\newcommand{\cl}{\mathcal}
\newcommand{\ra}{\rightarrow}
\newcommand{\lra}{\longrightarrow}
\theoremstyle{plain}
\newtheorem{probl}{Problem}
\newtheorem{defin}{Definition}
\newtheorem{teor}{Theorem}[section]
\newtheorem{prop}[teor]{Proposition}
\newtheorem{cor}[teor]{Corollary}
\newcommand{\adef}{\begin{defin}}
\newcommand{\zdef}{\end{defin}}
\newtheorem{lemma}[teor]{Lemma}
\theoremstyle{definition}
\newcommand\sgn{\mathop{\mathrm{sgn}}\nolimits}
\newcommand\dist{\mathop{\mathrm{dist}}\nolimits}
\newcommand\supp{\mathop{\mathrm{supp}}\nolimits}
\newcommand{\lop}{\ensuremath{\curvearrowright}}
\theoremstyle{remark}
\newcommand{\R}{\mathbb{R}}
\newcommand{\N}{\mathbb{N}}
\def\Ext{\operatorname{Ext}}
\newcommand{\seq}{\ensuremath{0\to Y\to X\to Z\to 0\:}}
\newcommand{\aproof}{\begin{proof}}
\newcommand{\zproof}{\end{proof}}
\newcommand\PB{{\mathrm{PB}}}
\newcommand\PO{{\mathrm{PO}}}
\newcommand{\pp}{{p^{\prime}}}
\author{Jesus M. F. Castillo, Valentin Ferenczi and Manuel Gonz\'alez}
\address{Departamento de Matem\'aticas, Universidad de Extremadura,
Avenida de Elvas s/n, 06011 Badajoz, Spain}
\email{castillo@unex.es}
\address{Departamento de Matem\'atica, Instituto de Matem\'atica e
Estat\'\i stica, Universidade de S\~ao Paulo, rua do Mat\~ao 1010,
05508-090 S\~ao Paulo SP, Brazil  \\ and \newline
Equipe d'Analyse Fonctionnelle \\
Institut de Math\'ematiques de Jussieu \\
Universit\'e Pierre et Marie Curie - Paris 6 \\
Case 247, 4 place Jussieu \\
75252 Paris Cedex 05 \\
France.}
\email{ferenczi@ime.usp.br}
\address{Departamento de Matem\'aticas, Universidad de Cantabria,
Avenida de los Castros s/n, 39071 Santander, Spain}
\email{manuel.gonzalez@unican.es}
\thanks{This research was supported by Project MTM2013-45643, D.G.I. Spain.
The second author was supported by Fapesp project 2013/11390-4, including for
visits of the first and third author to the University of S\~ao Paulo.}
\begin{document}

\title{Singular twisted sums generated by complex interpolation}

\begin{abstract}
We present new methods to obtain singular twisted sums $X\oplus_\Omega X$
(i.e., exact sequences $0\to X\to X\oplus_\Omega X \to X\to 0$ in which
the quotient map is strictly singular), in which  $X$ is the interpolation space arising
from a complex interpolation scheme and $\Omega$ is the induced centralizer.

Although our methods are quite general, in our applications we are mainly concerned
with the choice of $X$ as either a Hilbert space, or Ferenczi's uniformly convex
Hereditarily Indecomposable space.
In the first case, we construct new singular twisted Hilbert spaces, including the only known example so far: the Kalton-Peck space $Z_2$. In the second case we obtain the first example of an H.I. twisted sum of an H.I. space. We then use Rochberg's description of iterated twisted sums to show that there is a
sequence $\mathcal F_n$ of H.I. spaces so that $\mathcal F_{m+n}$ is a singular
twisted sum of  $\mathcal F_m$ and $\mathcal F_n$, while for $l>n$ the direct sum
$\mathcal F_n \oplus \mathcal F_{l+m}$ is a nontrivial twisted sum of $\mathcal F_l$
and $\mathcal F_{m+n}$.

We also introduce and study the notion of disjoint singular twisted sum of K\"othe function spaces and construct several examples involving reflexive $p$-convex K\"othe function spaces, which include the function version of the  Kalton-Peck space $Z_2$. \end{abstract}

\date{}

\maketitle

\thispagestyle{empty}

\section{Introduction}
\label{sect:intro}

For all unexplained notation see Sections \ref{sect:twisted-sums}
(background on exact sequences and quasi-linear maps) and \ref{sect:centralizers} (background and preliminary results on complex interpolation and centralizers).

This paper focuses on the study of the existence and properties of exact sequences
\begin{equation}\label{twix}
\begin{CD}
0@>>> X@>j>> E @>q>> X @>>>0,
\end{CD}
\end{equation}
in which the Banach space $X$ has been obtained by complex interpolation.
The exact sequence is called {\it nontrivial} when $j(X)$ is not complemented in the
middle space $E$, which is then called a (nontrivial) twisted sum of $X$ (or a twisting
of $X$, or even a twisted $X$).
The exact sequence is called {\it singular} (and $E$ is called a \emph{singular twisted sum})
when the operator $q$ is strictly singular.
The key example on which all the theory is modeled is the Kalton-Peck twisted Hilbert
space $Z_2$ obtained in \cite{kaltpeck}, which provides the first and only known singular
sequence
$$
\begin{CD}
0@>>> \ell_2@>j>> Z_2 @>q>> \ell_2 @>>>0.
\end{CD}
$$

Singular sequences correspond to twisted sums which are, in some sense, as far as
possible from being direct sums. For example, in Kalton-Peck example $Z_2$, the natural copy of $\ell_2$ does not even admit
a "relative" summand, i.e. there is no infinite dimensional subspace $Z$ of $Z_2$ forming
a topological direct sum $ j(\ell_2) \oplus Z$ inside $Z_2$.

In \cite{kalt} Kalton showed that exact sequences (\ref{twix}) are in correspondence
with certain non-linear maps $F: X\to  X$, called quasi-linear maps. So,  twisted sum spaces, and in particular exact sequences, can be written in the form
\begin{equation}\label{quasi}\begin{CD}
0@>>> X@>>> X\oplus_F X @>>> X @>>>0.
\end{CD}\end{equation}

Following \cite{ccs,castmorestrict}, we say that a quasi-linear map $F$ is {\it singular}
if the associated exact sequence (\ref{quasi}) is singular.
In \cite{kaltpeck} Kalton and Peck presented a method to show an explicit
construction of quasi-linear maps on Banach spaces with unconditional basis. This method was refined by Kalton \cite{kaltmem} and extended to K\"othe function spaces. The special type of quasi-linear maps obtained by this method were called {\it centralizers}. The main examples are the so called {\em Kalton-Peck maps:}
$$
\mathscr K_\phi(x) = x \phi \left( - \log \frac{|x|}{\|x\|}\right)
$$
where $\phi:\R\to \R$ is a certain Lipschitz map. The choice of the function $\phi_r(t)=t^r$ (when $t\geq 1$), and $\phi_r(t)=t$ (when $0\leq t\leq 1$); with $0<r\leq 1$ will have a especial interest for us.
We simply write $\mathscr K$ for Kalton-Peck space instead of $\mathscr K_{\phi_1}$.
In \cite{kaltpeck} it is shown that $\mathcal K$ is singular on $\ell_p$ spaces for
$1<p<\infty$; in \cite{castmorestrict} for $p=1$; and in \cite{ccs} for the whole range
$0<p<\infty$.
It was soon observed that the Kalton-Peck map $\mathscr K$ on $\ell_2$ could be generated from the interpolation scale of $\ell_p$ spaces. Taking this as starting point, Kalton unfolds in \cite{kaltmem,kaltdiff} the existence of
a correspondence between centralizers defined on K\"othe function spaces and interpolation
scales of K\"othe function spaces. This opens the door to the possibility of obtaining nontrivial quasi-linear maps in Banach
spaces generated by an interpolation scale, even when no unconditional structure is present.\\

Such is the point of view we adopt in this paper to tackle the study of singular centralizers
and singular quasi-linear maps on Banach spaces obtained by complex interpolation.
In the case of centralizers this leads us to obtain new singularity results for Kalton-Peck
sums of sequence spaces as well as of function spaces; and, in particular, new singular twisted
Hilbert spaces. We introduce a new concept of singularity, that we call \emph{disjoint singularity},
which is relevant to the study of interpolation schemes of function spaces.
In the case of general quasi-linear maps, not just centralizers, we ``localize" the techniques developed and apply
them to spaces with monotone basis and obtain the first H.I. twisted sum of an H.I. space.\\

A description of the contents of the paper is in order:
after this introduction and a preliminary Section \ref{sect:twisted-sums} on basic facts
about exact sequences and quasi-linear maps, Section \ref{sect:centralizers} takes root in
Kalton's work and so it contains an analysis of centralizers arising from an interpolation
scheme; the analysis is centered on an interpolation couple $(X_0,X_1)$ and the centralizer
$\Omega_\theta$ obtained at the interpolation space $X_\theta = (X_0,X_1)_\theta$; although
the results  extend (see subsection \ref{interp-family}) to cover the case of a measurable
family of spaces.
We observe, and derive a few consequences from it, the fact that such centralizers admit an
overall form as $\Omega_\theta(x) = x\log \frac{a_0(x)}{a_1(x)}$, where
$a_0(x)^{1-\theta} a_1(x)^\theta$ is a Lozanovskii factorization of $|x|$ with respect to
the couple $(X_0, X_1)$. Section \ref{sect:estimates} contains the two fundamental estimates we use through the
paper: Lemma \ref{trivialestimate} (estimate for non-singular maps) and Lemma \ref{core}
(general estimate for centralizers arising from an interpolation scheme).
Section \ref{sect:singular} contains several criteria for singularity based on the previous
two lemmata.
The first two subsections treat the Banach lattice case:
we recover the singularity of Kalton-Peck maps associated to the interpolation scale of
$\ell_p$ spaces, as a particular case of a general criterion for singularity, and we prove
the disjoint singularity of Kalton-Peck maps associated to the
interpolation scale of $L_p$ spaces, for which it was known that they were not singular.
We also prove the disjoint singularity of Kalton-Peck maps on more general $p$-convex
K\"othe spaces, by means of the interpolation formula $X=(L_\infty,X^{(p)})_{1/p}$.
In the third subsection, we give conditions implying the singularity in the conditional case
(spaces admitting a basis not necessarily unconditional) which will be needed to cover the
case of H.I. spaces.
In Section \ref{sect:twisting-Hilbert} we obtain new singular twisted Hilbert spaces;
we also complete previous results by showing that centralizers $\mathscr K_\phi$ is singular under rather mild conditions on $\phi$, satisfied in particular by the complex versions \cite{kalt-canad} of $\mathscr K$.
In Section \ref{sect:twisting-H.I.} we connect the results about singular sequences with
the twisting of H.I. spaces: a twisted sum of two H.I. spaces is H.I. if and only if it
is singular. One of the difficulties for such construction is, as we show, that a
nontrivial twisted sum of two H.I. spaces can be decomposable; note that it was known \cite[Theorem 1]{gonzherre} that such twisted sums should be at most 2-decomposable. Section \ref{sect:twisting-Ferenczi} applies the previous techniques to the quasi-linear map associated to the construction of Ferenczi's H.I. space $\mathcal F$ \cite{fere} by
complex interpolation of a suitable family of Banach spaces.
In Section \ref{sect:higher-twisting} we complete and improve the results in
Sections \ref{sect:twisting-H.I.} and \ref{sect:twisting-Ferenczi} by showing new natural H.I. and decomposable twistings; precisely, that there is a sequence $(\mathcal F_n)$ of H.I. spaces so that:
\begin{itemize}

\item[(i)] For each $m,n\geq 1$ there is a singular exact sequence
$$\begin{CD}
0@>>> \mathcal F_m @>>> \mathcal F_{m+n} @>>> \mathcal F_n @>>>0.
\end{CD}$$
\item[(ii)] For each $l,m,n\geq 1$ with $l>n$ there is a nontrivial exact sequence
$$\begin{CD}
0@>>> \mathcal F_l @>>> \mathcal F_{n} \oplus \mathcal F_{l+m} @>>> \mathcal F_{m+n} @>>>0.
\end{CD}$$
\end{itemize}

\section{Exact sequences, twisted sums and quasi-linear maps}
\label{sect:twisted-sums}

A {\it twisted sum} of two Banach spaces $Y$ and $Z$ is a space $X$ which has a subspace
$M$ isomorphic to $Y$ with the quotient $X/M$ isomorphic to $Z$.
The space $X$ is a quasi-Banach space in general \cite{kaltpeck}.
Recall that a Banach space is \emph{B-convex} when it does not contain $\ell_1^n$
uniformly.
Theorem 2.6 of \cite{kalt} implies that a twisted sum of two B-convex Banach spaces is
isomorphic to a Banach space.

An exact sequence $0 \to Y \to X \to Z \to 0$, where $Y,Z$ are Banach spaces and the
arrows are (bounded) operators is a diagram in which the kernel of each arrow coincides
with the image of the preceding one.
By the open mapping theorem this means that the middle space $X$ is a twisted sum
of $Y$ and $Z$.

Two exact sequences $0 \to Y \to X_1 \to Z \to 0$ and $0 \to Y \to X_2 \to Z \to 0$
are {\it equivalent} if there exists an operator $T:X_1\to X_2$ such that the following
diagram commutes:
$$
\begin{CD}
0 @>>>Y@>i>>X_1@>q>>Z@>>>0\\
&&@| @VVTV @|\\
0 @>>>Y@>>j>X_2@>>p>Z@>>>0
\end{CD}$$
The classical 3-lemma (see \cite[p. 3]{castgonz}) shows that $T$ must be an isomorphism.
An exact sequence is trivial if and only if it is equivalent to
$0 \to Y \to Y \times Z \to Z \to 0$, where $Y\times Z$ is endowed with the product norm.
In this case we say that the exact sequence \emph{splits.}

A map $F:Z\to Y$ is called \emph{quasi-linear} if it is homogeneous and there is a
constant $M$ such that $\|F(u+v)-F(u)-F(v)\|\leq M\|u+v\|$ for all $u,v\in Z$.
There is a correspondence (see \cite[Theorem 1.5.c, Section 1.6]{castgonz}) between exact
sequences $0 \to Y \to X \to Z \to 0$ of Banach spaces and a special kind of quasi-linear
maps $\omega:Z\to Y$, called $z$-linear maps, which satisfy
$\|\omega(\sum_{i=1}^n u_i) - \sum_{i=1}^n \omega(u_i)\| \leq M \sum_{i=1}^n \|u_i\|$
for all finite sets $u_1,\ldots, u_n \in Z$.
A quasi-linear map $F : Z \to Y$ induces the exact sequence
$0 \to Y \stackrel{j}\to Y \oplus_F Z \stackrel{p}\to Z \to 0$ in which
$Y \oplus_F Z$ denotes the vector space $Y \times Z$ endowed with the quasi-norm
$\|(y,z) \|_F = \|y - F(z)\| + \|z\|$.
The embedding is  $j(y)= (y,0)$ while the quotient map is $p(y,z)=z$.
When $F$ is $z$-linear, this quasi-norm is equivalent to a norm \cite[Chapter 1]{castgonz}.
On the other hand, the process to obtain a $z$-linear map out from an exact sequence
$0\to Y \stackrel{i}\to X \stackrel{q}\to Z \to 0$ of Banach spaces is the following one:
get a homogeneous bounded selection $b: Z \to X$ for the quotient map $q$, and then
a linear $\ell: Z \to X$ selection for the quotient map.
Then $\omega= i^{-1}(b - \ell)$ is a $z$-linear map from $Z$ to $Y$. The commutative diagram
$$
\begin{CD}
0 @>>>Y@>i>>X@>q>>Z@>>>0\\
&&@| @VVTV @|\\
0 @>>>Y@>>j>Y\oplus_\omega Z@>>p>Z@>>>0
\end{CD}$$
obtained by taking as $T:X\to  Y\oplus_\omega Z$ the operator $T(x) = (x - \ell qx, qx)$
shows that the upper and lower exact sequences are equivalent.
Two quasi-linear maps $F, F': Z \to Y$ are said to be equivalent, denoted $F\equiv G$,
if the difference $F-F'$ can be written as $B +L$, where $B: Z \to Y$ is a homogeneous bounded
map (not necessarily linear) and $L: Z \to Y$ is a linear map (not necessarily bounded).
Of course two quasi-linear maps are equivalent if and only if the associated exact sequences
are equivalent.
Thus, two exact sequences
$$\begin{CD}
0@>>> Y @>>> Y\oplus_\Omega Z @>>> Z @>>> 0\\
&&@|&&@|\\
0@>>> Y @>>> Y\oplus_\Psi Z @>>> Z @>>> 0\end{CD}
$$
(or two quasi-linear maps $\Omega, \Psi$) are equivalent (i.e., $\Omega \equiv \Psi$)
if there exists a commutative diagram
$$
\begin{CD}
0 @>>>Y@>>>Y\oplus_\Omega Z @>>>Z@>>>0\\
&&@V{\alpha}VV @V{\beta}VV @VV{\gamma}V\\
0 @>>>Y@>>>Y \oplus_\Psi Z@>>>Z@>>>0
\end{CD}
$$
with $\alpha=id_Y$ and $\gamma=id_Z$.
Imposing other conditions on the maps $\alpha, \beta, \gamma$ yields other notions of
equivalence appearing in the literature. From the most restrictive to the more general they are:

\begin{enumerate}
\item Bounded equivalence \cite{kaltmem,kaltdiff} (see also Section 3 below): asking that
$\Omega -\Psi$ is bounded.
\item Projective equivalence \cite{kaltpeck}: asking $\alpha, \gamma$ to be scalar
multiples of the identity.
Equivalently, $\Omega \equiv \mu\Psi$ for some scalar $\mu$.
\item We will need in this paper ``permutative projective equivalence": when $Y$ and
$Z$ have unconditional bases $(e_n)$, asking  $T_\sigma \Omega \equiv \mu \Psi T_\sigma$
for some scalar $\mu$  and some operator $T_\sigma(\sum_i x_i e_i)=\sum_i x_i e_{\sigma(i)}$
induced by a permutation $\sigma$ of the integers.
When $\mu=1$ we will just say that $\Omega$ and $\Lambda$ are permutatively equivalent.
\item Isomorphic equivalence \cite{cabecastdu,castmoreisr}: asking $\alpha, \beta, \gamma$
to be isomorphisms.
In quasi-linear terms, this means that $\alpha\Omega \equiv \Psi\gamma$.
\end{enumerate}

Obviously, equivalence takes place between (1) and (2).
In conclusion, each of (1), (2), (3), (4) yields a "natural" isomorphism $\beta$ between
$Y\oplus_\Omega Z$ and  $Y \oplus_\Psi Z$ of a specific form prescribed by the forms of
the maps $\alpha$ and $\gamma$.
\medskip

A few facts about the connections between quasi-linear maps and the associated exact
sequences will be needed in this paper, and can be explicitly found
in \cite[Section 1]{castmorebap}.
Given an exact sequence \seq with associated quasi-linear map $F$ and an operator
$\alpha: Y\to Y'$, there is a commutative diagram
\begin{equation}\label{po}
\begin{CD}
0 @>>>Y@>i>>X@>q>>Z@>>>0 \\
&&@V{\alpha}VV @VTVV @|\\
0 @>>>Y'@>i'>> \PO @>q'>>Z@>>>0.\end{CD}
\end{equation}

The lower sequence is called the \emph{push-out sequence,} its associated quasi-linear
map is equivalent to $\alpha\circ F$, and the space $\PO$ is called the
\emph{push-out space.}
When $F$ is $z$-linear, so is $\alpha\circ F$.
Given a commutative diagram like (\ref{po}) the \emph{diagonal push-out sequence}
is the exact sequence generated by the quasi-linear map $F\circ q'$, and is equivalent to
the exact sequence
$$
\begin{CD}
0@>>> Y @>d>> Y'\oplus X @>m>> X' @>>> 0
\end{CD}
$$
\smallskip

\noindent
where $d(y)= (-\alpha y, i y)$ and $m(y',x) = i' y' + Tx$.

\section{Complex interpolation and centralizers} \label{sect:centralizers}

Here we explain the connections between complex interpolation, twisted sums and
quasi-linear maps that we use throughout the paper.

\subsection{Complex interpolation and twisted sums}

We describe the complex interpolation method for a pair of spaces following \cite{Bergh-Lofstrom}.
Other general references are  \cite{CCRSW2,kaltdiff,kaltonmontgomery,rochberg-weiss}.

Let $\mathbb S$ denote the closed strip $\{z\in \mathbb C: 0\leq {\rm Re}(z)\leq 1\}$ in the complex
plane, and let $\mathbb S^\circ$ be its interior and $\partial \mathbb S$ be its boundary.
Given an admissible pair $(X_0,X_1)$ of complex Banach spaces, we denote by $\cl{H}=\cl H(X_0,X_1)$
the space of functions $g:\mathbb S\to \Sigma:=X_0+X_1$ satisfying the following conditions:
\begin{enumerate}
\item $g$ is $\|\cdot\|_\Sigma$-bounded and $\|\cdot\|_\Sigma$-continuous on $\mathbb S$,
and $\|\cdot\|_\Sigma$-analytic on $\mathbb S^\circ$;

\item $g(it)\in X_0$ for each $t\in\R$, and the map $t\in\R\mapsto g(it)\in X_0$ is bounded
and continuous;

\item $g(it+1)\in X_1$ for each $t\in\R$, and the map $t\in\R\mapsto g(it+1)\in X_1$ is bounded
and continuous;
\end{enumerate}

The space $\cl{H}$ is a Banach space under the norm
$\|g\|_\cl{H} = \sup\{\|g(j+it)\|_j: j=0,1; t\in\R \}$.
For $\theta\in[0,1]$, define the interpolation space
$$
X_\theta=(X_0,X_1)_\theta=\{x\in\Sigma: x=g(\theta) \text{ for some } g\in\cl H\}
$$
with the norm $\|x\|_\theta=\inf\{\|g\|_H: x=g(\theta)\}$.
So $(X_0,X_1)_\theta$ is the quotient of $\cl{H}$ by $\ker\delta_\theta$,
and thus it is a Banach space.

For  $0<\theta<1$, we will consider the maps $\delta_\theta^n: \mathcal H\to \Sigma$
--evaluation of the $n^{th}$-derivative at $\theta$-- that appear in Schechter's version
of the complex method of interpolation \cite{schechter}.
Note that $\delta_\theta\equiv \delta_\theta^0$ is bounded by the definition of
$\mathcal{H}$, and this fact and the Cauchy integral formula imply the boundedness
of $\delta_\theta^{n}$ for $n\geq 1$ (see also \cite{cabecastroch}).
We will also need the following result (see \cite[Theorem 4.1]{carro}):

\begin{lemma}\label{mechanism}
$\delta'_\theta: \ker \delta_\theta\to X_\theta$ is bounded and onto for $0<\theta<1$.
\end{lemma}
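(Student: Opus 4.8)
The plan is to exhibit a concrete bounded linear right inverse of $\delta'_\theta$ on $\ker\delta_\theta$, or at least to show the map is onto with an open-mapping argument, and then boundedness comes essentially for free from the Cauchy estimates already invoked in the excerpt. First I would recall that by definition $X_\theta=(X_0,X_1)_\theta$ consists of all $g(\theta)$ with $g\in\mathcal H$, and that $\delta_\theta:\mathcal H\to X_\theta$ is a quotient map with kernel $\ker\delta_\theta$. So given $x\in X_\theta$, pick $g\in\mathcal H$ with $g(\theta)=x$ and $\|g\|_{\mathcal H}\le (1+\varepsilon)\|x\|_\theta$. The issue is that $g$ need not lie in $\ker\delta_\theta$, nor is its derivative at $\theta$ controlled by $x$ alone; we must produce a \emph{new} function $h\in\ker\delta_\theta$ with $h'(\theta)=x$ and $\|h\|_{\mathcal H}\lesssim\|x\|_\theta$.

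The key construction is a multiplier trick: take a scalar analytic function $\varphi:\mathbb S\to\mathbb C$, bounded on $\mathbb S$, with $\varphi(\theta)=0$ and $\varphi'(\theta)=1$ — for instance $\varphi(z)=z-\theta$ works on the strip since it is bounded there (indeed $|\varphi(z)|\le\max(\theta,1-\theta)+|{\rm Im}\,z|$, so one should first multiply $g$ by a Gaussian-type factor $e^{\delta(z-\theta)^2}$, a device standard in the Calderón/Bergh--Löfström theory, to force decay in the imaginary direction and thereby keep $\varphi\cdot g$ in $\mathcal H$; after this reduction $\varphi$ can even be taken as $z-\theta$ on a relevant sub-strip). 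Then set $h=\varphi\cdot g$. One checks directly from conditions (1)--(3) defining $\mathcal H$ that $h\in\mathcal H$ (the boundary maps $t\mapsto h(j+it)$ inherit boundedness and continuity, using the boundedness of $\varphi$ and the Gaussian correction), that $h(\theta)=\varphi(\theta)g(\theta)=0$ so $h\in\ker\delta_\theta$, and that $h'(\theta)=\varphi'(\theta)g(\theta)+\varphi(\theta)g'(\theta)=g(\theta)=x$. Moreover $\|h\|_{\mathcal H}\le C\|g\|_{\mathcal H}\le C(1+\varepsilon)\|x\|_\theta$ with $C$ depending only on $\varphi$ and the strip. This simultaneously gives surjectivity and an a priori bound, which is exactly what "bounded and onto" asserts. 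Boundedness of $\delta'_\theta$ itself on $\ker\delta_\theta$ is the already-noted fact (Cauchy integral formula: $\delta_\theta^n$ is bounded on $\mathcal H$, hence on the closed subspace $\ker\delta_\theta$), so the only content is the existence of the controlled lift.

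The main obstacle I anticipate is the technical bookkeeping around keeping $\varphi\cdot g$ inside $\mathcal H$: the naive multiplier $z-\theta$ is unbounded on the full strip in the imaginary direction, so the Gaussian regularization $g\mapsto g_\delta(z):=e^{\delta(z-\theta)^2}g(z)$ (which satisfies $g_\delta\in\mathcal H$, $g_\delta(\theta)=g(\theta)$, $\|g_\delta\|_{\mathcal H}\to\|g\|_{\mathcal H}$ as $\delta\to 0$, by the maximum principle applied coordinatewise) must be invoked first, and one must verify that $(z-\theta)e^{\delta(z-\theta)^2}$ is bounded on $\mathbb S$ and that its product with $g$ still meets the continuity/analyticity requirements. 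This is the familiar Calderón normalization and is routine once set up, but it is the one place where care is genuinely needed. An alternative, cleaner route is to cite Lemma~\ref{mechanism}'s source directly and merely sketch that $\delta'_\theta$ is the composition $\ker\delta_\theta\hookrightarrow\mathcal H\xrightarrow{\,\text{bounded}\,}X_\theta$ for boundedness, with surjectivity from the multiplier construction above; I would present the multiplier argument as the proof since it is self-contained modulo the standard Gaussian trick.
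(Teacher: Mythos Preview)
Your proposal is correct. The paper itself does not prove the lemma but cites \cite[Theorem 4.1]{carro}; however, immediately after the statement it records the companion observation for boundedness: given $G\in\ker\delta_\theta$, the function $H(z)=G(z)/(z-\theta)$ lies in $\mathcal H$ (since $|z-\theta|\ge\dist(\theta,\partial\mathbb S)$ on the boundary) and $H(\theta)=G'(\theta)$, yielding $\|{\delta'_\theta}_{|\ker\delta_\theta}\|\le\dist(\theta,\partial\mathbb S)^{-1}$. Your multiplication argument for surjectivity is exactly the inverse of this division trick, so the two approaches match perfectly.

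One remark: the Gaussian regularization you invoke to tame the unbounded multiplier $z-\theta$ is correct but unnecessary. You can instead take any \emph{bounded} holomorphic $\varphi$ on $\mathbb S$ with $\varphi(\theta)=0$ and $\varphi'(\theta)=1$; for instance, let $\psi:\mathbb S\to\mathbb D$ be a conformal equivalence with $\psi(\theta)=0$ and set $\varphi=\psi/\psi'(\theta)$. Then $h=\varphi\cdot g$ lies in $\mathcal H$ directly, with $h(\theta)=0$, $h'(\theta)=x$, and $\|h\|_{\mathcal H}\le\|\varphi\|_\infty\|g\|_{\mathcal H}$. The paper uses precisely this conformal-map device later (in the proof of the proposition on the iterated spaces $\mathcal F_n$), so this would align your argument more closely with the paper's toolkit and avoid the technical bookkeeping you flagged as the main obstacle.
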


For future use, note that given $G \in {\rm ker}\ \delta_\theta$, the function $H$ defined
by $H(z)=G(z)/(z-\theta)$ belongs to $\mathcal H$ and satisfies
$\delta^{\prime}_\theta(G)=H(\theta)$,  which implies the estimate
$$
\|{\delta^{\prime}_\theta}_{|{\rm ker}\ \delta_\theta}\|\leq \dist(\theta,\partial\mathbb S)^{-1}.
$$

Lemma \ref{mechanism} provides the connection with exact sequences and twisted sums
through the following push-out diagram:

\begin{equation}\label{basic}\begin{CD}
0@>>> \ker\delta_\theta @>i_\theta>> \mathcal H @>\delta_\theta >> X_\theta @>>>0\\
&&@V\delta_\theta' VV @VVV @| \\
0@>>> X_\theta @>>> \PO @>>> X_\theta@>>> 0
\end{CD}
\end{equation}
whose lower row is obviously a twisted sum of $X_\theta$.

Apart from the obvious description as a push-out space, $\PO$ can be represented as:
\medskip


(1) \emph{A twisted sum space.}
Let $B_\theta: X_\theta \to \mathcal H$ be a bounded homogeneous selection for $\delta_\theta$,
and let $L_\theta: X_\theta \to \mathcal H$ be a linear selection.
The map $\omega_\theta= B_\theta -L_\theta: X_\theta \to \ker\delta_\theta$ is an associated
quasi-linear map for the upper sequence in diagram (\ref{basic}).
The lower push-out sequence then comes defined by the quasi-linear map
$\delta_\theta' \omega_\theta.$
Hence, $\PO \simeq X_\theta \oplus_{\delta_\theta' \omega_\theta} X_\theta.$
\medskip

(2) \emph{A derived space.} With the same notation as above, set
$$
d_{\delta_\theta' B_\theta}(X_\theta) = \{(y,z)\in \Sigma  \times X_\theta\, :\,
 y - \delta_\theta' B_\theta z\in X_\theta\}
$$
endowed with the quasi-norm
$\|(y,z)\|_d = \|y -\delta_\theta' B_\theta z\|_{X_\theta} +\|z\|_{X_\theta}$.
This is a twisted sum of $X_\theta$ since the embedding $y\to (y,0)$ and
quotient map $(y,z) \to z$ yield an exact sequence
$$
\begin{CD}
0@>>> X_\theta @>>> d_{\delta_\theta' B_\theta}(X_\theta) @>>> X_\theta@>>>0.
\end{CD}
$$
Moreover, the two exact sequences
$$
\begin{CD}
0@>>> X_\theta@>>> X_\theta \oplus_{\delta_\theta' \omega_\theta} X_\theta@>>> X_\theta @>>>0\\
&& \parallel  && @VVTV \Vert\\ 0@>>> X_\theta @>>> d_{\delta_\theta' B_\theta}(X_\theta)
@>>> X_\theta@>>>0.
\end{CD}
$$
are isometrically equivalent via the isometry $T(y,z) =(y +\delta_\theta' L_\theta z, z)$.
\medskip

Thus, we can pretend that the quasi-linear map associated to the push-out sequence
is $\delta_\theta' B_\theta$, usually more intuitive than the true quasi-linear map
$\delta_\theta'(B_\theta- L_\theta)$.
Such map has been sometimes called ``the $\Omega$-operator".
Needless to say, the $\Omega$-operator depends on the choice of $B_\theta$.
However the difference between two associated $\Omega$-operators must be bounded:
$$
\|\delta'_\theta(\tilde B_\theta - B_\theta)x\|_{X_\theta}\leq
\|{\delta_\theta'}_{|\ker \delta_\theta}\| (\|\tilde B_\theta\| +\|B_\theta\|)\|x\|_{X_\theta}.
$$

The derived space admits the following useful representation, see \cite[p.323]{rochberg-weiss}
for an embryonic finite-dimensional version; also quoted in \cite[p.218]{CJMR};
see \cite[Prop.7.1]{carro} for a general version involving two compatible interpolators,
and \cite{cabecastroch} for a rather complete exposition, variations and applications
of that representation.

\begin{prop}\label{rep-derived-space}
\quad $d_{\delta_\theta' B_\theta}(X_\theta) = \{\big(f'(\theta), f(\theta)\big)\,
:\, f\in  \mathcal{H} \}$,\quad
and the quotient norm of $\mathcal H/(\ker \delta_\theta \cap \ker \delta'_\theta)$ is
equivalent to the quasi-norm $\|(\cdot,\cdot)\|_d$.
\end{prop}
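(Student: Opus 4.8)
The plan is to exhibit explicitly a two-sided comparison between the derived space $d_{\delta_\theta' B_\theta}(X_\theta)$ and the space $\{(f'(\theta),f(\theta)) : f\in\mathcal H\}$, and then identify the norms. First I would prove the set-theoretic equality. For one inclusion, given $f\in\mathcal H$, write $x=f(\theta)\in X_\theta$; then $f-L_\theta x\in\ker\delta_\theta$ (since $\delta_\theta(L_\theta x)=x=\delta_\theta f$), and also $B_\theta x - L_\theta x=\omega_\theta x\in\ker\delta_\theta$, so $f-B_\theta x\in\ker\delta_\theta$. Applying $\delta'_\theta$ and using Lemma \ref{mechanism} gives $f'(\theta)-\delta'_\theta B_\theta x = \delta'_\theta(f-B_\theta x)\in X_\theta$, which is exactly the condition for $(f'(\theta),f(\theta))$ to lie in $d_{\delta_\theta' B_\theta}(X_\theta)$. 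For the reverse inclusion, take $(y,z)$ with $z\in X_\theta$ and $w:=y-\delta'_\theta B_\theta z\in X_\theta$. By Lemma \ref{mechanism}, pick $G\in\ker\delta_\theta$ with $\delta'_\theta G=w$; then $f:=B_\theta z + G$ satisfies $f(\theta)=z$ and $f'(\theta)=\delta'_\theta B_\theta z + w = y$, and $f\in\mathcal H$ because $B_\theta z, G\in\mathcal H$.

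Next I would handle the norm equivalence. The map $\Phi:\mathcal H\to \Sigma\times X_\theta$, $\Phi(f)=(f'(\theta),f(\theta))$, has kernel exactly $\ker\delta_\theta\cap\ker\delta'_\theta$, so it factors through an injection $\overline\Phi$ from the quotient $\mathcal H/(\ker\delta_\theta\cap\ker\delta'_\theta)$ onto $d_{\delta_\theta' B_\theta}(X_\theta)$; by the first part this injection is onto. To see $\overline\Phi$ is bounded, for $f\in\mathcal H$ estimate $\|\Phi(f)\|_d = \|f'(\theta)-\delta'_\theta B_\theta f(\theta)\|_{X_\theta}+\|f(\theta)\|_{X_\theta}$; the second term is $\le\|f\|_{\mathcal H}$, and for the first, $f-B_\theta f(\theta)\in\ker\delta_\theta$ has $\mathcal H$-norm $\le(1+\|B_\theta\|)\|f\|_{\mathcal H}$, so by the estimate $\|{\delta'_\theta}_{|\ker\delta_\theta}\|\le\dist(\theta,\partial\mathbb S)^{-1}$ the first term is $\le\dist(\theta,\partial\mathbb S)^{-1}(1+\|B_\theta\|)\|f\|_{\mathcal H}$; taking the infimum over representatives gives boundedness. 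For the reverse bound I would build, from a pair $(y,z)\in d_{\delta_\theta' B_\theta}(X_\theta)$, a good representative $f$: choose $g\in\mathcal H$ with $g(\theta)=z$ and $\|g\|_{\mathcal H}\le 2\|z\|_{X_\theta}$, and (via Lemma \ref{mechanism}) choose $G\in\ker\delta_\theta$ with $\delta'_\theta G = (y-\delta'_\theta B_\theta z)-(g'(\theta)-\delta'_\theta B_\theta z)\cdot 0$—more precisely, set $f=g+H$ where $H\in\ker\delta_\theta$ is chosen with $\delta'_\theta H = y - g'(\theta)$ and $\|H\|_{\mathcal H}\le C\|y-g'(\theta)\|_{X_\theta}$; here one must note $y-g'(\theta)\in X_\theta$, which holds because both $(y,z)$ and $(g'(\theta),g(\theta))=(g'(\theta),z)$ lie in the derived space. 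Then $f(\theta)=z$, $f'(\theta)=y$, and $\|f\|_{\mathcal H}\le \|g\|_{\mathcal H}+\|H\|_{\mathcal H}$ is controlled by $\|(y,z)\|_d$ after using $\|y-g'(\theta)\|_{X_\theta}\le\|(y,z)\|_d + \|g'(\theta)-\delta'_\theta B_\theta z\|_{X_\theta}$ and bounding the latter by $C\|g\|_{\mathcal H}\le 2C\|z\|_{X_\theta}$ as above.

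The main obstacle is the bookkeeping in the reverse estimate: one must produce a representative $f\in\mathcal H$ of prescribed value and derivative at $\theta$ whose $\mathcal H$-norm is controlled by $\|(y,z)\|_d$, and the naive choice only controls $f'(\theta)-\delta'_\theta B_\theta z$ rather than $f'(\theta)$ itself, so the triangle-inequality juggling above (splitting off the contribution of $g$ and correcting by an element of $\ker\delta_\theta$ supplied by Lemma \ref{mechanism}) is essential. Everything else is a routine unwinding of the push-out construction in diagram (\ref{basic}) together with the two representations of $\PO$ already recorded; in particular the identification of $d_{\delta_\theta' B_\theta}(X_\theta)$ with $X_\theta\oplus_{\delta'_\theta\omega_\theta}X_\theta$ is exactly the isometry $T(y,z)=(y+\delta'_\theta L_\theta z,z)$ stated just above the proposition, so no further work is needed there.
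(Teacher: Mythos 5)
Your proof is correct. The set-theoretic equality and the bound $\|(f'(\theta),f(\theta))\|_d\leq C\,\|f\|_{\mathcal H}$ are argued exactly as in the paper (the small detour through $L_\theta$ in the first inclusion is harmless but unnecessary: $f-B_\theta f(\theta)\in\ker\delta_\theta$ is immediate since $\delta_\theta B_\theta=\mathrm{id}$). Where you genuinely diverge is the reverse estimate. The paper gets $\dist(f,\ker\delta_\theta\cap\ker\delta'_\theta)\leq C\|(y,z)\|_d$ in one line by applying the open mapping theorem to the induced bijection between the quotient $\mathcal H/(\ker\delta_\theta\cap\ker\delta'_\theta)$ and the derived space; this is short but tacitly relies on the derived space being a complete quasi-normed space. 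You instead build an explicit representative $f=g+H$ with $g(\theta)=z$, $\|g\|\leq 2\|z\|_{X_\theta}$, and $H\in\ker\delta_\theta$ correcting the derivative, which yields the same inequality with an explicit constant; the only nonconstructive ingredient you need is a norm-controlled preimage under the surjection $\delta'_\theta:\ker\delta_\theta\to X_\theta$ of Lemma \ref{mechanism}, i.e.\ the open mapping theorem applied between honest Banach spaces. Your observation that $y-g'(\theta)\in X_\theta$ (because both $(y,z)$ and $(g'(\theta),z)$ lie in the derived space) is precisely the point that makes this correction step legitimate, and you handle it. The trade-off: the paper's argument is shorter, yours is quantitative and avoids invoking completeness of $d_{\delta'_\theta B_\theta}(X_\theta)$, which the paper never verifies explicitly. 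Both are valid.
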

\begin{proof}
Given $f\in \mathcal{H}$, since $f-B_\theta( f(\theta)) \in \ker \delta_\theta$, by
Lemma \ref{mechanism} one has
$$
f'(\theta)- \delta'_\theta B_\theta(f(\theta)) = \delta'_\theta(f-B_\theta (f(\theta)))
\in X_\theta,
$$
hence $\big(f'(\theta), f(\theta)\big)\in d_{\delta_\theta' B_\theta}(X_\theta)$.

Conversely, let $(y,z)\in d_{\delta_\theta' B_\theta}(X)$.
We have $z\in X_\theta$, so $B_\theta z \in\mathcal{H}$.
Since $y - \delta_\theta' B_\theta z\in X_\theta$, there exists $g\in \ker \delta_\theta$
such that $y - \delta_\theta' B_\theta z = g'(\theta)$.
Thus taking $f= B_\theta z+g$ we have $f(\theta)=z$ and $f'(\theta)=y$,
and the equality is proved.

For the equivalence, given $(y,z)\in d_{\delta_\theta' B_\theta}(X)$, take
$f\in \mathcal{H}$ with $\|f\|\leq 2\dist(f,\ker \delta_\theta\cap\ker\delta'_\theta)$
such that $y= f'(\theta)$ and $z=f(\theta)$.
Then $\|z\|_{X_\theta}=\dist(f,\ker \delta_\theta)$ and
$$
\|y -\delta_\theta' B_\theta z\|_{X_\theta}=\|\delta_\theta' (f-B_\theta z)\|_{X_\theta}.
$$
Since $f-B_\theta z \in\ker \delta_\theta$, we get
$$
\|(y,z)\|_d \leq \|{\delta_\theta'}_{|\ker \delta_\theta}\| (1+\|B_\theta\|) \|f\| + \|f\|
\leq 2(\|{\delta_\theta'}_{|\ker \delta_\theta}\|(1 + \|B_\theta\|)+1)
\dist(f,\ker \delta_\theta\cap\delta'_\theta),
$$
and there exists a constant $C$ so that
$\dist(f,\ker \delta_\theta\cap\delta'_\theta)\leq C \|(y,z)\|_d$
by the open-mapping theorem.
\end{proof}

The results mentioned so far remain valid in the wider context of
the general method of interpolation considered in \cite[Section 10]{kaltonmontgomery}.
In Section \ref{sect:twisting-Ferenczi} we will need to work with the complex interpolation
method associated to a family $(X_{(0,t)},X_{(1,t)})_{t\in \R}$ of complex Banach spaces
as described in  \cite{CCRSW2}; which is a special case of the general method mentioned above.


\subsection{Centralizers.}

Here we consider K\"othe function spaces $X$ over a measure space $(\Sigma,\mu)$ with their
$L_\infty$-module structure.
As a particular case, we have Banach spaces with a $1$-unconditional basis with their
associated $\ell_\infty$-structure. We denote by $L_0$ the space of all $\mu$-measurable
functions, and given $g\in L_0$, we understand that $\|g\|_X<\infty$ implies $g\in X$.

\begin{defin}
A centralizer on a K\"othe function space $X$ is a homogeneous map $\Omega: X \lop L_0$
such that $\|\Omega(ax) -a\Omega(x)\|_X\leq C\|x\|_X \|a\|_\infty$ for all $a\in L_\infty$
and $x\in X$.
\end{defin}

A centralizer on $X$ will be denoted by $\Omega: X\lop X$. We use this notation to stress the fact that a centralizer on $X$ is not a map $X\to X$, but only a map $X\to L_0$ so that the differences $\Omega(ax) -a\Omega(x)$ belong to $X$. This notion coincides with that of Kalton's ``strong centralizer" introduced in \cite{kaltmem}. Centralizers arise naturally in a complex interpolation scheme in which the interpolation scale
of spaces share a common $L_\infty$-module structure: in such case, the space $\mathcal H$
also enjoys the same $L_\infty$-module structure in the form $(u\cdot f)(z)=u\cdot f(z)$.
In this way, the fundamental sequence of the interpolation scheme
$0\to \ker \delta_\theta \to \mathcal H\to X_\theta \to 0$
is an exact sequence in the category of $L_\infty$-modules.
In an interpolation scheme starting with a couple $(X_0,X_1)$ of K\"othe function spaces,
the map $\Omega_\theta = {\delta_\theta}' B_\theta$ is a centralizer on $X_\theta$.\\

For a centralizer $\Omega: X\lop X$ on a K\"othe function space $X$, it was proved
in \cite[Lemma 4.2]{kaltmem} that there exists $M>0$ such that
$\|\Omega(u+v) -\Omega(u)-\Omega(v)\|_X\leq M(\|u\|_X + \|v\|_X)$.
So we can assume that $\Omega$ is a quasi-linear map.
The smallest of the constants $M$ satisfying the above inequality is denoted $\rho(\Omega)$.
Note that $\Omega: X\lop X$ induces an exact sequence in the category of
(quasi-)Banach $L_\infty$-modules $0\to X\to d_\Omega(X)\to X\to 0$, where
$$
d_\Omega(X)=\{(w,z)\in L_0 \times X :\,  w-\Omega z\in X\}
$$
endowed with the quasi-norm $\|(w,z)\|_\Omega = \|w- \Omega z\|_X +\|z\|_X$;
with embedding $y\to (y,0)$ and quotient map $(w,z)\to z$.
The derived space $d_\Omega(X)$ admits a $L_\infty$-module structure defined by
$a(w,z) = (aw, az)$.
Kalton proved in \cite[Section 4]{kaltmem} that every self-extension of a K\"othe function
space $X$ is (equivalent to) the extension induced by a centralizer on $X$.
Sometimes we will take the restriction of $\Omega$ to a closed subspace $Y$ of $X$,
and consider $d_\Omega(X,Y)$ defined in the same way as a subspace of $L_0 \times Y$.\\

A centralizer $\Omega: X\lop X$ is said to be \emph{bounded} when there exists a constant $C>0$
so that $\|\Omega(x)\|_X\leq C\|x\|_X$ for all $x\in X$; which in particular means that $\Omega(x)\in X$ for all $x\in X$.
Two centralizers $\Omega_1: X\lop X$ and $\Omega_2: X\lop X$ are \emph{equivalent} if and only if
the induced exact sequences are equivalent, which happens if and only if there exists
a linear map $L:X\to L_0$ so that $\Omega_1 -\Omega_2 - L$ is bounded.
Two centralizers $\Omega_1: X\lop X$ and $\Omega_2: X\lop X$ are said to
be \emph{boundedly equivalent} when $\Omega_1-\Omega_2$ is bounded.
The interest in this notion (which, to some extent, plays the role of triviality for quasi-linear
maps) stems from the following outstanding result of Kalton \cite[Theorem 7.6]{kaltdiff}:

\begin{teor}\label{kalton}
Let $X$ be a separable superreflexive K\"othe function space.
Then there exists a constant $c$ (depending on the concavity of a $q$-concave renorming of $X$)
such that if $\Omega: X\lop X$ is a real centralizer on $X$ with $\rho(\Omega)\leq c$, then
\begin{enumerate}
\item There is a pair of K\"othe function spaces $X_0, X_1$ such that $X=(X_0,X_1)_{1/2}$ and
 $\Omega - \Omega_{1/2}$ is bounded.
\item The spaces $X_0, X_1$ are uniquely determined up to equivalent renorming.
\end{enumerate}
\end{teor}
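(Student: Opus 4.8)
The plan is to follow Kalton's approach from \cite{kaltdiff} and reduce the problem to constructing the interpolation couple directly from $\Omega$. First I would set up the ``analytic family'' machinery: for $x\in X$ with $\|x\|_X=1$, one wants an analytic $X$-valued function $F_x$ on the strip $\mathbb S$ with $F_x(1/2)=x$ and $F_x'(1/2)\equiv \Omega(x)$ modulo bounded error, playing the role of the extremal for the derived-space representation in Proposition \ref{rep-derived-space}. The key idea is that a centralizer with small constant $\rho(\Omega)$ is ``close to linear'' enough that the formal exponential $z\mapsto x\exp\big((z-\tfrac12)\,\Omega(x)/x\big)$ (interpreted pointwise on the measure space, with the usual care when $x$ vanishes) is genuinely analytic with controlled $\mathcal H$-norm. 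This is where the hypothesis $\rho(\Omega)\le c$ with $c$ tied to the $q$-concavity constant enters: the $q$-concave renorming gives a lower estimate that keeps these exponentials bounded on the two boundary lines, so that the candidate spaces $X_0,X_1$ defined as the boundary traces have equivalent-norm descriptions.

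Concretely, I would define
$$
\|x\|_{X_0} = \inf\Big\{ \|G\|_{\mathcal H_0} : G \text{ analytic}, \ G(\tfrac12)=x\Big\},
$$
with an analogous definition for $X_1$ on the line $\mathrm{Re}(z)=1$, where $\mathcal H_0$ is an auxiliary space built so that evaluation at $1/2$ recovers $X$ and differentiation at $1/2$ recovers $\Omega$. One then checks $(X_0,X_1)_{1/2}=X$ isometrically (or up to renorming) and that the induced centralizer $\Omega_{1/2}=\delta'_{1/2}B_{1/2}$ differs from $\Omega$ by a bounded map; the latter is exactly the ``$\Omega$-operator depends on $B$ only up to a bounded map'' remark following Proposition \ref{rep-derived-space}, once one verifies the exponential selections above are legitimate bounded homogeneous selections. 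Part (2), uniqueness up to equivalent renorming, I would obtain by showing that any couple $(Y_0,Y_1)$ with $(Y_0,Y_1)_{1/2}=X$ and $\Omega-\Omega^{(Y)}_{1/2}$ bounded forces the Calder\'on products $Y_0^{1-\theta}Y_1^{\theta}$ (equivalently the Lozanovskii factorization data) to coincide with those of $(X_0,X_1)$: indeed $\Omega$ determines, via $\Omega(x)=x\log\frac{a_0(x)}{a_1(x)}$, the ratio of the factor functions, and together with $|x|=a_0(x)^{1/2}a_1(x)^{1/2}$ this pins down $a_0,a_1$ pointwise, hence the lattice norms of $X_0$ and $X_1$.

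The main obstacle I expect is the analyticity and boundedness of the exponential selections: one must make precise sense of $x\exp\big((z-\tfrac12)\omega\big)$ where $\omega=\Omega(x)/x$ is only in $L_0$, show the resulting function lands in $\Sigma=X_0+X_1$ with the right boundary behavior, and control its $\mathcal H$-norm by a constant multiple of $\|x\|_X$ using the smallness of $\rho(\Omega)$ and the $q$-concavity estimate. This is genuinely delicate because $\omega$ can be unbounded, so the exponential must be truncated/regularized and the error absorbed into the ``bounded part'' of the equivalence; the quantitative dependence of $c$ on the concavity constant comes precisely from making this truncation argument close. A secondary technical point is verifying the cocycle/centralizer identity survives the passage to the boundary traces so that $X_0,X_1$ are honest K\"othe function spaces (complete, with the Fatou-type property needed for the Calder\'on--Lozanovskii calculus); this is routine but must be done with the $L_\infty$-module structure in hand, as recorded in the discussion preceding the theorem.
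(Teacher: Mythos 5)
First, a point of context: the paper does not prove this theorem at all --- it is quoted verbatim as Kalton's result \cite[Theorem 7.6]{kaltdiff}, and the only argument the paper supplies in this vicinity is the proof of Proposition \ref{kaltonunique}, which extracts the uniqueness statement by running the first steps of Kalton's uniqueness argument (indicator functions $\Phi_{Y_0},\Phi_{Y_1}$ on a strict semi-ideal, then \cite[Proposition 4.5]{kaltdiff}). So there is no in-paper proof to match; what you have written is an outline of Kalton's own strategy, and it should be judged as such.

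As an outline it points in the right direction, but the two steps you flag as ``main obstacles'' are not side issues --- they \emph{are} the theorem, and your proposal does not overcome them. For the existence part, your definition of $\|x\|_{X_0}$ as an infimum over an auxiliary space $\mathcal H_0$ ``built so that evaluation at $1/2$ recovers $X$ and differentiation at $1/2$ recovers $\Omega$'' is circular: constructing such an $\mathcal H_0$ is exactly equivalent to producing the couple $(X_0,X_1)$, which is what is to be proved. Kalton does not proceed by boundary traces of the exponentials $x\exp((z-\tfrac12)\Omega(x)/x)$; he first passes from $\Omega$ to a \emph{derivation} and an associated convex ``indicator function'' $\Phi^{\Omega}$ defined on a strict semi-ideal, sets $\Phi_{X_j}=\Phi_X\mp\tfrac12\Phi^{\Omega}$, and then must show these functionals are the indicators of genuine K\"othe function spaces --- this is precisely where $\rho(\Omega)\le c$ and the $q$-concave renorming are used, and none of that calculus appears in your sketch. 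For the uniqueness part, your claim that $\Omega(x)=x\log\frac{a_0(x)}{a_1(x)}$ together with $|x|=a_0(x)^{1/2}a_1(x)^{1/2}$ ``pins down $a_0,a_1$ pointwise'' ignores that $\Omega$ is only given up to a \emph{bounded} perturbation, so the factorization data are only determined up to multiplicative factors $e^{b(x)}$ with $b$ bounded in $X$ but depending on $x$; converting that into ``the lattice norms of $Y_0$ and $X_0$ are equivalent'' requires the equivalence of indicator functions on a semi-ideal plus \cite[Proposition 4.5]{kaltdiff}, which is the route the paper's Proposition \ref{kaltonunique} actually takes. In short: right roadmap, but the proposal leaves unproved exactly the analytic and lattice-theoretic core of Kalton's argument.
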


An example is in order: taking the couple $(\ell_1, \ell_\infty)$, the map $B(x) = x^{2(1-z)}$
is a homogeneous bounded selection for the evaluation map $\delta_{1/2}: \mathcal H\to \ell_2$;
hence the interpolation procedure yields the centralizer $-2\mathscr K$;
while the couple $(\ell_p, \ell_{p^*})$ yields $-2(\frac{1}{p} -\frac{1}{p^*})\mathscr K$.
As we see the two centralizers are the same up to the scalar factor.
Theorem \ref{kalton} shows however that the scalar factor cannot be overlooked since it actually
determines the end points $X_0, X_1$ in the interpolation scale. See the general situation in Proposition \ref{reitera}.\\

We note for future use that the condition on $\rho(\Omega)$, which is necessary for existence,
is not necessary for uniqueness; thus, uniqueness may be stated as follows:

\begin{prop}\label{kaltonunique}
Let $X$ be a separable superreflexive K\"othe function space.
Assume that $X=(X_0,X_1)_\theta=(Y_0,Y_1)_\theta$, where $0<\theta<1$ and $X_i,Y_i$ are K\"othe
function spaces, and that the associated maps $\Omega_X$ and $\Omega_Y$ are boundedly equivalent.
Then $X_0=Y_0$ and $X_1=Y_1$.
\end{prop}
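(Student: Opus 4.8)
The plan is to reduce Proposition \ref{kaltonunique} to the $\theta=1/2$ case already settled by Theorem \ref{kalton}, using the standard reparametrization of a complex interpolation scale. First I would recall that for a couple $(X_0,X_1)$ and $0<\theta<1$, one can re-coordinatize the strip by an affine map sending $\{0,1\}$ to $\{0,1\}$ but $\theta$ to $1/2$; more precisely, one shows $(X_0,X_1)_\theta=(X_0,X_1)_{1/2}'$ where the primed scale is built from a couple $(\tilde X_0,\tilde X_1)$ obtained from $(X_0,X_1)$ by the reiteration/dilation formulas $\tilde X_0=(X_0,X_1)_{\theta_0}$, $\tilde X_1=(X_0,X_1)_{\theta_1}$ with $\theta=(\theta_0+\theta_1)/2$ (Wolff/reiteration theorem, valid for Köthe function spaces). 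The key point to verify is how the induced centralizer transforms: if $f\in\mathcal H(X_0,X_1)$ and $g(z)=f(\varphi(z))$ with $\varphi$ the affine change of variable taking $1/2\mapsto\theta$, then $g\in\mathcal H(\tilde X_0,\tilde X_1)$ and $g'(1/2)=\varphi'(1/2)f'(\theta)$, $g(1/2)=f(\theta)$. Consequently the $\Omega$-operators satisfy $\Omega_{1/2}^{(\tilde X)}=\varphi'(1/2)\,\Omega_\theta^{(X)}$ up to a bounded perturbation, i.e. they are boundedly equivalent after a fixed nonzero scalar multiple; since $\varphi'(1/2)$ is a nonzero real constant depending only on $\theta$, bounded equivalence of $\Omega_X$ and $\Omega_Y$ is preserved.

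With that dictionary in hand, here is the argument. Suppose $X=(X_0,X_1)_\theta=(Y_0,Y_1)_\theta$ with the associated centralizers $\Omega_X\equiv_b\Omega_Y$ (bounded equivalence). Apply the reparametrization to both couples to get couples $(\tilde X_0,\tilde X_1)$ and $(\tilde Y_0,\tilde Y_1)$ of Köthe function spaces with $X=(\tilde X_0,\tilde X_1)_{1/2}=(\tilde Y_0,\tilde Y_1)_{1/2}$ and induced centralizers $\tilde\Omega_X=c\,\Omega_X+(\text{bounded})$, $\tilde\Omega_Y=c\,\Omega_Y+(\text{bounded})$ for the same scalar $c=\varphi'(1/2)\neq 0$; hence $\tilde\Omega_X-\tilde\Omega_Y=c(\Omega_X-\Omega_Y)+(\text{bounded})$ is bounded, so $\tilde\Omega_X$ and $\tilde\Omega_Y$ are boundedly equivalent. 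Now each of $\tilde\Omega_X,\tilde\Omega_Y$ is the interpolation centralizer $\Omega_{1/2}$ of its scale, and $\tilde\Omega_X\equiv_b\tilde\Omega_Y$ means in particular they are boundedly equivalent to a common centralizer $\Omega_{1/2}$ on $X$. By the uniqueness clause (2) of Theorem \ref{kalton} — which asserts that the endpoint spaces of a $\theta=1/2$ scale realizing a centralizer are determined up to equivalent renorming, and whose proof (as remarked in the text) does not use the smallness hypothesis $\rho(\Omega)\le c$ — we conclude $\tilde X_0=\tilde Y_0$ and $\tilde X_1=\tilde Y_1$ up to renorming. Finally, run the reparametrization backwards: the original endpoints $X_i$ (resp.\ $Y_i$) are recovered from $\tilde X_i$ (resp.\ $\tilde Y_i$) by the inverse reiteration formulas, which are the same formulas applied to equal inputs, so $X_0=Y_0$ and $X_1=Y_1$.

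The main obstacle is making the claim ``uniqueness in Theorem \ref{kalton} does not require $\rho(\Omega)$ small'' precise and honest. Existence genuinely needs the smallness (that is where one builds analytic selectors), but uniqueness is a statement about two given scales that already realize the same centralizer; one has to point to the exact place in Kalton's argument in \cite{kaltdiff} where the scale is reconstructed from $\Omega$ alone (via the Lozanovskii-type factorization $|x|=a_0(x)^{1-\theta}a_1(x)^\theta$ read off from $\Omega$, cf.\ the overall form $\Omega_\theta(x)=x\log(a_0(x)/a_1(x))$ recalled in Section \ref{sect:centralizers}) and check that this reconstruction uses only bounded equivalence, not the size of $\rho$. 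A secondary, more technical point is checking the reparametrization lemma for Köthe function spaces rather than general Banach couples — but this is standard: the reiteration theorem for the complex method holds for Banach couples, and the Köthe/$L_\infty$-module structure is transported along the change of variable since it acts pointwise in $z$. I would state the reparametrization as a short lemma, defer its routine proof, and spend the bulk of the argument on the bounded-equivalence bookkeeping and the citation of uniqueness.
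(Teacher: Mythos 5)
There is a genuine gap, and it sits exactly where you flag ``the main obstacle''. Your plan reduces the general-$\theta$ statement to the $\theta=1/2$ case and then invokes ``the uniqueness clause of Theorem \ref{kalton}, whose proof does not use the smallness hypothesis $\rho(\Omega)\le c$''. But that clause, as stated, carries the hypothesis $\rho(\Omega)\le c$, and the assertion that uniqueness survives without it is precisely the content of Proposition \ref{kaltonunique} (at $\theta=1/2$) --- it is what has to be proved, not something that can be cited. So after the reparametrization you are left with the original problem, merely relocated to $\theta=1/2$, and you defer its resolution. The paper's proof consists exactly of carrying out the inspection you postpone: one extracts from Kalton's argument in \cite{kaltdiff} that bounded equivalence of $\Omega_X$ and $\Omega_Y$ forces bounded equivalence of the derived maps $\Omega_X^{[1]}$, $\Omega_Y^{[1]}$, whence on a suitable strict semi-ideal the indicator functions satisfy $\Phi^{\Omega_X}\sim\Phi_{Y_1}-\Phi_{Y_0}$ and $(1-\theta)\Phi_{Y_0}+\theta\Phi_{Y_1}\sim\Phi_X$; these two relations determine $\Phi_{Y_0}$ and $\Phi_{Y_1}$ up to equivalence, and \cite[Proposition 4.5]{kaltdiff} then recovers the spaces. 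Note that this argument works for arbitrary $\theta$ directly, so the detour through $\theta=1/2$ buys nothing.

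A second, related flaw is the final step ``run the reparametrization backwards''. Reiteration produces interior spaces from endpoints; recovering $X_0,X_1$ from $\tilde X_0=(X_0,X_1)_{\theta_0}$ and $\tilde X_1=(X_0,X_1)_{\theta_1}$ is extrapolation, for which there is no ``inverse formula'' --- it is itself a uniqueness assertion of the same nature as the one being proved. To pass from $\tilde X_i=\tilde Y_i$ to $X_i=Y_i$ one must solve the linear system $(1-\theta_j)\Phi_{X_0}+\theta_j\Phi_{X_1}=(1-\theta_j)\Phi_{Y_0}+\theta_j\Phi_{Y_1}$, $j=0,1$, in the indicator-function calculus and then apply \cite[Proposition 4.5]{kaltdiff} again; that is, you need the very machinery the paper's proof uses, at which point the whole reduction is superfluous. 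The one ingredient of your proposal that is sound is the transformation law $\Omega'_{1/2}=(\theta_1-\theta_0)\Omega_\theta$ for the centralizer under reiteration, which is Proposition \ref{reitera} of the paper; but by itself it does not close either gap.
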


\begin{proof}
We follow Kalton's notation and the first steps of the proof of uniqueness in Kalton's
theorem \cite[Theorem 7.6]{kaltdiff}, which is written in the case $\theta=1/2$.
Since $\Omega_X$ and $\Omega_Y$ are boundedly equivalent, $\Omega_X^{[1]}$ and $\Omega_Y^{[1]}$
are boundedly equivalent.
Hence on a suitable strict semi-ideal, $\Phi^{\Omega_X}$ is equivalent to $\Phi_{Y_1}-\Phi_{Y_0}$,
while $(1-\theta) \Phi_{Y_0}+\theta \Phi_{Y_1}$ is equivalent to $\Phi_X$.
Thus, up to equivalence $\Phi_{Y_0}$ and $\Phi_{Y_1}$ are uniquely determined.
\cite[Proposition 4.5]{kaltdiff} shows then that the spaces $Y_0$ and $Y_1$ are unique up to
equivalence of norm.
\end{proof}




\subsection{Centralizers and Lozanovskii's decomposition}
\label{subsect:Lozano}\

Here we follow Kalton (see \cite[formula (3.2)]{kaltdiff}) to obtain a formula for the centralizer $\Omega_\theta$ corresponding to the interpolation of a couple of K\"othe function spaces $(X_0,X_1)$. Let $0<\theta<1$, and suppose that one of the spaces $X_0$, $X_1$ has the Radon-Nikodym property.
The Lozanovskii decomposition formula allows us to show (see \cite[Theorem 4.6]{kaltonmontgomery})
that the complex interpolation space $X_\theta$ is isometric to the space $X_0^{1-\theta} X_1^\theta$,
with
$$
\|x\|_\theta=\inf \{\|y\|_0^{1-\theta}\|z\|_1^{\theta}: y\in X_0, z\in X_1,
|x|=|y|^{1-\theta}|z|^{\theta}\}.
$$
By homogeneity we may always assume that $\|y\|_0=\|z\|_1$ for $y,z$ in this infimum.
When $\|y\|_0, \|z\|_1 \leq K\|x\|_\theta$ we shall say that
$|x|=|y|^{1-\theta}|z|^{\theta}$ is a {\em $K$-optimal decomposition} for $x$.
When $x$ is finitely supported or $X$ is uniformly convex a $1$-optimal (or simply, optimal)
decomposition may be achieved. A simple choice of $B_\theta(x)$ can be made for positive $x$
as follows:
Let $a_0(x),a_1(x)$ be a $(1+\epsilon)$-optimal (or optimal when possible) Lozanovskii
decomposition for $x$.
Since $\|x\|_\theta = \|a_0(x)\|_0=\|a_1(x)\|_1$, set $B_\theta(x)\in \mathcal H$ given
by $B_\theta(x)(z) = |a_0(x)|^{1-z} |a_1(x)|^z$.
One thus gets for positive $x$ the formula:
$$
\Omega_\theta(x) = \delta_\theta'B_\theta(x) = |a_0(x)|^{1-\theta} |a_1(x)|^{\theta}
\log \frac{|a_1(x)|}{|a_0(x)|}x =x\, \log \frac{|a_1(x)|}{|a_0(x)|}.
$$

Using $B_\theta(x)=({\rm sgn}\ x) B_\theta(|x|)$ for general $x$ one still gets
\begin{equation}\label{kpgeneralized}
\Omega_\theta(x)  = x\, \log \frac{|a_1(x)|}{|a_0(x)|}.
\end{equation}

Recall that a \emph{unit} $u$ in $L_\infty$ is an element which only takes the values $\pm 1$.
Thus one has:

\begin{lemma}\label{addit-prop}
The centralizer $\Omega_\theta=\delta_\theta'B_\theta$ on $X_\theta=(X_0,X_1)_\theta$
verifies:
 \begin{enumerate}
\item $\Omega_\theta(ux) = u\Omega_\theta(x)$ for every unit $u$ and $x \in X_\theta$;
\item $\supp \Omega_\theta(x)\subset \supp x$ for every $x \in X_\theta$;
\item when $X_0$ and $X_1$ are spaces with a normalized $1$-unconditional basis $(e_n)$,
$\Omega_\theta(e_n)=0$ for all $n$.
\end{enumerate}
\end{lemma}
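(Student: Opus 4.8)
The plan is to read off all three statements directly from the explicit formula (\ref{kpgeneralized}), $\Omega_\theta(x)=\delta_\theta'B_\theta(x)=x\,\log\frac{|a_1(x)|}{|a_0(x)|}$, once the freedom in the choice of the Lozanovskii decompositions is normalized. Concretely, I would fix once and for all, for each $x\in X_\theta$, a decomposition $|x|=|a_0(x)|^{1-\theta}|a_1(x)|^\theta$ with $a_0(x)\in X_0$, $a_1(x)\in X_1$ and $\|a_0(x)\|_0=\|a_1(x)\|_1=\|x\|_\theta$ (taken $1$-optimal when $x$ is finitely supported, $(1+\epsilon)$-optimal otherwise), chosen moreover so that $a_0(x)$ and $a_1(x)$ are supported in $\supp x$. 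This last requirement is harmless: truncating any decomposition to $\supp x$ preserves the identity $|x|=|a_0|^{1-\theta}|a_1|^\theta$ and, the norms of $X_0$ and $X_1$ being monotone, does not increase $\|a_0\|_0$ or $\|a_1\|_1$. Recall $B_\theta(x)(z)=(\sgn x)\,|a_0(x)|^{1-z}|a_1(x)|^z$.

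For (1): if $u$ is a unit then $|ux|=|x|$, so one may take $a_i(ux)=a_i(x)$; then $B_\theta(ux)(z)=(\sgn(ux))\,|a_0(x)|^{1-z}|a_1(x)|^z=u\,B_\theta(x)(z)$ as elements of $\mathcal H$, since $u$ is a fixed element of $L_\infty$ independent of $z$. Applying $\delta_\theta'$, which commutes with multiplication by the $z$-independent factor $u$ (as $(u f)'(\theta)=u f'(\theta)$), gives $\Omega_\theta(ux)=u\Omega_\theta(x)$. For (2): on $\supp x$ one has $a_0(x)>0$ and $a_1(x)>0$ a.e.\ --- otherwise $|a_0(x)|^{1-\theta}|a_1(x)|^\theta$ would vanish on part of $\supp x$, contradicting $|x|=|a_0(x)|^{1-\theta}|a_1(x)|^\theta$ --- so $\log\frac{|a_1(x)|}{|a_0(x)|}$ is finite there, while off $\supp x$ one has $x=0$ and hence $\Omega_\theta(x)=x\log\frac{|a_1(x)|}{|a_0(x)|}=0$; thus $\supp\Omega_\theta(x)\subset\supp x$ (equivalently, $B_\theta(x)(z)$ is supported in $\supp x$ for every $z$, hence so is its pointwise $z$-derivative at $\theta$). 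For (3): viewing $X_0,X_1$ as sequence spaces with $e_n$ the $n$-th coordinate vector, $|e_n|=e_n$ is supported on $\{n\}$, so the convention forces $a_0(e_n)=c_0 e_n$ and $a_1(e_n)=c_1 e_n$ with $c_0,c_1\ge 0$; the identity at coordinate $n$ gives $c_0^{1-\theta}c_1^\theta=1$, and the normalization together with $\|e_n\|_0=\|e_n\|_1=1$ gives $c_0=c_1$, whence $c_0=c_1=1$ and $a_0(e_n)=a_1(e_n)=e_n$. Therefore $\Omega_\theta(e_n)=e_n\log\frac{|a_1(e_n)|}{|a_0(e_n)|}=e_n\log 1=0$ (equivalently, $B_\theta(e_n)(z)\equiv e_n$ is constant in $z$, so $\delta_\theta'B_\theta(e_n)=0$).

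I do not expect a genuine obstacle: all three assertions are immediate consequences of (\ref{kpgeneralized}). The only point demanding a little care is the bookkeeping around the selection $B_\theta$ --- namely that the Lozanovskii decompositions may be chosen supported in $\supp x$ and with $\|a_0(x)\|_0=\|a_1(x)\|_1$, and that for finitely supported $x$ (in particular $x=e_n$) they may be taken exactly optimal, as recorded in the discussion preceding the lemma. Once this is fixed, (1) is a computation with $\sgn$, (2) is the inheritance of support by the decomposition, and (3) is the observation that the optimal decomposition of a single coordinate vector is forced to be $(e_n,e_n)$.
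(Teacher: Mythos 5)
Your proof is correct and follows exactly the route the paper intends: Lemma \ref{addit-prop} is stated immediately after the derivation of formula (\ref{kpgeneralized}) as a direct consequence of it, and your argument simply makes explicit the bookkeeping (decompositions depending only on $|x|$, truncation to $\supp x$, and the forced optimal decomposition $a_0(e_n)=a_1(e_n)=e_n$) that the paper leaves implicit. The observation that $B_\theta(e_n)$ is constant in $z$, hence has vanishing derivative, is precisely the intended justification of item (3).
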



The Lozanovskii approach can be used to make explicit the Kalton correspondence between
centralizers and interpolation scales in some special cases.
Recall that the $p$-convexification of a K\"othe function space $X$ is defined by the norm
$\||x\||=\| |x|^p\|^{1/p}$.
Conversely, when $X$ is $p$-convex, the $p$-concavification of $X$ is given by
$\||x\||=\| |x|^{1/p}\|^p$.
Modulo the fact that every uniformly convex space may be renormed to be $p$-convex for some
$p>1$, the following proposition interprets Kalton-Peck maps defined on uniformly convex
spaces as induced by specific interpolation schemes.

\begin{prop}\label{spirit}
Let $0<\theta<1<p<\infty$, and let $X$ be a Banach space with $1$-unconditional basis
(respectively a K\"othe function space).
Then
$X_\theta = (\ell_\infty, X)_\theta$ (respectively $(L_\infty(\mu),X)_\theta$) is the
$\theta^{-1}$-convexification of $X$, and the induced centralizer on $X_\theta$ is
$$
\Omega(x)= \theta^{-1} \, x \log(|x|/||x||_\theta).
$$

Conversely if $X$ is $p$-convex and $X^{p}$ is the $p$-concavification of $X$ then
$X=(\ell_\infty,X^{p})_{1/p}$ (respectively $X=(L_\infty(\mu),X^p)_{1/p}$), and the
induced centralizer is defined on $X$ by
$$
\Omega(x)=p\, x \log(|x|/||x||).
$$
\end{prop}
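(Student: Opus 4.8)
The plan is to read off both assertions directly from the Lozanovskii formula (\ref{kpgeneralized}) of Subsection \ref{subsect:Lozano}, once the optimal decomposition for the couple at hand has been pinned down. I will run the argument for the couple $(\ell_\infty,X)$; the K\"othe function space case is literally the same with $L_\infty(\mu)$ in place of $\ell_\infty$ and $\mathbf 1_{\supp x}$ the characteristic function of the support of $x$.

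First I would identify $X_\theta=(\ell_\infty,X)_\theta$ with the $\theta^{-1}$-convexification $X^{(1/\theta)}$ (norm $\|x\|_{X^{(1/\theta)}}=\||x|^{1/\theta}\|_X^{\theta}$), isometrically. Fix $x$ and set $\lambda=\|x\|_{X^{(1/\theta)}}$; the candidate decomposition $|a_0(x)|=\lambda\,\mathbf 1_{\supp x}$, $|a_1(x)|=\lambda^{1-1/\theta}|x|^{1/\theta}$ satisfies $|a_0(x)|^{1-\theta}|a_1(x)|^{\theta}=|x|$ and $\|a_0(x)\|_\infty=\|a_1(x)\|_X=\lambda$. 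Its Lozanovskii selection is
$$
B_\theta(x)(z)=|a_0(x)|^{1-z}(\sgn x)|a_1(x)|^{z}=\lambda^{\,1-z/\theta}(\sgn x)|x|^{z/\theta},
$$
which is homogeneous with $B_\theta(x)(\theta)=x$, and a routine argument of the type underlying the Lozanovskii construction (using $\bigl||x|^{it/\theta}\bigr|\le 1$ and the lattice property of $X$) shows $B_\theta(x)\in\mathcal H(\ell_\infty,X)$ with $\|B_\theta(x)\|_{\mathcal H}=\lambda$; hence $B_\theta$ is a bounded homogeneous selection for $\delta_\theta$ and $\|x\|_\theta\le\|x\|_{X^{(1/\theta)}}$. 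For the reverse inequality I would note that the Calder\'on product $\ell_\infty^{1-\theta}X^{\theta}$ coincides with $X^{(1/\theta)}$ isometrically — given any $|x|=|y|^{1-\theta}|z|^{\theta}$ with $y\in\ell_\infty$, $z\in X$, one has $|x|^{1/\theta}=|y|^{(1-\theta)/\theta}|z|\le\|y\|_\infty^{(1-\theta)/\theta}|z|$ pointwise, so $\||x|^{1/\theta}\|_X^{\theta}\le\|y\|_\infty^{1-\theta}\|z\|_X^{\theta}$, with equality for the decomposition above — and then invoke Calder\'on's contractive inclusion $(\ell_\infty,X)_\theta\hookrightarrow \ell_\infty^{1-\theta}X^{\theta}$ (see also \cite{kaltonmontgomery}). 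Combining the two, $X_\theta=X^{(1/\theta)}$ isometrically; note that no Radon--Nikodym hypothesis enters here.

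The decomposition $(|a_0(x)|,|a_1(x)|)$ being $1$-optimal, formula (\ref{kpgeneralized}) then gives, for every $x$,
$$
\Omega_\theta(x)=x\log\frac{|a_1(x)|}{|a_0(x)|}=x\log\frac{\lambda^{1-1/\theta}|x|^{1/\theta}}{\lambda}=\theta^{-1}\,x\log\frac{|x|}{\lambda}=\theta^{-1}\,x\log\frac{|x|}{\|x\|_\theta},
$$
which is the first claim, and the same computation covers the function-space version. For the converse it suffices to observe, from the definitions preceding the statement, that $p$-convexification and $p$-concavification are mutually inverse: if $X$ is $p$-convex and $X^{p}$ denotes its $p$-concavification, then $\|x\|_{(X^{p})^{(p)}}=\||x|^{p}\|_{X^{p}}^{1/p}=\|(|x|^{p})^{1/p}\|_X=\|x\|_X$, so $(X^{p})^{(p)}=X$ isometrically and as sets. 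Applying the first part with $\theta=1/p$ and with $X^{p}$ in the role of the base space yields $(\ell_\infty,X^{p})_{1/p}=(X^{p})^{(p)}=X$ (respectively with $L_\infty(\mu)$), and the induced centralizer is $\theta^{-1}x\log(|x|/\|x\|)=p\,x\log(|x|/\|x\|_X)$, as asserted.

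The only genuinely non-formal point is the isometric identification $X_\theta=X^{(1/\theta)}$, and inside it the verification that $B_\theta(x)$ really belongs to $\mathcal H(\ell_\infty,X)$ — that is, that it is $\Sigma$-bounded, $\Sigma$-continuous up to $\partial\mathbb S$ and $\Sigma$-analytic on $\mathbb S^{\circ}$. I expect this to be the main place requiring care, but it is exactly the sort of check already implicit in the Lozanovskii selection recipe recalled in Subsection \ref{subsect:Lozano} (and in the worked $(\ell_1,\ell_\infty)$, $(\ell_p,\ell_{p^*})$ examples given there), so no new difficulty arises; everything else is bookkeeping with (\ref{kpgeneralized}) and Lemma \ref{addit-prop}.
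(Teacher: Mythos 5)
Your argument is correct and follows essentially the same route as the paper's: pin down the optimal Lozanovskii decomposition for the couple $(\ell_\infty,X)$ as having $a_0(x)$ constant on $\supp x$, read off that $X_\theta$ is the $\theta^{-1}$-convexification, apply formula (\ref{kpgeneralized}) to get $\Omega_\theta(x)=\theta^{-1}x\log(|x|/\|x\|_\theta)$, and obtain the converse by inverting convexification and concavification. The only divergence is that you justify the isometry $X_\theta=X^{(1/\theta)}$ directly (explicit analytic selection $B_\theta$ plus Calder\'on's contractive inclusion into the Calder\'on product) rather than citing the Lozanovskii factorization of Subsection \ref{subsect:Lozano}; this is if anything more careful, since the Radon--Nikodym hypothesis stated there is not obviously available for the couple $(\ell_\infty,X)$.
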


\begin{proof}
We write down the proof for unconditional basis, the other being analogous.
For normalized positive $x$ in $X_\theta$, write $x=a_0(x)^{1-\theta}a_1(x)^{\theta}$
and look for such a (normalized) decomposition which is optimal.
Since $a_0(x) \in \ell_\infty$,  we may assume that $a_0(x)$ has constant coefficients equal
to $1$  on the support of $x$: otherwise, we may increase the non $1$ coordinates
of $a_0(x)$ to $1$, therefore diminishing the corresponding coordinates of $a_1(x)$
and non-increasing the norm of $a_1(x)$ by  $1$-unconditionality, and still get
something optimal.
So $a_0(x)=1_{\rm supp}(x)$ and  $x=a_1(x)^\theta$.
Therefore
$\|x\|_\theta=\|a_1(x)\|^{\theta}=\|x^{1/\theta}\|^\theta$.
So $X_\theta$ is the $\theta^{-1}$-convexification of $X$ and
$$
\Omega_\theta(x)=x \log( a_1(x)/a_0(x) )= \frac{1}{\theta} \, x \log (x).
$$

As for the converse, note that when we interpolate $\ell_\infty$ and some $Y$ we have
$|a_1(x)|=|x|^p$ for $x$ normalized in $Y_\theta$, so if we interpolate $\ell_\infty$
and $Y=X^{(p)}$ we obtain for such $x$
$$
\|x\|_{Y_\theta}=1=\|a_1(x)\|_Y=\||x|^p\|_Y=\| (|x|^p)^\theta \|_X^p =\|x\|_X^p,
$$
therefore $X=Y_\theta=(\ell_\infty,X^{(p)})_\theta$.

The remaining part of the converse is an immediate consequence of the first part of
the proposition.
\end{proof}

As we announced before Theorem \ref{kalton}, we show now the dependence of the scalar factor with respect to different choices of endpoints in a given interpolation scale:

\begin{prop}\label{reitera}
Let $(X_0,X_1)$ be an admissible pair of K\"othe function spaces and for some
$0<\alpha<\beta<1$, consider also the admissible pair $(X_\alpha,X_\beta)$.
Let $\alpha<\theta<\beta$ so that one has $(X_0,X_1)_\theta = (X_\alpha,X_\beta)_\rho$
for some $0<\rho<1$.
Let $\Omega$ (resp. $\Omega'$) denote the centralizers generated by the couple $(X_0,X_1)$
(resp. $(X_\alpha,X_\beta)$).
Then $\Omega'_\rho =(\beta-\alpha)\Omega_\theta$.
\end{prop}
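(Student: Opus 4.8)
The plan is to reduce the identity to the reiteration theorem for the complex method, via the affine change of variable relating the two interpolation strips. By reiteration (\cite[Theorem 4.6.1]{Bergh-Lofstrom}; for K\"othe couples see also \cite[Theorem 4.6]{kaltonmontgomery}) one has $(X_\alpha,X_\beta)_\rho=X_{(1-\rho)\alpha+\rho\beta}$ isometrically, so the hypothesis $(X_0,X_1)_\theta=(X_\alpha,X_\beta)_\rho$ forces $\theta=(1-\rho)\alpha+\rho\beta$, i.e. $\rho=(\theta-\alpha)/(\beta-\alpha)$, and in particular $\|x\|_\theta=\|x\|_\rho$ for every $x$. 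Introduce the affine map $\phi(z)=(1-z)\alpha+z\beta=\alpha+(\beta-\alpha)z$; it carries the strip $\mathbb S$ into itself, sends $\{{\rm Re}\,z=0\}$ to $\{{\rm Re}\,z=\alpha\}$ and $\{{\rm Re}\,z=1\}$ to $\{{\rm Re}\,z=\beta\}$, satisfies $\phi(\rho)=\theta$, and has constant derivative $\phi'\equiv\beta-\alpha$.

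The key step is that composition with $\phi$ defines a contractive linear map $C_\phi\colon\mathcal H(X_0,X_1)\to\mathcal H(X_\alpha,X_\beta)$, $C_\phi f=f\circ\phi$, which commutes with the common $L_\infty$-module structure ($C_\phi(u\cdot f)=u\cdot C_\phi f$); this is essentially the content of reiteration, the point to verify being that $t\mapsto f(\alpha+(\beta-\alpha)it)$ is a bounded continuous $X_\alpha$-valued map with sup-norm $\le\|f\|_{\mathcal H}$ (and similarly on the line $\{{\rm Re}\,z=1\}$), which in turn follows from the three-lines bound $\|f(s+it)\|_{X_s}\le\|f\|_{\mathcal H}$, $0<s<1$. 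Granting this, if $B_\theta\colon X_\theta\to\mathcal H(X_0,X_1)$ is the bounded homogeneous selection for $\delta_\theta$ defining $\Omega_\theta=\delta_\theta'B_\theta$, then $B'_\rho:=C_\phi\circ B_\theta\colon X_\theta\to\mathcal H(X_\alpha,X_\beta)$ is a bounded homogeneous selection for $\delta_\rho$, because $(B'_\rho x)(\rho)=(B_\theta x)(\phi(\rho))=(B_\theta x)(\theta)=x$. Using $B'_\rho$ to define $\Omega'_\rho$, the chain rule gives
\[
\Omega'_\rho(x)=\delta_\rho'(B'_\rho x)=\big((B_\theta x)\circ\phi\big)'(\rho)=\phi'(\rho)\,(B_\theta x)'(\phi(\rho))=(\beta-\alpha)\,\delta_\theta'(B_\theta x)=(\beta-\alpha)\,\Omega_\theta(x),
\]
and since any two admissible selections yield boundedly equivalent centralizers, the identity persists for the canonical Lozanovskii selections of the two couples.

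Concretely, when $X_\theta$ is identified isometrically with the Calder\'on product $X_0^{1-\theta}X_1^\theta$ (as under the Radon--Nikodym hypothesis of this subsection), the same computation can be read off from (\ref{kpgeneralized}): starting from the chosen $(1+\epsilon)$-optimal factorization $|x|=a_0(x)^{1-\theta}a_1(x)^{\theta}$ for $(X_0,X_1)$, the functions $b_0(x):=a_0(x)^{1-\alpha}a_1(x)^{\alpha}\in X_\alpha$ and $b_1(x):=a_0(x)^{1-\beta}a_1(x)^{\beta}\in X_\beta$ satisfy $b_0(x)^{1-\rho}b_1(x)^{\rho}=|x|$ (using $\theta=(1-\rho)\alpha+\rho\beta$ and $1-\theta=(1-\rho)(1-\alpha)+\rho(1-\beta)$) together with $\|b_0(x)\|_\alpha,\|b_1(x)\|_\beta\le(1+\epsilon)\|x\|_\theta=(1+\epsilon)\|x\|_\rho$, hence constitute a $(1+\epsilon)$-optimal factorization of $x$ for $(X_\alpha,X_\beta)$; then (\ref{kpgeneralized}) applied to both couples yields
\[
\Omega'_\rho(x)=x\,\log\frac{b_1(x)}{b_0(x)}=x\,\log\!\big(a_0(x)^{\alpha-\beta}a_1(x)^{\beta-\alpha}\big)=(\beta-\alpha)\,x\,\log\frac{a_1(x)}{a_0(x)}=(\beta-\alpha)\,\Omega_\theta(x).
\]
The only genuine obstacle is the first, soft step: the isometric reiteration identity and the attendant boundedness of $C_\phi$ on the $\mathcal H$-spaces (equivalently, the isometry $X_s=X_0^{1-s}X_1^{s}$ of the intermediate K\"othe spaces). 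These are classical; once in place, the rest is the elementary exponent bookkeeping displayed above.
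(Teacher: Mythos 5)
Your proposal is correct, and its second half --- passing to the Lozanovskii factorization $|x|=a_0(x)^{1-\theta}a_1(x)^{\theta}$, observing that $b_0=a_0^{1-\alpha}a_1^{\alpha}$, $b_1=a_0^{1-\beta}a_1^{\beta}$ give an (almost) optimal factorization for the couple $(X_\alpha,X_\beta)$ because $1-\theta=(1-\rho)(1-\alpha)+\rho(1-\beta)$ and $\theta=(1-\rho)\alpha+\rho\beta$, and then reading off the factor $\beta-\alpha$ from the logarithm --- is precisely the paper's proof. Your first half, composing selections with the affine reparametrization $\phi(z)=\alpha+(\beta-\alpha)z$ of the strip and applying the chain rule, is a genuinely different and more structural route: it explains where the derivative factor $\phi'\equiv\beta-\alpha$ comes from without any exponent bookkeeping, and it would generalize to interpolation schemes where no Lozanovskii representation is available. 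Its costs are that it leans on the (isometric) reiteration theorem to know that $C_\phi$ maps $\mathcal H(X_0,X_1)$ into $\mathcal H(X_\alpha,X_\beta)$, and that, as you concede, it only yields the identity for the transported selection $C_\phi\circ B_\theta$, hence only bounded equivalence for the canonical selections; the exact scalar identity asserted in the statement is delivered by your second computation, i.e.\ by the paper's argument.
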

\begin{proof}
It is easy to check (see \cite[Theorem 4.5]{kaltonmontgomery}) that $\rho$ is given by
$\alpha(1-\rho)+\beta \rho=\theta$.
Let us consider the centralizers
$$
\Omega_\theta(x)  = x\, \log \frac{|a_1(x)|}{|a_0(x)|}\quad \textrm{ and }\quad
\Omega'_\rho(x)  = x\, \log \frac{|a_\beta(x)|}{|a_\alpha(x)|}.
$$
Since $x=a_0(x)^{1-\theta}a_1(x)^\theta$, $1-\theta=(1-\alpha)(1-\rho)+(1-\beta)\rho$ and
$\theta=\alpha(1-\rho)+\beta\rho$ we get
$$
x=\big(a_0(x)^{1-\alpha}a_1(x)^\alpha\big)^{1-\rho}\big(a_0(x)^{1-\beta}a_1(x)^{\beta}\big)^\rho.
$$
Thus taking $a_\alpha(x)=a_0(x)^{1-\alpha}a_1(x)^\alpha$ and $a_\beta(x)=a_0(x)^{1-\beta}a_1(x)^\beta$
it is not difficult to check that the minimality of $x=a_0(x)^{1-\theta}a_1(x)^\theta$ implies the
minimality of $x=a_\alpha(x)^{1-\rho}a_\beta(x)^\rho$, and the equality
$\Omega'_\rho(x) =(\beta-\alpha)\Omega_\theta(x)$
follows from the properties of the logarithm function.
\end{proof}

\subsection{The case of Orlicz function spaces} We now describe the centralizers associated to Orlicz function spaces over a measure space $(\Sigma,\mu)$.
Recall that an \emph{$N$-function} is a map $\varphi:[0,\infty)\ra[0,\infty)$ which is strictly
increasing, continuous, $\varphi(0)=0$, $\varphi(t)/t\ra 0$ as $t\ra 0$, and $\varphi(t)/t\ra \infty$
as $t\ra \infty$.
An $N$-function $\varphi$ satisfies the \emph{$\Delta_2$-property} if there exists a number $C>0$
such that $\varphi(2t)\leq C\varphi(t)$ for all $t\geq 0$.
For $1<p<\infty$, $\varphi(t)=t^p$ is $N$-function satisfying the $\Delta_2$-property.

When an $N$-function $\varphi$ satisfies the $\Delta_2$-property, the \emph{Orlicz space}
$L_\varphi(\mu)$ is given by
$$
L_\varphi(\mu) = \{f\in L_0(\mu) : \varphi(|f|)\in L_1(\mu)\}.
$$
with the norm
$$
\|f\|=\inf \{r>0 : \int \varphi(|f|/r) d\mu \leq 1\}
$$
The following result was proved in \cite{Gustavsson}, and a clear exposition can be found
in \cite{CFMMN}.

\begin{prop}\label{interp-Orlicz}
Let $\varphi_0$ and $\varphi_1$ be two $N$-functions satisfying the $\Delta_2$-property, and
let $0<\theta<1$.
Then the formula
$\varphi^{-1} = \big(\varphi_0^{-1}\big)^{1-\theta} \big(\varphi_1^{-1}\big)^\theta$
defines an $N$-function $\varphi$ satisfying the $\Delta_2$-property, and
$\big(L_{\varphi_0}(\mu), L_{\varphi_1}(\mu)\big)_\theta = L_\varphi(\mu)$.
\end{prop}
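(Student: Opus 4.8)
The plan is to treat the two assertions separately: first that $\varphi$ is a well-defined $N$-function enjoying the $\Delta_2$-property, and then the interpolation identity, for which I would lean on the Calder\'on--Lozanovskii machinery already recalled in Section~\ref{sect:centralizers}.

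For the first part, I would start from the observation that, since each $\varphi_i$ is convex, strictly increasing and vanishes at $0$, its inverse $\varphi_i^{-1}$ is a continuous, strictly increasing, \emph{concave} bijection of $[0,\infty)$ onto itself. The map $(x,y)\mapsto x^{1-\theta}y^\theta$ being concave and coordinatewise nondecreasing on $[0,\infty)^2$, its composition with $t\mapsto(\varphi_0^{-1}(t),\varphi_1^{-1}(t))$ --- which is exactly $\varphi^{-1}$ --- is again concave and strictly increasing, and it is a bijection of $[0,\infty)$ because both factors tend to $\infty$. Hence $\varphi:=(\varphi^{-1})^{-1}$ is well defined, continuous, strictly increasing and convex with $\varphi(0)=0$. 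The boundary growth conditions transfer to the inverse side: $\varphi_i(t)/t\to 0$ (resp.\ $\to\infty$) as $t\to 0$ (resp.\ $t\to\infty$) amounts to $\varphi_i^{-1}(s)/s\to\infty$ (resp.\ $\to 0$), and then $\varphi^{-1}(s)/s=(\varphi_0^{-1}(s)/s)^{1-\theta}(\varphi_1^{-1}(s)/s)^\theta$ has the same limits, so $\varphi$ is an $N$-function. The $\Delta_2$-property is handled the same way: $\varphi_i(2t)\le C_i\varphi_i(t)$ is equivalent (put $s=\varphi_i(t)$ and apply $\varphi_i^{-1}$) to $\varphi_i^{-1}(C_is)\ge 2\varphi_i^{-1}(s)$ for all $s\ge 0$; with $C:=\max(C_0,C_1)$ and monotonicity one gets $\varphi^{-1}(Cs)=\varphi_0^{-1}(Cs)^{1-\theta}\varphi_1^{-1}(Cs)^\theta\ge 2\varphi^{-1}(s)$, which read backwards is $\varphi(2t)\le C\varphi(t)$.

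For the interpolation identity the plan is to reduce everything to the Calder\'on--Lozanovskii product. Since the $\Delta_2$-hypothesis makes each $L_{\varphi_i}(\mu)$ order-continuous, the simple functions supported on sets of finite measure are dense in $L_{\varphi_0}(\mu)$, in $L_{\varphi_1}(\mu)$ and in their intersection; by Calder\'on's theorem (the isometric identification of complex interpolation spaces with Calder\'on products, cf.\ Section~\ref{sect:centralizers} and \cite{kaltonmontgomery}) this gives $\big(L_{\varphi_0}(\mu),L_{\varphi_1}(\mu)\big)_\theta=L_{\varphi_0}(\mu)^{1-\theta}L_{\varphi_1}(\mu)^\theta$ isometrically, the right-hand side being the space of $f\in L_0$ with $|f|\le|g|^{1-\theta}|h|^\theta$ for some $g\in L_{\varphi_0}(\mu)$, $h\in L_{\varphi_1}(\mu)$, normed by the infimum of $\|g\|_{\varphi_0}^{1-\theta}\|h\|_{\varphi_1}^\theta$. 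It then remains to identify this product isometrically with $L_\varphi(\mu)$. One inclusion is explicit: if $\int\varphi(|f|)\,d\mu\le 1$, set $g=\varphi_0^{-1}(\varphi(|f|))$ and $h=\varphi_1^{-1}(\varphi(|f|))$ on $\supp f$ and $0$ elsewhere; then $g^{1-\theta}h^\theta=\varphi^{-1}(\varphi(|f|))=|f|$ while $\int\varphi_0(g)\,d\mu=\int\varphi_1(h)\,d\mu=\int\varphi(|f|)\,d\mu\le 1$, so by homogeneity $L_\varphi(\mu)$ embeds into the product with norm $\le 1$. (The same data produce directly an analytic function $F\in\mathcal H(L_{\varphi_0}(\mu),L_{\varphi_1}(\mu))$, $F(z)=(\sgn f)\,g^{1-z}h^{z}$, with $F(\theta)=f$ and $\|F(it)\|_{\varphi_0},\|F(1+it)\|_{\varphi_1}\le 1$, giving $L_\varphi(\mu)\hookrightarrow\big(L_{\varphi_0}(\mu),L_{\varphi_1}(\mu)\big)_\theta$ with norm $\le 1$ for simple $f$, hence for all $f$ by density.) For the reverse inclusion one must show that \emph{every} factorization $|f|\le g^{1-\theta}h^\theta$ obeys $\|f\|_{L_\varphi}\le\|g\|_{\varphi_0}^{1-\theta}\|h\|_{\varphi_1}^\theta$; this is the H\"older--Rogers inequality for Orlicz spaces, which I would prove by testing $f$ against $w\in L_{\varphi^\ast}(\mu)$, using the Lozanovskii factorization $|w|=|w_0|^{1-\theta}|w_1|^\theta$ relative to the conjugate couple $(\varphi_0^\ast,\varphi_1^\ast)$ (legitimate since $(\varphi^\ast)^{-1}$ is equivalent to $((\varphi_0^\ast)^{-1})^{1-\theta}((\varphi_1^\ast)^{-1})^\theta$) and applying the classical three-factor H\"older inequality to $\int|fw|=\int(g|w_0|)^{1-\theta}(h|w_1|)^\theta\le\big(\int g|w_0|\big)^{1-\theta}\big(\int h|w_1|\big)^\theta$.

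The routine part is the verification that $\varphi$ is an $N$-function with $\Delta_2$. The main obstacle is the reverse inclusion in the identification of the Calder\'on product with $L_\varphi$, i.e.\ the H\"older--Rogers inequality: there is no pointwise Young-type bound $\varphi(a^{1-\theta}b^\theta)\le(1-\theta)\varphi_0(a)+\theta\varphi_1(b)$ to integrate --- it already fails for $\varphi_0(t)=t^2$, $\varphi_1(t)=t^4$ --- so the norm estimate for the product has to be obtained globally, through the duality argument above (or, equivalently, through the Amemiya representation of the Orlicz norm). A smaller point deserving care is that, since $\varphi_0,\varphi_1$ need not be reflexive here, Calder\'on's identification should be justified through the density of simple functions rather than through the Radon--Nikodym version quoted in Section~\ref{sect:centralizers}.
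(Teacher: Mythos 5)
The paper does not actually prove this proposition: it is quoted from Gustavsson--Peetre \cite{Gustavsson}, with \cite{CFMMN} cited for an exposition, so there is no internal argument to compare yours with. As a self-contained proof, yours is essentially correct: the verification that $\varphi$ is an $N$-function with $\Delta_2$ is fine (the concavity step tacitly uses that $N$-functions are convex, which the paper's stated definition omits but clearly intends), the reduction to the Calder\'on product via order continuity of the $\Delta_2$-Orlicz spaces is the right way to avoid the Radon--Nikodym hypothesis quoted in Section~\ref{sect:centralizers}, and the duality route through the conjugate couple does close the remaining inclusion up to constants, which is all that ``$=$'' requires here. The one point worth flagging is that you have misdiagnosed the ``main obstacle''. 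You are right that the convex-combination Young inequality $\varphi(a^{1-\theta}b^\theta)\le(1-\theta)\varphi_0(a)+\theta\varphi_1(b)$ fails, but a weaker pointwise bound holds and suffices: setting $w=\max(\varphi_0(a),\varphi_1(b))$ and using only that $\varphi_0^{-1},\varphi_1^{-1}$ are nondecreasing,
$$
a^{1-\theta}b^\theta=\varphi_0^{-1}\big(\varphi_0(a)\big)^{1-\theta}\varphi_1^{-1}\big(\varphi_1(b)\big)^{\theta}\le\varphi_0^{-1}(w)^{1-\theta}\varphi_1^{-1}(w)^{\theta}=\varphi^{-1}(w),
$$
whence $\varphi(a^{1-\theta}b^\theta)\le\max\big(\varphi_0(a),\varphi_1(b)\big)\le\varphi_0(a)+\varphi_1(b)$. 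Integrating this for $|f|\le g^{1-\theta}h^\theta$ with $\int\varphi_0(g)\,d\mu\le1$ and $\int\varphi_1(h)\,d\mu\le1$ gives $\int\varphi(|f|)\,d\mu\le2$, hence $\|f\|_{L_\varphi}\le2$ by convexity of $\varphi$. This yields the hard inclusion of the Calder\'on product into $L_\varphi(\mu)$ in one line and makes the detour through $L_{\varphi^*}$, the conjugate-couple Lozanovskii factorization and the Orlicz--H\"older inequality unnecessary; it is essentially the original Gustavsson--Peetre argument.
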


Next we give an expression for the centralizer associated to a Hilbert space obtained by
complex interpolation of Orlicz spaces.
Note that once we have defined a centralizer $\Omega$ for normalized $0\leq f\in X$,
we can define $\Omega(0)=0$ and $\Omega(g)= g\cdot\Omega(|g|/\|g\|)$ for $0\neq g\in X$.

\begin{prop}\label{interp-Orlicz-central}
Let $\varphi_0$ and $\varphi_1$ be two $N$-functions satisfying the $\Delta_2$-property and
such that $t =\varphi_0^{-1}(t) \cdot \varphi_1^{-1}(t)$.
Then $\big(L_{\varphi_0}(\mu), L_{\varphi_1}(\mu)\big)_{1/2} = L_2(\mu)$ and the induced
centralizer is
$$
\Omega_{1/2}(f)= f\, \log \frac{\varphi_1^{-1}(f^2)}{\varphi_0^{-1}(f^2)} =
2 f\, \log \frac{\varphi_1^{-1}(f^2)}{f}\quad (0\leq f\in L_2(\mu), \|f\|_2=1).
$$
\end{prop}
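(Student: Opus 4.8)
The plan is to read off the space identification from Proposition \ref{interp-Orlicz} and then compute the induced centralizer directly from the Lozanovskii form of the $\Omega$-operator recorded in (\ref{kpgeneralized}).

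First, applying Proposition \ref{interp-Orlicz} with $\theta=1/2$, the resulting $N$-function $\varphi$ satisfies $\varphi^{-1}(t)=(\varphi_0^{-1}(t))^{1/2}(\varphi_1^{-1}(t))^{1/2}=(\varphi_0^{-1}(t)\,\varphi_1^{-1}(t))^{1/2}=t^{1/2}$, using the standing hypothesis $t=\varphi_0^{-1}(t)\,\varphi_1^{-1}(t)$; hence $\varphi(t)=t^2$ and $\big(L_{\varphi_0}(\mu),L_{\varphi_1}(\mu)\big)_{1/2}=L_2(\mu)$ isometrically.

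Next I would exhibit the optimal Lozanovskii decomposition at $\theta=1/2$ of a normalized positive $f\in L_2(\mu)$ with respect to the couple $(L_{\varphi_0},L_{\varphi_1})$. The candidate is $a_0(f)=\varphi_0^{-1}(f^2)$ and $a_1(f)=\varphi_1^{-1}(f^2)$: indeed $a_0(f)^{1/2}a_1(f)^{1/2}=(\varphi_0^{-1}(f^2)\varphi_1^{-1}(f^2))^{1/2}=(f^2)^{1/2}=f$, and evaluating the Luxemburg norm at $r=1$ gives $\int\varphi_0(\varphi_0^{-1}(f^2))\,d\mu=\int f^2\,d\mu=\|f\|_2^2=1$, so $\|a_0(f)\|_{\varphi_0}=1$, and symmetrically $\|a_1(f)\|_{\varphi_1}=1$. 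Since then $\|a_0(f)\|_{\varphi_0}^{1/2}\|a_1(f)\|_{\varphi_1}^{1/2}=1=\|f\|_{L_2}=\|f\|_{1/2}$, this decomposition attains the infimum, hence is optimal. Equivalently, and more self-containedly, one checks that $B_{1/2}(f)(z)=\varphi_0^{-1}(f^2)^{\,1-z}\varphi_1^{-1}(f^2)^{\,z}$ lies in $\mathcal H(L_{\varphi_0},L_{\varphi_1})$ and is a bounded homogeneous selection for $\delta_{1/2}$: it is analytic in $z$, has $\|\cdot\|_{\varphi_0}$-norm $1$ on $\{\operatorname{Re}z=0\}$ and $\|\cdot\|_{\varphi_1}$-norm $1$ on $\{\operatorname{Re}z=1\}$.

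Finally, differentiating, $\delta_{1/2}'B_{1/2}(f)=\partial_z\big|_{z=1/2}\,\varphi_0^{-1}(f^2)^{1-z}\varphi_1^{-1}(f^2)^{z}=f\,\log\frac{\varphi_1^{-1}(f^2)}{\varphi_0^{-1}(f^2)}$, which by (\ref{kpgeneralized}) is $\Omega_{1/2}(f)$. The second displayed expression then follows from $\varphi_0^{-1}(f^2)=f^2/\varphi_1^{-1}(f^2)$, whence $\varphi_1^{-1}(f^2)/\varphi_0^{-1}(f^2)=\big(\varphi_1^{-1}(f^2)/f\big)^2$ and the factor $2$ pulls out of the logarithm; extending to arbitrary $g$ by $\Omega_{1/2}(g)=g\,\Omega_{1/2}(|g|/\|g\|_2)$ as indicated before the statement completes the argument. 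The only point needing care is the applicability of formula (\ref{kpgeneralized}), which in its derivation used that one of the endpoint spaces has the Radon--Nikodym property; staying with the direct verification that $B_{1/2}$ is an admissible selection in $\mathcal H$ avoids invoking that hypothesis, so that is the route I would take.
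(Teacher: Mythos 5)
Your proof is correct and follows essentially the same route as the paper's: identify $\varphi(t)=t^2$ via Proposition \ref{interp-Orlicz}, take the selection $B_{1/2}(f)(z)=\varphi_0^{-1}(f^2)^{\,1-z}\varphi_1^{-1}(f^2)^{\,z}$, and differentiate at $z=1/2$. The only difference is that you explicitly verify the optimality of the Lozanovskii decomposition and the admissibility of $B_{1/2}$ (and the reduction of the second displayed formula), steps the paper leaves implicit.
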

\begin{proof}
First we consider the general case
$\varphi^{-1}:= \big(\varphi_0^{-1}\big)^{1-\theta} \big(\varphi_1^{-1}\big)^\theta$,
as in Proposition \ref{interp-Orlicz}.
For $0\leq f$ normalized in $L_\varphi(\mu)$  we can write
$f = \big(\varphi_0^{-1}\varphi(f)\big)^{1-\theta} \big(\varphi_1^{-1}\varphi(f)\big)^\theta$.
Thus a selection of the quotient map $\mathcal{H}\ra L_\varphi(\mu)$ is given by
$B_\theta(f)(z) = \big(\varphi_0^{-1}\varphi(f)\big)^{1-z} \big(\varphi_1^{-1}\varphi(f)\big)^z$.
Differentiating
$B_\theta(f)'(z) =B_\theta(f)(z)\, \log\frac{|\varphi_1^{-1}(\varphi(f))|}{|\varphi_0^{-1}\varphi(f)|}$,
hence
$\Omega_{1/2}(f)=B_{1/2}(f)'(1/2)=f\, \log\frac{|\varphi_1^{-1}(\varphi(f))|}{|\varphi_0^{-1}\varphi(f)|}$,
which gives the desired result when $\varphi(t)=t^2$.
\end{proof}

\subsection{Additional properties.} The properties of $\Omega_\theta$ obtained in Lemma \ref{addit-prop} will turn out essential
for our estimates, so they deserve a definition.

\adef
Let $X$ be a K\"othe function space.
A centralizer $\Omega: X\lop X$ is called \emph{exact} if for each $x\in X$ and every unit $u$
one has $\Omega(ux)=u\Omega x$.
It is called \emph{contractive} if $\supp \Omega(x)\subset \supp x$ for every $x\in X$.
\zdef

One has:

\begin{lemma}\label{propers}
Let $X$ be a K\"othe function space.
\begin{enumerate}
\item Every exact quasi-linear map on $X$ is contractive.
\item If $X$ is reflexive, then every exact trivial centralizer $\Omega$ on $X$
admits an exact linear map $\Lambda$ such that $\Omega-\Lambda$ is bounded.
\item If $X$ has unconditional basis $(e_n)$ and is reflexive, and if $\Omega$ is exact
and trivial on $X$, and satisfies $\Omega(e_n)=0$ for all $n$, then $\Omega$ is bounded.
\end{enumerate}
\end{lemma}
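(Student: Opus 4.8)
For part (1), the plan is to argue directly from the definition. Suppose $\Omega$ is an exact quasi-linear map on $X$ and fix $x\in X$; I want to show $\Omega(x)$ vanishes off $\supp x$. Let $A$ be a measurable set disjoint from $\supp x$ and let $u$ be the unit equal to $-1$ on $A$ and $+1$ elsewhere. Then $ux=x$ (since $u=1$ on $\supp x$), so by exactness $\Omega(x)=\Omega(ux)=u\Omega(x)$; hence $\Omega(x)=0$ on $A$. Since $A$ was an arbitrary measurable set disjoint from $\supp x$, we get $\supp\Omega(x)\subset\supp x$, i.e.\ $\Omega$ is contractive. (Strictly one needs that $\Omega(x)$, which a priori lies in $L_0$, is genuinely determined off $\supp x$; this is fine since the identity $\Omega(x)=u\Omega(x)$ holds pointwise a.e.)

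For part (2), assume $X$ reflexive and $\Omega$ an exact trivial centralizer, so $\Omega = L + B$ with $L:X\to L_0$ linear and $B:X\to X$ homogeneous bounded. The idea is to average $L$ over the action of units to produce an exact linear map. For a unit $u$, define $L^u(x) := u\,L(ux)$; this is again linear, and $\Omega - L^u = (L - L^u) + (\text{something})$ — more precisely, using exactness of $\Omega$, $\Omega(x) - L^u(x) = u(\Omega(ux)-L(ux)) = u\,B(ux)$, which is bounded uniformly in $u$ with $\|uB(ux)\|_X = \|B(ux)\|_X\le \|B\|\,\|x\|_X$. So \emph{every} $L^u$ differs from $\Omega$ by a uniformly bounded map. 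Now I want to select, or average, a single exact linear $\Lambda$ out of the family $\{L^u\}$. The natural move is a weak-type averaging: using reflexivity of $X$ (hence of $L(X)$-valued expressions after a suitable bounded correction), take $\Lambda(x)$ to be a limit of convex combinations / a barycenter of $\{L^u(x) : u\ \text{unit}\}$ with respect to a suitable invariant mean on the group of units, or a weak cluster point. Reflexivity guarantees these barycenters exist in $X$ (after subtracting the bounded part so the relevant quantities land in $X$, not merely $L_0$), and invariance of the averaging under the unit group forces $\Lambda(ux)=u\Lambda(x)$. One then checks $\Omega-\Lambda$ is bounded, being an average of the uniformly bounded maps $x\mapsto uB(ux)$.

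For part (3), assume additionally that $X$ has an unconditional basis $(e_n)$ and $\Omega(e_n)=0$ for all $n$. Apply part (2) to get an exact linear $\Lambda$ with $\Omega-\Lambda$ bounded; it suffices to show $\Lambda$ is bounded, equivalently (since $\Lambda$ is linear) that $\Lambda=0$ after discarding a bounded perturbation. Exactness of $\Lambda$ together with $\Omega(e_n)=0$ and boundedness of $\Omega-\Lambda$ gives $\|\Lambda(e_n)\|_X = \|(\Lambda-\Omega)(e_n)\|_X \le C$; but a contractive (by part (1), since exact) linear map with $\Lambda(e_n)$ supported on $\{n\}$ must have $\Lambda(e_n)=\lambda_n e_n$ for scalars $\lambda_n$, and the uniform bound forces $\sup_n|\lambda_n|<\infty$. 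By unconditionality the diagonal operator $\sum x_ne_n\mapsto\sum\lambda_n x_ne_n$ is then bounded on $X$, so $\Lambda$ restricted to finitely supported vectors is bounded, hence $\Lambda$ is bounded, hence $\Omega$ is bounded.

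The main obstacle is the averaging argument in part (2): one must make precise the invariant mean on the (non-locally-compact) multiplicative group of $L_\infty$-units and verify that the barycenter $\Lambda(x)$ is well defined, linear in $x$, and exact. The clean way is to fix, for each finitely supported $x$, a weak limit along an ultrafilter of averages over finite subgroups of units generated by sign flips on the (finitely many) atoms in $\supp x$ — reflexivity supplies weak compactness — and then extend by density using that $\Omega-L^u$ is uniformly bounded; coherence of these limits across different $x$ is what needs care, and is where the linearity of the underlying $L$ and the module structure do the work.
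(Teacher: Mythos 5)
Your proposal is correct and follows essentially the same route as the paper: part (1) is the same sign-flip argument, part (2) is exactly the paper's construction $\Lambda(x)=\Omega(x)-\int_U uB(ux)\,dm$ (your $L^u$ average), where the worry you raise about the invariant mean evaporates because the multiplicative group of units is abelian, hence amenable as a \emph{discrete} group, so a left-invariant finitely additive mean exists with no topology needed, and reflexivity then defines the $X$-valued integral weakly. The only step to tighten is the final ``hence $\Lambda$ is bounded'' in part (3): agreement with the bounded diagonal operator on finitely supported vectors does not by itself extend by density to an unbounded-a-priori linear map, but the contractivity of $\Lambda$ that you already invoked closes this, since for arbitrary $x$ the $n$-th coordinate of $\Lambda(x)=\Lambda(x-x_ne_n)+\lambda_nx_ne_n$ equals $\lambda_nx_n$, so $\Lambda$ coincides with the diagonal operator everywhere.
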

\begin{proof}
(1) Let $u \in L_\infty$ be the function with value $1$ on the support of $x$
and $-1$ elsewhere, then $ux=x$, therefore $u\Omega(x)=\Omega(ux)=\Omega(x)$ which means
that $\supp\Omega(x)$ is included in the support of $x$.

(2) Let $\Omega$ be a centralizer with constant $C$ and assume that it is trivial.
So some linear map $\ell: X \to L_0$ exists such that $B:=\Omega - \ell$ is bounded.
Let $U$ denote the abelian group of units in in $L_\infty$.
Then $U$ is amenable, so there exists a left invariant finitely additive mean $m$ on $U$
allowing to define for any bounded $f: U \to \R$ an integral $\int_U f(u) dm$.
Since $X$ is reflexive we may then define for
any bounded  $f: U \to X$ an element  $x=\int_U f(u) dm \in X$ in the natural way, i.e.
$$\phi(x)=\int_U \phi(f(u)) dm$$ for every $\phi \in X^*$.
One can therefore define a map $\Lambda: X\to  L_0$ as follows:
$$
\Lambda(x) =\Omega(x)-  \int_{U} u B(u x) dm.
$$
Then the homogeneous map $x \mapsto \int_{U} u B(u x) dm$ is bounded, and by exactness
of $\Omega$ and invariance of $m$, we have that $\Lambda$ is exact.
It is also easy to check that $\Lambda$ is linear.
Indeed, denoting by  $\Delta(x,y)$ the element $\Omega(x+y)-\Omega x -\Omega y
=B(x+y)-Bx-By \in X$, and observing that $\Delta(ux,uy)=u\Delta(x,y)$, we obtain
\begin{displaymath}
\begin{array}{ll}
\Omega(x+y)-\Lambda(x+y)&=
\int_{U} u B(ux+uy) dm  \\
&= \int_U u\Delta(ux,uy) dm
+\int_U uBux dm + \int_U uBuy dm \\
&=  \Delta(x,y)
+\Omega(x)-\Lambda(x)+\Omega(y)-\Lambda(y) \\
&=\Omega(x+y)-\Lambda(x)-\Lambda(y).
\end{array}
\end{displaymath}

(3) We claim that $\Lambda(x)=ax$ for all $x \in X$, where $\Lambda(e_n)=a_n e_n$.
Indeed
$$\Lambda(x)=\Lambda(x-x_n e_n)+ \Lambda(x_n e_n)=\Lambda(x-x_n e_n)+a_n x_n e_n$$
which, since $\Lambda(x-x_n e_n)$ has support disjoint from $n$, implies that
the $n$-th entry of $\Lambda(x)$ is $a_n x_n$.
Since $\Omega(e_n)=0$, $a_n e_n=-B(e_n)$, and therefore $(a_n)_n$
is a bounded sequence.
So unconditionality applies to make $\Lambda$  bounded.
Since $\Omega -\Lambda$ is also bounded, $\Omega$ is bounded.
\end{proof}

A reformulation of (3) will provide us in due time with a criterion to distinguish between
permutatively projectively equivalent centralizers:

\begin{cor}\label{prepara}
Let $\Omega$ and $\Psi$ be exact centralizers on a reflexive space $X$ with
$1$-unconditional basis $(e_n)$, and such that $\Omega(e_n)=\Psi(e_n)=0$ for all $n\in\N$.
If $\Omega$ and $\Lambda$ are equivalent then they are boundedly equivalent.
\end{cor}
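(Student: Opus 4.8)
The plan is to deduce the statement from part (3) of Lemma \ref{propers}, applied not to $\Omega$ itself but to the difference $\Omega-\Psi$ (I read the hypothesis as ``$\Omega$ and $\Psi$ are equivalent'', the symbol $\Lambda$ being a misprint for $\Psi$). The first step is to verify that $\Phi := \Omega-\Psi$ belongs to the same class of maps to which Lemma \ref{propers}(3) applies. Indeed, the difference of two centralizers is again a centralizer, since the defining constants add up; exactness passes to differences because $\Phi(ux)=\Omega(ux)-\Psi(ux)=u\Omega(x)-u\Psi(x)=u\Phi(x)$ for every unit $u$; and $\Phi(e_n)=\Omega(e_n)-\Psi(e_n)=0$ for all $n\in\N$.

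Next, equivalence of $\Omega$ and $\Psi$ means precisely that $\Phi=\Omega-\Psi$ can be written as $B+L$ with $B:X\to L_0$ homogeneous and bounded and $L:X\to L_0$ linear; in other words, $\Phi$ is a trivial centralizer in the sense recalled in Section \ref{sect:centralizers}. Since $X$ is reflexive and has a $1$-unconditional basis, all hypotheses of Lemma \ref{propers}(3) are met by $\Phi$, and the lemma gives that $\Phi$ is bounded. Unwinding the definition, boundedness of $\Omega-\Psi$ is exactly the assertion that $\Omega$ and $\Psi$ are boundedly equivalent, which completes the proof.

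I do not expect any genuine obstacle here: the entire content lies in Lemma \ref{propers}(3), and the corollary is merely the remark that its hypotheses (exactness, triviality, and vanishing on the basis vectors) are stable under subtracting the two given centralizers. The only point worth checking carefully is the harmless identification between ``trivial'' in the sense of equivalence of quasi-linear maps and ``trivial centralizer'' as used in the lemma; both simply mean ``differs from a linear map into $L_0$ by a bounded homogeneous map'', so the match is immediate from the definitions.
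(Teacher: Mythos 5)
Your proposal is correct and matches the paper's own proof, which likewise applies Lemma \ref{propers}(3) to the difference $\Omega-\Psi$ after noting that it is still an exact centralizer vanishing on the $e_n$ (and yes, $\Lambda$ in the statement is a misprint for $\Psi$). Your verification of the stability of exactness and the centralizer condition under subtraction is the same routine check the paper leaves implicit.
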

\begin{proof} $\Omega - \Lambda$ is still an exact centralizer vanishing on the $e_n$.
Thus, if it is trivial then it is bounded.
\end{proof}

Lemma \ref{propers} can be generalized for maps between two different modules.
We are interested in the particular  case in which one has to combine two related actions:
let $X$ be an $L_\infty$-Banach module and let $W\subset X$ be a subspace generated by
disjointly supported elements $W=[u_n]$.
Consider in this case the subspace $L_\infty^W\subset L_\infty$ formed by the elements
which are constant on the supports of all the $u_n$. Let $U_W$ be its group of units.
We say that a map $\Omega: W\to X$ is \emph{relatively exact} if $\Omega(ux)=u\Omega(x)$
for all $u\in U_W$ and $x \in W$, and we say that $\Omega$ is \emph{relatively contractive}
if ${\rm supp}_X \Omega(x) \subset {\rm supp}_X x$, for all  $x \in W$.
One has:

\begin{lemma}\label{propersing}
Let $X$ be a K\"othe function space, and let $W$ be a subspace of $X$ generated by
disjointly supported elements.
Then:
\begin{enumerate}
\item  If $\Omega: X\lop X$ is a exact centralizer then the restriction $\Omega_{|W}$
is relatively exact.
\item Every relatively exact map $W \lop X$ is relatively contractive.
\item Assume $X$ is reflexive. If some relatively exact $\Omega: W \lop X$  is trivial
then there exists a relatively exact linear map $\Lambda: W \rightarrow X$ such that
$\Omega-\Lambda$ is bounded.
\end{enumerate}
\end{lemma}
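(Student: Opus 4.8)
The plan is to mirror the proof of Lemma \ref{propers}, replacing the group $U$ of all units in $L_\infty$ by the group $U_W$ of units in $L_\infty^W$, and using that $U_W$ is still abelian, hence amenable. Statement (1) is immediate: if $u\in U_W$ then $u$ is in particular a unit in $L_\infty$, so $\Omega(ux)=u\Omega(x)$ by exactness of $\Omega$, and this is precisely relative exactness of $\Omega_{|W}$. For (2), given $x\in W$, let $u$ be the element of $L_\infty^W$ taking the value $1$ on $\supp_X x$ and $-1$ elsewhere (this is a legitimate element of $L_\infty^W$ because $\supp_X x$ is a union of supports of the generators $u_n$, as the $u_n$ are disjointly supported and $x$ lies in their closed span); then $ux=x$ and relative exactness gives $u\Omega(x)=\Omega(x)$, forcing $\supp_X \Omega(x)\subset \supp_X x$.

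For (3), suppose $\Omega:W\lop X$ is relatively exact and trivial, so there is a linear map $\ell:W\to L_0$ with $B:=\Omega-\ell$ bounded. Since $U_W$ is abelian it is amenable; fix a left-invariant finitely additive mean $m$ on $U_W$. Because $X$ is reflexive, for any bounded $f:U_W\to X$ the weak integral $\int_{U_W} f(u)\,dm \in X$ is well defined by $\phi(\int f(u)\,dm)=\int \phi(f(u))\,dm$ for $\phi\in X^*$; here boundedness of $u\mapsto uB(ux)$ follows from $\|uB(ux)\|_X=\|B(ux)\|_X\le C\|ux\|_X=C\|x\|_X$. Define
$$
\Lambda(x)=\Omega(x)-\int_{U_W} uB(ux)\,dm .
$$
The correction term $x\mapsto \int_{U_W} uB(ux)\,dm$ is homogeneous and bounded, so $\Omega-\Lambda$ is bounded. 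Relative exactness of $\Lambda$ follows exactly as in Lemma \ref{propers}(2): for $v\in U_W$, substituting and using left-invariance of $m$ together with $\Omega(vx)=v\Omega(x)$ gives $\Lambda(vx)=v\Lambda(x)$. Linearity is checked as in the display in the proof of Lemma \ref{propers}(2): writing $\Delta(x,y)=\Omega(x+y)-\Omega x-\Omega y=B(x+y)-Bx-By\in X$ and noting $\Delta(ux,uy)=u\Delta(x,y)$ for $u\in U_W$, one expands $\int_{U_W} uB(ux+uy)\,dm$ into $\int u\Delta(ux,uy)\,dm+\int uBux\,dm+\int uBuy\,dm=\Delta(x,y)+(\Omega x-\Lambda x)+(\Omega y-\Lambda y)$, whence $\Lambda(x+y)=\Lambda(x)+\Lambda(y)$.

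The only point requiring genuine care — and the main obstacle — is to make sure the amenability machinery runs in this relative setting: that $U_W$ really is a group (it is, since products and inverses of functions constant on each $\supp u_n$ are again of that form), that it is abelian (pointwise multiplication), and that the weak integral lands in $X$ rather than merely in $L_0$, which is where reflexivity of $X$ is used. Everything else is a verbatim transcription of the arguments already carried out for Lemma \ref{propers}, with $U$ replaced by $U_W$ and ``exact'' replaced by ``relatively exact'', so I would not repeat those computations in full but simply indicate the substitutions.
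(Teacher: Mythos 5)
Your proposal is correct and follows essentially the same route as the paper: (1) is immediate, (2) repeats the sign-flip argument of Lemma \ref{propers}(1) with the unit chosen in $U_W$, and (3) defines $\Lambda(x)=\Omega(x)-\int_{U_W}uB(ux)\,dm$ for a left-invariant mean $m$ on the abelian group $U_W$, exactly as in the paper. The extra details you supply (that the sign function in (2) genuinely lies in $L_\infty^W$ because $\supp_X x$ is a union of supports of the generators, and that reflexivity is what places the weak integral in $X$) are correct and only make explicit what the paper leaves to the reader.
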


\begin{proof}
Assertion (1) is obvious, (2) has the same proof as before.
For (3), assuming $\Omega=B+\ell$, where $B: W \rightarrow X$ is bounded and
$\ell: W \rightarrow L_0$ is linear, define for $x \in W$,
$$
\Lambda(x) =  \Omega(x)-\int_{U_W} u B(u x) dm,
$$
where $m$ is a left invariant finitely additive mean on $U_W$.
\end{proof}

\begin{lemma}$\;$\begin{enumerate}
\item Every centralizer $\Omega$ on a K\"othe function space admits a exact
centralizer $\omega$ such that $\Omega -\omega$ is bounded.

\item Every exact centralizer (resp. quasi-linear map) $\Omega$ between Banach spaces
with unconditional basis admits a exact centralizer (resp. quasi-linear map) $\omega$
such that $\omega(e_n)=0$ and $\Omega - \omega$ is linear and exact.

\item Every contractive centralizer (resp. quasi-linear map) $\Omega$ between K\"othe
function spaces admits, for every sequence $(f_n)$ of disjointly supported vectors,
a contractive centralizer (resp. quasi-linear map) $\omega$ such that $\omega(f_n)=0$
and $\Omega - \omega$ is linear and contractive.
\end{enumerate}
\end{lemma}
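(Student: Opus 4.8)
The three statements are all proved by subtracting from $\Omega$ a correction term (bounded in (1), linear in (2) and (3)) chosen to be a \emph{multiplier} with the required symmetry; the only real work is the reduction step in (1), the rest being the routine observation that subtracting an $L_\infty$-multiplier changes neither the centralizer constant (since $\Lambda(ax)=a\Lambda(x)$ forces $\omega(ax)-a\omega(x)=\Omega(ax)-a\Omega(x)$) nor quasi-linearity (when the correction is linear) nor homogeneity. So the plan is: for (1), first replace $\Omega$ by a boundedly equivalent \emph{contractive} centralizer and then symmetrize it with the sign function; for (2) and (3), read off the diagonal part of $\Omega$ and subtract it.

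For (1), let $\Omega$ be a centralizer on $X$ with constant $C$. The key point — which avoids any appeal to reflexivity or to an invariant mean, in contrast with Lemma \ref{propers}(2)--(3) — is that applying the centralizer inequality to the idempotent $a=\mathbf 1_{\supp x}$ (for which $ax=x$ and $\|a\|_\infty\le 1$) gives $\|\mathbf 1_{(\supp x)^c}\,\Omega(x)\|_X=\|\Omega(x)-a\Omega(x)\|_X\le C\|x\|_X$. Hence $B(x):=\mathbf 1_{(\supp x)^c}\,\Omega(x)$ is a bounded homogeneous map, and $\omega_1:=\Omega-B$, i.e.\ $\omega_1(x)=\mathbf 1_{\supp x}\,\Omega(x)$, is a \emph{contractive} centralizer boundedly equivalent to $\Omega$ (that it is still a centralizer follows from $\|B(ax)-aB(x)\|_X\le 2C\|a\|_\infty\|x\|_X$). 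Now symmetrize: put $\omega(x):=(\sgn x)\,\omega_1(|x|)$, $\omega(0)=0$. For a unit $u$ one has $|ux|=|x|$ and $\sgn(ux)-u\sgn(x)=(1-u)\mathbf 1_{(\supp x)^c}$, so $\omega(ux)-u\omega(x)=(1-u)\mathbf 1_{(\supp x)^c}\,\omega_1(|x|)=0$ because $\omega_1$ is contractive; thus $\omega$ is exact. Writing $x=(\sgn x)|x|$ and using the centralizer inequality for $\omega_1$ at $a=\sgn x$ gives $\|\omega_1(x)-\omega(x)\|_X\le C'\|x\|_X$, whence $\Omega-\omega$ is bounded; the same computation with a general $a\in L_\infty$ in place of a unit shows $\omega$ is again a centralizer.

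For (2), an exact quasi-linear map (resp.\ centralizer) is contractive by Lemma \ref{propers}(1), so $\Omega(e_n)$ is supported on $\{n\}$, say $\Omega(e_n)=c_ne_n$. Define the (generally unbounded) linear map $\Lambda(\sum_n x_ne_n)=\sum_n c_nx_ne_n$; it is an $\ell_\infty$-multiplier, hence exact and contractive, and it commutes with the $\ell_\infty$-action. Then $\omega:=\Omega-\Lambda$ is quasi-linear (resp.\ a centralizer, since $\omega(ax)-a\omega(x)=\Omega(ax)-a\Omega(x)$), exact as a difference of exact maps, and satisfies $\omega(e_n)=c_ne_n-c_ne_n=0$; and $\Omega-\omega=\Lambda$ is linear and exact, as required.

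For (3), set $E_n=\supp f_n$ (pairwise disjoint; discard any $f_n=0$). Contractiveness of $\Omega$ gives $\supp\Omega(f_n)\subset E_n$, so $\Omega(f_n)/f_n$ is a well-defined element of $L_0$ on $E_n$; let $a:=\sum_n(\Omega(f_n)/f_n)\mathbf 1_{E_n}\in L_0$ and $\Lambda(x):=ax$. Then $\Lambda$ is linear, contractive ($\supp(ax)\subset\supp x$), and $\Lambda(f_n)=af_n=\Omega(f_n)$ on $E_n$ and $0=f_n=\Omega(f_n)$ off $E_n$. Since $\Lambda$ is a multiplier, $\omega:=\Omega-\Lambda$ is again a contractive quasi-linear map (resp.\ centralizer, as $\omega(bx)-b\omega(x)=\Omega(bx)-b\Omega(x)$), with $\omega(f_n)=0$, while $\Omega-\omega=\Lambda$ is linear and contractive. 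The main obstacle, as indicated, is entirely concentrated in part (1) — getting to a contractive centralizer \emph{before} symmetrizing, via the estimate $\|\mathbf 1_{(\supp x)^c}\Omega(x)\|_X\le C\|x\|_X$; once contractiveness is in hand the sign-symmetrization and parts (2), (3) are bookkeeping.
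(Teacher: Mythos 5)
Your proof is correct and follows essentially the same route as the paper: parts (2) and (3) subtract exactly the same diagonal multiplier maps, and in part (1) your final map $\omega(x)=\sgn(x)\,\mathbf{1}_{\supp x}\,\Omega(|x|)$ coincides with the formula $\omega(x)=\|x\|\sgn(x)\Omega(|x|/\|x\|)$ that the paper records, citing Kalton's memoir for the verification. The only difference is that you supply in full the details the paper delegates to that citation — the preliminary reduction to a contractive centralizer via $\|\mathbf{1}_{(\supp x)^c}\Omega(x)\|\leq C\|x\|$ and the sign-symmetrization estimate with $a=\sgn x$ — which is a welcome but not essentially different elaboration.
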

\begin{proof}
Assertion (1) is in \cite[Prop. 4.1]{kaltmem}.
In fact, $\omega(x) = \|x\| \sgn(x) \Omega(|x|/\|x\|)$ for $x\neq 0$.
To prove (2), note that since $\Omega$ is contractive, $\Omega(e_n)=\mu_ne_n$, and we
may define the multiplication linear map $\ell(x)=\mu x$, where $\mu=(\mu_n)_n$.
Thus $\omega=\Omega -\ell$ is the desired map.
To prove (3), define as above a linear map by $\ell(f_n)=\Omega(f_n)$.
If $\Omega$ is contractive, so is $\ell$ and thus $\omega=\Omega -\ell$ is the desired map.
\end{proof}

\section{Singularity and estimates for exact centralizers}
\label{sect:estimates}

Recall that an operator between Banach spaces is said to be \emph{strictly singular}
if no restriction to an infinite dimensional closed subspace is an isomorphism.

\adef
A quasi-linear map (in particular, a centralizer) is said to be \emph{singular} if its
restriction to every infinite dimensional closed subspace is never trivial.
An exact sequence induced by a singular quasi-linear map is called a \emph{singular sequence.}\zdef

It is well known \cite{castmorestrict} that a  quasi-linear map is singular if and only
if the associated exact sequence has strictly singular quotient map. So singular quasi-linear maps induce twisted sums which are, in some sense, as nontrivial as is possible. The following notion is perhaps more suitable to work with K\"othe function spaces.

\adef A quasi-linear map on a K\"othe function space is called \emph{disjointly singular} if
its restriction to every subspace generated by a disjoint sequence is never trivial.
\zdef

One can show that a quasi-linear map  $F: Z \to Y$ is disjointly singular if and only if
the quotient map of the induced exact sequence $0\to Y\to Y\oplus_F Z\stackrel{q}\to  Z\to 0$ is never an isomorphism on a subspace $X$ of  $Y\oplus_F Z$ such that $q(X)$ is generated by disjoint vectors in $Z$. Observe that when $Z$ has an unconditional basis and the lattice structure one considers is the one induced by the basis then the two notions coincide since saying that $q$ is an isomorphism on some subspace is the same that saying that it is an isomorphism on some subspace whose image is generated by blocks of the basis. Thus:

\begin{lemma} A quasi-linear map $F:Z\to Y$ on a Banach space $Z$ with unconditional basis is singular if and only if it is disjointly singular with respect to the induced latttice structure.
\end{lemma}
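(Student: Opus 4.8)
The plan is to prove the two implications separately; the forward one costs nothing. If $F$ is singular then its restriction to \emph{every} infinite dimensional closed subspace of $Z$ is non-trivial, and a subspace generated by a disjoint sequence is --- for the lattice structure induced by the unconditional basis, where a disjoint sequence can be taken to be a block basis --- in particular an infinite dimensional closed subspace; hence $F$ is disjointly singular. All the content is in the converse, which I would establish by contraposition: assuming $F$ is not singular, I produce a block subspace on which $F$ is trivial.

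So suppose $F|_X$ is trivial for some infinite dimensional closed $X\subseteq Z$. Replacing $F$ by an equivalent $z$-linear map (which affects neither singularity nor disjoint singularity) and spelling out triviality, there is a linear map $g\colon X\to Y$ with $\|g(x)-F(x)\|\le C\|x\|$ for all $x\in X$. Now I apply the Bessaga--Pe\l czy\'nski selection principle inside $X$: after passing to a subsequence there are a normalized basic sequence $(x_n)$ in $X$ and a normalized block basis $(u_n)$ of the basis of $Z$ with $\sum_n\|x_n-u_n\|<\delta$, where $\delta>0$ is as small as we please. (In all the applications of this paper $Z$ is reflexive, so $X$ contains normalized weakly null sequences and the principle applies directly; in general one first extracts a weakly Cauchy or an $\ell_1$ basic subsequence and argues as usual.) Since $[x_n]\subseteq X$, the restriction $F|_{[x_n]}$ is again trivial, say $\|F(y)-L(y)\|\le C'\|y\|$ on $[x_n]$ for some linear $L\colon[x_n]\to Y$; and $[u_n]$ is a block subspace, so it suffices to transfer triviality from $[x_n]$ to $[u_n]$.

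For $\delta$ small the operator $T(z)=\sum_n u_n^{\ast}(z)(x_n-u_n)$, with $(u_n^{\ast})$ norm-bounded extensions to $Z$ of the biorthogonal functionals of $(u_n)$, has $\|T\|<1$, so $\Phi:=I+T$ is an automorphism of $Z$ with $\Phi u_n=x_n$ and $\|\Phi-I\|$ small; putting $L':=L\circ\Phi|_{[u_n]}$ one gets $\|F(u)-L'(u)\|\le\|F(\Phi u)-L(\Phi u)\|+\|F(u)-F(\Phi u)\|\le C'\|\Phi u\|+\|F(u)-F(\Phi u)\|$, and by $z$-linearity $\|F(u)-F(\Phi u)\|$ is dominated by $\|F((\Phi-I)u)\|$ up to an $O(\|u\|)$ term. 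Thus everything reduces to choosing the perturbation so that $\|F((\Phi-I)u)\|\le C''\|u\|$ for $u\in[u_n]$. I would arrange this with a gliding-hump refinement of the selection: writing $u_n=P_{(p_{n-1},p_n]}x_n$, the vector $(\Phi-I)u=\sum_n a_n(x_n-u_n)$ splits into a ``head'' piece $\sum_n a_nP_{(0,p_{n-1}]}x_n$ --- each summand lying in the finite dimensional space $[e_1,\dots,e_{p_{n-1}}]$, on which every $z$-linear map is bounded by a constant already fixed when $x_n$ is chosen, so via the gliding hump the head norms are made summably small relative to that constant and $z$-linearity then bounds $F$ of the whole head by a multiple of $\|u\|$ --- and a ``tail'' piece $\sum_n a_nP_{(p_n,\infty)}x_n$, handled through $\sum_n\|x_n-u_n\|<\delta$ together with the fact that $F(x_n)\in Y$ (a consequence of $x_n\in X$ and triviality of $F|_X$). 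I expect this last estimate --- that small, suitably chosen perturbations of a basic sequence do not destroy triviality of a $z$-linear map, even though such maps are typically unbounded on the unit sphere --- to be the only genuinely delicate point. Once it is in place, $L'$ witnesses that $F|_{[u_n]}$ is trivial, contradicting disjoint singularity and completing the proof.
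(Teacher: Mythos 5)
Your strategy is sound and is essentially a re-derivation of the paper's Lemma~\ref{transfer} (the ``transfer principle'' from \cite{castmorestrict,ccs}), which is all the paper itself invokes: the forward implication is immediate, and the converse reduces to transferring triviality from an arbitrary infinite dimensional subspace to a block subspace obtained by small perturbation. Two side remarks on the setup. First, no reflexivity or weak nullity is needed for the selection: in any infinite dimensional $X\subseteq Z$ pick normalized $x_n\in X\cap\bigcap_{i\le p_{n-1}}\ker e_i^*$, which already tends to $0$ coordinatewise. Second, replacing $F$ by an equivalent $z$-linear map is not free --- a quasi-linear map between Banach spaces need not yield a locally convex twisted sum (Ribe's map on $\ell_1$, which has an unconditional basis) --- but this is harmless because telescoping the quasi-linearity inequality gives $\|F(\sum_k v_k)-\sum_k F(v_k)\|\le M\sum_k k\|v_k\|$, which suffices once you take $\|x_n-u_n\|\le 4^{-n}$.

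The genuine gap is in your final step. You reduce the problem to proving $\|F((\Phi-I)u)\|\le C''\|u\|$ for $u\in[u_n]$, and that is the wrong target: it is in general unattainable. Writing $u=\sum_n a_nu_n$, quasi-linearity gives $F((\Phi-I)u)=\sum_n a_nF(x_n-u_n)+O(\|u\|)$ (using $|a_n|\le 2K\|u\|$ and the summability of $n\|x_n-u_n\|$), but $\|F(x_n-u_n)\|$ is not controlled by $\|x_n-u_n\|$, since $F$ is unbounded on the unit sphere; already a \emph{trivial} $F$, namely an unbounded linear map, defeats any such norm bound. Your head/tail device does not repair this: the finite-dimensional boundedness trick is fine for the head, but for the tail $t_n=P_{(p_n,\infty)}x_n$ there is no constant, fixed before or after $x_n$ is chosen, bounding $\|F(t_n)\|/\|t_n\|$, and the clause ``$F(x_n)\in Y$ by triviality of $F|_X$'' is vacuous here because a quasi-linear map $F:Z\to Y$ takes values in $Y$ by definition (that point only has content for centralizers valued in $L_0$). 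The correct and much shorter move is to recall that triviality permits an \emph{unbounded} linear witness: set $\Lambda(\sum_na_nu_n):=L(\sum_na_nx_n)-\sum_na_nF(x_n-u_n)$ on finitely supported vectors. Then $\|F(u)-\Lambda(u)\|\le C\|u\|$ there, so $u\mapsto(\Lambda(u),u)$ is a bounded linear section of the quotient map on a dense subspace of $[u_n]$, extends by continuity to all of $[u_n]$, and witnesses that $F|_{[u_n]}$ is trivial; no gliding hump or head/tail splitting is needed. With this correction your proof is complete and matches the argument behind the lemma the paper cites.
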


Singularity implies disjoint singularity and, as we shall see, the reverse implication does not hold in general. Of course, a disjointly singular quasi-linear map is nontrivial. The following ``transfer principle" (\cite{castmorestrict}, \cite{ccs}) will be essential for us.

\begin{lemma}\label{transfer}
If a quasi-linear map defined on a Banach space $X$ with basis is trivial on some infinite
dimensional subspace of $X$ then it is also trivial on some subspace $W=[w_n]$ of $X$ spanned
by normalized blocks of the basis.
\end{lemma}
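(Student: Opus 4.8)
The plan is to reduce the general statement to a standard blocking/perturbation argument. Suppose the quasi-linear map $F$ is trivial on an infinite-dimensional closed subspace $M\subseteq X$; this means there is a bounded homogeneous map $B:M\to Y$ and a linear map $L:M\to Y$ with $F_{|M}=B+L$ (equivalently, the restricted exact sequence splits, so the restricted quotient map admits a bounded linear section on $M$, producing a bounded linear right inverse which, composed appropriately, gives the decomposition). First I would pass to a ``nice'' subspace of $M$: since $X$ has a (Schauder) basis $(e_n)$ and $M$ is infinite-dimensional, a routine gliding-hump / small-perturbation argument produces a normalized block sequence $(w_n)$ of the basis and vectors $m_n\in M$ with $\sum_n\|w_n-m_n\|$ as small as we please, so that $(w_n)$ is equivalent to $(m_n)$ and the natural ``identification'' map between $W:=[w_n]$ and $M':=[m_n]$ is an isomorphism close to the identity. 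This is exactly the classical principle of small perturbations (Bessaga--Pe{\l}czy\'nski).

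The key point is then that triviality of a quasi-linear map is stable under this kind of isomorphic identification. Indeed, if $F$ is trivial on $M$, then $F$ is trivial on the subspace $M'=[m_n]\subseteq M$ (a subspace of a space on which $F$ splits still has $F$ splitting on it, simply by restricting $B$ and $L$). Now let $R:W\to M'$ be the isomorphism sending $w_n\mapsto m_n$; one checks that $F\circ R$ and $F_{|W}$ differ by a bounded map — here one uses that $R$ is a small perturbation of the formal identity and that $F$ is quasi-linear, so $\|F(Rw)-F(w)\|\le\|F(Rw)-F(w)-F(Rw-w)\|+\|F(Rw-w)\|$, and both terms are controlled: the first by the quasi-linearity constant times $\|Rw\|\lesssim\|w\|$, and the second because $Rw-w=\sum(m_n-w_n)(\text{coords})$ lies in a subspace where $F$ is small, or more simply because $\|Rw-w\|\lesssim\varepsilon\|w\|$ and $F$ is bounded on a suitable cone after a further harmless adjustment. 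Since $F\circ R$ is trivial on $W$ (it is $(B\circ R)+(L\circ R)$, a bounded-plus-linear decomposition on $W$) and differs from $F_{|W}$ by a bounded map, $F_{|W}$ is trivial on $W=[w_n]$, which is the desired block subspace.

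The main obstacle, and the place where some care is genuinely needed, is the quasi-linear (rather than linear) nature of $F$: one cannot simply say ``$F$ restricted to an isomorphic copy is again trivial'' as one would for a linear operator, because $F$ does not respect linear combinations exactly. The right way around this is to work at the level of exact sequences and the associated self-maps: triviality of $F$ on $M$ means the pullback sequence along the inclusion $M\hookrightarrow X$ splits, and splitting is preserved under pullback along the isomorphism $R:W\to M'\hookrightarrow M$; unwinding this back into quasi-linear language gives that $F_{|W}\equiv F\circ R\circ(\text{identification})$ is trivial, with all the ``error'' maps bounded because $R$ is a small perturbation of the identity. Alternatively, and perhaps most cleanly, one invokes the fact (used repeatedly in this circle of ideas, cf.\ \cite{castmorestrict,ccs}) that if a quasi-linear map is trivial on $M$ and $N\subseteq X$ is $\varepsilon$-close to $M$ in the sense above with $\varepsilon$ small relative to the quasi-linearity constant, then it is trivial on $N$; applying this with $N=W=[w_n]$ finishes the proof.
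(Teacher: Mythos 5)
The paper itself gives no proof of this lemma (it is quoted from \cite{castmorestrict} and \cite{ccs}), so your proposal can only be measured against the standard argument, whose overall shape you have correctly identified: a Bessaga--Pe{\l}czy\'nski small-perturbation step producing normalized blocks $w_n$ close to vectors $m_n$ of the given subspace, followed by a transfer of triviality from $[m_n]$ to $W=[w_n]$. The gap is in the transfer step. You claim that $F\circ R$ and $F_{|W}$ differ by a \emph{bounded} map and try to control $\|F(Rw-w)\|$ by the smallness of $\|Rw-w\|$. A quasi-linear map is homogeneous but in general unbounded (indeed a trivial $F$ is bounded plus \emph{unbounded} linear), so neither of your justifications works: $\|Rw-w\|\leq\varepsilon\|w\|$ gives no control whatsoever on $\|F(Rw-w)\|$; and there is no ``cone'', nor any subspace spanned by small vectors, on which $F$ is small --- by homogeneity only the directions matter, not the norms of the spanning vectors. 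In fact $F\circ R-F_{|W}$ need not be bounded at all.

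What saves the argument is that this difference is bounded \emph{plus linear}, which is all that equivalence of quasi-linear maps requires. Writing $w=\sum_n a_nw_n$, quasi-linearity gives $F(Rw)-F(w)=F(Rw-w)+O(\|w\|)$, and $Rw-w=\sum_n a_n(m_n-w_n)$. Using the $z$-linear estimate $\|F(\sum_i v_i)-\sum_i F(v_i)\|\leq M\sum_i\|v_i\|$ (legitimate in the Banach setting, where one may take $F$ to be $z$-linear; otherwise choose $\|m_n-w_n\|$ decaying fast enough for the telescoped quasi-linearity estimate to converge) together with $|a_n|\leq K\|w\|$ and $\sum_n\|m_n-w_n\|$ small, one gets $F(Rw-w)=\Lambda(w)+O(\|w\|)$ where $\Lambda(w):=\sum_n a_nF(m_n-w_n)$ is a (possibly unbounded) \emph{linear} map on $W$. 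Hence $F_{|W}=(B\circ R)+(L\circ R)+\Lambda+(\text{bounded})$, which is trivial on $W$; the linear correction $\Lambda$ cannot be dropped. Finally, your fallback --- invoking from \cite{castmorestrict} and \cite{ccs} the stability of triviality under small perturbations of the subspace --- is not an independent route: that stability statement is precisely the step at issue, i.e.\ essentially the lemma itself.
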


Observe that if $F$ is a quasi-linear map on a K\"othe space $X$, and if for some
sequence $(u_n)$ of disjointly supported vectors and some constant $K$ one has
$$
\left\| F(\sum \lambda_j u_j) - \sum \lambda_j F(u_j)\right\| \leq
K \|\sum \lambda_j u_j\|
$$
for all choices of scalars $(\lambda_j)$ then $F$ is not singular: indeed,
the estimate above means that the linear map  $[u_j]\to X\oplus_F [u_j]$ given by
$u_j \to (0, u_j)$ is continuous.
Under exactness conditions we can get a partial converse.

\begin{lemma}\label{trivialestimate}
Let $\Omega: X\lop X$ be an exact centralizer on a reflexive K\"othe function space.
If $\Omega$ is not disjointly singular, then there exists a subspace $W$ of $X$ generated by a
disjoint sequence and a constant $K$ such that given vectors $u_1,\ldots, u_n$ in $W$ there
are vectors $z_1, ..., z_n$ in $X$ with $\mathrm{supp} z_i \subset \mathrm{supp} u_i$ and
$\|z_i\|\leq K\|u_i\|$ such that for all scalars $\lambda_1, \dots, \lambda_n$ one has
\begin{equation}\label{esteem}
\|\Omega(\sum_{i=1}^n \lambda_i u_i)-\sum_{i=1}^n \lambda_i \Omega(u_i)\|
\leq K \left( \|\sum_{i=1}^n \lambda_i u_i\| +\|\sum_{i=1}^n \lambda_i z_i\|\right).
\end{equation}
\end{lemma}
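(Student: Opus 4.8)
The plan is to start from the failure of disjoint singularity and reduce, via the transfer principle of Lemma \ref{transfer}, to a subspace spanned by a disjoint sequence on which $\Omega$ is trivial; then to extract from triviality a \emph{quantitative} estimate of the type \eqref{esteem}, paying careful attention to the fact that $\Omega$ is a centralizer (not a genuine map into $X$) and that we must control the ``error'' vectors $z_i$ so that their supports lie inside those of the $u_i$. The exactness of $\Omega$ is what will let us do this last bit of support-localization.

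First I would use the hypothesis: since $\Omega$ is not disjointly singular, its restriction to some subspace $W_0=[w_n]$ generated by a disjoint sequence is trivial, meaning there is a linear map $L\colon W_0\to L_0$ and a constant $K_0$ with $\|\Omega(w)-L(w)\|_X\le K_0\|w\|$ for all $w\in W_0$. Because $\Omega$ is exact, hence (Lemma \ref{propers}(1)) contractive, $\Omega(w_n)$ is supported on $\supp w_n$; applying the machinery of Lemma \ref{propersing} (using reflexivity of $X$ and amenability of the group $U_{W_0}$ of units constant on the supports of the $w_n$) I can replace $L$ by a \emph{relatively exact} linear map $\Lambda\colon W_0\to X$ with $\Omega-\Lambda$ bounded on $W_0$. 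The point of passing to $\Lambda$ is twofold: it genuinely takes values in $X$ (so norms make sense), and being relatively exact it is relatively contractive, i.e. $\Lambda(w)$ is supported inside $\supp w$ for every $w\in W_0$. In particular, writing an arbitrary $u\in W_0$ as $u=\sum_i \lambda_i u_i$ with the $u_i$ having pairwise disjoint supports (blocks of the $w_n$), relative exactness of $\Lambda$ forces $\Lambda(u)=\sum_i \Lambda(u_i)$ with $\supp\Lambda(u_i)\subset \supp u_i$.

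Now set $W=W_0$ and, given disjointly supported $u_1,\dots,u_n\in W$, define $z_i := \Lambda(u_i)$. Then $\supp z_i\subset\supp u_i$ and $\|z_i\|=\|\Lambda(u_i)\|\le (\|\Omega-\Lambda\|_{\mathrm{bdd}}+\rho(\Omega)\text{-type constant})\|u_i\|=:K\|u_i\|$, since $\Lambda=\Omega-(\Omega-\Lambda)$ and $\Omega$ restricted to a single vector obeys $\|\Omega(u_i)\|\le C\|u_i\|$ only after we know $\Omega(u_i)\in X$ — but that follows because $\Omega(u_i)=\Lambda(u_i)+(\text{bounded})$ and $\Lambda(u_i)\in X$; in fact the cleanest route is to bound $\|z_i\|=\|\Lambda(u_i)\|\le \|\Omega(u_i)-\Lambda(u_i)\|+\|\Omega(u_i)\|$ and absorb both terms into $K\|u_i\|$. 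For the main estimate, write
\begin{align*}
\Omega\Bigl(\sum_i\lambda_i u_i\Bigr)-\sum_i\lambda_i\Omega(u_i)
&=\Bigl[\Omega\Bigl(\sum_i\lambda_i u_i\Bigr)-\Lambda\Bigl(\sum_i\lambda_i u_i\Bigr)\Bigr]
-\sum_i\lambda_i\bigl[\Omega(u_i)-\Lambda(u_i)\bigr]\\
&\quad+\Bigl[\Lambda\Bigl(\sum_i\lambda_i u_i\Bigr)-\sum_i\lambda_i\Lambda(u_i)\Bigr].
\end{align*}
The last bracket vanishes by linearity of $\Lambda$. The first term is at most $(K\text{-bdd})\cdot\|\sum_i\lambda_i u_i\|$ by boundedness of $\Omega-\Lambda$ on $W$. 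The remaining sum $\sum_i\lambda_i(\Omega(u_i)-\Lambda(u_i))=\sum_i\lambda_i(\Omega(u_i)-z_i)$ is a sum of \emph{disjointly supported} vectors (each supported in $\supp u_i$), so by the lattice property its norm equals the norm of the single vector $\sum_i\lambda_i(\Omega(u_i)-z_i)$, which is dominated by $\|\sum_i\lambda_i\Omega(u_i)\|+\|\sum_i\lambda_i z_i\|$; and $\|\sum_i\lambda_i\Omega(u_i)\|=\|\Omega(\sum_i\lambda_i u_i)\text{-correction}\|$ — more directly, since the $\Omega(u_i)-z_i$ are disjoint and $\|\Omega(u_i)-z_i\|\le K'\|u_i\|$, we get $\|\sum_i\lambda_i(\Omega(u_i)-z_i)\|$ bounded by $K'\|\sum_i\lambda_i u_i\|$ using disjointness and the lattice structure. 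Collecting the two surviving contributions yields \eqref{esteem} with a single constant $K$.

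\emph{The main obstacle} is bookkeeping the constants and, more substantively, handling the fact that $\Omega$ a priori maps into $L_0$: one cannot estimate $\|\Omega(u_i)\|_X$ until one knows $\Omega(u_i)-z_i\in X$, which is exactly the content of triviality on $W$ together with relative exactness. The delicate point is therefore to run the amenability/averaging argument of Lemma \ref{propersing}(3) to produce a \emph{relatively exact} $\Lambda$ valued in $X$ with supports localized correctly — without that localization the error vectors $z_i$ would not satisfy $\supp z_i\subset\supp u_i$, and the disjointness trick that collapses the sum into a single-vector lattice norm would fail. Everything else is a routine triangle-inequality split once $\Lambda$ is in hand.
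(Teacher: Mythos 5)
Your overall strategy is the paper's: pass from non-disjoint-singularity to a subspace $W=[u_n]$ spanned by a disjoint sequence on which $\Omega$ is trivial, upgrade the linear witness to a \emph{relatively exact} (hence relatively contractive) linear map $\Lambda:W\to X$ with $\Omega_{|W}-\Lambda$ bounded via Lemma \ref{propersing}, and then split by the triangle inequality. But your choice $z_i:=\Lambda(u_i)$ is wrong, and it is exactly where the argument breaks. First, the requirement $\|z_i\|\leq K\|u_i\|$ fails: $\Lambda$ is linear but in general \emph{unbounded} (it differs from $\Omega_{|W}$ by a bounded map, and centralizers are typically unbounded), and your attempted bound $\|\Lambda(u_i)\|\leq\|\Omega(u_i)-\Lambda(u_i)\|+\|\Omega(u_i)\|$ only helps if $\|\Omega(u_i)\|\leq C\|u_i\|$, which is precisely what one cannot assume (e.g.\ the Kalton--Peck map on normalized sums of basis vectors). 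Second, your closing step claims that since the vectors $v_i:=\Omega(u_i)-z_i$ are disjointly supported with $\supp v_i\subset\supp u_i$ and $\|v_i\|\leq K'\|u_i\|$, one gets $\|\sum_i\lambda_i v_i\|\leq K'\|\sum_i\lambda_i u_i\|$ ``by the lattice structure.'' That inequality requires pointwise domination $|v_i|\leq K'|u_i|$, not just norm domination; with only norm control it is false in a general K\"othe space (take $u_i$ with mass spread over its support and $v_i$ of comparable norm concentrated on a small piece of it — in a space mixing $\ell_1$- and $\ell_2$-type behaviour the ratio $\|\sum v_i\|/\|\sum u_i\|$ is unbounded). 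Indeed, if your inequality were true the statement would read $\leq K\|\sum\lambda_i u_i\|$ with no $z_i$ at all, and the whole apparatus of standard classes closed under support-shrinking in Section \ref{sect:estimates} would be unnecessary; the second term in \eqref{esteem} is there precisely because this absorption is impossible.

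The repair is one line: set $z_i:=(\Omega-\Lambda)(u_i)$ and $K:=\|\Omega_{|W}-\Lambda\|$. Then $\|z_i\|\leq K\|u_i\|$ by boundedness of $\Omega_{|W}-\Lambda$, $\supp z_i\subset\supp u_i$ because $\Omega$ and $\Lambda$ are both relatively contractive, and by linearity of $\Lambda$,
$$
\Omega\Big(\sum_i\lambda_i u_i\Big)-\sum_i\lambda_i\Omega(u_i)
=(\Omega-\Lambda)\Big(\sum_i\lambda_i u_i\Big)-\sum_i\lambda_i z_i,
$$
whose norm is at most $K\|\sum_i\lambda_i u_i\|+\|\sum_i\lambda_i z_i\|$. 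No lattice inequality between the $v_i$'s and the $u_i$'s is needed; the term $\|\sum_i\lambda_i z_i\|$ is simply carried along into the right-hand side of \eqref{esteem}.
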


\begin{proof}
Since $\Omega$ is not disjointly singular, it is trivial on some subspace $W=[u_n]$
spanned by disjointly supported vectors.
Then by Lemma \ref{propersing} there exists a linear relatively exact map
$\Lambda: W \rightarrow X$ so that $\Omega_{|W} -\Lambda$ is bounded.
Since both $\Omega$ and $\Lambda$ (by Lemma \ref{propersing} (2)) are relatively
contractive, so is $\Omega -\Lambda$.
Set $z_i= (\Omega - \Lambda)(u_i)$ and $K=\|\Omega_{|W}- \Lambda\|$.
\end{proof}

The preceding estimate can be considered as a subtler version of the ``upper
$p$-estimates" argument for non-splitting, which can be quickly described as:
if the space $X$ verifies some type of upper $p$-estimate and the twisted sum
$X\oplus_\Omega X$ splits then the space $X\oplus_\Omega X$ must also verify the upper
$p$-estimate (the key here is the $p$ since, in general,  if $X$ has type $p$ then
$X\oplus_\Omega X$ only needs to have type $p+\varepsilon$ for every $\varepsilon$
(see  \cite{kaltconvex}).
Therefore, given suitable vectors $(u_n)$ in $X$ the elements $(0, u_n)$ in
$X\oplus_\Omega X$ should verify an upper $p$-estimate; which amounts to
$$
\|\Omega(\sum_{i=1}^n  u_i)-\sum_{i=1}^n \Omega(u_i)\| \leq C\sqrt[n]{p}.
$$

We now introduce the notion of standard class of finite families of elements
of K\"othe spaces to simplify the exposition.

\adef
A \emph{standard class} $\mathcal{S}$ is a class of finite families ($n$-tuples) of elements
of K\"othe function spaces (respect. spaces with $1$-unconditional bases) $X$ satisfying
\begin{itemize}
\item[(i)] whenever $(x_i)\in \mathcal{S}$ and $\mathrm{supp} z_i \subset \mathrm{supp} x_i$
for all $i$ then $(z_i)\in \mathcal{S}$;
\item[(ii)] assume that
$W$ is a subspace generated by disjoint vectors (resp. generated by successive vectors) of $X$,
and $(x_i)$ is $n$-tuple of elements of $W$; if $(x_i)$ belongs to $\mathcal{S}$ as a family
in $W$, then it also belongs to $\mathcal{S}$ as a family in $X$.
\end{itemize}
\zdef

The two standard classes we shall use in this paper are disjointly supported vectors in
K\"othe spaces and "Schreier" successive vectors on $1$-unconditional bases, i.e. families
$(x_1,\ldots,x_n)$ such that $n < {\rm supp}\ x_1 < \cdots <{\rm supp}\ x_n$, but some other
examples like successive vectors on  $1$-unconditional bases could also be of interest for
other applications.

Given a standard class $\mathcal{S}$ and a space $X$, we consider the following indicator function:
$$
M_{X, \mathcal{S}}(n):=\sup \{\|x_1+\ldots+x_n\|: \; (x_j) \in \mathcal{S},\; \|x_j\|\leq 1\}.
$$

Lemma \ref{trivialestimate} can be rewritten as:

\begin{lemma}\label{estimativa} Let $\mathcal{S}$ be a standard class, and let $\Omega: X\lop X$
be an exact centralizer on a reflexive K\"othe function space.
If $\Omega$ is not disjointly singular, then there exists a subspace $W$ of $X$ generated by a
disjoint sequence and a constant $K$ such that given any $n$-tuple $(u_i)\in \mathcal{S}$ belonging
to the unit ball of $W$, one has
$$
\Big\|\Omega(\sum_{i=1}^n  u_i)-\sum_{i=1}^n \Omega(u_i)\Big\|
\leq K M_{X, \mathcal{S}}(n).
$$
\end{lemma}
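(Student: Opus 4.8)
The plan is to derive Lemma~\ref{estimativa} directly from Lemma~\ref{trivialestimate} by observing that the right-hand side of \eqref{esteem} is controlled by the indicator $M_{X,\mathcal{S}}(n)$ once the scalars are removed. First I would invoke Lemma~\ref{trivialestimate}: since $\Omega$ is an exact centralizer on a reflexive K\"othe function space and is not disjointly singular, there is a subspace $W$ generated by a disjoint sequence and a constant $K$ so that for any $u_1,\dots,u_n \in W$ there exist $z_1,\dots,z_n \in X$ with $\supp z_i \subset \supp u_i$ and $\|z_i\| \leq K\|u_i\|$ satisfying \eqref{esteem} for all scalars. Specializing to $\lambda_i = 1$ for all $i$ gives
$$
\Big\|\Omega\big(\textstyle\sum_{i=1}^n u_i\big) - \sum_{i=1}^n \Omega(u_i)\Big\|
\leq K\Big(\big\|\textstyle\sum_{i=1}^n u_i\big\| + \big\|\textstyle\sum_{i=1}^n z_i\big\|\Big).
$$

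Next I would bound each of the two norms on the right by $M_{X,\mathcal{S}}(n)$, up to a constant. For the first term: if $(u_i)$ is an $n$-tuple in $\mathcal{S}$ lying in the unit ball of $W$, then by property (ii) of a standard class, $(u_i)$ also belongs to $\mathcal{S}$ as a family in $X$, and since $\|u_i\| \leq 1$ the definition of the indicator gives $\|\sum u_i\| \leq M_{X,\mathcal{S}}(n)$. For the second term: the $z_i$ satisfy $\supp z_i \subset \supp u_i$, so by property (i) of a standard class $(z_i) \in \mathcal{S}$; moreover $\|z_i\| \leq K\|u_i\| \leq K$. Rescaling by $K$, the family $(z_i/K)$ lies in the unit ball and belongs to $\mathcal{S}$, hence $\|\sum z_i\| \leq K\, M_{X,\mathcal{S}}(n)$. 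Combining, one gets
$$
\Big\|\Omega\big(\textstyle\sum_{i=1}^n u_i\big) - \sum_{i=1}^n \Omega(u_i)\Big\|
\leq K\big(M_{X,\mathcal{S}}(n) + K\,M_{X,\mathcal{S}}(n)\big) = K(1+K)\,M_{X,\mathcal{S}}(n),
$$
so replacing $K$ by $K(1+K)$ (or relabeling the constant) yields the stated inequality.

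The argument is essentially bookkeeping: the real content is already in Lemma~\ref{trivialestimate}, and the only genuinely new ingredient is that the vectors $z_i$ produced there are \emph{supported inside} the $u_i$, which is exactly what property (i) of a standard class is designed to exploit, while property (ii) handles passing from the lattice structure of $W$ to that of $X$. The main (minor) obstacle is purely cosmetic — making sure the constant is tracked correctly through the rescaling of the $z_i$ and confirming that $\mathcal{S}$ being closed under the two operations (support-shrinking and the $W$-to-$X$ transfer) is invoked at the right places; there is no analytic difficulty. One should also note that the subspace $W$ and constant $K$ here are the same as those furnished by Lemma~\ref{trivialestimate}, so no extra choices are needed.
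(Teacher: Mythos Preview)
Your proof is correct and matches the paper's approach exactly: the paper simply states that Lemma~\ref{estimativa} is a rewriting of Lemma~\ref{trivialestimate}, and your argument spells out precisely how that rewriting goes, using properties (i) and (ii) of a standard class to bound the two terms on the right of \eqref{esteem}.
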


We arrive now to the core of out method:

\begin{lemma}\label{core}
Let $(X_0, X_1)$ be an admissible couple of K\"othe function spaces, fix $0<\theta<1$, and let
$\Omega_\theta$ be the induced centralizer on $X_\theta$.
If $(x_i)\in \mathcal{S}$ is a $n$-tuple in the unit ball of ${X_\theta}$, then
\begin{equation}\label{eme}
\left\|\Omega_\theta\big(\sum_{i=1}^n x_i\big)-\sum_{i=1}^n \Omega_\theta(x_i)-
\log\frac{M_{X_0,\mathcal{S}}(n)}{M_{X_1,\mathcal{S}}(n)} \Big(\sum_{i=1}^n x_i\Big)\right\|\leq 3
\frac{M_{X_0, \mathcal{S}}(n)^{1-\theta} M_{X_1, \mathcal{S}}(n)^\theta}{\dist(\theta, \partial \mathbb S)}.
\end{equation}
\end{lemma}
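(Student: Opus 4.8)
The plan is to use the explicit form of $\Omega_\theta$ in terms of a Lozanovskii decomposition together with the estimate $\|\delta'_\theta|_{\ker\delta_\theta}\|\le \dist(\theta,\partial\mathbb S)^{-1}$ recorded after Lemma \ref{mechanism}. The key observation is that if for each $i$ we pick an (almost) optimal decomposition $x_i = a_0(x_i)^{1-\theta}a_1(x_i)^{\theta}$, then, since the family $(x_i)\in\mathcal S$ lies in the unit ball of $X_\theta$ and the $a_j(x_i)$ also form an $\mathcal S$-family in $X_j$ (using (i) of the definition of standard class, as the supports are controlled), the functions $\Sigma_j := \sum_{i=1}^n a_j(x_i)$ satisfy $\|\Sigma_j\|_{X_j}\le M_{X_j,\mathcal S}(n)$ for $j=0,1$. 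We then compare the selection $B_\theta(\sum x_i)$ with a ``rescaled sum of the individual selections''. Concretely, set
$$
F(z) \;=\; \Bigl(M_{X_0,\mathcal S}(n)\Bigr)^{1-z}\Bigl(M_{X_1,\mathcal S}(n)\Bigr)^{z}\;
\sum_{i=1}^n \frac{B_\theta(x_i)(z)}{?}
$$
— more precisely one builds an $f\in\mathcal H$ with $f(\theta)=\sum x_i$ whose boundary values on $\mathrm{Re}(z)=j$ are, up to the scalar $M_{X_j,\mathcal S}(n)$-normalization, the normalized sums $\Sigma_j/\|\Sigma_j\|_{X_j}$, so that $\|f\|_{\mathcal H}\le M_{X_0,\mathcal S}(n)^{1-\theta}M_{X_1,\mathcal S}(n)^{\theta}$ (here the geometric-mean bound on $\|f\|_{\mathcal H}$ is where the exponents $1-\theta,\theta$ enter, via the three-lines nature of the $\mathcal H$-norm). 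Differentiating this $f$ at $\theta$ produces, by the product rule, exactly the extra logarithmic term $\log\bigl(M_{X_0,\mathcal S}(n)/M_{X_1,\mathcal S}(n)\bigr)\sum x_i$ plus $\sum_i \Omega_\theta(x_i)$ plus a term in $\ker\delta_\theta$.

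The second step is bookkeeping: write
$$
\Omega_\theta\Bigl(\sum x_i\Bigr) - \sum \Omega_\theta(x_i)
- \log\frac{M_{X_0,\mathcal S}(n)}{M_{X_1,\mathcal S}(n)}\Bigl(\sum x_i\Bigr)
\;=\; \delta'_\theta(G)
$$
where $G = B_\theta(\sum x_i) - f \in \mathcal H$, and indeed $G\in\ker\delta_\theta$ since both $B_\theta(\sum x_i)$ and $f$ take the value $\sum x_i$ at $\theta$. Now apply $\|\delta'_\theta|_{\ker\delta_\theta}\|\le\dist(\theta,\partial\mathbb S)^{-1}$ to get the bound
$$
\Bigl\|\,\cdots\,\Bigr\|_{X_\theta}\;\le\;\frac{\|G\|_{\mathcal H}}{\dist(\theta,\partial\mathbb S)}
\;\le\;\frac{\|B_\theta(\sum x_i)\|_{\mathcal H}+\|f\|_{\mathcal H}}{\dist(\theta,\partial\mathbb S)}.
$$
Since $\|\sum x_i\|_{X_\theta}\le M_{X_\theta,\mathcal S}(n)\le M_{X_0,\mathcal S}(n)^{1-\theta}M_{X_1,\mathcal S}(n)^{\theta}$ (the interpolation inequality $\|\cdot\|_\theta\le\|\cdot\|_0^{1-\theta}\|\cdot\|_1^{\theta}$ applied termwise, or just the Lozanovskii formula), we may take $B_\theta$ so that $\|B_\theta(\sum x_i)\|_{\mathcal H}\le(1+\epsilon)\|\sum x_i\|_{X_\theta}$, giving $\le 2\,M_{X_0,\mathcal S}(n)^{1-\theta}M_{X_1,\mathcal S}(n)^{\theta}$ for the first term and $\le M_{X_0,\mathcal S}(n)^{1-\theta}M_{X_1,\mathcal S}(n)^{\theta}$ for $\|f\|_{\mathcal H}$, hence the constant $3$.

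The main obstacle is the construction of the interpolating function $f$: one needs an $\mathcal H$-function that simultaneously hits $\sum_i x_i$ at $\theta$ \emph{and} has boundary traces that are (normalizations of) $\Sigma_0$ and $\Sigma_1$, while keeping $\|f\|_{\mathcal H}$ at the geometric mean $M_{X_0,\mathcal S}(n)^{1-\theta}M_{X_1,\mathcal S}(n)^{\theta}$ rather than the (possibly much larger) crude sum $\|B_\theta(x_i)\|$ piled up $n$ times — this is precisely the gain over naively summing. The natural candidate is $f(z) = M_{X_0,\mathcal S}(n)^{1-z}M_{X_1,\mathcal S}(n)^{z}\cdot\sum_i |a_0(x_i)|^{1-z}|a_1(x_i)|^{z}\cdot(\text{something of modulus }1)$ suitably renormalized; one must check it lands in $\mathcal H$ (continuity/analyticity are clear from the form $|a|^{1-z}|b|^{z}$, boundedness on the boundary is where the $M$'s are used) and that its value at $\theta$ is really $\sum x_i$, which forces the scalar normalization to match up. Care with $\mathrm{sgn}\,x_i$ and with the definition of $\Omega_\theta$ on non-positive elements (via \eqref{kpgeneralized}) is needed, but is routine. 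Once $f$ is in hand, everything else is the two displayed estimates above.
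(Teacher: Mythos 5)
Your strategy is the paper's own: compare $B_\theta(\sum_i x_i)$ with an auxiliary analytic function built from the individual selections and renormalized by $M(n,z):=M_{X_0,\mathcal S}(n)^{1-z}M_{X_1,\mathcal S}(n)^{z}$, differentiate at $\theta$ to produce the logarithmic term, and apply $\|{\delta'_\theta}_{|\ker\delta_\theta}\|\le \dist(\theta,\partial\mathbb S)^{-1}$ to the difference, which lies in $\ker\delta_\theta$. The only point you leave genuinely open --- the ``?'' in your display and the ``suitably renormalized'' candidate at the end --- is settled by putting $M(n,z)$ in the \emph{denominator}:
$$
f(z)=\frac{M(n,\theta)}{M(n,z)}\sum_{i=1}^n B_\theta(x_i)(z).
$$
On ${\rm Re}(z)=j$ the $n$-tuple $\big(B_\theta(x_i)(z)\big)_i$ belongs to $\mathcal S$ by property (i) of a standard class, so $\|\sum_i B_\theta(x_i)(z)\|_{X_j}\le (1+\epsilon)M_{X_j,\mathcal S}(n)$, and dividing by $|M(n,z)|=M_{X_j,\mathcal S}(n)$ equalizes the two boundary sups at $(1+\epsilon)M(n,\theta)$; this is how the geometric mean enters, not via any ``three-lines nature of the $\mathcal H$-norm'' (that norm is the maximum of the two boundary sups, not their geometric mean, and a function whose boundary traces have norms $M_{X_j,\mathcal S}(n)$, as in your prose description, would only give the max). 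With the factor in the numerator, as your written candidate has it, the $\mathcal H$-norm bound fails and the sign of the logarithm flips; with it in the denominator one gets $f(\theta)=\sum_i x_i$ and $f'(\theta)=\log\frac{M_{X_0,\mathcal S}(n)}{M_{X_1,\mathcal S}(n)}\sum_i x_i+\sum_i\Omega_\theta(x_i)$, and your bookkeeping with $G=B_\theta(\sum_i x_i)-f\in\ker\delta_\theta$ yields the constant $3$ exactly as you describe. Finally, the detour through Lozanovskii decompositions is unnecessary (and slightly less general, since it needs the factorization to exist): any $(1+\epsilon)$-bounded homogeneous selection $B_\theta$ with $\supp B_\theta(x)\subset\supp x$ does the job, which is all the paper uses.
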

\begin{proof}
To simplify, let us write $M(n,z) = M_{X_0, \mathcal{S}}(n)^{1-z} M_{X_1, \mathcal{S}}(n)^z$.
Given $0<\epsilon<1/4$, let $(x_i)\in \mathcal{S}$ be a $n$-tuple in the unit ball of $X_\theta$.
Let $B_\theta$ be a $(1+\epsilon)$-bounded selection $X_\theta \rightarrow {\mathcal H}$
such that ${\rm supp\ } B_\theta(x) \subset {\rm supp\ }x$ for all $x$.
Let $F_i=B_\theta(x_i)$ for each $i$.
Note that $\big(F_i(z)\big)$ is a $n$-tuple in $\mathcal{S}$ for any $z$ in the strip.
Let $F$ be the function
$$
F(z)=\frac{F_1(z)+\cdots+F_n(z)}{M(n,z)}
$$
for $z \in\mathbb{S}$. We know that  $\|F\| \leq 1+\epsilon$ and
$$
F(\theta)=\frac{1}{M(n,\theta)}(x_1+\ldots+x_n).
$$

Set $k=\|F(\theta)\|^{-1}.$ The map $\Phi: F(\theta)\mapsto F$ defines a linear
bounded selection on the one-dimensional subspace  $[F(\theta)]$ having norm at most $k$.
Therefore
$$
\|{B_\theta}_{|[F(\theta)]} - \Phi\| \leq 1+\epsilon + k \leq k(1+\epsilon)+\epsilon+k.
$$
Thus, if $x\in [F(\theta)]$, and denoting $\delta'=\delta_\theta^{\prime}$,
$$
\|(\delta'B_\theta - \delta'\Phi) (x)\|_\theta \leq (2k+k\epsilon+\epsilon)
\|\delta^{\prime}_{|{\rm ker}\ \delta_\theta}\| \|x\|_{\theta}.
$$

In particular
$$
\left\|(\delta'B_\theta - \delta'\Phi)(\sum_{i=1}^n x_i)\right \|_\theta\leq
(2k+k\epsilon+\epsilon)
\dist(\theta, \partial \mathbb S)^{-1} \left \|\sum_{i=1}^n x_i\right \|_{\theta},
$$
or equivalently
$$
\left\|(\delta'B_\theta - \delta'\Phi)(\sum_{i=1}^n x_i)\right \|_\theta
\leq \dist(\theta, \partial \mathbb S)^{-1} (2+\epsilon+\frac{\epsilon}{k}) M(n,\theta)
\leq 3\dist(\theta, \partial \mathbb S)^{-1} M(n,\theta).
$$
On the other hand,
$$
F'(\theta)=F(\theta) \log\frac{M_{X_0, \mathcal{S}}(n)}{M_{X_1, \mathcal{S}}(n)}+
\frac{1}{M(n,\theta)} \sum_i B_\theta(x_i)^{\prime}(\theta),
$$
which means
$$\delta'\Phi(\sum_i x_i)=\log\frac{M_{X_0, \mathcal{S}}(n)}{M_{X_1, \mathcal{S}}(n)}
\big(\sum_i x_i\big) +\sum_i \delta'B_\theta(x_i).$$
Therefore
$$\delta'\Phi(\sum_i x_i)- \delta'B_\theta(\sum_i x_i) = \sum_i \delta'B_\theta(x_i)-
\delta'B_\theta(\sum_i x_i)+\log\frac{M_{X_0, \mathcal{S}}(n)}{M_{X_1, \mathcal{S}}(n)}
\big(\sum_i x_i\big)$$
which yields
$$\Big\|\sum_i \delta'B_\theta(x_i)-\delta'B_\theta(\sum_i x_i)+
\log\frac{M_{X_0, \mathcal{S}}(n)}{M_{X_1, \mathcal{S}}(n)}
\big(\sum_i y_i\big) \Big\|_\theta \leq 3\dist(\theta, \partial \mathbb S)^{-1} M(n,\theta),$$
hence

\begin{equation}\label{newestimate}
\Big\|\Omega_\theta(\sum_{i=1}^n x_i)-\sum_{i=1}^n \Omega_\theta(x_i)-
\log\frac{M_{X_0, \mathcal{S}}(n)}{M_{X_1, \mathcal{S}}(n)} \big(\sum_{i=1}^n x_i\big)\Big\|
\leq  3\dist(\theta, \partial \mathbb S)^{-1} M(n,\theta)
\end{equation}
as desired.
\end{proof}

Observe that the estimate above applies (after suitable normalization) to all real centralizers; it is not equally clear the form such estimate should adopt for complex centralizers or for centralizers generated by arbitrary families. We show now that the function $\theta \mapsto M_{X_\theta, \mathcal{S}}(n)$ is log-convex:

\begin{lemma}\label{logconvex}
Given an interpolation scale $(X_\theta)$ of K\"othe function spaces associated to a pair
$(X_0,X_1)$ one has $$M_{X_\theta, \mathcal{S}}(n) \leq
M_{X_0, \mathcal{S}}(n)^{1-\theta} M_{X_1,\mathcal{S}}(n)^\theta.$$\end{lemma}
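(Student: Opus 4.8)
The plan is to run the scaling argument from the proof of Lemma~\ref{core} directly on the sum $x_1+\cdots+x_n$ instead of on the individual split functions. Fix $n\in\N$ and $\epsilon>0$, and let $(x_i)\in\mathcal{S}$ be an $n$-tuple in the unit ball of $X_\theta$; we may assume $0<M_{X_j,\mathcal{S}}(n)<\infty$ for $j=0,1$, the inequality being trivial otherwise. As in Lemma~\ref{core}, I would choose a $(1+\epsilon)$-bounded selection $B_\theta\colon X_\theta\to\mathcal H$ with $\supp B_\theta(x)\subset\supp x$, put $F_i=B_\theta(x_i)$, and consider
$$
M(n,z)=M_{X_0,\mathcal{S}}(n)^{1-z}M_{X_1,\mathcal{S}}(n)^{z},\qquad
G(z)=\frac{F_1(z)+\cdots+F_n(z)}{M(n,z)} .
$$

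First I would check that $G\in\mathcal H$: the scalar $M(n,\cdot)$ is a zero-free entire function, bounded above and below in modulus on $\mathbb S$, so dividing by it preserves boundedness, continuity, analyticity and the boundary behaviour required by the definition of $\mathcal H$. Then, on the line $\mathrm{Re}\,z=j$ the tuple $\big(F_i(j+it)\big)_i$ still belongs to $\mathcal{S}$ by axiom (i) of a standard class (each $F_i(j+it)$ is supported inside $\supp x_i$), and each entry has $X_j$-norm at most $\|F_i\|_{\mathcal H}\le 1+\epsilon$; hence $\|\sum_i F_i(j+it)\|_j\le(1+\epsilon)M_{X_j,\mathcal{S}}(n)=(1+\epsilon)\,|M(n,j+it)|$, so that $\|G\|_{\mathcal H}\le 1+\epsilon$. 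Finally, since $M(n,\theta)=M_{X_0,\mathcal{S}}(n)^{1-\theta}M_{X_1,\mathcal{S}}(n)^{\theta}$ is a positive real number and $G(\theta)=M(n,\theta)^{-1}(x_1+\cdots+x_n)$, one gets
$$
\|x_1+\cdots+x_n\|_\theta=M(n,\theta)\,\|G(\theta)\|_\theta\le M(n,\theta)\,\|G\|_{\mathcal H}\le(1+\epsilon)\,M_{X_0,\mathcal{S}}(n)^{1-\theta}M_{X_1,\mathcal{S}}(n)^{\theta}.
$$
Taking the supremum over all admissible $n$-tuples $(x_i)$ and letting $\epsilon\to0$ yields the asserted log-convexity.

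The computation is essentially routine once one notices that $M(n,z)$ is exactly the right multiplier. The only point that really needs attention is the stability of the standard class under passing to the boundary values $F_i(j+it)$, which is precisely why the selection $B_\theta$ must be support-preserving and why axiom (i) was built into the definition of a standard class; beyond that I do not expect a genuine obstacle.
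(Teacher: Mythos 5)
Your argument is correct and is exactly the paper's proof: the paper simply reuses the function $F(z)=(F_1(z)+\cdots+F_n(z))/M(n,z)$ already built in the proof of Lemma~\ref{core} and reads off $\|x_1+\cdots+x_n\|_\theta\le\|F\|_{\mathcal H}\,M(n,\theta)\le(1+\epsilon)M(n,\theta)$. You merely spell out the verification that $\|F\|_{\mathcal H}\le 1+\epsilon$ (via axiom (i) of a standard class and the support-preserving selection), which the paper takes for granted.
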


\begin{proof}
Let $F(z)=(F_1(z)+\cdots+F_n(z))/M(n, z)$ be the function in the proof of Lemma \ref{core}.
The inequalities
$\|F(\theta)\|_\theta \leq \|F\| \leq 1+\epsilon$ imply
$\|x_1+\cdots+x_n\|_\theta \leq (1+\epsilon) M(n, \theta)$, from where the conclusion follows.
\end{proof}

\section{Criteria for singularity}\label{sect:singular}

Here we give some results that will allow us to recognize nontrivial exact sequences
by showing that the quasi-linear map is singular in some sense.

\subsection{A general criterion in K\"othe function spaces}

We set now the core of our criterion to obtain disjointly singular sequences: to combine
Lemma \ref{estimativa}, Lemma \ref{core} and Lemma \ref{logconvex} to get the following
result.

\begin{prop}\label{general}
Let $\mathcal{S}$ be a standard class.
Let $(X_0, X_1)$ be an interpolation couple of K\"othe function spaces generating the
interpolation scale $(X_\theta)$; assume $X_\theta$ is reflexive and let $\Omega_\theta$ be
the induced centralizer on $X_\theta$, $0<\theta<1$.
If $\Omega_\theta$ is not disjointly singular then there exists a subspace $W \subset X_\theta$
spanned by disjointly supported vectors and a constant $K$ such that
\begin{equation}\label{lastMgeneral}
\left| \log \frac{M_{X_0,\mathcal{S}}(n)}{M_{X_1,\mathcal{S}}(n)}\right| M_{W,\mathcal{S}}(n)
\leq K M_{X_0,\mathcal{S}}(n)^{1-\theta}M_{X_1,\mathcal{S}}(n)^\theta.
\end{equation}
\end{prop}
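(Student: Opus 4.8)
The plan is to combine the three estimates established just before the statement. Suppose $\Omega_\theta$ is not disjointly singular. By Lemma \ref{estimativa}, applied to the standard class $\mathcal{S}$, there is a subspace $W\subset X_\theta$ spanned by a disjoint sequence and a constant $K_1$ such that for every $n$-tuple $(u_i)\in\mathcal{S}$ in the unit ball of $W$,
\[
\Big\|\Omega_\theta\big(\textstyle\sum_{i=1}^n u_i\big)-\sum_{i=1}^n \Omega_\theta(u_i)\Big\|_\theta \leq K_1\, M_{X_\theta,\mathcal{S}}(n).
\]
On the other hand, Lemma \ref{core} gives, for the same $n$-tuple,
\[
\Big\|\Omega_\theta\big(\textstyle\sum_{i=1}^n u_i\big)-\sum_{i=1}^n \Omega_\theta(u_i)-\log\frac{M_{X_0,\mathcal{S}}(n)}{M_{X_1,\mathcal{S}}(n)}\big(\textstyle\sum_{i=1}^n u_i\big)\Big\|_\theta \leq 3\,\frac{M_{X_0,\mathcal{S}}(n)^{1-\theta}M_{X_1,\mathcal{S}}(n)^\theta}{\dist(\theta,\partial\mathbb{S})}.
\]

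The next step is a triangle-inequality argument to isolate the linear term. Subtracting the two displayed inequalities and using the triangle inequality,
\[
\left|\log\frac{M_{X_0,\mathcal{S}}(n)}{M_{X_1,\mathcal{S}}(n)}\right|\Big\|\textstyle\sum_{i=1}^n u_i\Big\|_\theta \leq K_1\, M_{X_\theta,\mathcal{S}}(n) + \frac{3\,M_{X_0,\mathcal{S}}(n)^{1-\theta}M_{X_1,\mathcal{S}}(n)^\theta}{\dist(\theta,\partial\mathbb{S})}.
\]
Now I take the supremum over all $n$-tuples $(u_i)\in\mathcal{S}$ in the unit ball of $W$: the left-hand side produces exactly $\big|\log\frac{M_{X_0,\mathcal{S}}(n)}{M_{X_1,\mathcal{S}}(n)}\big|\,M_{W,\mathcal{S}}(n)$ by definition of the indicator function, using that $\mathcal{S}$ is closed under the passage from $W$ to $X_\theta$ (part (ii) of the definition of a standard class, so that the relevant families really lie in $\mathcal{S}$ viewed inside $X_\theta$ as well). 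This yields
\[
\left|\log\frac{M_{X_0,\mathcal{S}}(n)}{M_{X_1,\mathcal{S}}(n)}\right| M_{W,\mathcal{S}}(n) \leq K_1\, M_{X_\theta,\mathcal{S}}(n) + \frac{3}{\dist(\theta,\partial\mathbb{S})}\,M_{X_0,\mathcal{S}}(n)^{1-\theta}M_{X_1,\mathcal{S}}(n)^\theta.
\]

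Finally, I invoke Lemma \ref{logconvex}, which gives $M_{X_\theta,\mathcal{S}}(n)\leq M_{X_0,\mathcal{S}}(n)^{1-\theta}M_{X_1,\mathcal{S}}(n)^\theta$, so both terms on the right are bounded by a multiple of $M_{X_0,\mathcal{S}}(n)^{1-\theta}M_{X_1,\mathcal{S}}(n)^\theta$; setting $K=K_1+3\dist(\theta,\partial\mathbb{S})^{-1}$ delivers \eqref{lastMgeneral}. I expect the only delicate point to be bookkeeping with the standard class: one must make sure that the $n$-tuples used to compute $M_{W,\mathcal{S}}(n)$ are legitimately in $\mathcal{S}$ both as families in $W$ and as families in $X_\theta$ (so that Lemma \ref{core} applies to them) and that the $z_i$ implicitly appearing in Lemma \ref{trivialestimate} do not interfere — but this is exactly what conditions (i) and (ii) in the definition of a standard class are designed to handle, so no real obstacle remains.
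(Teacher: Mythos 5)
Your proof is correct and follows exactly the route the paper intends: the paper gives no separate proof of Proposition \ref{general} beyond the instruction to ``combine Lemma \ref{estimativa}, Lemma \ref{core} and Lemma \ref{logconvex}'', and your triangle-inequality argument, the passage to the supremum over families in the unit ball of $W$ (justified by condition (ii) of the standard class), and the final appeal to log-convexity are precisely that combination carried out in detail.
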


An even more general criterion could be obtained by using in the definition of $M_X$
sequences of vectors whose norms are at most some prescribed varying values, instead of
vectors of norm at most $1$. We shall not write it since it will not be needed to deal with the applications we are
interested in.\\

We consider first the standard class $\mathcal D$ of all disjointly supported sequences
in a K\"othe function space $X$, and simplify notation to:
$$
M_{X}(n)= M_{X,\mathcal{D}}(n) = \sup \{\|x_1+\ldots+x_n\| :
x_1,\ldots,x_k\; \textrm{\rm disjoint in the unit ball of}\; X\}.
$$
Recall that two functions $f,g: \N\to \R$ are called equivalent, and denoted $f\sim g$,
if $0< \liminf f(n)/g(n) \leq \limsup f(n)/g(n)< +\infty$.
As a direct application of the criterion in Proposition \ref{general} we have:

\begin{prop}\label{good}
Let $(X_0, X_1)$ be an interpolation couple of two K\"othe function spaces so that
$M_{X_0}$ and $M_{X_1}$ are not equivalent. Let $0<\theta<1$.
Assume that $X_\theta$ is reflexive, "self-similar" in the sense that $M_W \sim M_{X_\theta}$ for every
infinite-dimensional subspace generated by a disjoint sequence $W\subset X_\theta$, and
$M_{X_{\theta}} \sim M_{X_0}^{1-\theta} M_{X_1}^\theta$.
Then $\Omega_\theta$ is disjointly singular.
\end{prop}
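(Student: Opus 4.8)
The plan is to argue by contradiction using the criterion from Proposition \ref{general}, applied to the standard class $\mathcal D$ of disjointly supported sequences. Suppose that $\Omega_\theta$ is not disjointly singular. Then Proposition \ref{general} furnishes a subspace $W\subset X_\theta$ spanned by disjointly supported vectors, together with a constant $K$, such that
\[
\left|\log\frac{M_{X_0}(n)}{M_{X_1}(n)}\right| M_W(n)\leq K\, M_{X_0}(n)^{1-\theta}M_{X_1}(n)^\theta
\]
for all $n$. Now I would invoke the two hypotheses on $X_\theta$: self-similarity gives $M_W\sim M_{X_\theta}$, and the interpolation-type identity gives $M_{X_\theta}\sim M_{X_0}^{1-\theta}M_{X_1}^\theta$. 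Substituting, the right-hand side is, up to a multiplicative constant, comparable to $M_W(n)$, so after dividing through by $M_W(n)$ (which is $\geq 1$, hence nonzero) one obtains
\[
\left|\log\frac{M_{X_0}(n)}{M_{X_1}(n)}\right|\leq K'
\]
for some constant $K'$ and all $n$. This says precisely that the ratio $M_{X_0}(n)/M_{X_1}(n)$ is bounded above and bounded away from zero, i.e. $M_{X_0}\sim M_{X_1}$, contradicting the assumption that $M_{X_0}$ and $M_{X_1}$ are not equivalent. Hence $\Omega_\theta$ is disjointly singular.

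The only genuinely delicate point is bookkeeping with the $\sim$ relation: the chain of equivalences $M_W\sim M_{X_\theta}\sim M_{X_0}^{1-\theta}M_{X_1}^\theta$ must be combined with the inequality coming from Proposition \ref{general} in a way that legitimately cancels the $M_W$ factor on both sides. Since $\sim$ controls both $\liminf$ and $\limsup$ of the ratio by positive finite constants, and all the quantities $M_{(\cdot)}(n)$ are positive, this cancellation is valid and produces a uniform bound on $\left|\log(M_{X_0}(n)/M_{X_1}(n))\right|$; I would make this explicit by writing the constants rather than leaving it at the level of $\sim$. Everything else is a direct citation: Proposition \ref{general} already packages Lemma \ref{estimativa}, Lemma \ref{core}, and Lemma \ref{logconvex}, so no further estimate is needed here. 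The main conceptual work has really been done upstream in establishing the core estimate; the present proposition is the clean extraction of a verifiable numerical obstruction from non-disjoint-singularity.
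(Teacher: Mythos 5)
Your proposal is correct and follows exactly the paper's own argument: apply Proposition \ref{general} to get the inequality on some disjointly generated subspace $W$, then use the chain $M_W\sim M_{X_\theta}\sim M_{X_0}^{1-\theta}M_{X_1}^{\theta}$ to cancel $M_W(n)$ and conclude that $\bigl|\log(M_{X_0}(n)/M_{X_1}(n))\bigr|$ is bounded, contradicting non-equivalence. The paper states this more tersely as a chain of $O(\cdot)$ identities, but the content is identical, and your remark about making the constants explicit (using $M_W(n)\geq 1$) is the right way to justify the cancellation.
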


\begin{proof}
Otherwise, the estimate (\ref{lastMgeneral})  yields that, on some subspace $W$, one gets
$$
\left|\log\frac{M_{X_0}(n)}{M_{X_1}(n)}\right| M_{W}(n) = O( M(n,\theta))
= O(M_{X_\theta}(n)) = O(M_W(n)),
$$
which is impossible unless $M_{X_0}$ and $M_{X_1}$ are equivalent.
\end{proof}

Let us see these criteria at work.
The simplest case of course concerns the scale of $\ell_p$ spaces, $1< p<+\infty$.
%
These spaces are self similar with $M_{\ell_p}(n) = n^{1/p}$, while reiteration
theorems allow one to fix $X_0$ and $X_1$ at any two different values $p,q$ so that
$\lim |\log\frac{M_{X_0}(n)}{M_{X_1}(n)}| = \lim |\log n^{1/p - 1/q}| = +\infty$.
Thus, the induced centralizer, which is actually (projectively equivalent to) the Kalton-Peck
$\ell_\infty$-centralizer $\mathscr K$, is disjointly singular, hence singular on $\ell_p$.
The case of $L_p$ spaces, $1 < p < +\infty$ is also simple:
Proposition \ref{general} yields that if the twisted sum fails to be disjointly singular then
$$
\left|\log \frac{M_{L_\infty}(n)}{M_{L_1}(n)}\right| M_{\ell_p}(n)\leq
K M_{L_\infty}^{1-\frac{1}{p}}(n)M^{\frac{1}{p}}_{L_1}(n).
$$
Therefore $(\log n) n^{1/p} \leq K n^{1/p}$, which is impossible.
So the induced centralizer, which is actually (projectively equivalent to) the Kalton-Peck
$L_\infty$-centralizer $\mathscr K$, in $L_p$ is disjointly singular.\\

\begin{prop}\label{Lp} For $1<p<+\infty$, the Kalton-Peck $L_\infty$-centralizer
${\mathscr K}(f)=f \log \frac{|f|}{\|f\|}$ is disjointly singular on $L_p$,
\end{prop}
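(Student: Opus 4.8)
The plan is to apply Proposition \ref{general} with the standard class $\mathcal{D}$ of disjointly supported sequences, together with the interpolation formula $L_p = (L_\infty, L_1)_{1/p}$, and then check that the quantitative obstruction \eqref{lastMgeneral} cannot hold. First I would record the relevant indicator functions: for the measure-theoretic $L_p$ one has $M_{L_\infty}(n) = 1$ (disjoint unit vectors in $L_\infty$ can be assembled without increasing the norm), $M_{L_1}(n) = n$, and $M_{L_p}(n) = n^{1/p}$ for $1 < p < \infty$. Moreover, for \emph{any} infinite-dimensional subspace $W \subset L_p$ generated by a disjoint sequence, $W$ is isometric to $\ell_p$ and hence $M_W(n) = n^{1/p} = M_{L_p}(n)$ as well; this is the self-similarity needed for Proposition \ref{good}.

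The next step is to verify the hypotheses of Proposition \ref{good} directly: $M_{L_\infty}$ and $M_{L_1}$ are plainly inequivalent (one is bounded, the other grows linearly); $L_p$ is reflexive for $1<p<\infty$; the self-similarity $M_W \sim M_{L_p}$ was just observed; and the interpolation identity $M_{L_p}(n) = n^{1/p} = 1^{1-1/p}\cdot n^{1/p} = M_{L_\infty}(n)^{1-1/p} M_{L_1}(n)^{1/p}$ holds on the nose. Proposition \ref{good} then yields that $\Omega_{1/p}$, the centralizer induced by the couple $(L_\infty, L_1)$ at $\theta = 1/p$, is disjointly singular on $L_p$. To conclude I would identify this induced centralizer with $\mathscr K$ up to a scalar and a bounded perturbation: by Proposition \ref{spirit} (applied to the function-space case with $X = L_1$, $p$ there equal to our $p$, so that $L_1^{(p)}$ has the $p$-concavification giving $L_p = (L_\infty, L_1^{(p)})_{1/p}$ — or, more directly, by the Lozanovskii formula \eqref{kpgeneralized} with $a_0(f)^{1-1/p}a_1(f)^{1/p} = |f|$ and the optimal choice forcing $a_0(f)$ constant on $\mathrm{supp}\, f$) one computes $\Omega_{1/p}(f) = p\, f \log(|f|/\|f\|_p)$, which is $p \cdot \mathscr K$. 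Since disjoint singularity is unaffected by nonzero scalar multiples, $\mathscr K(f) = f\log(|f|/\|f\|)$ is disjointly singular on $L_p$.

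Alternatively, and perhaps more transparently for the reader, I would present the argument as the explicit computation already sketched in the text just before the statement: if $\mathscr K$ were not disjointly singular on $L_p$, then Proposition \ref{general} would force, on some disjointly generated subspace $W$ with $M_W(n) = n^{1/p}$,
\[
\left| \log \frac{M_{L_\infty}(n)}{M_{L_1}(n)} \right| M_W(n) \leq K\, M_{L_\infty}(n)^{1-1/p} M_{L_1}(n)^{1/p},
\]
that is, $(\log n)\, n^{1/p} \leq K\, n^{1/p}$, which is absurd for large $n$. This gives disjoint singularity immediately.

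\textbf{Main obstacle.} The genuinely substantive point is not the arithmetic — that is a one-line contradiction — but pinning down precisely that the centralizer produced by the interpolation couple $(L_\infty, L_1)$ at $\theta = 1/p$ is (projectively equivalent to) the Kalton-Peck map $\mathscr K$ and not some other centralizer; this rests on the Lozanovskii factorization analysis in Subsection \ref{subsect:Lozano} and on Proposition \ref{spirit}, and one must be careful that $L_1$ (or one of the endpoint spaces) has the Radon-Nikodym property so that the Lozanovskii formula and the isometry $X_\theta \cong X_0^{1-\theta} X_1^\theta$ are available. A secondary point worth stating cleanly is the isometric identification of disjointly generated subspaces of $L_p$ with $\ell_p$, which underlies the self-similarity hypothesis of Proposition \ref{good}; this is classical but should be invoked explicitly.
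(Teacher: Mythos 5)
Your proposal is correct and follows essentially the same route as the paper: the text preceding the statement is precisely the argument you give, namely applying Proposition \ref{general} to the couple $(L_\infty,L_1)$ at $\theta=1/p$ with $M_{L_\infty}(n)=1$, $M_{L_1}(n)=n$ and $M_W(n)=n^{1/p}$ for disjointly generated $W\subset L_p$, yielding the absurd inequality $(\log n)\,n^{1/p}\le K n^{1/p}$, and then identifying the induced centralizer with a nonzero multiple of $\mathscr K$ via the Lozanovskii/$p$-concavification description (Proposition \ref{spirit}). Only note that your parenthetical $L_p=(L_\infty,L_1^{(p)})_{1/p}$ is a slip (the correct identity, which your Lozanovskii computation actually uses, is $L_p=(L_\infty,L_1)_{1/p}$ since $L_1$ is the $p$-concavification of $L_p$); this does not affect the argument.
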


In \cite{cabe} it was shown that no $L_\infty$-centralizer on $L_p$ is singular for
$0<p<\infty$; previously, it had been shown in \cite{suarez} that the Kalton-Peck
$L_\infty$-centralizer $\Omega(f)=f \log |f|/\|f\|$ on $L_p$ is not singular since it becomes
trivial on the Rademacher copy of $\ell_2$. Proposition \ref{Lp} tells us that it is not trivial on any subspace generated by disjointly supported vectors. In \cite[Theorem 2(b)]{ccs} it was shown that the Kalton-Peck centralizer on $\ell_p$ is
singular for $0<p<\infty$. Cabello \cite{cabe} remarks that it would be interesting to know whether there exist singular
quasi-linear maps $L_p \to L_p$ for $p<2$.\\

A tricky question is what occurs with the scale of $L_p$-spaces in their $\ell_\infty$-module
structure generated by the Haar basis. Is the associated $\ell_\infty$-centralizer $\Omega_\theta$ singular? Khintchine's inequality makes possible to define $B_\theta(r)=f_r$ (the constant function
$f_r(z)=r$ on the subspace $\ell_2^R$ generated by the Rademacher functions, so
$\Omega_\theta(r)= \delta_\theta'B_\theta(r)=0$ on $\ell_2^R$ and thus $\Omega_\theta$
is not singular. Since the Haar basis is unconditional, this means that it is not disjointly singular either.

In sharp constrast with this, it was shown in \cite{ccs} that the Kalton-Peck centralizer $f\to f \log \frac{|f|}{\|f\|}$ (relative to the Haar basis)
is singular for $2\leq p<\infty$. This means, in particular, that the Kalton-Peck $\ell_\infty$-centralizer
relative to the Haar basis is not the $\ell_\infty$-centralizer induced by the interpolation
scale of $L_p$ spaces in their $\ell_\infty$-module structure.\\


We may use Proposition \ref{good} together with Proposition \ref{spirit} to prove singularity
of Kalton-Peck maps on more general classes of Banach lattices.

\begin{teor}\label{tsi}
Let $X$ be a reflexive, $p$-convex K\"othe function space, $p>1$.
Assume $M_X(n) \sim M_Y(n)$ for every subspace $Y$ of $X$ generated by a sequence of
disjointly supported vectors.
Then the Kalton-Peck map $\mathscr K(x)=x\log \frac{|x|}{\|x\|}$is disjointly singular on $X$.
\end{teor}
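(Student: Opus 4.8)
The plan is to apply Proposition \ref{good} to the interpolation couple $(L_\infty(\mu), X^{(p)})$, where $X^{(p)}$ denotes the $p$-concavification of $X$, so that by Proposition \ref{spirit} one has $X = (L_\infty(\mu), X^{(p)})_{1/p}$ and the induced centralizer on $X$ is $p\,\mathscr K$, which is disjointly singular exactly when $\mathscr K$ is. Thus the statement reduces to checking the four hypotheses of Proposition \ref{good} for the couple $(X_0, X_1) = (L_\infty(\mu), X^{(p)})$ with $\theta = 1/p$: namely (a) that $M_{X_0}$ and $M_{X_1}$ are not equivalent; (b) that $X_\theta = X$ is reflexive; (c) self-similarity $M_W \sim M_X$ for every disjointly generated subspace $W \subset X$; and (d) the interpolation formula $M_X \sim M_{X_0}^{1-\theta} M_{X_1}^\theta$.

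\medskip

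Hypothesis (b) is assumed outright, and (c) is exactly the hypothesis $M_X(n)\sim M_Y(n)$ placed on $X$ in the statement. For (d), I would use Lemma \ref{logconvex} for one inequality, $M_X(n) = M_{X_{1/p}}(n) \le M_{L_\infty}(n)^{1-1/p} M_{X^{(p)}}(n)^{1/p}$; for the reverse inequality one uses that $X$ is $p$-convex, so $M_X(n) \ge c\, n^{1/p}$ for disjoint unit vectors (indeed $p$-convexity gives a lower $p$-estimate on disjoint vectors), while at the same time $M_{L_\infty}(n) = n$ and the $p$-concavification satisfies $M_{X^{(p)}}(n) = M_X(n)^p$ up to constants, so that $M_{L_\infty}(n)^{1-1/p} M_{X^{(p)}}(n)^{1/p} = n^{1-1/p} M_X(n)$. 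Combining with self-similarity, $n^{1-1/p} M_X(n)$ must be comparable to $M_X(n)$ along any subspace, which is consistent only because the relevant quantity entering the criterion \eqref{lastMgeneral} is the \emph{logarithmic} factor; more precisely, for (d) itself it suffices that $M_X(n) \ge c n^{1/p}$ together with log-convexity pins down $M_X^{1-1/p}M_{X^{(p)}}^{1/p}$ up to the factor $n^{1-1/p}$, and one verifies directly that $M_X(n)\sim n^{1/p}$ (a $p$-convex K\"othe space with the self-similarity property has $M_X(n)\sim n^{1/p}$, since it must have both an upper and a lower $p$-estimate on disjoint sequences — the upper $p$-estimate being forced by reflexivity and self-similarity ruling out an $\ell_\infty^n$-behavior). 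Then $M_{X^{(p)}}(n)\sim n$ and $M_{L_\infty}(n)^{1-1/p}M_{X^{(p)}}(n)^{1/p}\sim n^{(1-1/p)+1/p}=n$... so one works instead with the unnormalized version, or more carefully tracks that $(L_\infty)^{(p)}$-style concavification sends $M_{X^{(p)}} = M_X^p$ giving $M_{L_\infty}^{1-1/p}M_{X^{(p)}}^{1/p}\sim n^{1-1/p}\cdot n = n^{2-1/p}$, not matching $n^{1/p}$: this discrepancy is absorbed because Proposition \ref{good} only needs the \emph{product} $M_{X_0}^{1-\theta}M_{X_1}^\theta$ to dominate $M_{X_\theta}$ up to the self-similarity of $W$, and the genuine content is that $|\log(M_{L_\infty}(n)/M_{X^{(p)}}(n))| = |\log(n / M_X(n)^p)| \to \infty$.

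\medskip

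Hence the real point, and hypothesis (a), is that $M_{L_\infty}(n) = n$ while $M_{X^{(p)}}(n) = M_X(n)^p \sim n$ — wait, these \emph{are} equivalent — so I would instead argue directly via the criterion \eqref{lastMgeneral} rather than the streamlined Proposition \ref{good}: if $\mathscr K$ (equivalently the induced centralizer on $X$) were not disjointly singular, then on some disjointly generated $W\subset X$ one has, by Proposition \ref{general},
\[
\left|\log \frac{M_{L_\infty}(n)}{M_{X^{(p)}}(n)}\right| M_W(n) \le K\, M_{L_\infty}(n)^{1-1/p} M_{X^{(p)}}(n)^{1/p}.
\]
Using $M_{L_\infty}(n) = n$, $M_{X^{(p)}}(n) \sim M_X(n)^p$, self-similarity $M_W(n)\sim M_X(n)$, and $M_X(n)\sim n^{1/p}$ (established as above from $p$-convexity plus self-similarity plus reflexivity), the right side is $\sim n^{1-1/p}\cdot n^{1/p} = n$, while the left side is $\sim |\log(n/n)|\cdot n^{1/p}$... which vanishes. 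This shows the naive estimate is not enough and that one must keep $X^{(p)}$ \emph{unnormalized}: the correct reading is that $M_{X^{(p)}}(n)$ need not be $\sim n$ but only satisfies $M_{X^{(p)}}(n) = M_X(n)^p$, and $X$ being merely $p$-convex (not $p$-concave) means $M_X(n)$ can be strictly larger than $n^{1/p}$, say $M_X(n)\sim n^{1/q}$ with $q < p$, giving $M_{X^{(p)}}(n)\sim n^{p/q}$ with $p/q > 1$, hence $|\log(n/n^{p/q})| = (p/q - 1)\log n \to \infty$, and the right side $\sim n^{1-1/p}\cdot n^{1/q}$ while the left side $\sim (\log n)\, n^{1/q}$; dividing by $n^{1/q}$ gives $\log n \le K n^{1-1/p}$ — \emph{still} consistent. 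So the genuinely needed refinement is the \emph{self-similarity on $X^{(p)}$-side via $W^{(p)}$}: I would pass to the $p$-concavification of the subspace $W$ inside $X^{(p)}$ and use that the couple restricted to $W$ still generates $W$ with centralizer $p\mathscr K_{|W}$, reducing to the already-proven sequence-space phenomenon. \textbf{The main obstacle} is precisely this bookkeeping: ensuring that when $\mathscr K$ trivializes on $W$, the estimate from Proposition \ref{general} applied to the couple $(L_\infty, X^{(p)})$ actually produces a contradiction, which requires that $|\log(M_{L_\infty}(n)/M_{X^{(p)}}(n))|$ grows \emph{strictly faster} than any bounded multiple of $M_{X^{(p)}}(n)^{1/p}/M_W(n) \cdot M_{L_\infty}(n)^{1-1/p} / (\text{that log})$ — and the clean way to guarantee this is to invoke Proposition \ref{good} after first proving the elementary lemma that a reflexive self-similar $p$-convex K\"othe space has $M_X(n)$ \emph{strictly} sub-linear (so $M_{L_\infty} = n$ and $M_{X^{(p)}} = M_X^p$ are non-equivalent precisely when $M_X(n) \not\sim n^{1/p'}$... ), i.e. the cleanest route is: reduce to $\mathscr K$ on $X$, apply Proposition \ref{spirit} to get the interpolation representation, and then quote Proposition \ref{good} once the non-equivalence of endpoint indicator functions is checked — which is where I expect the authors' proof to be shortest, and which I would present as the one genuine step requiring care.
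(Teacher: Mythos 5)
Your overall strategy --- rewrite $X=(L_\infty,X^{(p)})_{1/p}$ via Proposition \ref{spirit}, note that the induced centralizer is a multiple of $\mathscr K$ (so the two are projectively equivalent and one is disjointly singular iff the other is), and then apply Proposition \ref{good} --- is exactly the paper's route. But the execution founders on one concrete error: you take $M_{L_\infty}(n)=n$, whereas for the standard class of \emph{disjointly supported} vectors one has $M_{L_\infty}(n)=1$, since the sup-norm of a sum of disjointly supported functions each of sup-norm at most $1$ is again at most $1$ (the value $n$ is what one gets in $L_1$, not in $L_\infty$). With the correct value everything you struggled with evaporates: $M_{X^{(p)}}(n)=M_X(n)^p$ (immediate from $\|x\|_{X^{(p)}}=\||x|^{1/p}\|_X^p$), hence $M_{L_\infty}(n)^{1-1/p}\,M_{X^{(p)}}(n)^{1/p}=M_X(n)$, which is hypothesis (d) of Proposition \ref{good} on the nose; and hypothesis (a) is the non-equivalence of the constant function $1$ with the unbounded function $M_X(n)^p$ (the paper gets unboundedness of $M_X(n)$ from $p$-convexity). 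Together with reflexivity and the self-similarity hypothesis of the theorem, Proposition \ref{good} then applies directly, the divergent factor being $\bigl|\log\bigl(1/M_X(n)^p\bigr)\bigr|=p\log M_X(n)\to\infty$.

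Because of the wrong value of $M_{L_\infty}(n)$, your attempted verifications of (a) and (d) collapse into the chain of mutually inconsistent computations in your second and third paragraphs: at various points you assert $M_X(n)\sim n^{1/p}$, $M_{X^{(p)}}(n)\sim n$, and $M_X(n)\sim n^{1/q}$ with $q<p$, none of which is needed, justified by the hypotheses, or compatible with the others, and each time the contradiction from \eqref{lastMgeneral} fails to materialize. The proposal then ends by deferring the ``one genuine step requiring care'' back to the expected proof rather than supplying it. As written this is a genuine gap --- the argument is never closed --- but it is repaired entirely by the single correction $M_{L_\infty}(n)=1$, after which your plan coincides with the paper's proof.
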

\begin{proof}
Since $X$ is $p$-convex we may by Proposition \ref{spirit} write $X=(L_\infty,X^{p})_{1/p}$.
Furthermore the centralizer induced by this interpolation scheme is a multiple of the
Kalton-Peck map.
In particular, the two twisted sums are projectively equivalent in the sense of
Section \ref{sect:twisted-sums}.
Thus one is singular if and only if the other is.
Since the norm on $X^{p}$ is defined as $\|x\|=\||x|^{1/p}\|_X^p$, we have immediately
that $M_{X^p}(n)=M_X(n)^p$.
Since $X$ is $p$-convex, $M_X(n)$ is not bounded and so $M_X(n)^p$ is not equivalent
to $M_{L_\infty}(n)=1$.
Furthermore
$$
M_{L_\infty}(n)^{1-\frac{1}{p}} M_{X^p}(n)^{\frac{1}{p}}
=(M_X(n)^p)^{1/p}=M_X(n),
$$
and by Proposition \ref{good} the centralizer (hence the Kalton-Peck map)
is disjointly  singular.
\end{proof}

\subsection{The criterion in spaces with  unconditional bases} We consider now the following asymptotic variation of $M_X$ with its associated standard class:
$$
A_{X}(n)=\sup \{\|x_1+\ldots+x_n\|: \|x_i\|\leq 1,\; n<x_1< \ldots < x_n\},
$$
when $X$ has a $1$-monotone basis.
Then Proposition \ref{general} can be reformulated as follows:

\begin{prop}\label{logb}
Let $(X_0, X_1)$ be an admissible pair of Banach spaces with a common $1$-unconditional
basis, and $0<\theta<1$.
\begin{itemize}
\item[a)] If the associated centralizer $\Omega_\theta$ is not singular then there
exists a block subspace $W \subset X_\theta$  and a constant $K$ such that:
$$\left| \log \frac{A_{X_0}(n)}{A_{X_1}(n)}\right| A_W(n) \leq K
{\rm A}_{X_0}^{1-\theta}(n){\rm A}_{X_1}^\theta(n).$$
\item[b)] If $A_{X_0} \not\sim A_{X_1}$ and
$A_{X_0}^{1-\theta} A_{X_1}^\theta \sim A_{X_{\theta}} \sim A_Y$
%
for all subspaces $Y\subset X_\theta$ then $\Omega_\theta$ is singular.
\end{itemize}
\end{prop}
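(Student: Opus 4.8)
The plan is to read both statements off the general K\"othe-space criteria of this section, applied to the standard class $\mathcal S$ of Schreier successive vectors, for which $M_{X,\mathcal S}(n)=A_X(n)$ by definition; the extra ingredient is that on a space with an unconditional basis a quasi-linear map is singular exactly when it is disjointly singular. The canonical centralizer $\Omega_\theta=\delta'_\theta B_\theta$ attached to the common-basis scale is exact by Lemma~\ref{addit-prop}, so Lemmas~\ref{estimativa}, \ref{core}, \ref{logconvex} and Proposition~\ref{general} all apply to it (with reflexivity of $X_\theta$ understood where Lemma~\ref{estimativa} is invoked).

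For part (a): if $\Omega_\theta$ is not singular then it is trivial on some infinite-dimensional subspace of $X_\theta$, hence by the transfer principle (Lemma~\ref{transfer}) on a block subspace $W=[w_n]$ — equivalently, for an unconditional basis, $\Omega_\theta$ is not disjointly singular. Running the argument behind Proposition~\ref{general} on this particular $W$ with $\mathcal S$ the Schreier class then gives the claimed inequality: for an $n$-tuple $(u_i)\in\mathcal S$ in the unit ball of $W$ one has $\|\Omega_\theta(\sum_i u_i)-\sum_i\Omega_\theta(u_i)\|\le K\,A_{X_\theta}(n)$ by Lemma~\ref{estimativa}; comparing this with the core estimate of Lemma~\ref{core} for the same tuple and taking the supremum over all such $(u_i)$ yields
\[
\Big|\log\tfrac{A_{X_0}(n)}{A_{X_1}(n)}\Big|\,A_W(n)\ \le\ K\,A_{X_\theta}(n)\ +\ \tfrac{3}{\dist(\theta,\partial\mathbb S)}\,A_{X_0}(n)^{1-\theta}A_{X_1}(n)^{\theta},
\]
and finally $A_{X_\theta}(n)\le A_{X_0}(n)^{1-\theta}A_{X_1}(n)^{\theta}$ by Lemma~\ref{logconvex} collapses the right-hand side to $K'A_{X_0}(n)^{1-\theta}A_{X_1}(n)^{\theta}$, which is (a).

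For part (b), I would argue by contradiction as in Proposition~\ref{good}. Were $\Omega_\theta$ not singular, part (a) would supply a block subspace $W$ with $|\log(A_{X_0}(n)/A_{X_1}(n))|\,A_W(n)=O\big(A_{X_0}(n)^{1-\theta}A_{X_1}(n)^{\theta}\big)$; the hypotheses $A_{X_0}^{1-\theta}A_{X_1}^{\theta}\sim A_{X_\theta}\sim A_W$ then allow one to cancel $A_W(n)$ up to a multiplicative constant, forcing $\log(A_{X_0}(n)/A_{X_1}(n))$ to stay bounded, i.e.\ $A_{X_0}\sim A_{X_1}$, against the standing assumption; hence $\Omega_\theta$ is singular.

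Once Lemmas~\ref{core}, \ref{estimativa} and \ref{logconvex} are in hand the argument is essentially bookkeeping, and the two points that genuinely need care are, first, checking that the Schreier successive vectors satisfy the axioms of a standard class — in particular that the normalization clause $n<\supp x_1$ is respected when one passes between a block subspace $W$ and the ambient $X_\theta$ — and, second, making sure that the failure of singularity is fed in through a subspace spanned by successive blocks of the basis rather than by arbitrary disjointly supported vectors, which is exactly the role of Lemma~\ref{transfer}. I expect the first of these to be the only delicate point.
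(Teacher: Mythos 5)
Your proposal is correct and is exactly the argument the paper intends: Proposition~\ref{logb} is stated there as a direct reformulation of Proposition~\ref{general} (via Lemmas~\ref{estimativa}, \ref{core} and \ref{logconvex}) for the standard class of Schreier successive vectors, for which $M_{X,\mathcal S}(n)=A_X(n)$, together with the equivalence of singularity and disjoint singularity for unconditional bases and the transfer principle to land on a block subspace. The two points you flag as delicate (the standard-class axioms for the Schreier class when passing from a block subspace $W$ to $X_\theta$, and the implicit reflexivity needed for Lemma~\ref{estimativa}) do check out and are precisely what the paper leaves tacit.
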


Recall that a Banach space with a basis is said to be {\em asymptotically $\ell_p$} if there
exists $C \geq 1$ such that for all $n$ and normalized $n<x_1<\ldots<x_n$ in $X$, the sequence
$(x_i)_{i=1}^n$ is $C$-equivalent to the basis of $\ell_p^n$.
Apart from the $\ell_p$ spaces, Tsirelson's space is asymptotically $\ell_1$ as well as a class
of H.I. spaces (this one without unconditional basic sequences)  defined by Argyros and Delyanii \cite{argdely}.
One has:

\begin{cor}\label{gen-cor}
Let $(X_0, X_1)$ be an interpolation pair of Banach spaces with a common $1$-unconditional basis.
Let $p_0 \neq p_1$ and $\frac{1}{p}=\frac{1-\theta}{p_0}+\frac{\theta}{p_1}$.
The induced centralizer $\Omega_\theta: X_\theta \lop X_\theta$ is singular in any of the
following cases:

\begin{enumerate}
\item  The spaces $X_j$, $j=0,1$ are reflexive asymptotically $\ell_{p_j}$.
\item  Successive vectors in $X_{j}$, $j=0,1$ satisfy an asymptotic upper
$\ell_{p_j}$-estimate; and for every block-subspace $W$ of $X_\theta$, there exist a
constant $C$ and, for each $n$, a finite block-sequence $n < y_1 < \ldots < y_n$ in $B_W$
such that $\|y_1+\cdots+y_n\| \geq C^{-1}n^{1/p}$.
\end{enumerate}
\end{cor}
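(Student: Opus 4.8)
The plan is to read both statements off Proposition \ref{logb}, working with the standard class $\mathcal S$ of Schreier successive vectors and its indicator $A_X$: part (b) will handle case (1), and part (a) will be used to reach a contradiction in case (2). A preliminary observation to record is that, since $0<\theta<1$ and $1/p_0\neq 1/p_1$, the number $1/p=(1-\theta)/p_0+\theta/p_1$ is a \emph{strict} convex combination of $1/p_0$ and $1/p_1$; hence $\min(1/p_0,1/p_1)<1/p<\max(1/p_0,1/p_1)$. This strict inequality is what will eventually give the non-equivalence $A_{X_0}\not\sim A_{X_1}$ needed in case (1), and the final contradiction in case (2).

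For case (1) I would first identify the indicators: an asymptotically $\ell_{p_j}$ space satisfies both an upper and a lower asymptotic $\ell_{p_j}$-estimate on successive normalized vectors, so $A_{X_j}(n)\sim n^{1/p_j}$; in particular $A_{X_0}\not\sim A_{X_1}$. Lemma \ref{logconvex} gives immediately $A_{X_\theta}(n)\leq A_{X_0}(n)^{1-\theta}A_{X_1}(n)^{\theta}\sim n^{1/p}$. The real work is the matching lower bound $\|\sum_{i\leq n}u_i\|_\theta\gtrsim n^{1/p}$ for \emph{every} normalized disjoint $n$-tuple in $X_\theta$. I would get it from the Lozanovskii representation $X_\theta=X_0^{1-\theta}X_1^{\theta}$ (available since $X_0,X_1$ are reflexive, hence have RNP): for any factorization $|\sum_i u_i|=|y|^{1-\theta}|z|^{\theta}$ one may localize $y,z$ to $\bigcup_i\mathrm{supp}\,u_i$, split them into pieces $y_i,z_i$ supported on $\mathrm{supp}\,u_i$, note that the restricted factorization of $u_i$ forces $\|y_i\|_0^{1-\theta}\|z_i\|_1^{\theta}\geq\|u_i\|_\theta=1$, feed the successive normalized tuples $(y_i/\|y_i\|_0)$, $(z_i/\|z_i\|_1)$ into the lower $\ell_{p_j}$-estimates, and finish with Hölder's inequality (exponent $u=p_1(1-\theta)/(p_0\theta+p_1(1-\theta))$) to turn $(\sum\|y_i\|_0^{p_0})^{(1-\theta)/p_0}(\sum\|z_i\|_1^{p_1})^{\theta/p_1}$ into a lower bound $n^{1/p}$. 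Taking the infimum over factorizations yields the claim, and since the same estimate is available inside any block subspace $Y\subset X_\theta$, I get $A_Y(n)\sim n^{1/p}\sim A_{X_\theta}(n)\sim A_{X_0}(n)^{1-\theta}A_{X_1}(n)^{\theta}$ for all such $Y$. As $X_\theta$ is reflexive (being an interpolation space of reflexive spaces), Proposition \ref{logb}(b) applies and $\Omega_\theta$ is singular.

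For case (2) I would argue by contradiction. If $\Omega_\theta$ is not singular, Proposition \ref{logb}(a) yields a block subspace $W\subset X_\theta$ and a constant $K$ with $|\log(A_{X_0}(n)/A_{X_1}(n))|\,A_W(n)\leq K\,A_{X_0}(n)^{1-\theta}A_{X_1}(n)^{\theta}$. The asymptotic upper $\ell_{p_j}$-estimates give $A_{X_j}(n)\leq C_j n^{1/p_j}$, so the right-hand side is $O(n^{1/p})$; the second hypothesis applied to this $W$ gives $A_W(n)\geq C^{-1}n^{1/p}$. Dividing, $|\log(A_{X_0}(n)/A_{X_1}(n))|$ stays bounded, i.e. $A_{X_0}\sim A_{X_1}$. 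Now apply the second hypothesis to $W=X_\theta$ itself together with Lemma \ref{logconvex}: $n^{1/p}\lesssim A_{X_\theta}(n)\leq A_{X_0}(n)^{1-\theta}A_{X_1}(n)^{\theta}\sim A_{X_0}(n)\leq C_0 n^{1/p_0}$, forcing $1/p\leq 1/p_0$, and symmetrically $1/p\leq 1/p_1$; this contradicts $1/p>\min(1/p_0,1/p_1)$, so $\Omega_\theta$ is singular.

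I expect the only substantive step to be the uniform lower $\ell_p$-estimate for $X_\theta$ in case (1); everything else is bookkeeping with the indicators and the criteria already established. The delicate point is precisely that the estimate must hold over \emph{all} disjoint normalized tuples (so that it passes to an arbitrary block subspace), which is what the Lozanovskii-factorization argument with the Hölder rebalancing delivers. An alternative route to the same estimate would be to pass to duals: $X_\theta^*=(X_0^*,X_1^*)_\theta$, the $X_j^*$ satisfy upper asymptotic $\ell_{p_j'}$-estimates, these interpolate to an upper $\ell_{p'}$-estimate on $X_\theta^*$ (the natural extension of Lemma \ref{logconvex} to arbitrary coefficients), and one reads off the lower $\ell_p$-estimate on $X_\theta$; either way one genuinely uses the reflexivity/RNP hypotheses.
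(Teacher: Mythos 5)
Your derivation is correct and follows the route the paper clearly intends: the corollary is stated without proof as an immediate consequence of Proposition \ref{logb}, and your case (2) is exactly the direct reading of part (a) (with the nice extra observation that the hypotheses of (2) already force $A_{X_0}\not\sim A_{X_1}$, via log-convexity and the strictness of the convex combination $1/p$). The one place where you go beyond what the paper writes down is case (1): to invoke Proposition \ref{logb}(b) one genuinely needs the lower bound $A_Y(n)\gtrsim n^{1/p}$ for every block subspace $Y$ of $X_\theta$, i.e.\ a lower $\ell_p$-estimate on Schreier tuples in $X_\theta$, and the paper supplies no argument for this. Your Lozanovskii-factorization proof (localize the factors to the supports of the $u_i$, use the lower asymptotic $\ell_{p_j}$-estimates on the pieces, then H\"older with exponents $p_0/((1-\theta)p)$ and $p_1/(\theta p)$, which are conjugate since $(1-\theta)p/p_0+\theta p/p_1=1$) is correct and legitimately uses reflexivity through the RNP hypothesis needed for the factorization $X_\theta=X_0^{1-\theta}X_1^\theta$; the duality route you sketch would work as well. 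Two small points worth a remark if you write this up: the H\"older step degenerates when one $p_j=\infty$ (e.g.\ a reflexive asymptotically $c_0$ space such as $T^*$), though the estimate still goes through by bounding $a_i\geq b^{-\theta/(1-\theta)}$ with $b=\max_i b_i$; and Proposition \ref{logb} tacitly carries the reflexivity of $X_\theta$ needed for Lemma \ref{trivialestimate}, which in case (1) follows from reflexivity of $X_0,X_1$.
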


We also obtain as immediate corollary, with the same method as in Theorem \ref{tsi}:

\begin{cor}\label{bis}
Let $X$ be a $p$-convex reflexive space with $1$-unconditional basis, such that
$A_X(n) \sim A_Y(n)$ for every block-subspace $Y$ of $X$.
Then the Kalton-Peck map $\mathscr K(x)=x\log \frac{|x|}{\|x\|}$ is singular on $X$.
\end{cor}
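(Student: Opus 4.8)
The plan is to deduce this corollary from Proposition \ref{spirit} and Proposition \ref{logb}(b), in exact analogy with the proof of Theorem \ref{tsi} — where Proposition \ref{good} played the role that Proposition \ref{logb}(b) plays here, and disjoint families were used in place of Schreier successive families.

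First I would invoke the converse half of Proposition \ref{spirit}: since $X$ is $p$-convex with $1$-unconditional basis, $X=(\ell_\infty,X^{(p)})_{1/p}$, where $X^{(p)}$ is the $p$-concavification of $X$, and the centralizer $\Omega_{1/p}$ induced by this scheme is, up to a nonzero scalar factor and a bounded perturbation, the Kalton-Peck map $\mathscr K(x)=x\log(|x|/\|x\|)$. Since neither multiplication by a nonzero scalar nor addition of a bounded map affects triviality of a quasi-linear map on a subspace, $\mathscr K$ is singular on $X$ if and only if $\Omega_{1/p}$ is; so it suffices to prove that $\Omega_{1/p}$ is singular, and for this I would apply Proposition \ref{logb}(b) to the admissible pair $(X_0,X_1)=(\ell_\infty,X^{(p)})$ and $\theta=1/p$, so that $X_\theta=X$.

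It then remains to verify the two hypotheses of Proposition \ref{logb}(b). Computing the indicator $A$: clearly $A_{\ell_\infty}(n)=1$ for every $n$, since disjointly supported normalized vectors in $\ell_\infty$ have sup-norm $1$; while the defining formula $\|f\|_{X^{(p)}}=\||f|^{1/p}\|_X^p$ shows that $f\mapsto|f|^{1/p}$ carries a normalized Schreier successive family of $X^{(p)}$ to a normalized Schreier successive family of $X$ with the same supports and, using $|\sum_i z_i|^p=\sum_i|z_i|^p$ for disjointly supported $z_i$, that $\|\sum_i x_i\|_{X^{(p)}}=\|\sum_i|x_i|^{1/p}\|_X^p$; hence $A_{X^{(p)}}(n)=A_X(n)^p$ (the successive-blocks analogue of the identity $M_{X^p}(n)=M_X(n)^p$ used in Theorem \ref{tsi}). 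Consequently $A_{X_0}(n)^{1-\theta}A_{X_1}(n)^\theta=1^{1-1/p}\,(A_X(n)^p)^{1/p}=A_X(n)=A_{X_\theta}(n)$, and by hypothesis $A_X(n)\sim A_Y(n)$ for every block subspace $Y$ of $X$, i.e. $A_{X_\theta}\sim A_Y$. Finally $A_{X_0}=1\not\sim A_{X_1}=A_X^{\,p}$, because $X$ is reflexive and therefore contains no copy of $c_0$, which forces $A_X(n)\to\infty$ — exactly as reflexivity was used in Theorem \ref{tsi} to exclude $M_X(n)=O(1)$. Proposition \ref{logb}(b) then yields that $\Omega_{1/p}$, hence $\mathscr K$, is singular on $X$.

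There is no substantial obstacle here: once Propositions \ref{spirit} and \ref{logb}(b) are available, the argument is pure bookkeeping. The only points requiring (routine) care are the identity $A_{X^{(p)}}(n)=A_X(n)^p$, where one must check that passing to the $p$-concavification genuinely maps the Schreier successive standard class into itself — which it does, since it neither changes supports nor disturbs the $1$-unconditional basis — and the observation that $A_X$ is unbounded, which is where the reflexivity of $X$ enters.
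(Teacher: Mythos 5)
Your proposal is correct and follows exactly the route the paper intends for Corollary \ref{bis}: the paper proves it ``with the same method as in Theorem \ref{tsi}'', namely Proposition \ref{spirit} to realize $X=(\ell_\infty,X^{(p)})_{1/p}$ with induced centralizer a scalar multiple of $\mathscr K$, followed by Proposition \ref{logb}(b) together with the computation $A_{\ell_\infty}(n)=1$, $A_{X^{(p)}}(n)=A_X(n)^p$, hence $A_{X_0}^{1-\theta}A_{X_1}^{\theta}=A_X\sim A_Y$ and $A_{X_0}\not\sim A_{X_1}$ since $A_X$ is unbounded. Your appeal to reflexivity (no copy of $c_0$) to get the unboundedness of $A_X$ is if anything a cleaner justification than the paper's.
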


Spaces to which Corollary \ref{bis} apply include, for example, the $p$-convexified Tsirelson
spaces $T^{(p)}$, $p>1$; since then $A_Y(n) \sim n^{1/p}$ for any block subspace $Y$.Thus, the Kalton-Peck map on $T^{(p)}$ is singular.

\subsection{The  criterion in spaces with monotone bases} Let $\Omega: X\to X$ be a quasi-linear map acting on a space with $1$-monotone basis.
This case does not fit under the umbrella of Kalton theorem, so it could well occur
that $\Omega$ could not be recovered from an interpolation scheme.
Without the lattice structure, supports cannot be used in the same way as before,
although successive vectors and asymptoticity still makes sense, so that the function
$A_X$ still may be defined.
In this context one uses the range of vectors (${\rm ran\ } x$ is the minimal interval
of integers containing its support) instead of their supports.
In the general case of $1$-monotone bases the maps $\Omega_\theta$ appearing in an
interpolation process are not $\ell_\infty$-centralizers or contractive.
However, the maps can be chosen to be ``range'' contractive,
in the sense of verifying ${\rm ran\ }\Omega_\theta(x) \subset {\rm ran}\ x$.
Indeed if for $x \in c_{00}$, $b_\theta(x)$ is an almost optimal selection, then
$B_\theta(x)=1_{{\rm ran} x}  b_{\theta}(x)$ will also be almost optimal and range
contractive, so $\delta_\theta^{\prime}B_\theta$ will be the required map.
The transfer principle still works and thus a non-singular $\Omega: X\to X$ must be
trivial on some subspace $W$ generated by blocks of the basis.\\

Note that the lattice structure was not used in the proof of Lemma \ref{core}, apart from
the use of supports, which are here replaced by ranges. So a proof entirely similar to that
of Lemma \ref{core},  using instead the function
$$
F(z) =\frac{1}{A_{X_0}(n)^{1-z} A_{X_1}(n)^z}(B_\theta(y_1) + \cdots + B_\theta(y_n))(z),
$$
immediately yields the estimate
\begin{equation}\label{acore}
\Big\|\Omega_\theta(\sum_{i=1}^n y_i)-\sum_{i=1}^n \Omega_\theta(y_i)-
\log\frac{{\rm A}_{X_0}(n)}{{\rm A}_{X_1}(n)}
\sum_i y_i\Big\| \leq k_\theta {\rm A}_{X_0}^{1-\theta}{\rm A}_{X_1}^\theta(n),
\end{equation}
for all $n<y_1<\cdots<y_n$ in the unit ball of $X_\theta$, in an interpolation scale $(X_0,X_1)$
of spaces with common $1$-monotone basis (here $k_\theta=3 \dist(\theta,\partial \mathbb S)^{-1}$).
One can also prove that the function
$\theta \mapsto A_{X_\theta}(n)$ is log-convex working as in Lemma \ref{logconvex}.
On the other hand the estimate in Lemma \ref{trivialestimate} requires lattice structure in a deep
way, and so something new is needed in the conditional case:
we shall now see how the lattice structure may be replaced by hypotheses of local unconditionality
and complementation.


\begin{prop}\label{appl-B-convex}
Assume we have a complex interpolation scheme of two spaces $X_0$, $X_1$ with a
common $1$-monotone basis.
Assume that for every block-subspace $W$ of $X_\theta$, there exists for every $n$ a finite
successive sequence $n<y_1<\cdots<y_n$ with $\|y_i\| \leq 1\ \forall i=1,\ldots,n$,
and constants $\varepsilon_n, \lambda_n, M_n$ satisfying
\begin{itemize}
\item[(i)] The block sequence is  $\varepsilon_n$-optimal, in the sense that
$\left\|\sum_{i=1}^n  y_i\right \| \geq \varepsilon_n A_{X_0}(n)^{1-\theta}A_{X_1}(n)^\theta$;
\item[(ii)] The block sequence $\{y_1, \dots, y_n\}$ is $\lambda_n$-unconditional;
\item[(iii)] the space $[y_1,\ldots,y_n]$ is $M_n$-complemented in $X_\theta$;
\end{itemize}
and so that
$$
\liminf_{n\to +\infty} \frac{\lambda_n^3 M_n}{\varepsilon_n \left|\log
\frac{A_{X_0}(n)}{A_{X_1}(n)}\right|}=0.
$$
Then $\Omega_\theta$ is singular.
\end{prop}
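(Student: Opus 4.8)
The plan is to argue by contradiction, transferring the strategy of Lemma~\ref{trivialestimate} to the conditional setting: the local hypotheses (ii)--(iii) will play the role of the lattice structure, and the conditional core estimate (\ref{acore}) will replace Lemma~\ref{core}. So suppose $\Omega_\theta$ is not singular (as singularity is a bounded‑perturbation invariant we may use the range‑contractive representative of $\Omega_\theta$, the one for which (\ref{acore}) holds). By the transfer principle for range‑contractive quasi‑linear maps on a space with $1$‑monotone basis, recalled just above, $\Omega_\theta$ is trivial on some block subspace $W\subset X_\theta$; hence there are a linear map $L\colon W\to X_\theta$ and a constant $C$ with $\|\Omega_\theta(w)-L(w)\|\le C\|w\|$ for every $w\in W$, and we set $B\colon W\to X_\theta$, $B(w)=\Omega_\theta(w)-L(w)$. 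Now fix $n$ and apply the hypothesis to this particular $W$: we obtain a successive sequence $n<y_1<\cdots<y_n$ in the unit ball of $W$ and constants $\varepsilon_n,\lambda_n,M_n$ satisfying (i)--(iii). Put $E_n=[y_1,\dots,y_n]$, fix a projection $P_n\colon X_\theta\to E_n$ with $\|P_n\|\le M_n$, and for each sign vector $\varepsilon=(\varepsilon_i)\in\{-1,1\}^n$ let $T_\varepsilon\colon E_n\to E_n$ be the diagonal operator $\sum_i c_iy_i\mapsto\sum_i\varepsilon_ic_iy_i$ and $\widehat T_\varepsilon=T_\varepsilon P_n\colon X_\theta\to E_n$; by (ii) and (iii) one has $\|\widehat T_\varepsilon\|\le\lambda_nM_n$.

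First I would produce a vector identity. The sequence $(\varepsilon_iy_i)_i$ is again successive, of norm at most $1$, and with support above $n$, so (\ref{acore}) applies to it; using the homogeneity of $\Omega_\theta$ (so $\sum_i\varepsilon_i\Omega_\theta(y_i)=\sum_i\Omega_\theta(\varepsilon_iy_i)$) and the linearity of $L$ (so $\sum_i\varepsilon_iL(y_i)=L(\sum_i\varepsilon_iy_i)$), the $L$‑terms cancel and the estimate rewrites as
\[
\log\frac{A_{X_0}(n)}{A_{X_1}(n)}\sum_i\varepsilon_iy_i=B\Big(\sum_i\varepsilon_iy_i\Big)-\sum_i\varepsilon_iB(y_i)-R_\varepsilon,\qquad\|R_\varepsilon\|\le k_\theta\,A_{X_0}(n)^{1-\theta}A_{X_1}(n)^{\theta}.
\]
Since $\sum_i\varepsilon_iy_i\in E_n$ we have $\widehat T_\varepsilon\big(\sum_i\varepsilon_iy_i\big)=\sum_iy_i$, so applying $\widehat T_\varepsilon$ turns this into
\[
\log\frac{A_{X_0}(n)}{A_{X_1}(n)}\sum_iy_i=\widehat T_\varepsilon B\Big(\sum_i\varepsilon_iy_i\Big)-\widehat T_\varepsilon\Big(\sum_i\varepsilon_iB(y_i)\Big)-\widehat T_\varepsilon R_\varepsilon.
\]

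Next I would average this identity over all $2^n$ sign vectors $\varepsilon$ — the left‑hand side being independent of $\varepsilon$ — and estimate the three averages. Writing $D(n)=A_{X_0}(n)^{1-\theta}A_{X_1}(n)^{\theta}$ and using $\|\sum_i\varepsilon_iy_i\|\le A_{X_\theta}(n)\le D(n)$ (log‑convexity of $\theta\mapsto A_{X_\theta}(n)$, proved as in Lemma~\ref{logconvex}), the bounds $\|\widehat T_\varepsilon B(\sum_i\varepsilon_iy_i)\|\le C\lambda_nM_nD(n)$ and $\|\widehat T_\varepsilon R_\varepsilon\|\le k_\theta\lambda_nM_nD(n)$ hold for every $\varepsilon$, so the first and third averages are $O(\lambda_nM_nD(n))$. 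For the middle average, expand $P_nB(y_i)=\sum_kb_{ik}y_k$ in the basis $(y_k)$ of $E_n$; then $\widehat T_\varepsilon\big(\sum_i\varepsilon_iB(y_i)\big)=\sum_k\big(\sum_i\varepsilon_k\varepsilon_ib_{ik}\big)y_k$, whose average over $\varepsilon$ is the diagonal vector $\sum_kb_{kk}y_k$ since $\sum_\varepsilon\varepsilon_k\varepsilon_i$ vanishes unless $i=k$. Because the one‑coordinate projections $Q_k$ of the $\lambda_n$‑unconditional basis $(y_k)$ are $\lambda_n$‑bounded, $\|b_{kk}y_k\|=\|Q_kP_nB(y_k)\|\le\lambda_nM_nC$; and estimating $\sum_kb_{kk}y_k$ against $\sum_k(y_k/\|y_k\|)$, whose norm is at most $A_{X_\theta}(n)\le D(n)$, costs two more factors $\lambda_n$ (replace each coefficient by its maximum modulus, then remove the signs), so the middle average has norm $O(\lambda_n^3M_nD(n))$.

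Collecting the three bounds and absorbing the lower powers of $\lambda_n\ge1$, the averaged identity gives, for a constant $C_1=C_1(C,k_\theta)$ and all $n$,
\[
\Big|\log\frac{A_{X_0}(n)}{A_{X_1}(n)}\Big|\,\Big\|\sum_iy_i\Big\|\le C_1\,\lambda_n^3M_nD(n).
\]
By the near‑optimality (i), $\|\sum_iy_i\|\ge\varepsilon_nD(n)$, so $\varepsilon_n\big|\log(A_{X_0}(n)/A_{X_1}(n))\big|\le C_1\lambda_n^3M_n$ for every $n$, i.e. the quantity $\lambda_n^3M_n/\big(\varepsilon_n|\log(A_{X_0}(n)/A_{X_1}(n))|\big)$ is bounded below by $C_1^{-1}>0$; this contradicts the hypothesis that its $\liminf$ is $0$, so $\Omega_\theta$ is singular. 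The step I expect to be the main obstacle — and the precise reason hypotheses (ii)--(iii) are imposed — is the control of the diagonal part $\sum_i\varepsilon_iB(y_i)$: in the lattice case of Lemma~\ref{trivialestimate} this is automatic, because a relatively exact linear correction makes the $B(y_i)$ have pairwise disjoint supports, whereas here it must be distilled by the sign‑averaging device after compressing to $E_n$ through the $M_n$‑bounded projection and the $\lambda_n$‑unconditional coordinate operators, which is exactly where the factors $M_n$ and $\lambda_n^3$ come from. The remaining points are routine: that (\ref{acore}) applies verbatim to the sign‑twisted sequences $(\varepsilon_iy_i)_i$, that the $L$‑terms genuinely cancel, and the elementary unconditional‑basis inequalities used in the estimates.
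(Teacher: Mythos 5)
Your proof is correct and follows essentially the same route as the paper's: the paper packages your sign-averaging device into the map $\psi_n(y)=\mathrm{Ave}_{u}\,uP_n(\Omega_\theta-\ell)(uy)$, observes it is an exact $\ell_\infty^n$-centralizer (hence diagonal, $\psi_n(y_i)=\mu_i y_i$ with $|\mu_i|\le KM_n\lambda_n^2$), and combines the resulting bound with the averaged form of (\ref{acore}) exactly as you do, arriving at $\varepsilon_n\bigl|\log(A_{X_0}(n)/A_{X_1}(n))\bigr|\le (K\lambda_n(1+\lambda_n)+k_\theta)M_n\lambda_n$. Your explicit computation of the average of the middle term via the expansion $P_nB(y_i)=\sum_k b_{ik}y_k$ is the same calculation in different notation, and your accounting of the powers of $\lambda_n$ and $M_n$ matches the paper's.
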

\begin{proof}
Suppose that the restriction of $\Omega_\theta$ to some subspace of $X$ is trivial.
By the hypothesis $\Omega_\theta$ is trivial on some block subspace $Y_\theta$
of $X_\theta$, and we can pick for any $n$ a $\lambda_n$-unconditional, $\varepsilon_n$-optimal,
finite sequence $[y_i]_{i=1}^n$ of blocks in $B_{Y_\theta}$ that is $M_n$-complemented in
$X_\theta$ by a projection $P_n$.

Then a local version of the proof of Lemma \ref{propersing} (3) can be made.
Let $\ell: {Y_\theta} \to L_0$ be a linear map so that
$\|\Omega_{|{Y_\theta}} - \ell\|\leq K$.
Let then $G_n \simeq \{-1,1\}^n$ be the group of units of $\ell_\infty^n$ acting on
$Y_n=[y_1,\ldots,y_n]$ in the natural way by change of signs of the coordinates on the $y_i$'s,
and let, for $y \in Y_n$, $\psi_n(y)$ be the finite average
$$
\psi_{n}(y) ={\rm Ave}_{u \in G_n}
u P_{n}(\Omega_{|{Y_\theta}}- \ell)(uy).
$$
Note that $\psi_n$ takes values in $Y_n$, and that this homogeneous map is bounded by
$K M_n \lambda_n^2$. It is also an exact $\ell_\infty^n$-centralizer in the sense
that $\psi_n(uy)=u\psi_n(y)$ for $u \in G_n$,
so $\mathrm{supp}\; \psi_{n}(y) \subset  \mathrm{supp}\; y$ for $y\in {Y_n}$.
This implies that $\psi_{n}(y_i)= \mu_i y_i$ for some scalars
$\mu_i$ with $|\mu_i|\leq KM_n\lambda_n^2$.
Thus
\begin{equation}\label{timo}
\begin{array}{ll}
\|\psi_{n}(\sum_{i=1}^n  y_i) - \sum_{i=1}^n  \psi_{n}(y_i)\|
  &\leq KM_n\lambda_n^2 \|\sum_{i=1}^n  y_i\| + \|\sum_{i=1}^n  \mu_i y_i\|\\
  &\leq KM\lambda_n^2 (1+\lambda_n) \|\sum_{i=1}^n y_i\|.
\end{array}
\end{equation}

Consider the estimate (\ref{acore}), and observe that replacing $\Omega_\theta$ by
$\Omega_\theta - \ell$ with $\ell$ linear changes nothing, and projecting
and averaging on $\pm$ signs as in the definition of $\psi_n$ only changes the estimate
by $\lambda_n \|P_n\| \leq \lambda_n M_n$; so one gets
$$
\Big\|\psi_n(\sum_{i=1}^n y_i)-\sum_{i=1}^n \psi_n(y_i)-
\log\frac{{\rm A}_{X_0}(n)}{{\rm A}_{X_1}(n)} \sum_{i=1}^n  y_i\Big\| \leq
k_\theta M_n \lambda_n A_{X_0}(n)^{1-\theta}A_{X_1}(n)^\theta.
$$
On the other hand by log-convexity of $A_{X_\theta}$ we can rewrite (\ref{timo}) as
 \begin{equation}\label{(1)}
\Big\|\psi_n(\sum_i  y_i)-\sum_i  \psi_n(y_i)\Big\|
\leq KM_n\lambda_n^2 (1+\lambda_n) {\rm A}_{X_0}^{1-\theta}(n){\rm A}_{X_1}^\theta(n).
\end{equation}

Putting both estimates together we get
$$
\Big|\log \frac{A_{X_0}(n)}{A_{X_1}(n)}\Big|\cdot \Big\|\sum_{i=1}^n y_i \Big\| \leq
\big(K\lambda_n(1+\lambda_n) +k_\theta\big) M_n \lambda_n {\rm A}_{X_0}^{1-\theta}(n){\rm A}_{X_1}^\theta(n).
$$
Condition (i)  yields that
$$
\varepsilon_n  \left|\log \frac{A_{X_0}(n)}{A_{X_1}(n)}\right| \leq
\big(K\lambda_n(1+\lambda_n)+k_\theta\big)M_n \lambda_n
$$
in contradiction with the hypothesis.
\end{proof}

\begin{cor}\label{generalmono}
Assume we have an interpolation scheme of two spaces $X_0$ and $X_1$ with a common
$1$-monotone basis. Let $1 \leq p_0 \neq p_1 \leq +\infty$, $0<\theta<1$, and
$\frac{1}{p}=\frac{1-\theta}{p_0}+\frac{\theta}{p_1}$ and assume that the spaces
$X_j$, $j=0,1$ satisfy an asymptotic upper $\ell_{p_j}$-estimate;
and that for every block-subspace $W$ of $X_\theta$, there exist a constant $C$
and for each $n$, a $C$-unconditional finite block-sequence $n < y_1 < \ldots < y_n$
in $B_W$ such that $\|y_1 + \cdots + y_n\| \geq C^{-1} n^{1/p}$
and $[y_1, \cdots, y_n]$ is  $C$-complemented in $X_\theta$. Then $\Omega_\theta$ is singular.
\end{cor}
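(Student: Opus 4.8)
The plan is to check the hypotheses of Proposition \ref{appl-B-convex}, the only substantial point being to determine the asymptotic behaviour of $A_{X_0}$ and $A_{X_1}$. First I would record that an asymptotic upper $\ell_{p_j}$-estimate for $X_j$ yields a constant $c_j$ with $A_{X_j}(n)\le c_j\,n^{1/p_j}$ for all $n$ (apply the estimate to successive normalised vectors; when $p_j=\infty$ one reads $n^{1/p_j}=1$). On the other hand, taking $W=X_\theta$ in the hypothesis produces for each $n$ a block sequence witnessing $A_{X_\theta}(n)\ge C^{-1}n^{1/p}$. Combining this with the log-convexity of $\theta\mapsto A_{X_\theta}(n)$ (the successive-vectors analogue of Lemma \ref{logconvex}, noted after (\ref{acore})), namely $A_{X_\theta}(n)\le A_{X_0}(n)^{1-\theta}A_{X_1}(n)^{\theta}$, and with the endpoint upper bounds, one gets
$$
C^{-1}n^{1/p}\le A_{X_0}(n)^{1-\theta}A_{X_1}(n)^{\theta}\le A_{X_0}(n)^{1-\theta}\bigl(c_1 n^{1/p_1}\bigr)^{\theta},
$$
so that $A_{X_0}(n)\ge c_0'\,n^{1/p_0}$ for a suitable constant; symmetrically $A_{X_1}(n)\ge c_1'\,n^{1/p_1}$. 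Hence $A_{X_j}(n)\sim n^{1/p_j}$ for $j=0,1$, and since $p_0\neq p_1$,
$$
\Bigl|\log\frac{A_{X_0}(n)}{A_{X_1}(n)}\Bigr|\ \sim\ \Bigl|\frac{1}{p_0}-\frac{1}{p_1}\Bigr|\log n\ \longrightarrow\ +\infty .
$$

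With this in hand I would verify the hypotheses of Proposition \ref{appl-B-convex}. Fix a block-subspace $W\subset X_\theta$, and let $C=C_W$ together with the finite block sequences $n<y_1<\cdots<y_n$ in $B_W$ be those provided by the Corollary. Conditions (ii) and (iii) hold with $\lambda_n=M_n=C$. For condition (i), the endpoint upper bounds give $A_{X_0}(n)^{1-\theta}A_{X_1}(n)^{\theta}\le c_0^{1-\theta}c_1^{\theta}\,n^{1/p}$, so
$$
\Bigl\|\sum_{i=1}^n y_i\Bigr\|\ \ge\ C^{-1}n^{1/p}\ \ge\ \varepsilon\,A_{X_0}(n)^{1-\theta}A_{X_1}(n)^{\theta}
$$
with $\varepsilon=\varepsilon_W:=C^{-1}c_0^{-(1-\theta)}c_1^{-\theta}$, a constant independent of $n$. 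Since $\lambda_n,M_n,\varepsilon_n$ are constant in $n$ while $\bigl|\log(A_{X_0}(n)/A_{X_1}(n))\bigr|\to+\infty$, the quotient $\lambda_n^3 M_n/\bigl(\varepsilon_n|\log(A_{X_0}(n)/A_{X_1}(n))|\bigr)$ tends to $0$, so the $\liminf$ condition of Proposition \ref{appl-B-convex} is satisfied. Therefore $\Omega_\theta$ is singular.

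The step I expect to be the main obstacle is the identification $A_{X_j}(n)\sim n^{1/p_j}$: the hypotheses only supply one-sided information — upper estimates at the endpoints and a lower estimate for block sequences inside $X_\theta$ — and it is log-convexity of $\theta\mapsto A_{X_\theta}(n)$ that forces these to match exactly at $X_0$ and $X_1$. Once this is established, the remainder is bookkeeping with constants.
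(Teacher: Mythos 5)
Your proof is correct and is exactly the intended argument: the paper states the corollary without a written proof, as a direct verification of the hypotheses of Proposition \ref{appl-B-convex} with $\lambda_n=M_n=C$ and $\varepsilon_n$ constant, the only real work being to pin down $A_{X_j}(n)\sim n^{1/p_j}$ via the endpoint upper estimates together with the log-convexity of $\theta\mapsto A_{X_\theta}(n)$ and the lower bound $A_{X_\theta}(n)\ge C^{-1}n^{1/p}$ coming from the hypothesis applied to $W=X_\theta$.
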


It was proved by Pisier \cite{Pisier-Annals} that a B-convex Banach space contains
$\ell_2^n$ uniformly complemented.
Condition (ii) in Proposition \ref{appl-B-convex} could suggest to apply this result to
B-convex Banach spaces.
Proposition \ref{Ext-B-convex} below states that when $X$ is  B-convex, nontrivial twisted
sums $X\oplus_F X$ always exist.

\subsection{Interpolation of families of spaces}\label{interp-family}

Here we apply the preceding criteria to spaces induced by complex interpolation of a family
of spaces (see \cite{CCRSW2}), as will be necessary in Section \ref{sect:twisting-Ferenczi}. We thus take a family of compatible Banach spaces $\{X_{(j,t)} : j=0,1; t\in\R \}$ with index
in the boundary of $\mathbb S$, and denote by $\Sigma(X_{j,t})$ the algebraic sum of these
spaces with the norm
$$
\|x\|_\Sigma = \inf\{\|x_1\|_{(j_1,t_1)}+\cdots+\|x_n\|_{(j_n,t_n)} : x = x_1+\cdots+x_n\}.
$$

Let $\cl{H}(X_{j,t})$ denote the space of functions $g:\mathbb S\to \Sigma:=\Sigma(X_{j,t})$
which are $\|\cdot\|_\Sigma$-bounded, $\|\cdot\|_\Sigma$-continuous on $\mathbb S$ and
$\|\cdot\|_\Sigma$-analytic on $\mathbb S^\circ$; and satisfy $g(it)\in X_{(0,t)}$ and
$g(it+1)\in X_{(1,t)}$ for each $t\in\R$.
Note that $\cl{H}(X_{j,t})$ is a Banach space under the norm
$$
\|g\|_\mathcal H= \sup\{\|g(j+it)\|_{(j,t)}: j=0,1; t\in\R \}.
$$

For each $\theta\in (0,1)$, or even $\theta\in\mathbb S$, we define
$$
X_\theta :=\{x\in\Sigma(X_{j,t}) : x=g(\theta) \text{ for some } g\in\cl{H}(X_{j,t})\}
$$
with the norm $\|x\|_\theta=\inf\{\|g\|_\mathcal{H}: x=g(\theta)\}$.
Clearly $X_\theta$ is the quotient of $\cl{H}(X_{j,t})$ by the kernel of the evaluation map
$\ker\delta_\theta$, and thus it is a Banach space.

All the ingredients of our constructions straightforwardly adapt to this context, and the only
relevant modification is to set $A_j(n)={\rm ess\, sup}_{t \in \R}\; A_{X_{j+it}}(n)$ instead
of $A_{X_j}(n)$, $j=0,1$.

\begin{prop}\label{gen}
Consider an interpolation scheme given by a family $\{X_{(j,t)} : j=0,1; t\in\R \}$ of spaces
with a common $1$-monotone basis.
Let $1 \leq p_0 \neq p_1 \leq +\infty$, $0<\theta<1$, and $\frac{1}{p}=\frac{1-\theta}{p_0}+\frac{\theta}{p_1}$. Assume that all the spaces $X_{j,t}$ satisfy an asymptotic upper $\ell_{p_j}$-estimate with
uniform constant; and for every block-subspace $W$ of $X_\theta$, there exist a constant $C$
and for each $n$, a $C$-unconditional finite block-sequence $n < y_1 < \ldots < y_n$ in $B_W$
such that $\|y_1 + \cdots + y_n\| \geq C^{-1} n^{1/p}$ and $[y_1, \cdots, y_n]$ is
$C$-complemented in $X_\theta$. Then $\Omega_\theta$ is singular.
\end{prop}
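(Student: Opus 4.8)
The plan is to reduce Proposition~\ref{gen} to Corollary~\ref{generalmono} by showing that the interpolation-of-families framework of subsection~\ref{interp-family} produces exactly the same kind of estimate that drives the proof of Proposition~\ref{appl-B-convex}. Concretely, since (as remarked just before the statement) the only change needed in passing from a couple to a family is to replace $A_{X_j}(n)$ by $A_j(n) = {\rm ess\,sup}_{t\in\R} A_{X_{j+it}}(n)$, the first step is to verify that the core estimate \eqref{acore} survives verbatim in this setting, with $A_{X_j}(n)$ replaced by $A_j(n)$: one repeats the proof of Lemma~\ref{core} word for word, using the function
$$
F(z) = \frac{1}{A_0(n)^{1-z} A_1(n)^z}\bigl(B_\theta(y_1) + \cdots + B_\theta(y_n)\bigr)(z),
$$
and noting that, because each $X_{j,t}$ satisfies an asymptotic upper $\ell_{p_j}$-estimate with a \emph{uniform} constant, one has $A_j(n) \leq C n^{1/p_j}$, so $A_0(n)^{1-\theta}A_1(n)^\theta \leq C' n^{1/p}$. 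The boundedness of $\delta_\theta'$ on $\ker\delta_\theta$ and the ess-sup definition of $\|\cdot\|_\mathcal{H}$ are all that is used, so nothing genuinely new happens here.

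Next I would run the localization argument of Proposition~\ref{appl-B-convex}. Assume for contradiction that $\Omega_\theta$ is not singular; by the transfer principle (valid here since $X_\theta$ has a $1$-monotone basis, as explained in subsection on monotone bases) $\Omega_\theta$ is trivial on a block subspace $Y_\theta$, so there is a linear $\ell$ with $\|\Omega_{\theta|Y_\theta} - \ell\| \leq K$. Using the hypothesis, for each $n$ pick a $C$-unconditional finite block sequence $n < y_1 < \cdots < y_n$ in $B_{Y_\theta}$ with $\|y_1+\cdots+y_n\| \geq C^{-1} n^{1/p}$ and $[y_1,\ldots,y_n]$ being $C$-complemented in $X_\theta$ by a projection $P_n$. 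Form the averaged map $\psi_n(y) = {\rm Ave}_{u\in G_n} u P_n (\Omega_{\theta|Y_\theta} - \ell)(uy)$ over the sign group $G_n$; as in Proposition~\ref{appl-B-convex}, $\psi_n$ is an exact $\ell_\infty^n$-centralizer on $Y_n = [y_1,\ldots,y_n]$ bounded by $KCM\lambda^2$ with $\lambda = M = C$ here, hence $\psi_n(y_i) = \mu_i y_i$ with $|\mu_i| \leq KC^3$, giving the analogue of \eqref{timo}. Combining this with the range-contractive version of \eqref{acore} (averaged and projected, which costs only a factor $\lambda_n M_n \leq C^2$) yields
$$
\Bigl|\log \tfrac{A_0(n)}{A_1(n)}\Bigr| \cdot \Bigl\|\sum_{i=1}^n y_i\Bigr\| \leq
\bigl(KC(1+C) + k_\theta\bigr) C^2 A_0(n)^{1-\theta} A_1(n)^\theta.
$$

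Finally I would extract the contradiction. Using $\|\sum y_i\| \geq C^{-1} n^{1/p}$, $A_j(n) \leq C n^{1/p_j}$, and $A_0(n)^{1-\theta}A_1(n)^\theta \leq C' n^{1/p}$, the inequality above forces $|\log(A_0(n)/A_1(n))|$ to be bounded by a constant independent of $n$. But since $p_0 \neq p_1$, the uniform upper $\ell_{p_j}$-estimates together with the fact that each space contains, asymptotically, $\ell_{p_j}^n$ at the relevant scale (or more carefully: $A_j(n)$ behaves like $n^{1/p_j}$ up to constants, which one should confirm from the uniform-constant hypothesis — an asymptotic upper estimate gives the upper bound, and the lower bound $A_j(n) \gtrsim n^{1/p_j}$ follows because a $1$-monotone basis always has $A_j(n) \geq n^{1/p_j}$ when $p_j \leq \infty$ is the exponent... here one must be slightly careful and may need to assume or derive a matching lower estimate) one has $\log(A_0(n)/A_1(n)) \sim (1/p_0 - 1/p_1)\log n \to \pm\infty$, a contradiction. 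The main obstacle I anticipate is precisely this last point: making sure that $A_0(n)/A_1(n)$ genuinely diverges. The upper $\ell_{p_j}$-estimate only bounds $A_j(n)$ from above, so to get divergence of the ratio one needs a lower bound on one of them; this is implicitly supplied by the block sequences in the hypothesis (which give $\|\sum y_i\| \geq C^{-1}n^{1/p}$ inside $X_\theta$, hence via log-convexity of $\theta \mapsto A_{X_\theta}(n)$ a lower bound linking $A_0, A_1$), and threading that bookkeeping correctly is the delicate part of the argument. Everything else is a routine transcription of the couple case to the family case.
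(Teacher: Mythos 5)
Your proposal is correct and takes essentially the same route as the paper, whose own proof of Proposition~\ref{gen} is simply the remark that it is ``similar to those of Proposition~\ref{appl-B-convex} and Corollary~\ref{generalmono}'': you carry out exactly that reduction, replacing $A_{X_j}(n)$ by $A_j(n)=\operatorname{ess\,sup}_t A_{X_{j+it}}(n)$ in the estimate \eqref{acore} and rerunning the averaging/projection argument. You also correctly identify and resolve the one genuinely delicate point, namely that the divergence of $\bigl|\log\bigl(A_0(n)/A_1(n)\bigr)\bigr|$ follows not from the upper $\ell_{p_j}$-estimates alone but from combining them with the lower bound $A_{X_\theta}(n)\geq C^{-1}n^{1/p}$ supplied by the hypothesis and the log-convexity of $\theta\mapsto A_{X_\theta}(n)$.
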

\begin{proof}
It is similar to those of Proposition \ref{appl-B-convex} and Corollary \ref{generalmono}.
%
\end{proof}

\section{Singular twisted Hilbert spaces}
\label{sect:twisting-Hilbert}

In many cases, complex interpolation between a Banach space and its dual gives
$(X,X^*)_{1/2}=\ell_2$.
See e.g., the comments at \cite[around Theorem 3.1]{pisier-handbook}.
Also Watbled \cite{watbled} claims that her results cover the case of spaces
with a $1$-unconditional basis $X$.
We do not know whether there could be counterexamples with monotone basis.
So, for the sake of clarity, let us briefly explain the situation.

Given a Banach space $X$ with a normalized basis $(e_n)$, we denote by $(e^*_n)$ the
corresponding sequence of biorthogonal functionals.
We identify $X$ with $\{\big(e^*_n(x)\big) : x\in X)\}$, and its antidual space
$\hat X^*$ with $\{\overline{\big(x^*(e_n)\big)} : x^*\in X)\}$, both linear
subspaces of $\ell_\infty$, in such a way that $X\cap \hat X^*$ is continuously
embedded in $\ell_2$.
Indeed, $x=(a_n)\in X\cap \hat X^*$ implies
$x(x) =\sum |a_n|^2 \leq \|x\|_X \cdot \|x\|_{\hat X^*}$.

\begin{prop}\label{Wat-basis}
Let $X$ be a Banach space with a monotone shrinking basis.
Then $(X, \hat X^*)_{1/2}=\ell_2$ with equality of norms.
\end{prop}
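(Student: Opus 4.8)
The plan is to establish the two inequalities $\|x\|_{\ell_2}\le\|x\|_{(X,\hat X^*)_{1/2}}$ and $\|x\|_{(X,\hat X^*)_{1/2}}\le\|x\|_{\ell_2}$ on the dense subspace $c_{00}$ of finitely supported sequences. First I would justify this reduction: since $(e_n)$ is a basis of $X$ and, being shrinking, $(e_n^*)$ is a basis of $X^*$, $c_{00}$ is dense in $X$, in $\hat X^*$, hence in $X\cap\hat X^*$ and therefore in $(X,\hat X^*)_{1/2}$; it is of course dense in $\ell_2$. So it suffices to check $\|x\|_{(X,\hat X^*)_{1/2}}=\|x\|_{\ell_2}$ for $x\in c_{00}$.

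For $\|x\|_{\ell_2}\le\|x\|_{(X,\hat X^*)_{1/2}}$, fix $x\in c_{00}$ supported in $\{1,\dots,N\}$ and $g\in\mathcal{H}(X,\hat X^*)$ with $g(1/2)=x$. Composing $g$ with the partial sum projection $P_N$, which has norm one on $X$ by monotonicity and norm one on $\hat X^*$ (its restriction being $P_N^*$ on $X^*$, of the same norm), I may assume $g$ takes values in $\mathrm{span}\{e_1,\dots,e_N\}$ without increasing $\|g\|_{\mathcal H}$. Then set $F(z)=\sum_{n\le N}g(z)_n\,\overline{g(1-\bar z)_n}$. As $z\mapsto g(z)_n$ is holomorphic and $z\mapsto 1-\bar z$ is antiholomorphic, $F$ is holomorphic on $\mathbb{S}^{\circ}$ and bounded and continuous on $\mathbb{S}$. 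On the edge $\{\mathrm{Re}\,z=0\}$, $F(it)=\sum_n g(it)_n\overline{g(1+it)_n}$ is the value at $g(it)\in X$ of the functional of $X^*$ represented by the sequence $g(1+it)\in\hat X^*$, so $|F(it)|\le\|g(it)\|_X\|g(1+it)\|_{\hat X^*}\le\|g\|_{\mathcal H}^2$; on the other edge $F(1+it)=\overline{F(it)}$, so the same bound holds. By the three lines lemma $\|x\|_{\ell_2}^2=F(1/2)\le\|g\|_{\mathcal H}^2$, and taking the infimum over $g$ gives the inequality.

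For the reverse inequality I would argue by duality. By Calderón's duality theorem (applicable since $c_{00}$ is dense in both $X$ and $\hat X^*$), $\big((X,\hat X^*)_{1/2}\big)^{*}=[X^*,X^{**}]^{1/2}$, the upper complex interpolation space. When $X$ is reflexive this equals $(X^*,X)_{1/2}=(X,X^*)_{1/2}$, so $(X,X^*)_{1/2}$ is isometrically self-dual; combined with the first inequality — which places the unit ball of $(X,X^*)_{1/2}$ inside that of $\ell_2$ under the canonical identification — self-duality yields, for $x\in c_{00}$, $\|x\|_{(X,X^*)_{1/2}}=\sup\{|\langle x,y\rangle|:\|y\|_{(X,X^*)_{1/2}}\le1\}\le\sup\{|\langle x,y\rangle|:\|y\|_{\ell_2}\le1\}=\|x\|_{\ell_2}$. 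In the general case one needs the analogue of the first step for the upper method, namely that $[X^*,X^{**}]^{1/2}$ embeds contractively into $\ell_2$ via $\Phi\mapsto(\Phi(e_n))_n$; then $\Phi(x)=\sum_n x_n\Phi(e_n)$ for $x\in c_{00}$ is bounded by $\|x\|_{\ell_2}\|\Phi\|_{[X^*,X^{**}]^{1/2}}$, whence $\|x\|_{(X,\hat X^*)_{1/2}}\le\|x\|_{\ell_2}$. I expect this to be the main obstacle: the three lines argument for the lower method does not transcribe verbatim to the $\mathcal{G}$-type function space of the upper method (the naive analogue of $F$ has purely imaginary derivative at $1/2$), so one must either supply the right auxiliary analytic function there, or bypass duality and build an admissible $g$ for $x$ directly — and for a merely monotone, possibly conditional, basis the obvious diagonal choice $g(z)_n=x_n|x_n|^{2z-1}$ fails to stay bounded in $X$, so a genuinely non-diagonal construction would be needed.
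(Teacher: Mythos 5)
Your first inequality is correct: the auxiliary function $F(z)=\sum_n g(z)_n\,\overline{g(1-\bar z)_n}$, the boundary estimate via the duality pairing between $X$ and $\hat X^*$, and the three-lines lemma give $\|x\|_{\ell_2}\leq \|x\|_{(X,\hat X^*)_{1/2}}$ cleanly (the reduction to $c_{00}$ and the use of $P_N$ are fine since the basis is monotone and shrinking). This is, in substance, the easy half of Watbled's theorem. The problem is the reverse inequality, which is the substantive half, and which your proposal does not establish. The duality route you sketch needs three inputs you do not supply: Calder\'on's duality theorem for the couple $(X,\hat X^*)$; the identification of the \emph{upper} interpolation space $[X^*,X^{**}]^{1/2}$ with a lower one (which you only obtain under reflexivity, an assumption not made in the statement); and the contractive embedding of that upper space into $\ell_2$, for which you concede you lack the analytic construction. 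As written, the proof is therefore incomplete precisely where the work lies: you would in effect have to reprove the hard direction of Watbled's theorem.

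The paper takes a different, and much shorter, route: it invokes Watbled's Corollary 4 as a black box. That corollary says that if a Hilbert space $H$ satisfies $X\cap \hat X^*\subset H\subset X+\hat X^*$ with contractive inclusions compatible with the duality, then $(X,\hat X^*)_{1/2}=H$ with equality of norms. The contractive embedding $T:X\cap\hat X^*\to\ell_2$ is already noted before the proposition ($x(x)=\sum|a_n|^2\leq\|x\|_X\|x\|_{\hat X^*}$), so the only thing to check is that $\ell_2$ embeds continuously into $X+\hat X^*$. This is done by duality on the \emph{intersection} space rather than on the interpolation space: shrinkingness gives that $X\cap\hat X^*$ is dense in both $X$ and $\hat X^*$, hence $(X\cap\hat X^*)^*=X^{**}+\hat X^*$, and $T^*$ maps $\ell_2$ into this sum; one then checks (following Watbled) that the image lands in the closed subspace $X+\hat X^*$. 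If you want a self-contained argument you should either reproduce Watbled's proof of the hard inclusion or restructure your second step around this verification of the hypotheses of her corollary; the Calder\'on-duality detour, besides being incomplete, does not handle the non-reflexive case at all.
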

\begin{proof}
It is enough to show that $\ell_2$ is continuously embedded in $X+ \hat X^*$ and
apply \cite[Corollary 4]{watbled}.
Let $T:X\cap \hat X^*\ra \ell_2$ be the embedding.
Since the basis is shrinking, $X\cap \hat X^*$ is dense in both $X$ and $\hat X^*$.
Thus the dual of $X\cap \hat X^*$ is $X^* + (\hat X^*)^* = X^{**}+\hat X^*$
\cite[2.7.1 Theorem]{Bergh-Lofstrom}, and the conjugate operator $T^*$ embeds $\ell_2$
into $X+\hat X^*$, which is a closed subspace of $X^{**}+\hat X^*$ by the arguments
in \cite[p. 204]{watbled}.
\end{proof}

We have a similar result for K\"othe function spaces $X$.
Observe that in this case $X^*$ and $\hat X^*$ coincide as sets.

\begin{prop}\label{Wat-Kothe} \emph{\cite[Corollary 5]{watbled}}
Let $X$ be a K\"othe function space on a complete $\sigma$-finite measurable space $S$.
Suppose that $X\cap X^*$ is dense in $X$ and
$$
L_1(S)\cap L_\infty(S) \subset X\cap X^* \subset L_2(S) \subset X+ X^*
\subset  L_1(S) + L_\infty(S).
$$
Then $(X, X^*)_{1/2}=L_2(S)$.
\end{prop}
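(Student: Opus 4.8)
The plan is to obtain this as a consequence of Watbled's interpolation theorem, in the same spirit as the proof of Proposition \ref{Wat-basis}, so that the only real work is checking that the displayed chain of inclusions furnishes the hypotheses required there. First I would fix the compatibility structure: on the $\sigma$-finite space $S$ the natural pairing $\langle f,g\rangle=\int_S f\overline g\,d\mu$ puts $X$ and its (K\"othe $=$ Banach, as sets) dual $X^*$ into duality and makes $(X,X^*)$ a compatible couple of K\"othe function spaces. A useful preliminary remark is that between K\"othe function spaces on a $\sigma$-finite measure space every \emph{set} inclusion is automatically a \emph{continuous} inclusion: norm convergence in such a space forces $\mu$-a.e. convergence of a subsequence, and the lattice (ideal) property then identifies limits, so the inclusion has closed graph and is bounded. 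Hence the displayed chain yields continuous embeddings $X\cap X^*\hookrightarrow L_2(S)\hookrightarrow X+X^*$; moreover the first of these is, up to a constant, the canonical Hilbertian embedding, since $\int_S|f|^2\,d\mu\le\|f\|_X\,\|f\|_{X^*}$ for $f\in X\cap X^*$.

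With this in hand the setting is exactly the one covered by Watbled's theorem: a compatible couple $(X,X^*)$ with $X\cap X^*$ dense in $X$ (the standing hypothesis), together with a Hilbert space $L_2(S)$ continuously embedded in $X+X^*$ and receiving the embedded image of $X\cap X^*$ through the canonical map. The outer inclusions $L_1\cap L_\infty\subset X\cap X^*$ and $X+X^*\subset L_1+L_\infty$ play the auxiliary role of guaranteeing that $X\cap X^*$ contains the simple integrable functions, so that $L_2(S)$ is a bona fide intermediate space of the couple and nothing degenerates. Invoking Watbled's result (this is \cite[Corollary 5]{watbled}, obtained by the same mechanism as \cite[Corollary 4]{watbled} used for Proposition \ref{Wat-basis}) then gives $(X,X^*)_{1/2}=L_2(S)$ with equality of norms.

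It is worth isolating where the content actually sits. One half is elementary: by Lozanovskii's factorization $L_1(S)=X\cdot X^*$ isometrically (each nonnegative $h\in L_1$ factors as $h=fg$ with $\|f\|_X\|g\|_{X^*}=\|h\|_1$), so the Calder\'on product satisfies $X^{1/2}(X^*)^{1/2}=L_2(S)$ isometrically (factor $|h|^2$ when $h\in L_2$), and Calder\'on's general inclusion $X^{1/2}(X^*)^{1/2}\subseteq(X,X^*)_{1/2}$ shows $L_2(S)\subseteq(X,X^*)_{1/2}$ with $\|\cdot\|_{1/2}\le\|\cdot\|_2$. The delicate, and main, point is the reverse inequality $\|\cdot\|_2\le\|\cdot\|_{1/2}$: for general couples of K\"othe spaces complex interpolation can strictly enlarge the Calder\'on product (this is why order-continuity hypotheses appear in Calder\'on--Lozanovskii theory), and it is precisely the density of $X\cap X^*$ in $X$, exploited through the duality of the complex methods in this near self-dual situation, that forces equality. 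That step is the heart of Watbled's argument and the one I would regard as the main obstacle were one to reprove it rather than cite it.
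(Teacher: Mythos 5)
Your proposal is correct and matches the paper's treatment: Proposition \ref{Wat-Kothe} is stated there as a direct quotation of \cite[Corollary 5]{watbled} with no proof supplied, so reducing the statement to that citation (after the routine verification, via the closed-graph/ideal argument, that the displayed inclusions are continuous and that $X\cap X^*\hookrightarrow L_2(S)\hookrightarrow X+X^*$ canonically) is exactly what is intended. Your supplementary remarks on Lozanovskii factorization and the Calder\'on product correctly identify where the easy inclusion comes from and where Watbled's real work lies, but they are not needed beyond the citation.
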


Arguing like in Proposition \ref{Wat-basis}, we can show that the conditions $X$ and
$X^*$ intermediate spaces between $L_1(S)$ and $L_\infty(S)$, and $X\cap X^*$
dense in both $X$ and $X^*$ imply the hypothesis of Proposition \ref{Wat-Kothe}.
%

In all the previous situations the twisted sum space induced by the interpolation
of a space and its antidual is a twisted Hilbert space.
Proposition \ref{good} fits appropriately in this situation since $\ell_2$ is
``asymptotically self-similar'' in the sense that $A_W(n)=n^{1/2}$ for all infinite
dimensional block subspaces.
Thus, we are ready to construct singular exact sequences
$$
\begin{CD}0 @>>> \ell_2 @>>> E @>>> \ell_2 @>>>0.\end{CD}
$$
The first consequence of Corollary \ref{generalmono} is:

\begin{prop}\label{delya}
The interpolation of a reflexive asymptotically $\ell_p$ space, $p \neq 2$,
with its antidual induces a singular twisted Hilbert space.
\end{prop}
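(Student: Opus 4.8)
The plan is to verify the hypotheses of Corollary \ref{generalmono} for the couple $(X_0,X_1)=(X,\hat X^*)$ at $\theta=1/2$, taking $p_0=p$ and $p_1=\pp$ with $1/p+1/\pp=1$. Since the corresponding interpolated exponent is $\big(\tfrac12\cdot\tfrac1p+\tfrac12\cdot\tfrac1{\pp}\big)^{-1}=2$, this is exactly the scheme producing the twisted Hilbert space in the statement, and the conclusion ``$\Omega_{1/2}$ is singular'' is precisely the assertion that the induced twisted sum $\ell_2\oplus_{\Omega_{1/2}}\ell_2$ is singular. To set the couple up: since $X$ is reflexive its basis $(e_n)$ is shrinking, so $(e_n^*)$ is a basis of $X^*$; after an equivalent renorming (which makes $(e_n)$ $1$-monotone and $(e_n^*)$ $1$-monotone, and changes the induced twisted sum only up to isomorphism, hence preserves singularity) we realize $X$ and $\hat X^*$ as sequence spaces inside $\ell_\infty$ as in the discussion preceding Proposition \ref{Wat-basis}. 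Then the couple $(X,\hat X^*)$ carries the common unit-vector basis, and Proposition \ref{Wat-basis} gives $(X,\hat X^*)_{1/2}=\ell_2$ isometrically, the unit vectors becoming the standard orthonormal basis of $\ell_2$.

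Next I check the asymptotic upper estimates required by Corollary \ref{generalmono}. By the definition of ``asymptotically $\ell_p$'', successive normalized blocks $n<x_1<\cdots<x_n$ in $X$ are $C$-equivalent to the unit basis of $\ell_p^n$; hence $X$ satisfies an asymptotic upper $\ell_p$-estimate and $A_X(n)\sim n^{1/p}$. For $\hat X^*$ one uses that the basis is shrinking: the standard duality for asymptotic structure shows that the (anti)dual of an asymptotically $\ell_p$ space with shrinking basis is asymptotically $\ell_{\pp}$. Concretely, a normalized successive block $y^*$ of $(e_n^*)$ supported in an interval $I$ is, up to a fixed constant, a norming functional for some successive block of $(e_n)$ with range contained in $I$; testing $\|\sum a_iy_i^*\|$ against a vector of $X$ supported on $\bigcup_i \operatorname{supp}y_i^*$ and invoking the lower $\ell_p$-estimate of $X$ together with H\"older's inequality yields the upper $\ell_{\pp}$-estimate for $\hat X^*$, so $A_{\hat X^*}(n)\sim n^{1/\pp}$. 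This duality step is the only part of the argument that requires genuine work; everything else is either quoted or trivial.

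It remains to check the block condition of Corollary \ref{generalmono}, which is automatic here: in any block subspace $W$ of $\ell_2$, normalized blocks $n<y_1<\cdots<y_n$ are orthonormal, hence $1$-unconditional, span a $1$-complemented subspace (the orthogonal projection), and satisfy $\|y_1+\cdots+y_n\|=n^{1/2}=n^{1/q}$ with $q=2$; so the condition holds with constant $C=1$. Moreover $A_X(n)^{1/2}A_{\hat X^*}(n)^{1/2}\sim n^{1/2}$ while $A_X(n)/A_{\hat X^*}(n)\sim n^{1/p-1/\pp}$, so $\big|\log\!\big(A_X(n)/A_{\hat X^*}(n)\big)\big|\sim|1/p-1/\pp|\log n\to\infty$ precisely because $p\neq2$ (equivalently $1/p\neq1/\pp$). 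Corollary \ref{generalmono} therefore applies with $p_0=p\neq\pp=p_1$ and shows that $\Omega_{1/2}$ is singular; that is, the interpolation scheme $(X,\hat X^*)$ induces a singular twisted Hilbert space, as claimed.
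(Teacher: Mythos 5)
Your overall strategy is exactly the paper's: the paper offers no written proof of Proposition \ref{delya} beyond presenting it as a direct consequence of Corollary \ref{generalmono}, via Proposition \ref{Wat-basis} to identify $(X,\hat X^*)_{1/2}$ with $\ell_2$, and your verification of the block condition in $\ell_2$ and of the divergence of $\bigl|\log(A_{X}(n)/A_{\hat X^*}(n))\bigr|$ when $p\neq 2$ is correct.

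The one place where your write-up has a real (though repairable) gap is the duality step, precisely the step you flag as ``the only part requiring genuine work.'' You propose to ``test $\|\sum_i a_i y_i^*\|$ against a vector of $X$ supported on $\bigcup_i \supp y_i^*$.'' For a merely $1$-monotone basis this reduction is not available: the projection onto a union of $n$ disjoint intervals is a combination of $2n$ initial-segment projections and its norm can grow with $n$, so one cannot replace a general $x\in B_X$ by $P_{\bigcup_i I_i}x$ at bounded cost. This matters here because the proposition is applied in the paper to the Argyros--Deliyanni H.I.\ space, whose basis is necessarily conditional. The correct route is coordinatewise: for $x\in B_X$ write $\langle\sum_i a_i y_i^*,x\rangle=\sum_i a_i\langle y_i^*,P_{I_i}x\rangle\le(\sum_i|a_i|^{\pp})^{1/\pp}(\sum_i\|P_{I_i}x\|^{p})^{1/p}$ by H\"older, and then bound $(\sum_i\|P_{I_i}x\|^{p})^{1/p}$ by applying the asymptotic lower $\ell_p$-estimate to the \emph{full} successive block decomposition of $P_{[\min I_1,\infty)}x$, i.e.\ including the ``gap'' blocks between consecutive $I_i$'s, whose norm sum the lower estimate also controls; this yields $(\sum_i\|P_{I_i}x\|^{p})^{1/p}\le 2C$ without ever projecting onto $\bigcup_i I_i$. (One also has to adjust for the fact that this decomposition produces about $2n$ blocks starting after $n$ rather than after $2n$; this is handled by the usual harmless renormalization of the asymptotic constants, e.g.\ by using only the functionals $y_i^*$ with $2n<y_1^*<\cdots<y_n^*$.) With the duality step carried out this way, your argument is complete and coincides with the paper's intended one.
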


Thus inter\-polation of Tsirelson's space $\mathcal T$ with its dual $\mathcal T^*$;
or interpolation of Argyros-Deliyanni's H.I. asymptotically $\ell_1$-space
\cite{argdely} with its antidual produce new singular exact sequences
$$\begin{CD}
0@>>> \ell_2 @>>> X @>>> \ell_2@>>>0.
\end{CD}$$
By uniqueness in Kalton's theorem (Proposition \ref{kaltonunique}), the singular sequence
induced by interpolation of $\mathcal T$ with $\mathcal T^*$ is not boundedly equivalent to
$$\begin{CD}
0@>>> \ell_2 @>>> Z_2 @>>> \ell_2@>>>0.
\end{CD}$$

Thus, by Corollary \ref{prepara}, they cannot be even equivalent.
In favorable situations this can be improved to be non-permutatively projectively equivalent.
Indeed, given a reflexive Banach space $X$ with normalized subsymmetric basis $(e_n)$,
we denote as usual \cite{lindtzaf1}
$$
\lambda_X(n):=\Big\|\sum_{i=1}^n e_i\Big\|_{X}.
$$
Then $\lambda_{X^*}(n) \simeq n/\lambda_X(n)$ (see \cite[Proposition 3.a.6]{lindtzaf1}). One has

\begin{prop}
Let $\ell_M$ be the symmetric Orlicz space with function $M_\alpha(t)=e^{-t^{-\alpha}}, \alpha>0$.
The induced centralizers at $\ell_2 = (\ell_M, \ell_M^*)_{1/2}$ for different values of $\alpha$ are
not permutatively projectively equivalent.
\end{prop}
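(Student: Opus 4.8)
The plan is to read off, from the behaviour of the induced centralizer on the flat vectors $f_n=\sum_{i=1}^n e_i$, an invariant of permutative projective equivalence that detects $\alpha$; concretely, I will show that the centralizer $\Omega^{(\alpha)}$ produced by the couple $(\ell_{M_\alpha},\ell_{M_\alpha}^*)$ satisfies $\Omega^{(\alpha)}(f_n)=\big(\tfrac2\alpha\log\log n-\log n+O(1)\big)f_n$, and that the secondary coefficient $2/\alpha$ cannot be absorbed by a scalar factor. First I would collect the data of $X_0:=\ell_{M_\alpha}$ and $X_1:=\ell_{M_\alpha}^*$. Since $M_\alpha^{-1}(t)=(\log(1/t))^{-1/\alpha}$ near $0$, the fundamental function is $\lambda_{X_0}(n)=1/M_\alpha^{-1}(1/n)=(\log n)^{1/\alpha}$, and from $t\le M_\alpha^{-1}(t)(M_\alpha^*)^{-1}(t)\le 2t$ (equivalently \cite[Proposition 3.a.6]{lindtzaf1}) one gets $\lambda_{X_1}(n)\simeq n(\log n)^{-1/\alpha}$. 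Here $M_\alpha$ fails the $\Delta_2$-condition at $0$, so $\ell_{M_\alpha}$ is non-separable; what keeps the construction available is that $M_\alpha^*$ does satisfy $\Delta_2$ at $0$ and $\ell_{M_\alpha}^*$ contains no copy of $c_0$, hence has the Radon--Nikodym property. Thus the Lozanovskii factorization of Subsection~\ref{subsect:Lozano} applies to $(X_0,X_1)$: since $M_\alpha^{-1}(t)(M_\alpha^*)^{-1}(t)\simeq t$ we have $(X_0,X_1)_{1/2}=\ell_2$ up to equivalent norm, and the induced centralizer $\Omega^{(\alpha)}:=\delta'_{1/2}B_{1/2}$, built from an optimal (achievable on finitely supported vectors), symmetric Lozanovskii selection $B_{1/2}$, is an exact, contractive centralizer on $\ell_2$ which commutes with coordinate permutations and vanishes on the $e_n$ (Lemma~\ref{addit-prop}).

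Next I would evaluate $\Omega^{(\alpha)}$ on $f_n=\sum_{i=1}^n e_i$. As $f_n$ is flat and finitely supported, an optimal decomposition $|f_n|=a_0(f_n)^{1/2}a_1(f_n)^{1/2}$ can be chosen flat, i.e.\ $a_j(f_n)$ a multiple of $f_n$ with $\|a_j(f_n)\|_{X_j}=\|f_n\|_{X_{1/2}}$; plugging into formula~(\ref{kpgeneralized}) gives
$$\Omega^{(\alpha)}(f_n)=\log\frac{\lambda_{X_0}(n)}{\lambda_{X_1}(n)}\,f_n=\Big(\tfrac2\alpha\log\log n-\log n+O(1)\Big)f_n,$$
with the $O(1)$ uniform in $n$. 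Writing $c_\alpha(n):=\tfrac2\alpha\log\log n-\log n$, this reads $\|\Omega^{(\alpha)}(f_n)-c_\alpha(n)f_n\|\le C\sqrt n$ for an absolute $C$, and likewise for $\beta$.

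Finally, suppose for contradiction that $\Omega^{(\alpha)}$ and $\Omega^{(\beta)}$ are permutatively projectively equivalent. Because both centralizers commute with all the operators $T_\sigma$, this amounts to projective equivalence, $\Omega^{(\alpha)}\equiv\mu\Omega^{(\beta)}$ for some scalar $\mu$. Now $\Omega^{(\alpha)}$ and $\mu\Omega^{(\beta)}$ are exact centralizers on the reflexive space $\ell_2$ with $1$-unconditional basis $(e_n)$, both vanishing on the $e_n$, so Corollary~\ref{prepara} promotes the equivalence to a bounded one: $\|\Omega^{(\alpha)}(x)-\mu\Omega^{(\beta)}(x)\|\le C'\|x\|$ for all $x$. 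Evaluating at $f_n$ and using the previous step, $|c_\alpha(n)-\mu c_\beta(n)|\sqrt n\le(C+|\mu|C+C')\sqrt n$, so $(1-\mu)\log n-\big(\tfrac2\alpha-\tfrac{2\mu}\beta\big)\log\log n$ stays bounded as $n\to\infty$. Comparing the $\log n$ terms forces $\mu=1$, and then $\tfrac2\alpha=\tfrac2\beta$, i.e.\ $\alpha=\beta$, contradicting $\alpha\ne\beta$.

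The step I expect to be the main obstacle is the first one: establishing the fundamental functions of $\ell_{M_\alpha}$ and $\ell_{M_\alpha}^*$, checking that the $\Delta_2$-failure of $M_\alpha$ at $0$ does not obstruct anything (it suffices that $\ell_{M_\alpha}^*$ has the Radon--Nikodym property, which is exactly what legitimises the Lozanovskii description of $\Omega^{(\alpha)}$ used above), and verifying that an optimal Lozanovskii decomposition of each flat vector $f_n$ may again be taken flat; once these are in place, the rest is the displayed computation and a direct application of Corollary~\ref{prepara}.
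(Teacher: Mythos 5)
Your proposal is correct and follows essentially the same route as the paper: evaluate the centralizer on the flat vectors $\sum_{i\le n}e_i$ via a flat optimal Lozanovskii decomposition to get the coefficient $\log\big(n\lambda_{\ell_{M_\alpha}}(n)^{-2}\big)$ with $\lambda_{\ell_{M_\alpha}}(n)\simeq(\log n)^{1/\alpha}$, reduce permutative projective equivalence to projective equivalence by symmetry of the bases, upgrade to bounded equivalence via Corollary \ref{prepara}, and read off $\mu=1$ from the $\log n$ term and $\alpha=\beta$ from the $\log\log n$ term. The only difference is that you make explicit the asymptotic bookkeeping and the $\Delta_2$/Radon--Nikodym caveat that the paper leaves implicit.
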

\begin{proof}
Let $X$ and $Y$ be reflexive spaces with normalized $1$-unconditional and $1$-subsymmetric
bases, and let $\Omega$ (resp. $\Psi$) be the induced centralizers at $\ell_2$ defined on terms of the
Lozanovskii decompositions associated to $(X, X^*)_{1/2}$ (resp. $(Y, Y^*)_{1/2}$). Then
$$
\big(\Omega-\mu \Psi\big) (x)= \Big(\log\frac{|a_0(x)|}{|a_1(x)|}-
\mu\log\frac{|a_0^{\prime}(x)|}{|a_1^{\prime}(x)|}\Big) x.
$$

Pick $x=\sum_{i=1}^n x_i  e_i$ with $x_i=1/\sqrt{n}$ and apply the above formula with
\medskip

\quad $|a_0(x)|=\lambda_X(n)^{-1} 1_{[1,n]}$,\quad $|a_1(x)|=\frac{\lambda_{X}(n)}{n} 1_{[1,n]}$,\quad and

\quad $|a_0^{\prime}(x)|=\lambda_{Y}(n)^{-1} 1_{[1,n]}$,\quad $|a_1^{\prime}(x)|=
\frac{\lambda_{Y}(n)}{n} 1_{[1,n]}$.
\medskip


If $\Omega-\mu \Psi$ is trivial then it is bounded by Corollary \ref{prepara}, so the function
$\log(n\lambda_{X}(n)^{-2})-\mu \log(n\lambda_{Y}(n)^{-2})$ on $\N$ is bounded, which implies that
the functions $n\lambda_{X}(n)^{-2}$ and $(n\lambda_{Y}(n)^{-2})^\mu$ are equivalent.
It is not difficult to check that that is impossible for different $\alpha, \beta > 0$ since the
choice of $M_\alpha$ in the statement yields $\lambda_{\ell_{M_\alpha}}(n) \simeq (\log n)^{1/\alpha}$.
Since the symmetric Orlicz spaces have symmetric bases, the corresponding induced centralizers are
not even permutatively projectively equivalent.
\end{proof}

We have found no specific criterion to show when twisted Hilbert sums induced by interpolation of
spaces with subsymmetric bases are singular.
Let us move our attention back to asymptotically $\ell_p$ spaces.

\begin{prop}
Let $X, Y$ be  spaces with asymptotically $\ell_p$ normalized $1$-unconditional  bases,
$1 \leq p \leq +\infty$.
Then the singular twisted Hilbert sums induced by  the interpolation couples $(X, X^*)$
and $(Y, Y^*)$ at $1/2$  are (permutatively) projectively equivalent if and only
if the bases of $X$ and $Y$ are  (permutatively) equivalent.
\end{prop}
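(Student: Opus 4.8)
The statement is an "if and only if" between two notions of equivalence of centralizers and of bases. The easy direction is that permutative equivalence of the bases of $X$ and $Y$ forces the interpolation data to match up, so that the induced centralizers are (permutatively) projectively equivalent. Indeed, if $T_\sigma$ maps the basis of $X$ onto that of $Y$ isometrically (or with uniform constants in the "equivalent" case), then it intertwines the Lozanovskii factorizations $|x|=|a_0(x)|^{1/2}|a_1(x)|^{1/2}$ for the couple $(X,X^*)$ with those for $(Y,Y^*)$, because the factorization is determined by the lattice norms of $X$ and $X^*$; hence by formula (\ref{kpgeneralized}) the centralizer $\Omega_X(x)=x\log(|a_1(x)|/|a_0(x)|)$ transforms into $\Omega_Y$ up to a bounded perturbation and the scalar $\mu=1$ (or an appropriate $\mu$). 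So first I would write this direction, invoking Lemma \ref{addit-prop}(3) to note that both centralizers vanish on basis vectors, which will be needed for the converse.

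For the converse, suppose the twisted Hilbert sums are permutatively projectively equivalent, i.e.\ $T_\sigma\Omega_X\equiv \mu\,\Omega_Y T_\sigma$ for some permutation $\sigma$ and scalar $\mu$. Both $\Omega_X$ and $\Omega_Y$ are exact centralizers on $\ell_2$ (Lemma \ref{addit-prop}(1)) vanishing on the unit basis vectors (Lemma \ref{addit-prop}(3)), and this property is preserved by $T_\sigma$; so by Corollary \ref{prepara} the equivalence $T_\sigma\Omega_X\equiv \mu\,\Omega_Y T_\sigma$ is in fact a \emph{bounded} equivalence: $T_\sigma\Omega_X-\mu\,\Omega_Y T_\sigma$ is a bounded map $\ell_2\to\ell_2$. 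The plan is then to test this bounded map on the flat vectors $x=n^{-1/2}\sum_{i\in A}e_i$, where $A$ ranges over finite subsets of $\mathbb N$ of size $n$. Since the bases are asymptotically $\ell_p$, the Lozanovskii factorization of such a normalized $x$ with respect to $(X,X^*)$ has $|a_0(x)|$ a multiple of $1_A$ and $|a_1(x)|$ a multiple of $1_A$, with the two constants controlled by $\lambda_X(A):=\|\sum_{i\in A}e_i\|_X$ (up to uniform constants coming from the asymptotic-$\ell_p$ constant), exactly as in the computation in the preceding Orlicz-space proposition. Plugging into (\ref{kpgeneralized}) gives $\Omega_X(x)=x\cdot\log(n\,\lambda_X(A)^{-2})$ up to a uniformly bounded term, and similarly for $Y$ with $\sigma(A)$ in place of $A$.

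Feeding these into the boundedness of $T_\sigma\Omega_X-\mu\Omega_YT_\sigma$ and computing the $\ell_2$ norm (the vectors have norm $1$), one obtains that
$$
\left|\log\bigl(n\,\lambda_X(A)^{-2}\bigr)-\mu\,\log\bigl(n\,\lambda_Y(\sigma(A))^{-2}\bigr)\right|\le C
$$
uniformly over all finite $A$. For asymptotically $\ell_p$ bases one has $\lambda_X(A)\sim |A|^{1/p}$ and $\lambda_Y(\sigma(A))\sim|A|^{1/p}$ with uniform constants, so $n\lambda_X(A)^{-2}\sim n^{1-2/p}\sim n\lambda_Y(\sigma(A))^{-2}$; substituting forces $|(1-\mu)\log n^{1-2/p}|\le C'$ for all $n$, hence (when $p\ne 2$) $\mu=1$, and when $p=2$ the relation $\mu\cdot 0=0$ is automatic and one argues directly. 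With $\mu=1$ in hand, the bounded-difference estimate now reads
$$
\Bigl\|\,T_\sigma\Omega_X(x)-\Omega_Y(T_\sigma x)\,\Bigr\|_2\le C\|x\|_2\qquad(x\in\ell_2),
$$
and this is precisely the statement that the identity $\ell_2\oplus_{\Omega_X}\ell_2\to\ell_2\oplus_{\Omega_Y}\ell_2$ composed with $T_\sigma$ on both coordinates is an isomorphism respecting the copies of $\ell_2$; since these twisted sums carry the $\ell_\infty$-module structure coming from the respective bases, $T_\sigma$ must then transport the one lattice structure to the other, i.e.\ it carries the basis of $X$ to a basis permutatively equivalent to that of $Y$. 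The main obstacle is making this last step rigorous: one must extract from bounded equivalence of the two centralizers the genuine equivalence of the two \emph{lattice} structures on $\ell_2$, which is where one uses that the centralizers $\Omega_X$, $\Omega_Y$ encode, via their derivative-of-Lozanovskii form, the norms of $X$ and $Y$ on arbitrary (not just flat) vectors — so the flat-vector computation must be upgraded to general positive vectors by approximating their distribution functions, exactly in the spirit of Kalton's uniqueness argument (Proposition \ref{kaltonunique}). I would carry out the flat-vector computation to pin down $\mu$ and then quote Proposition \ref{kaltonunique} (bounded equivalence of the interpolation maps forces $X=Y$ up to renorming, hence equivalent bases) to close the converse cleanly, adding the permutation $\sigma$ throughout.
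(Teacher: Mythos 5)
Your plan is correct and follows essentially the same route as the paper: upgrade (permutative) projective equivalence to bounded equivalence via Corollary \ref{prepara}, force $\mu=1$ by testing on flat vectors and computing the two Lozanovskii factorizations through the asymptotic $\ell_p$ estimates, and then invoke Kalton's uniqueness (Proposition \ref{kaltonunique}) to get equivalence of the bases. The only point to tighten is that $\lambda_X(A)\sim|A|^{1/p}$ holds only for sets $A$ supported far enough out (and one must choose $A$ so that $\sigma(A)$ is also far out in $Y$, using that a permuted asymptotically $\ell_p$ basis remains asymptotically $\ell_p$ ``in the long distance''), which is exactly how the paper phrases the computation.
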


\begin{proof}
The key is to show that projective equivalence actually implies equivalence, hence bounded equivalence;
which implies, by Kalton's result (Proposition \ref{kaltonunique}), that the bases of $X$ and $Y$
are equivalent.

Assume thus that the induced centralizers are $\lambda$-projectively equivalent.
By Lemma \ref{addit-prop} (3) and  Corollary \ref{prepara}
$$
\sum_i a_i^2 \Big(\log\frac{\mu_i}{\nu_i}-\lambda\log\frac{\mu_i^{\prime}}{\nu_i^{\prime}}\Big)^2
\leq K,$$
whenever $x=\sum_i a_i e_i$ in $\ell_2$ is normalized, and
$a_i^2=\nu_i \mu_i={\nu_i^{\prime}} {\mu_i^{\prime}}$ with
$$
1\leq \|\sum_i \nu_i e_i\|_{X}, \|\sum_i\mu_i e_i\|_{X^*}, \|\sum_i \nu_i^{\prime} e_i\|_{Y},
\|\sum_i \mu_i^{\prime} e_i\|_{Y^*} \leq c.
$$
Taking $x$ with support far enough on the basis, we may choose $a_i=n^{-1/2}$ and
$\nu_i = \nu^{\prime}_i \simeq n^{-1/p}$, $\mu_i=\mu_i^{\prime} \simeq n^{-1/p'}$.
Then $|(1-\lambda)\log n|^2 \leq K'$, which means that $\lambda=1$.
Therefore we have equivalence, and even bounded equivalence by Corollary \ref{prepara}.

To deduce the permutative projective equivalence case from the projective equivalence case just
note that if a basis $(e_n)$ is asymptotically $\ell_p$ then any permutation of $(e_n)$ is again
asymptotically $\ell_p$ ``in the long distance", in the sense that there exists $C \geq 1$
and a function $f:\N\to \N$ such that for all $n$ and normalized $f(n)<x_1<\ldots<x_n$ in $X$,
the sequence $(x_i)_{i=1}^n$ is $C$-equivalent to the basis of $\ell_p^n$.
\end{proof}

From the purely Banach space theory it is interesting to decide whether the twisted Hilbert spaces
thus obtained are isomorphic. We can obtain non-isomorphic singular twisted Hilbert spaces as follows.

\adef
A Lipschitz function $\phi: [0+\infty) \rightarrow \mathbb{C}$ with $\phi(0)=0$ is called \emph{expansive}
if for every $M$ there exists $N$ such that $|s-t| \geq N \Rightarrow |\phi(s)-\phi(t)|\geq M$.
\zdef

Observe that Lipschitz functions for which $\lim_{t \to \infty} \phi'(t)=0$ are not expansive.
In particular the functions $\phi_r$ for $0<r<1$ are not expansive, while $\phi_1$ is expansive.

\begin{prop}\label{sing-lipsch}
Let $X$ be a  space with a normalized $1$-unconditional basis that is self-similar, in the sense that
$M_X \sim M_Y$ for all subspaces $Y \subset X$ generated by a disjoint sequence, and such that
$\lim_{n\ra\infty} M_X(n)=\infty$.
Assume $\phi: [0+\infty) \rightarrow \mathbb{C}$ is an expansive Lipschitz function. Then the Kalton-Peck map $\mathscr K_\phi(x)=x \phi\left (- \log \frac{|x|}{\|x\|}\right)$ is  singular.
\end{prop}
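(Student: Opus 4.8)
The plan is to prove that $\mathscr K_\phi$ is disjointly singular, which, since $X$ has an unconditional basis, is equivalent to its being singular. Recall that for a Lipschitz $\phi$ the Kalton--Peck map $\mathscr K_\phi$ is a (complex) centralizer on $X$ that is exact and contractive and satisfies $\mathscr K_\phi(e_n)=0$ for every $n$, so the machinery of Section~\ref{sect:estimates} is available for it. Assume, for contradiction, that $\mathscr K_\phi$ is not disjointly singular. Then it is trivial on some subspace $W=[w_n]$ spanned by a disjointly supported sequence.

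The first step is an upper estimate: there is a constant $K$, independent of $n$, such that for any disjointly supported norm-one vectors $u_1,\dots,u_n$ of $W$,
\[
\Big\|\mathscr K_\phi\big(\sum_i u_i\big)-\sum_i\mathscr K_\phi(u_i)\Big\|\le 2K\,M_X(n).
\]
If $X$ is reflexive this is exactly Lemma~\ref{trivialestimate}, obtained through Lemma~\ref{propersing}. In general one argues by hand, localizing as in the proof of Proposition~\ref{appl-B-convex}: write $\mathscr K_\phi|_W=\Lambda+B$ with $\Lambda$ linear and $B$ bounded, and for each fixed finite family $u_1,\dots,u_n$ let $P$ be the (norm-one) coordinate projection of $X$ onto the span of the basis vectors lying in $\bigcup_i\supp u_i$; replacing $B$ by the average over $\varepsilon\in\{-1,1\}^n$ of $y\mapsto\varepsilon\,PB(\varepsilon y)$, where $\varepsilon$ acts by change of sign on the blocks $u_1,\dots,u_n$, produces a relatively exact --- hence relatively contractive --- bounded map $\beta$ with $\|\beta\|\le\|B\|$, such that $\mathscr K_\phi|_{[u_i]}-\beta$ is linear. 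Consequently $\mathscr K_\phi(\sum_i u_i)-\sum_i\mathscr K_\phi(u_i)=\beta(\sum_i u_i)-\sum_i\beta(u_i)$; since the $\beta(u_i)$ are disjointly supported of norm at most $\|B\|$ and $\|\sum_i u_i\|\le M_X(n)$, the displayed bound follows with $K=\|B\|$.

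The second step is a lower estimate, and this is where expansiveness enters. By self-similarity $M_W\sim M_X$, so we may choose the $u_i$ with $m_n:=\|\sum_i u_i\|\ge c\,M_X(n)$ for a fixed $c>0$; as $M_X(n)\to\infty$, also $m_n\to\infty$. On $\supp u_i$, since $\|u_i\|=1$, the Kalton--Peck formula gives
\[
\mathscr K_\phi\big(\sum_j u_j\big)-\sum_j\mathscr K_\phi(u_j)=u_i\Big(\phi\big(-\log|u_i|+\log m_n\big)-\phi\big(-\log|u_i|\big)\Big),
\]
and here the two arguments of $\phi$ differ by the constant $\log m_n$ at \emph{every} point of the support (note $0\le-\log|u_i|<\infty$ there, since a normalized $1$-unconditional basis forces $|u_i|\le\|u_i\|=1$ pointwise). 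Hence, given an arbitrary $M$, expansiveness yields $N$ with $|\phi(s+\log m_n)-\phi(s)|\ge M$ for all $s\ge0$ whenever $\log m_n\ge N$; choosing $n$ with $m_n\ge e^N$, the pointwise multiplier above has modulus $\ge M$, so by monotonicity of the norm $\|\mathscr K_\phi(\sum_i u_i)-\sum_i\mathscr K_\phi(u_i)\|\ge M\,m_n\ge c\,M\,M_X(n)$. Together with the upper estimate this gives $c\,M\le 2K$ for every $M$, a contradiction. Therefore $\mathscr K_\phi$ is disjointly singular, hence singular.

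I expect the one genuinely delicate point to be the upper estimate outside the reflexive setting: converting the abstract bounded/linear splitting of $\mathscr K_\phi|_W$ into a relatively contractive one, localized to $[u_1,\dots,u_n]$, so that the error term $\sum_i\beta(u_i)$ is controlled by $M_X(n)$ and not merely by $n$. This is exactly the finite averaging trick over $\{-1,1\}^n$, which works here because the coordinate projection onto a union of basis-supports has norm one. Everything else is either the explicit Kalton--Peck formula or a direct exploitation of the two hypotheses: self-similarity to produce sums of near-maximal norm inside the offending subspace, and expansiveness to turn the uniform shift by $\log m_n$ into a uniform lower bound for the $\phi$-increments.
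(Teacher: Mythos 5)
Your proof is correct and follows essentially the same strategy as the paper's: assume triviality on a disjointly supported subspace, get an upper bound of order $M_X(n)$ for the commutator $\mathscr K_\phi(\sum_i u_i)-\sum_i\mathscr K_\phi(u_i)$ from exactness/contractiveness, and contradict it via the lower bound $M\cdot\|\sum_i u_i\|$ obtained by applying expansiveness to the uniform shift $\log\|\sum_i u_i\|\to\infty$ in the arguments of $\phi$. The only divergence is that the paper simply invokes Lemma \ref{estimativa} for the upper estimate, whereas you substitute a finite sign-averaging localized to $[u_1,\dots,u_n]$; this is a sound refinement (it sidesteps the reflexivity hypothesis of Lemma \ref{trivialestimate}, which the proposition does not formally assume) but not a different route.
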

\begin{proof}
To simplify notation we write $\Omega=\mathscr K_\phi$.
Observe that $\Omega$ is a contractive centralizer. Assume that $Y$ is a sublattice of $X$ such that
$\Omega_{|Y}$ is trivial.
Let $M$ be arbitrary positive, $N$ be such that $|s-t|\geq N\Rightarrow |\phi(s)-\phi(t)|\geq M$,
and $n$ be such that $M_Y(n) \geq 2 e^N$.
We may consider disjoint vectors $y_1,\ldots,y_n$ in $Y$ of norm at most $1$ such that
$\|y_1+\cdots+y_n\| \geq M_Y(n)/2$.
An easy calculation shows that
$$
\Omega(\sum_i y_i)-\sum_i \Omega (y_i)=\sum_i y_i  \big(\phi( - \log(\sum_i y_i/K))-
\phi(- \log(\sum_i y_i))\big),
$$
where $K=\|\sum_{i=1}^n y_i\|$.
Each coordinate of the vector $\log(\sum_i y_i))-\log(\sum_i y_i/K)$ is  $\log K$ which
is larger than $\log (M_Y(n)/2) \geq N$.
Therefore each coordinate of the vector $\phi( -\log(\sum_i y_i))-\phi( -\log(\sum_i y_i/K))$
is larger than $M$ in modulus.
We deduce that
$$
\|\Omega(\sum_i y_i)-\sum_i \Omega (y_i) \| \geq M \|\sum_i y_i\| \geq M M_Y(n)/2.
$$
By Lemma \ref{estimativa}, this implies for some fixed constant $k$ that $kM_X(n) \geq MM_Y(n)/2$,
therefore $M_X \not\sim M_Y$, a contradiction which proves that $\Omega$ is singular.
\end{proof}

Observe that the condition $\lim_{n\ra \infty} M_X(n)=\infty$ can be obtained assuming that $X$ is self-similar
and does not contain $c_0$.\\

In \cite{kalt-canad} Kalton obtained a family $Z_2(\alpha)$ of complex twisted Hilbert spaces
induced by the centralizers
$$
\mathscr K_{i\alpha}(x) = x \left( -\log \frac{|x|}{\|x\|} \right)^{1+i\alpha}
$$
for $-\infty<\alpha<\infty$ (see also \cite{kaltmem}). Since these are not real centralizers they appear, according to \cite{kaltdiff}, as induced by the interpolation of three spaces. They are singular because:

\begin{lemma} The Lispchitz function $\phi(t)=t^{1+i\alpha}$ is expansive.
\end{lemma}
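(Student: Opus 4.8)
The plan is to establish the stronger quantitative fact $|\phi(s)-\phi(t)|\ge|s-t|$ for all $s,t\ge 0$, from which expansiveness is immediate: given $M$ one simply takes $N=M$. Since both sides are symmetric in $s$ and $t$, I would assume $0\le t<s$. The case $t=0$ is trivial: $|\phi(s)-\phi(0)|=|s^{1+i\alpha}|=s=|s-t|$, where I use that for $u>0$ one has $u^{1+i\alpha}=u\,e^{i\alpha\log u}$, so $|u^{1+i\alpha}|=u$.

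For $t>0$ the key step is to factor out $t^{1+i\alpha}$. Writing $r=s/t>1$,
\[
\phi(s)-\phi(t)=t^{1+i\alpha}\bigl(r^{1+i\alpha}-1\bigr),
\qquad\text{hence}\qquad
|\phi(s)-\phi(t)|=t\,\bigl|r^{1+i\alpha}-1\bigr|,
\]
because $|t^{1+i\alpha}|=t$. Now $r^{1+i\alpha}=r\,e^{i\alpha\log r}$ lies on the circle of radius $r$ about the origin, and since $r>1$ the point $1$ lies strictly inside that circle, so $|r^{1+i\alpha}-1|\ge r-1$ (equivalently $|re^{i\theta}-1|^2=r^2-2r\cos\theta+1\ge(r-1)^2$ for every real $\theta$). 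Combining, $|\phi(s)-\phi(t)|\ge t(r-1)=s-t=|s-t|$. For completeness I would also note that $\phi$ is Lipschitz with constant $\sqrt{1+\alpha^2}$, since $|\phi'(u)|=|1+i\alpha|\,|u^{i\alpha}|=\sqrt{1+\alpha^2}$, and $\phi(0)=0$, so $\phi$ indeed meets the hypotheses in the definition of an expansive function.

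There is essentially no obstacle here once the right normalisation is in place; the only way to go astray is to attack $\phi(s)-\phi(t)=(1+i\alpha)\int_t^s u^{i\alpha}\,du$ directly, where the factor $u^{i\alpha}$ winds around the unit circle and cancellation is awkward to control, especially for large $|\alpha|$. Factoring out $t^{1+i\alpha}$ first removes this difficulty: what drives the estimate is the growth of the modulus $|u^{1+i\alpha}|=u$, and after the factorisation everything reduces to the elementary inequality $|re^{i\theta}-1|\ge r-1$, so the short computation displayed above is all that the proof needs.
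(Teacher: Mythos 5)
Your proof is correct and is essentially the paper's argument: both establish the stronger bound $|\phi(s)-\phi(t)|\ge |s-t|$ using that $|u^{1+i\alpha}|=u$, the paper doing so in one line via the reverse triangle inequality $|se^{i\alpha\log s}-te^{i\alpha\log t}|\ge \big||s|-|t|\big|$. Your preliminary factoring out of $t^{1+i\alpha}$ is harmless but unnecessary, since the inequality $|re^{i\theta}-1|\ge r-1$ you then invoke is just the same reverse triangle inequality in disguise.
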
 \begin{proof} $
|\phi(s)-\phi(t)|=|se^{i\alpha \log(s)}-te^{i\alpha \log(t)}| \geq ||s|-|t||=|s-t|.
$\end{proof}

Thus, according to Proposition \ref{sing-lipsch} \cite{kalt-canad} we get:

\begin{prop}\label{newTwistHsing} Given $\alpha \in \R$, the exact sequences
$$\begin{CD}
0@>>> \ell_2 @>>> Z_2(\alpha) @>>> \ell_2 @>>> 0\end{CD}$$
are singular and for $\alpha \neq \beta$ the spaces
$Z_2(\alpha)$ and $Z_2(\beta)$ are not isomorphic.
\end{prop}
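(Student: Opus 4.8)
The plan is to split the statement into its two independent assertions: singularity of the sequences, and non-isomorphism of the spaces $Z_2(\alpha)$.

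\emph{Singularity.} I would obtain this as a direct application of Proposition~\ref{sing-lipsch} to $X=\ell_2$, equipped with its unit vector basis $(e_n)$, and to the function $\phi(t)=t^{1+i\alpha}$. The hypotheses are easily checked: $(e_n)$ is a normalized $1$-unconditional basis, and if $Y=[y_n]$ is spanned by a disjoint sequence of normalized vectors of $\ell_2$ then $\|\sum_i\lambda_i y_i\|^2=\sum_i|\lambda_i|^2$, so $Y$ is isometric to $\ell_2$ and $M_Y(n)=M_{\ell_2}(n)=\sqrt n$; thus $\ell_2$ is self-similar in the required sense and $M_{\ell_2}(n)\to\infty$. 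The preceding Lemma supplies the last ingredient: $\phi(t)=t^{1+i\alpha}$ is an expansive Lipschitz map (it is Lipschitz with constant $|1+i\alpha|$ since $\phi'(t)=(1+i\alpha)t^{i\alpha}$, and expansive by the bound $|\phi(s)-\phi(t)|\ge|s-t|$ proved there, $\phi(0)=0$ by continuity). Since $\mathscr K_\phi$ is precisely the Kalton--Peck centralizer $\mathscr K_{i\alpha}$ that induces $Z_2(\alpha)$, Proposition~\ref{sing-lipsch} yields that $\mathscr K_{i\alpha}$ is singular, which by definition means that the exact sequence $0\to\ell_2\to Z_2(\alpha)\to\ell_2\to 0$ is singular.

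\emph{Non-isomorphism.} This does not follow from the singularity machinery of this paper, and I would instead quote Kalton's result from \cite{kalt-canad}, where the family $\{Z_2(\alpha):\alpha\in\R\}$ is constructed precisely so that distinct parameters yield non-isomorphic spaces. Note that the uniqueness tool at our disposal (Proposition~\ref{kaltonunique}) applies only to \emph{real} centralizers, whereas $\mathscr K_{i\alpha}$ is genuinely complex for $\alpha\neq 0$; and in any case inequivalence of the inducing centralizers would not by itself force the twisted sum spaces to be non-isomorphic. The separation in \cite{kalt-canad} rests on a subtler, analytic/torsional invariant of the twisting that is insensitive to the choice of quasi-linear representative.

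\emph{Main obstacle.} The genuinely hard point is the non-isomorphism: manufacturing a Banach-space invariant fine enough to distinguish all the $Z_2(\alpha)$ is delicate, and it is exactly what \cite{kalt-canad} achieves; within the present framework it can only be cited. The singularity half, by contrast, is a clean corollary of the general expansive-Lipschitz criterion together with the elementary self-similarity of $\ell_2$.
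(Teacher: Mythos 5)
Your proposal matches the paper's own argument: the paper likewise deduces singularity from Proposition~\ref{sing-lipsch} via the lemma that $\phi(t)=t^{1+i\alpha}$ is expansive (using the same reverse-triangle-inequality bound $|\phi(s)-\phi(t)|\ge||s|-|t||=|s-t|$) together with the self-similarity of $\ell_2$, and it attributes the pairwise non-isomorphism of the $Z_2(\alpha)$ to \cite{kalt-canad}. Your verification of the hypotheses and your remark that Proposition~\ref{kaltonunique} is unavailable here because the centralizers are complex are both correct and consistent with the paper's discussion.
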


We consider now the Kalton-Peck centralizers $\mathscr{K}_{\phi_r}(x)=x \phi_r\big(-\log(|x|/\|x\|_2)\big)$
induced by the Lipschitz functions $\phi_r(t)= t$ for $0\leq t\leq 1$, and
$\phi_r(t)= t^r$ for $1<t<\infty$, and the twisted Hilbert spaces
$\ell_2(\phi_r) = \ell_2 \oplus_{\mathscr{K}_{\phi_r}} \ell_2$ they generate, introduced by Kalton and Peck in \cite{kaltpeck}.
Note that $\ell_2(\phi_1)=Z_2$. It follows from Kalton's theorem \ref{kalton} (\cite[Theorem 7.6]{kaltdiff}) that
$\ell_2(\phi_r)$ comes generated by some interpolation scale, and we show now that it is a scale of Orlicz spaces.

\begin{prop}\label{TwistHinterp}
Let $0<r<1$ and $\varphi_0, \ \varphi_1$ be the maps $[0,\infty) \rightarrow [0,\infty)$
defined by
$$
\varphi_0^{-1}(t)=t^{\frac{1}{2}+\frac{1}{4}(-\log t)^{r-1}}, \quad
\varphi_1^{-1}(t)=t^{\frac{1}{2}-\frac{1}{4}(-\log t)^{r-1}},
$$
on a neighborhood of $0$, and extended to $[0,\infty)$ to be  $N$-functions with the
$\Delta_2$-property. Then $$\ell_2(\phi_r) \simeq  (\ell_{\varphi_0},\ell_{\varphi_1})_{1/2}.$$
\end{prop}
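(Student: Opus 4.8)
Proposition \ref{TwistHinterp} asserts that the twisted Hilbert space generated by the scale $(\ell_{\varphi_0},\ell_{\varphi_1})$ at $\theta=\tfrac{1}{2}$ — that is, $\ell_2\oplus_{\Omega_{1/2}}\ell_2$, where $\Omega_{1/2}$ is the induced centralizer — is isomorphic to $\ell_2(\phi_r)=\ell_2\oplus_{\mathscr{K}_{\phi_r}}\ell_2$. The plan is to compute $\Omega_{1/2}$ via Propositions \ref{interp-Orlicz} and \ref{interp-Orlicz-central} and then to show that $\Omega_{1/2}$ differs from a fixed scalar multiple of $\mathscr{K}_{\phi_r}$ by a bounded map; projective equivalence of the two centralizers then yields the isomorphism of the associated twisted sums.

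First I would dispose of the ambiguity in the extension of $\varphi_0,\varphi_1$. Near $0$ the exponents $\frac{1}{2}\pm\frac{1}{4}(-\log t)^{r-1}$ tend to $\frac{1}{2}$, and a routine check (using that $\frac{d}{dt}\log\varphi_0^{-1}(t)=\frac{1}{t}\big(\frac{1}{2}+\frac{r}{4}(-\log t)^{r-1}\big)$ and similarly for $\varphi_1^{-1}$) shows that $\varphi_0^{-1},\varphi_1^{-1}$ are strictly increasing on a small neighbourhood of $0$ and that $\varphi_0,\varphi_1$ satisfy the $N$-function and $\Delta_2$ conditions there. Since an Orlicz \emph{sequence} space and the associated Lozanovskii factorization depend only on the germ of the $N$-function at $0$, I would fix the extensions so that $\varphi_0^{-1}(t)\varphi_1^{-1}(t)=t$ for all $t\ge 0$ (continue $\varphi_0^{-1}$ past the neighbourhood with, say, $\varphi_0^{-1}(t)\asymp t^{1/2}$ at infinity, which keeps both $\varphi_0$ and $\varphi_1^{-1}(t)=t/\varphi_0^{-1}(t)$ $N$-functions with $\Delta_2$). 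Then $\varphi^{-1}(t)=\big(\varphi_0^{-1}(t)\varphi_1^{-1}(t)\big)^{1/2}=t^{1/2}$, so $\varphi(t)=t^{2}$, and Proposition \ref{interp-Orlicz-central} applies: $(\ell_{\varphi_0},\ell_{\varphi_1})_{1/2}=\ell_2$ with induced centralizer $\Omega_{1/2}(f)=f\log\big(\varphi_1^{-1}(f^2)/\varphi_0^{-1}(f^2)\big)$ for normalized positive $f$, extended to all of $\ell_2$ via $\Omega_{1/2}(g)=g\,\Omega_{1/2}(|g|/\|g\|_2)$.

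The core step is a coordinatewise computation. Where the displayed formulas hold, $\varphi_1^{-1}(t)/\varphi_0^{-1}(t)=t^{-\frac{1}{2}(-\log t)^{r-1}}$, hence $\log\big(\varphi_1^{-1}(t)/\varphi_0^{-1}(t)\big)=\frac{1}{2}(-\log t)^{r}$; putting $t=f_i^{2}$ and using $-\log f_i^{2}=2(-\log f_i)$ gives $\Omega_{1/2}(f)_i=2^{r-1}f_i(-\log f_i)^{r}$, which equals $2^{r-1}\mathscr{K}_{\phi_r}(f)_i$ as soon as $f_i$ is small enough that $-\log(|f_i|/\|f\|)=-\log f_i>1$ (so that $\phi_r(-\log f_i)=(-\log f_i)^{r}$). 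Now set $D=\Omega_{1/2}-2^{r-1}\mathscr{K}_{\phi_r}$; it is homogeneous and satisfies $D(uf)=uD(f)$ for units $u$ (both terms being exact in that sense, by Lemma \ref{addit-prop}), so it suffices to bound $\|D(f)\|_2$ for normalized positive $f$. There $D(f)_i=f_i\,\eta(f_i^{2})$ with $\eta(t)=\log\big(\varphi_1^{-1}(t)/\varphi_0^{-1}(t)\big)-2^{r-1}\phi_r\big(-\tfrac{1}{2}\log t\big)$ on $(0,1]$; by the computation $\eta$ vanishes near $0$ (where the formulas for $\varphi_0^{-1},\varphi_1^{-1}$ hold and $-\tfrac{1}{2}\log t>1$) and is continuous, hence bounded, on the complementary compact subinterval of $(0,1]$, so $\|D(f)\|_2^{2}=\sum_i f_i^{2}\,\eta(f_i^{2})^{2}\le\|\eta\|_\infty^{2}$. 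Thus $D$ is bounded, $\Omega_{1/2}$ is boundedly equivalent to $2^{r-1}\mathscr{K}_{\phi_r}$, hence $\Omega_{1/2}$ and $\mathscr{K}_{\phi_r}$ are projectively equivalent, and therefore $\ell_2\oplus_{\Omega_{1/2}}\ell_2\simeq\ell_2\oplus_{\mathscr{K}_{\phi_r}}\ell_2=\ell_2(\phi_r)$.

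I expect the main obstacle to be the very first step: Proposition \ref{interp-Orlicz-central} is stated under the global hypothesis $t=\varphi_0^{-1}(t)\varphi_1^{-1}(t)$, whereas the statement only prescribes $\varphi_0,\varphi_1$ near $0$. If choosing a convenient global extension turns out to be awkward, the alternative is to bypass the global hypothesis: for each fixed $f$ there are at most $\|f\|_2^{2}/\delta$ coordinates with $f_i^{2}>\delta$, and on any interval bounded away from $0$ the functions $\log(\varphi_1^{-1}/\varphi_0^{-1})$ and $s\mapsto\phi_r(-\tfrac{1}{2}\log s)$ are bounded whatever the extension; combined with the vanishing of $\eta$ near $0$ this again yields $\|D(f)\|_2=O(\|f\|_2)$ independently of the chosen extension.
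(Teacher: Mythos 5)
Your proposal is correct and follows essentially the same route as the paper: compute $\Omega_{1/2}$ from Proposition \ref{interp-Orlicz-central}, observe that coordinatewise it agrees with a multiple of $\mathscr{K}_{\phi_r}$ wherever $|f_i|$ is small, and absorb the remaining coordinates (where $|f_i|$ is bounded below) into a uniformly bounded error, concluding by (projective) equivalence of the centralizers. In fact you track the computation slightly more carefully than the printed proof, which writes $\psi(-\log|x|)$ where the substitution $t=|x|^2$ actually produces $\psi(-2\log|x|)$ and hence your constant $2^{r-1}$; this is harmless for the isomorphism but your version is the accurate one, and your explicit treatment of the global extension of $\varphi_0,\varphi_1$ fills a point the paper leaves implicit.
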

\begin{proof}
We note that everything here is well defined since by choice of $r$ and after
an easy calculation,
$t^{3/4} \leq \varphi_0^{-1}(t) \leq t^{1/4}$, $t^{3/4} \leq \varphi_1^{-1}(t) \leq t^{1/4}$
and $\varphi_1^{-1}(t)$ and $\varphi_0^{-1}(t)$ are increasing, for $t$ in some neighborhood
of $0$.
This is enough to make sure that $\varphi_1$ and $\varphi_0$ define $N$-function Orlicz spaces.
The $\Delta_2$-property is also satisfied on a neighborhood of $0$.
Indeed
\begin{displaymath}
\begin{array}{ll}
\varphi_0^{-1}(9t)
&=3 t^{\frac{1}{2}+\frac{1}{4}(-\log 9 t)^{r-1}}
=3 \varphi_0^{-1}(t) t^{\frac{1}{4}[(-\log 9 -\log t)^{r-1}-(-\log t)^{r-1}]}\\
&=3 \varphi_0^{-1}(t) \exp \big(-\frac{1}{4} (-\log t)^r
[(1+\frac{\log 9}{\log t})^{r-1}-1]\big).
\end{array}
\end{displaymath}
The exponential in this expression is easily seen to tend to $1$ when $t$ tends to $0$,
so close enough to $0$, $\varphi_0^{-1}(9t) \geq 2\varphi_0^{-1}(t)$, and $\varphi_0$ satisfies
the $\Delta_2$ condition $\varphi_0(2s) \leq 9\varphi(s)$ for $s$ in a neighborhood of $0$.
The same holds for $\varphi_1$. Since $\varphi_0^{-1}(t)\varphi_1^{-1}(t) =t$ on a neighborhood of $0$,
the equality $(\ell_{\varphi_0},\ell_{\varphi_1})_{1/2}=\ell_2$ holds up to equivalence of bases.

Let $\psi$ be the map so that
$$
\varphi_1^{-1}(t)=t^{\frac{1}{2}-\frac{1}{4}\psi(-\log(t))}.
$$
Note that $\psi$ is continuous, $\psi(s)=s^{r-1}$ for $s$ on a neighborhood
$V$ of $+\infty$, and only the value of $\psi(s)$ for $s \geq 0$ is relevant here.
Suppose that $\|x\|_2=1$.
Then the centralizer $\Omega$ associated to $(\ell_{\varphi_0},\ell_{\varphi_1})_{1/2}=\ell_2$
(see Proposition \ref{interp-Orlicz-central}), is given by
$$
\Omega(x)=2x \log \frac{\varphi_1^{-1}(|x|^2)}{|x|}
=2x \log{|x|^{-\frac{1}{2}\psi(-\log |x|)}}=x \psi(-\log |x|) (-\log |x|),
$$
while $\mathscr{K}_{\phi_r}(x)_n=x_n\cdot (-\log|x_n|)^r$ whenever $|x_n|$ is less than some
constant $c$ depending on $V$.
So we deduce that
\begin{displaymath}
\begin{array}{ll}
\|\Omega(x)-\mathscr{K}_{\phi_r}(x)\|^2
&\leq \sum_{|x_n| \geq c} 2(\Omega(x))_n^2+(\mathscr{K}_{\phi_r}(x))_n^2\\
&\leq 2\big( (-\log c)^2 \sup_{[0,-\log c]}|\psi|+ (-\log c)^{2r}\big).
\end{array}
\end{displaymath}
Since $\Omega$ and $\mathscr{K}_{\phi_r}$ are homogeneous, they are boundedly equivalent.
Hence $\ell_2\oplus_\Omega\ell_2$ and $\ell_2(\phi_r)$ are isomorphic.
\end{proof}

Recall from \cite[Corollary 5.5]{kaltpeck} that the spaces $\ell_2(\phi_r)$ are mutually
non-isomorphic for different values of $0<r\leq1$.
We already know \cite[Corollary 5.5]{kaltpeck} that $\mathscr K = \mathscr K_{\phi_1}$ is
singular but, since the function $\phi_r$ is not expansive for $r<1$, we do not know if also
$\mathscr{K}_{\phi_r}$ is singular for $0<r<1$.

\section{The twisting of H.I. spaces}
\label{sect:twisting-H.I.}

A Banach space $X$ is said to be \emph{indecomposable} if it cannot be decomposed
as $A\oplus B$ for two infinite dimensional subspaces $A,B$.
An infinite dimensional space $X$ is said to be \emph{hereditarily indecomposable} (H.I., in short)
if all subspaces are indecomposable \cite{GM}.
It is said to be \emph{quotient hereditarily indecomposable} (Q.H.I., in short)
if all its quotients of subspaces are indecomposable \cite{FQHI}.
In particular, Q.H.I. spaces are H.I. The existence of Q.H.I. Banach spaces was proved in \cite{FQHI}.
The simplest connection between H.I. spaces and the theory of singular exact sequences is described
in the following folklore proposition; we present its proof for the sake of completeness.

\begin{lemma}\label{characterization}
Given an exact sequence of Banach spaces
$$
\begin{CD} 0@>>>Y@>>> X@>q>> Z@>>>0, \end{CD}
$$
the space $X$ is H.I. if and only if $Y$ is H.I. and $q$ is strictly singular.
\end{lemma}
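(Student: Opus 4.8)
The plan is to prove both implications. Assume first that $X$ is H.I. Then $Y$, being a (closed, infinite-dimensional) subspace of $X$, is automatically H.I.; this is immediate from the definition of hereditary indecomposability. For the strict singularity of $q$, suppose toward a contradiction that $q$ restricted to some infinite-dimensional closed subspace $M\subset X$ is an isomorphism onto its image $q(M)\subset Z$. Then $M\cap Y = 0$, and I claim $M+Y$ is a closed topological direct sum inside $X$: indeed, for $m\in M$ one has $\|m\|\leq \|m+y\|\cdot\text{const}$ because $\|q(m)\| = \|q(m+y)\|\lesssim \|m+y\|$ and $q|_M$ is bounded below, hence $\|y\|\leq\|m\|+\|m+y\|\lesssim\|m+y\|$ as well. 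Since $Y$ is infinite-dimensional, this exhibits $X=M\oplus Y$-type decomposition contradicting indecomposability of $X$ (more precisely, $\overline{M+Y}$ is a decomposable subspace of $X$, contradicting that $X$ is H.I.).

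For the converse, assume $Y$ is H.I. and $q$ is strictly singular, and suppose $X$ is not H.I., so some closed infinite-dimensional subspace $V\subset X$ decomposes as $V = A\oplus B$ with $A,B$ infinite-dimensional closed subspaces. The idea is to use strict singularity of $q$ to push the decomposition down into $Y$. Since $q|_A$ and $q|_B$ are strictly singular (restrictions of a strictly singular operator), a standard perturbation/gliding-hump argument produces infinite-dimensional subspaces $A'\subset A$, $B'\subset B$ on which $q$ is ``small'': more carefully, one finds $A'\subset A$ with $A'$ close (in the sense of small Kadec--Pe\l czy\'nski perturbation) to a subspace of $\ker q = Y$, and likewise $B'$. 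One then arranges, again by a perturbation argument, that $A'$ and $B'$ can be replaced by subspaces $A''\subset Y$, $B''\subset Y$ that are still ``almost'' subspaces of the original $A,B$, hence still form a direct sum (the angle between $A$ and $B$ inside $V$ is bounded below, and small perturbations preserve a positive angle). This yields a decomposable infinite-dimensional subspace $A''\oplus B''$ of $Y$, contradicting that $Y$ is H.I.

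The main obstacle is the converse direction: turning ``$q$ is strictly singular'' into an actual finite-codimensional (or just infinite-dimensional-subspace) reduction that lands the decomposition inside $Y$. The precise tool is that a strictly singular operator, restricted suitably, admits for every $\varepsilon>0$ a further infinite-dimensional subspace on which its norm is $<\varepsilon$; combined with the principle that $Y=\ker q$ is ``locally close'' to such subspaces (since $q$ has closed range, $\|qv\|$ small forces $v$ close to $Y$), one perturbs $A'$ and $B'$ into $Y$. The delicate point is ensuring the two perturbed subspaces still form a direct sum: one must keep the perturbations small relative to the (fixed, positive) angle between $A$ and $B$ in $V$, which is possible because one can shrink $\varepsilon$ at will before perturbing. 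I would carry this out by first fixing normalized basic sequences in $A$ and $B$ realizing the direct-sum constant, then applying strict singularity of $q$ to extract subsequences whose spans are within $\varepsilon$ of $Y$, then invoking the principle of small perturbations of basic sequences to conclude that the spans of the perturbed (in-$Y$) sequences remain complemented in each other, contradicting that $Y$ is H.I.
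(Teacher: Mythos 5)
Your proof is correct, and in both directions it follows the same overall strategy as the paper: for the forward implication you justify (with the explicit norm estimate) the paper's one-line claim that a subspace $M$ on which $q$ is an isomorphism yields a decomposable subspace $Y\oplus M$ of $X$; for the converse, both you and the paper perturb a decomposable subspace $A\oplus B$ of $X$ into the kernel $Y$. The implementations of that perturbation differ slightly. The paper passes to subspaces $Y_1\subset A$, $Y_2\subset B$ on which $q$ is compact (even nuclear), lifts to produce a single small perturbation of the identity, i.e.\ an automorphism $U$ of $X$ with $U(Y_1),U(Y_2)\subset Y$; since $U$ is an automorphism of the whole space, $U(Y_1)\oplus U(Y_2)$ is automatically still a topological direct sum, so no separate angle argument is needed. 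You instead use the ``norm $<\varepsilon$ on a further infinite-dimensional subspace'' form of strict singularity, the fact that $\|qv\|$ small forces $\dist(v,Y)$ small (because $q$ induces an isomorphism $X/Y\to Z$), and the principle of small perturbations of basic sequences to keep the two perturbed spans in direct sum. That route works, but be careful on one point of phrasing: you cannot in general ``extract subsequences'' of a fixed basic sequence in $A$ on whose span $q$ is small (the formal identity $\ell_1\to\ell_2$ restricted to the span of any subsequence of the unit vector basis still has norm $1$); you must choose the vectors $a_n$ successively inside subspaces $A_{\varepsilon_n}\subset A$ with $\|q|_{A_{\varepsilon_n}}\|<\varepsilon_n$ and $\sum_n\varepsilon_n$ small, as your earlier ``$A'\subset A$'' formulation correctly does, so that the total perturbation is summable and beats the basis constant of the interleaved sequence.
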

\begin{proof}
Suppose $X$ is H.I. Then clearly $Y$ is H.I., and if $q$ is not strictly singular,
$q_{|V}$ is an isomorphism for some (infinite dimensional) subspace $V$ of $X$, hence
$Y \oplus V$ is a subspace of $X$ and thus $X$  cannot be H.I.
Conversely, suppose that $q$ is strictly singular.
If $X$ is not H.I. we can find a decomposable subspace $X_1\oplus X_2$ of $X$,
and $q$ has compact (even nuclear) restrictions on some subspaces $Y_1\subset X_1$ and
$Y_2\subset X_2$.
Thus we can assume that there exists a bijective isomorphism $U:X\ra X$
such that $U(Y_1)$ and $U(Y_2)$ are contained in $Y$.
Since $U(Y_1)\oplus U(Y_2)$ is closed, we conclude that $Y$ is not H.I.
%
\end{proof}

The basic question we tackle in this section is whether it is possible to obtain nontrivial
twisted sums of H.I. spaces.
The existence of a nontrivial twisted sum of $A$ and $B$ will be denoted $\Ext(B,A)\neq 0$.
On one hand, if $X$ is a given example of a Q.H.I. space and $Y$ is a subspace of $X$ with
$\dim Y = \dim X/Y =\infty$, then $X$ is a nontrivial twisted sum of the two H.I. spaces $Y$
and $X/Y$.
However, what one is looking for is to obtain methods to twist two specified H.I. spaces.
Recall that the Kalton-Peck method \cite{kaltpeck} to twist spaces only works, in principle,
under unconditionality assumptions.
A second method is to use the local theory of exact sequences as developed in \cite{cabecastuni}.
The following result is a good example; we could not find it explicitly in the literature,
but it is certainly known:

\begin{prop}\label{Ext-B-convex}
If $X$ is a B-convex Banach space then $\Ext(X,X)\neq 0$.
\end{prop}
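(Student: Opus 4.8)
The statement asserts that every $B$-convex Banach space $X$ admits a nontrivial self-extension, i.e.\ $\Ext(X,X)\neq 0$. The natural approach is to exhibit a nontrivial quasi-linear (even $z$-linear) map $F\colon X\to X$. Since $X$ need not have any unconditional structure, the Kalton--Peck construction is unavailable directly; instead I would localize the problem and build the twisted sum from finite-dimensional data. The plan is to use the fact, due to Pisier and quoted just before the statement, that a $B$-convex space contains $\ell_2^n$ uniformly complemented: there are subspaces $E_n\subset X$ with $d(E_n,\ell_2^n)\le C$ and projections $P_n\colon X\to E_n$ with $\|P_n\|\le C$, and moreover (after passing to a subsequence and using a gliding-hump / small-perturbation argument) we may assume the $E_n$ are ``almost disjointly supported'' with respect to a fixed monotone basis or FDD of $X$, hence the projections $P_n$ may be taken to have almost disjoint ranges so that $\sum_n P_n$ is bounded on a suitable subspace.

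On each $E_n\cong \ell_2^n$ one has the finite-dimensional Kalton--Peck map $\mathscr K_n$, which is quasi-linear with a constant independent of $n$ but whose ``distortion'' — the norm of the best linear approximation — grows like $\log n$; concretely, the Kalton--Peck estimate gives $\|\mathscr K_n(x_1+\cdots+x_k)-\sum_i\mathscr K_n(x_i)\|\le C\sqrt{\log k}\,\|\sum_i x_i\|$ for disjoint norm-one vectors, while any linear map $L\colon \ell_2^n\to \ell_2^n$ approximating $\mathscr K_n$ up to a bounded error must have norm $\gtrsim \log n$. The plan is then to paste these together: define $F = \sum_n \iota_n\,\mathscr K_n\,P_n$, where $\iota_n\colon E_n\hookrightarrow X$ is the inclusion, checking that $F$ is quasi-linear on $X$ because the $\mathscr K_n$ have a uniform quasi-linearity constant and the family $(\iota_n,P_n)$ behaves like a uniformly bounded unconditional decomposition on the relevant vectors (this uses $B$-convexity again, via type $p>1$, to sum the pieces). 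Finally, $F$ is nontrivial: if $F = B+L$ with $B$ bounded and $L$ linear, then restricting to $E_n$ would yield $\|\mathscr K_n - P_n L\iota_n\|\le \|B\| + \|P_n(L-F)\iota_n\|$ bounded uniformly in $n$, forcing a linear map on $\ell_2^n$ within bounded distance of $\mathscr K_n$, contradicting the $\log n$ lower bound for $n$ large.

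The main obstacle I expect is the bookkeeping needed to make $F = \sum_n \iota_n \mathscr K_n P_n$ genuinely quasi-linear on all of $X$, not merely on $\bigcup_n E_n$: one must control the interaction between different blocks. Here is where $B$-convexity is essential rather than cosmetic — by James's theorem a $B$-convex space has nontrivial type $p>1$, and an argument in the spirit of Kalton's type-theoretic estimates (see \cite{kaltconvex}, cited in the excerpt) shows that a sum of uniformly quasi-linear maps supported on a sequence of uniformly complemented, ``spread-out'' subspaces remains quasi-linear, with the error terms across distinct blocks absorbed by the upper-$p$ estimate. An alternative, perhaps cleaner, route is to invoke the local theory of exact sequences from \cite{cabecastuni}: the finite-dimensional extensions $0\to\ell_2^n\to Z_2^n\to\ell_2^n\to 0$ have $\Ext$-norms tending to infinity, and by the duality/compactness machinery there this forces $\Ext(X,X)\neq 0$ whenever $X$ contains the $\ell_2^n$ uniformly complemented. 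I would present the proof along the second route if the local machinery of \cite{cabecastuni} is available to cite, since it sidesteps the delicate quasi-linearity verification and reduces the statement to the uniform complementation of the $\ell_2^n$ together with the known growth of the Kalton--Peck constants.
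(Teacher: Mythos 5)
Your preferred (second) route is essentially the paper's own proof. The paper disposes of the statement in two lines by citing \cite{cabecastuni} twice: once for the fact that a space containing $\ell_2^n$ uniformly complemented --- which every B-convex space does, by Pisier's theorem --- satisfies $\Ext(X,\ell_2)\neq 0$, and once for the implication $\Ext(X,X)=0\Rightarrow\Ext(X,\ell_2)=0$. Your version compresses these into a single appeal to the uniform-boundedness/local machinery (if $\Ext(X,X)=0$ held, the maps $\iota_n\mathscr K_n P_n:X\to X$ would be uniformly trivial, and restricting back to $E_n\cong\ell_2^n$ would contradict the $\log n$ growth of the Kalton--Peck constants); that is a legitimate packaging of the same argument, though you should be aware that what the local theory most directly yields is $\Ext(X,\ell_2)\neq 0$, and the paper makes the passage to $\Ext(X,X)$ a separate explicit step. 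Your first route --- assembling a single global quasi-linear map $F=\sum_n\iota_n\mathscr K_n P_n$ --- is where the real difficulties sit and is not what the paper does: a general B-convex space need not carry a basis or FDD along which the $E_n$ can be made ``almost disjoint,'' and the quasi-linearity of the infinite sum across blocks is precisely the delicate point you acknowledge but do not resolve. You are right to discard that route in favor of the second; had you presented only the first, the proof would be incomplete.
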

\begin{proof}
If $X$ contains $\ell_2^n$ uniformly complemented, as it is the case of $B$-convex
Banach spaces, then $\Ext(X, \ell_2) \neq 0$ \cite{cabecastuni}.
And if $\Ext(X,X)=0$ then $\Ext(X,\ell_2)=0$ \cite{cabecastuni}.
\end{proof}

The only currently known $B$-convex H.I. space is the one constructed by Ferenczi
in \cite{fere}. So, calling this space $\mathcal F$ one gets $\Ext(\mathcal F, \mathcal F)\neq 0$.
However this is not entirely satisfactory since this twisting does not provide any information
about the twisted sum space, apart from its existence.
So we formulate the following question:

\begin{probl}\label{P2} Let $X$ be an H.I. space. Does there exist an H.I. twisted sum of $X$?
\end{probl}

Focusing again on Ferenczi's space $\mathcal F$, since it is a space obtained via an
interpolation scheme, i.e., $\mathcal F = X_\theta$ for a certain configuration of spaces,
the induced centralizer $\Omega_\theta$ provides a natural twisted sum of $\mathcal F$ with
itself that we call $\mathcal F_2$:
$$
\begin{CD}0@>>> \mathcal F  @>>> \mathcal F_2 @>>> \mathcal F @>>>0.
\end{CD}
$$

We will show in Section \ref{sect:twisting-Ferenczi} that this sequence is singular, which
implies that $\mathcal F_2$ is H.I.\\

By the characterization in Lemma \ref{characterization} it is tempting to believe
that a twisted sum of two H.I. spaces is H.I. whenever is not trivial.
However, this is not the case:

\begin{prop}\label{indecomposable}
There exists a nontrivial twisted sum of two H.I. spaces which is indecomposable
but not H.I.
\end{prop}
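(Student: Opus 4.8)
The plan is to realize the example as a twisted sum $X=Y\oplus_F Z$ of two H.I.\ spaces in which $F$ is nontrivial but \emph{not} singular, and then to check that $X$, although not H.I., is nevertheless indecomposable. Recall (from the discussion after Lemma \ref{characterization}) that a quasi-linear map is singular exactly when the quotient map of its exact sequence is strictly singular; so ``$F$ not singular'' means precisely that the quotient map $q\colon X\to Z$ is an isomorphism on some infinite-dimensional subspace of $X$, and then Lemma \ref{characterization} (with $Y$ H.I.) already forces $X$ not to be H.I. The whole difficulty is to produce a genuinely nontrivial such $F$ while keeping $X$ indecomposable.

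The construction I would use is a pull-back. Fix a quotient hereditarily indecomposable space $Z$ (these exist by \cite{FQHI}), an infinite-dimensional, infinite-codimensional subspace $V_0\subseteq Z$, and set $Q:=Z/V_0$; since $Z$ is Q.H.I., $Q$ is H.I.\ (indeed Q.H.I.), and we write $\pi\colon Z\to Q$ for the quotient map, with associated sequence $\mathcal E'\colon 0\to V_0\to Z\xrightarrow{\pi}Q\to 0$, which is singular because $Z$ is H.I. Take $Y:=Q$ and fix a \emph{singular} self-extension $\mathcal G\colon 0\to Q\to E\xrightarrow{p}Q\to 0$ of $Q$; this is where the interpolation machinery enters, by taking $Q$ to be built from a complex interpolation scheme whose induced centralizer is singular by the criteria of Sections \ref{sect:singular}--\ref{sect:twisting-Ferenczi} (for instance modelled on Ferenczi's $\mathcal F$ with its singular sequence $0\to\mathcal F\to\mathcal F_2\to\mathcal F\to 0$), chosen so that $Q$ is simultaneously Q.H.I.\ and realized as $Z/V_0$. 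As $\mathcal G$ is singular, $E$ is H.I. Now let $X$ be the pull-back of $\mathcal G$ along $\pi$, i.e.\ $F:=G\circ\pi$ with $G$ a quasi-linear map for $\mathcal G$, so $X\simeq Q\oplus_{G\pi}Z$. This $X$ carries two exact sequences,
$$0\to Q\to X\xrightarrow{q}Z\to 0\qquad\text{and}\qquad 0\to V_0\to X\xrightarrow{r}E\to 0,$$
with common composite $s:=p\circ r=\pi\circ q\colon X\to Q$, whose kernel is the closed internally direct sum $Q\oplus V_0$ and which is strictly singular because $p$ is (that being the singularity of $\mathcal G$). Moreover the map $(q,r)\colon X\to Z\times E$ is an isomorphism onto its graph $\{(z,e):\pi z=p e\}$.

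The easy properties follow quickly. Since $\pi$ vanishes on $V_0$, so does $F=G\pi$ on the copy of $V_0$ inside $Z$; hence $q$ is an isometry there, $q$ is not strictly singular, and by Lemma \ref{characterization} the space $X$ is not H.I. Nontriviality of the first sequence means $\pi^{*}\mathcal G\neq 0$ in $\Ext(Z,Q)$; by the long exact cohomology sequence attached to $\mathcal E'$ this holds exactly when $\mathcal G$ is not the push-out of $\mathcal E'$ along any operator in $\Hom(V_0,Q)$, a condition one can meet by keeping $\Hom(V_0,Q)$ small (take $Z$ and $Q$ suitably incomparable) and choosing $\mathcal G$ accordingly. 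Dually one arranges $p^{*}\mathcal E'\neq 0$ in $\Ext(E,V_0)$, which prevents $X$ from splitting as $E\oplus V_0$.

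It remains to show $X$ is indecomposable, and this is the main obstacle. Suppose $X=A\oplus B$ with both summands infinite-dimensional. Since $(q,r)$ is bounded below, no infinite-dimensional subspace of $X$ can have $q$ and $r$ both compact, so on each of $A,B$ at least one of $q,r$ is non-strictly-singular; combined with $q$ and $r$ each being non-strictly-singular while $s=\pi q=pr$ is strictly singular, one reduces — along the lines of the ``at most $2$-decomposable'' analysis of \cite[Theorem 1]{gonzherre} — to a configuration in which, after passing to further subspaces and an automorphism of $X$, $q$ carries one summand isomorphically onto a subspace of $Z$ and $r$ carries the other isomorphically onto a subspace of $E$. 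The delicate point, and the heart of the proof, is to rule out precisely this configuration: it forces the induced self-extension of $Q$ on the ``$q$-side'' and the induced self-extension of $V_0$ on the ``$r$-side'' to be trivial, contradicting $\pi^{*}\mathcal G\neq 0$ and $p^{*}\mathcal E'\neq 0$ respectively. In other words, the work is to upgrade the known $2$-decomposability bound to genuine indecomposability for this particular $X$, using both non-splitting conditions at once; an alternative (but, I expect, harder to control) route would be to start from a concrete indecomposable non-H.I.\ space in the Ferenczi/Argyros--Tolias circle and exhibit it directly as a twisted sum of two H.I.\ spaces.
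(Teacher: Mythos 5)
Your construction is not the paper's, and as written it has a genuine gap precisely at the point you yourself flag as ``the heart of the proof'': the indecomposability of $X$. Everything you actually verify (that $q$ is an isometry on the copy of $V_0$, hence $X$ is not H.I.; that nontriviality can be reformulated via the long exact sequence) concerns the \emph{easy} half of the statement. The hard half --- that $X$ admits no decomposition $A\oplus B$ --- is only described as a programme (``upgrade the known $2$-decomposability bound\ldots using both non-splitting conditions at once''), and there is real reason to doubt it goes through for your $X$: the paper's own second construction in this section, the pull-back sequence (\ref{decomposable}), is of exactly the same flavour as yours and produces a twisted sum of two H.I.\ spaces that is genuinely \emph{decomposable}. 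So a pull-back along a strictly singular quotient map does not by itself prevent decomposability, and you would need a concrete argument, not just the non-splitting of $\pi^{*}\mathcal G$ and $p^{*}\mathcal E'$. A second, independent gap is the input data: you need a Q.H.I.\ space $Q$ that is simultaneously a quotient $Z/V_0$ of a Q.H.I.\ space \emph{and} admits a singular self-extension $\mathcal G$. The paper treats the existence of singular (equivalently, H.I.) extensions of a given H.I.\ space as an open problem (Problem \ref{P2}); it is only established for Ferenczi's $\mathcal F$, which is not known to fit your quotient requirement. So the existence of your ingredients is itself unproven.

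For contrast, the paper sidesteps both difficulties by duality. Starting from \cite[Prop.~25]{FQHI} one takes two reflexive Q.H.I.\ spaces $X_1,X_2$ with isometric subspaces $Y_1\cong Y_2$ and totally incomparable quotients, amalgamates them into $\hat X=(X_1\times X_2)/\hat Y$, which is reflexive and H.I., and then passes to $\hat X^{*}$. Indecomposability of $\hat X^{*}$ is automatic (a decomposition of the dual of a reflexive space dualizes back to a decomposition of the space), failure of H.I.\ is witnessed by the decomposable annihilator $\hat Z^{\perp}$, and nontriviality of the exact sequence $0\to\hat X_1^{\perp}\to\hat X^{*}\to\hat X^{*}/\hat X_1^{\perp}\to 0$ follows immediately from indecomposability. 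If you want to salvage your route, the missing indecomposability argument is the piece you must supply; as it stands the proposal does not prove the proposition.
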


\begin{proof}
Recall that two Banach spaces $A,B$ are said to be totally incomparable if no infinite
dimensional subspace of $A$ is isomorphic to a subspace of $B$.
It was proved in \cite[Prop. 25]{FQHI} that there exist two reflexive Q.H.I. spaces
$X_1, X_2$ admitting infinite dimensional subspaces $Y_1\subset X_1$ and $Y_2\subset X_2$
such that $Y_1$ is isometric to $Y_2$ and $X_1/Y_1$ and $X_2/Y_2$
are infinite dimensional and totally incomparable.
Note that $X_1^*$ and $X_2^*$ are Q.H.I.

Given a bijective isometry $U:Y_1\ra Y_2$, we consider the subspace
$\hat Y :=\{(y,Uy): y\in Y_1\}$ of $X_1\times X_2$, the quotient
$\hat X := (X_1\times X_2)/\hat Y$, and the quotient map $Q:X_1\times X_2\ra \hat X$.
Note that $\hat X_1 := Q(X_1\times \{0\})$ and $\hat X_1 := Q(\{0\}\times X_2)$ are
subspaces of $\hat X$ isometric to $X_1$ and $X_2$ respectively, and
$\hat Z :=\hat X_1\cap \hat X_2 = Q(Y_1\times \{0\})=  Q(\{0\}\times Y_2)$.
Thus $\hat X/\hat Z$ is isomorphic to $\hat X_1/\hat Z\oplus \hat X_2/\hat Z$,
hence $\hat Z^\perp$ is decomposable and $\hat X^*$ is not H.I.
Let us see that $\hat X^*$ is a nontrivial twisted sum of two H.I. spaces:
Since $\hat X$ is reflexive and H.I. \cite[Proposition 23]{FQHI}, the dual space
$\hat X^*$ is indecomposable, hence the exact sequence
$$
\begin{CD}
0 @>>> \hat X_1^\perp @>>>\hat X^*@>>> \hat X^*/\hat X_1^\perp @>>> 0
\end{CD}
$$
is nontrivial.
Moreover, $\hat X_1^\perp$ and $\hat X^*/\hat X_1^\perp$ are H.I. because
$\hat X_1 \simeq X_1$ and $\hat X/\hat X_1 \simeq X_2/Y_2$ are Q.H.I. and reflexive.
\end{proof}

We can present  an alternative construction of nontrivial and non H.I. twisted sums
of H.I. spaces.
Let us say that a Banach space $X$ \emph{admits a singular extension} if there exists
a singular exact sequence
$$\begin{CD} 0@>>>X @>>> Y@>q>> Z@>>>0;\end{CD}$$
i.e., an exact sequence with $q$ strictly singular and $Z$ infinite dimensional.
By Lemma \ref{characterization}, a HI space admits a singular extension if and only if it
admits a non trivial extension which is a HI space.


\begin{prop}
Every separable H.I. space $X$ which admits a singular extension is a complemented subspace
of a nontrivial twisted sum of two H.I. spaces.
\end{prop}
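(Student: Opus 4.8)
The plan is to realise $X$ as a complemented summand inside the middle term of a diagonal push-out sequence attached to a singular extension of $X$. So fix, as the hypothesis allows, a singular exact sequence $0\to X\stackrel{i}{\to} Y\stackrel{q}{\to} Z\to 0$, which by Lemma \ref{characterization} has $Y$ hereditarily indecomposable; it is in particular nontrivial, so $i(X)$ is not complemented in $Y$ and $Z$ is infinite dimensional. I first record the rigidity fact that \emph{every operator $T\colon Y\to X$ is strictly singular}: since $Y$ is H.I. we may write $iT=\nu\,\mathrm{id}_Y+S$ with $S$ strictly singular, and applying $q$ (which kills $iT$, as $q\circ i=0$) gives $\nu q=-qS$; since $qS$ is strictly singular while $q$ is a quotient onto the infinite dimensional space $Z$, this forces $\nu=0$, so $iT=S$ and hence $T$ are strictly singular.

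Next I choose a strictly singular operator $\alpha\colon X\to X$ such that the push-out of the extension along $\alpha$ is again singular --- equivalently, so that the quasi-linear map $\alpha\circ F$ is singular, where $F\colon Z\to X$ is a quasi-linear map of the extension. Granting such an $\alpha$, I pass to the push-out diagram (\ref{po}) along $\alpha$ and then to the associated diagonal push-out sequence, which by the description recalled after (\ref{po}) takes the form $0\to X\stackrel{d}{\to} X\oplus Y\stackrel{m}{\to}\PO\to 0$, with $d(x)=(-\alpha x,ix)$ and $\PO$ the push-out space. Here $X$ is visibly a complemented summand of the middle term $X\oplus Y$; the kernel $d(X)$ is isomorphic to $X$ (the second coordinate recovers $x$), hence H.I.; and $\PO$ lies in the push-out sequence $0\to X\to\PO\to Z\to 0$ whose quotient map is strictly singular by the choice of $\alpha$, so $\PO$ is H.I. by Lemma \ref{characterization}. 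Thus $X\oplus Y$ is a twisted sum of the two H.I. spaces $X$ and $\PO$ that contains $X$ complemented.

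It remains to verify that this sequence is nontrivial, i.e. that $d(X)$ is not complemented in $X\oplus Y$. A projection onto $d(X)$ would yield operators $A\colon X\to X$ and $B\colon Y\to X$ with $Bi-A\alpha=\mathrm{id}_X$. By the rigidity fact $B$, hence $Bi$, is strictly singular; and $A=\mu\,\mathrm{id}_X+S'$ with $S'$ strictly singular, $X$ being H.I. So $\mu\alpha=Bi-S'\alpha-\mathrm{id}_X$ would be a strictly singular perturbation of $-\mathrm{id}_X$: if $\mu=0$ then $\mathrm{id}_X$ is strictly singular, absurd; if $\mu\neq 0$ then $\alpha$ is a nonzero scalar multiple of the identity plus a strictly singular operator, contradicting that $\alpha$ is strictly singular. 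Hence $d(X)$ is not complemented, the sequence is nontrivial, and the proof is complete.

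The main obstacle is the choice of $\alpha$ in the second step: pushing out a singular extension along an arbitrary strictly singular operator need not preserve singularity (along operators with very rapidly shrinking multipliers the push-out can even split), so $\alpha$ must be built with care, ensuring that for every infinite dimensional subspace $V\subseteq Z$ the push-out along $\alpha$ of the (still nontrivial) restricted extension $F|_V$ remains nontrivial. This is precisely where the quantitative estimates and the transfer principle of Section \ref{sect:estimates} (Lemma \ref{transfer}) must be brought to bear, and it is the heart of the argument.
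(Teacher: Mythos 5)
Your architecture is reasonable, but the proof has a genuine gap at exactly the step you flag as ``the heart of the argument'': you never produce the strictly singular operator $\alpha\colon X\to X$ with $\alpha\circ F$ still singular, and nothing guarantees that such an $\alpha$ exists for an arbitrary separable H.I. space $X$. The tools you invoke to fill the gap --- the estimates of Section \ref{sect:estimates} and the transfer principle of Lemma \ref{transfer} --- require a basis or a K\"othe lattice structure and a quasi-linear map arising from an interpolation scheme; they say nothing about an abstract singular extension of an abstract separable H.I. space. The obstruction is structural, not technical: on an H.I. space the strictly singular endomorphisms may all be compact or nearly so, and (as you yourself observe) pushing a singular extension out along a very small operator can trivialize it, so the existence of a strictly singular $\alpha$ preserving singularity is an unproved, and quite possibly false, assertion on which the whole construction rests. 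The remaining pieces --- the rigidity fact that every operator $Y\to X$ is strictly singular (which relies on the scalar-plus-strictly-singular representation of operators on $Y$, so implicitly on the complex H.I. setting), and the non-complementation of $d(X)$ in $X\oplus Y$ --- are correct granting $\alpha$.

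The paper sidesteps this entirely by working on the other side of the sequence: instead of pushing out along a map into the subspace $X$, it pulls back along a strictly singular surjection onto the middle space $Y$. The external input is the Argyros--Tolias theorem \cite[Theorems 14.5 and 14.8]{AT}, which supplies a separable H.I. space $W$ and a surjection $p\colon W\to Y$ with infinite dimensional kernel; $p$ is automatically strictly singular by Lemma \ref{characterization}. The pull-back $\PB=\{(w,x)\in W\oplus X : p(w)=i(x)\}$ is then H.I.\ (its projection onto $X$ is a strictly singular surjection with H.I.\ kernel $\ker p$), and the sequence $0\to\PB\to W\oplus X\xrightarrow{U} Y\to 0$ with $U(w,x)=i(x)-p(w)$ exhibits $X$ as a complemented subspace of a twisted sum of the two H.I. spaces $\PB$ and $Y$; nontriviality follows because $U$ is a strictly singular perturbation of $(w,x)\mapsto i(x)$, whose range $i(X)$ has infinite codimension in $Y$. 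To rescue your push-out route you would need an existence theorem producing $\alpha$ that plays the role Argyros--Tolias plays here, and that is precisely what is missing.
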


\begin{proof}
Let $0 \ra X\stackrel{i}{\lra}Y\stackrel{q}{\lra}Z\ra 0$ be a singular extension of $X$
with $Y$ separable.
It follows from Proposition \ref{characterization} that $Y$ is H.I.
By \cite[Theorems 14.5 and 14.8]{AT} there exists a separable H.I. space $W$ and a surjective
operator $p:W\ra Y$ with infinite dimensional kernel.
Note that $p$ is strictly singular by Proposition \ref{characterization}.
We consider the closed subspace $\PB := \{(w,x) \in W\oplus X : p(w) = i(x)\}$ of $W\oplus X$
and the projection operators $\alpha:\PB\ra W$ and $\beta:\PB\ra X$.
Note that $\beta$ is strictly singular because $i\beta = q\alpha$, and that $\beta$ is
surjective with $\ker(\beta)= \ker(p)$ an H.I. space.
Hence $\PB$ is H.I.

Since the operator $U:(w,x)\in Z\oplus X \lra i(x)-p(w)\in Y$ is surjective,
we have a twisted sum of two H.I. spaces
\begin{equation}\label{decomposable}\begin{CD}
0 @>>> \PB @>>> W\oplus X  @>U>> Y @>>> 0.\end{CD}
\end{equation}
To finish the proof it is enough to show that this twisted sum is nontrivial.
Indeed, otherwise $U$ would be in the class $\Phi_r$ of operators with complemented kernel
and finite codimensional closed range.
By the stability of $\Phi_r$ under strictly singular perturbations \cite[Theorem 7.23]{Aiena},
the operator $T(w,x)\in Z\oplus W \lra i(x)\in Y$ would define an isomorphism of $X$ onto a
finite codimensional subspace of $Y$, which is not possible.
\end{proof}

We do not know if every separable H.I. space admit a singular extension. On the other hand, the exact sequence (\ref{decomposable}) also shows that there are nontrivial twisted sums of
H.I. spaces which are decomposable (``two" is the maximum number of summands by
\cite[Theorem 1]{gonzherre}). In Section \ref{sect:higher-twisting} we will give other examples of this kind.\\

To conclude this section, we formulate the general problem about twisting H.I:

\begin{probl}
Does there exists an H.I. space $X$ so that $\Ext(X,X)=0$?
\end{probl}

Note (see \cite{accgm}) that there are only a few known solutions to the equation $\Ext(X,X)=0$: the spaces
$L_1(\mu)$, $c_0$, $\ell_\infty(\Gamma)$ and $\ell_\infty/c_0$.

\section{An H.I. twisted sum of $\mathcal F$}\label{sect:twisting-Ferenczi}

Ferenczi's H.I. uniformly convex space $\mathcal F$ \cite{fere} comes induced by a
complex interpolation scheme associated to a family of Banach spaces (briefly described in
Subsection \ref{interp-family}) setting $X_{(1,t)} = \ell_q$, $q>1, t\in\R$, and as $X_{(0,t)}$ certain
Gowers-Maurey-like spaces with $1$-monotone basis. We fix $\theta\in (0,1)$, and define $
\mathcal{F} =\{x\in\Sigma(X_{j,t}) : x=g(\theta) \text{ for some } g\in\cl{H}(X_{j,t})\}
$ with the quotient norm of $\cl{H}(X_{j,t})/\ker\delta_\theta$, given by
$\|x\|_\theta=\inf\{\|g\|_\mathcal{H}: x=g(\theta)\}$. In this section we will show that the space $\mathcal F$ satisfies the hypotheses of Proposition \ref{gen} with $C=1+\epsilon$ for any $\epsilon>0$ and thus:

\begin{teor}\label{ferenczi} The induced exact sequence$$
\begin{CD}0@>>> \mathcal F  @>>> \mathcal F_2 @>>> \mathcal F @>>>0.
\end{CD}
$$is singular. Therefore $\mathcal F_2$ is H.I.
\end{teor}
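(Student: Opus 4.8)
The goal is to verify that $\mathcal F=X_\theta$ satisfies all the hypotheses of Proposition \ref{gen}, with unconditionality and complementation constants $C=1+\epsilon$ for arbitrarily small $\epsilon>0$. Once this is done, Proposition \ref{gen} gives that $\Omega_\theta$ is singular, i.e.\ the sequence $0\to\mathcal F\to\mathcal F_2\to\mathcal F\to 0$ is singular; and since $\mathcal F$ is H.I.\ by \cite{fere}, the H.I.\ property of $\mathcal F_2$ follows from Lemma \ref{characterization} together with the fact (\cite{castmorestrict}) that a quasi-linear map is singular exactly when the associated quotient map is strictly singular. So the whole argument is a structural analysis of the interpolation family $\{X_{(j,t)}\}$ defining $\mathcal F$: on the line $\mathrm{Re}=1$ one has $X_{(1,t)}=\ell_q$, with an upper $\ell_q$-estimate on disjoint blocks with constant $1$, while the Gowers--Maurey--like spaces $X_{(0,t)}$ satisfy, uniformly in $t$, an asymptotic upper $\ell_{p_0}$-estimate for the relevant exponent $p_0$; with $\tfrac1p=\tfrac{1-\theta}{p_0}+\tfrac\theta q$ one has $p_0\neq q$ and hence $p$ strictly between them, and $X_\theta$ is reflexive since the scale is.

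\textbf{First step: reduction to block subspaces.} Using the range-contractive selection $B_\theta$ available in the $1$-monotone case (Section \ref{sect:singular}) together with the transfer principle, it suffices to handle the restriction of $\Omega_\theta$ to a block subspace $W$ of the monotone basis of $\mathcal F$. Thus what must be produced, for every such $W$ and every $n$, is a successive normalized sequence $n<y_1<\cdots<y_n$ in $W$ which is $(1+\epsilon)$-unconditional, whose span $[y_1,\dots,y_n]$ is $(1+\epsilon)$-complemented in $\mathcal F$, and which satisfies $\big\|\sum_{i=1}^n y_i\big\|\ge(1+\epsilon)^{-1}n^{1/p}$.

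\textbf{Second step: near-optimal, almost unconditional, almost complemented block sequences.} The vectors will appear as values $y_i=g_i(\theta)$ of analytic functions $g_i\in\cl H(X_{j,t})$ supported on pairwise disjoint ranges, chosen so that the traces $g_i(1+it)$ are genuine disjoint blocks of $\ell_q$ while the traces $g_i(it)$ form a rapidly increasing / special block sequence in each $X_{(0,t)}$ of the kind controlled by the Gowers--Maurey combinatorics, for which a two-sided $\ell_{p_0}$-estimate with constant close to $1$ holds once the blocks are placed far enough along the basis. Interpolating these estimates --- the upper bound by the three-lines argument already used in the proof of Lemma \ref{core}, the lower bound by pairing $\sum_i y_i$ against analytically lifted biorthogonal functionals --- yields $\big\|\sum_i y_i\big\|_\theta\sim n^{(1-\theta)/p_0}\,n^{\theta/q}=n^{1/p}$, and that same pairing furnishes a norm-$(1+\epsilon)$ projection of $\mathcal F$ onto $[y_1,\dots,y_n]$. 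The $(1+\epsilon)$-unconditionality is arranged by taking the $\mathrm{Re}=0$ traces to be exact pairs in the Gowers--Maurey sense (the $\ell_q$-side being $1$-unconditional automatically), so that an arbitrary sign change alters $\big\|\sum_i\pm y_i\big\|$ by at most a factor $1+\epsilon$.

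\textbf{Main obstacle.} The estimate $\big\|\sum_i y_i\big\|\gtrsim n^{1/p}$ is essentially forced by the choice of $p$ and the reflexivity of the scale; the genuinely delicate point is producing \emph{simultaneously}, inside the H.I.\ space $\mathcal F$, finite block sequences that are $(1+\epsilon)$-unconditional \emph{and} $(1+\epsilon)$-complemented --- properties that fail globally and can only be secured locally, far out along the basis, by exploiting the rapidly-increasing-sequence technology at the Gowers--Maurey endpoint and transporting it through the interpolation functor. Checking that this technology survives interpolation with the uniform constants demanded by the $\liminf$ condition behind Proposition \ref{appl-B-convex} (in its family form, Proposition \ref{gen}) is where the real work of the section lies.
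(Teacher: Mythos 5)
Your overall strategy coincides with the paper's: reduce everything to Proposition \ref{gen} (via the range-contractive selection and the transfer principle), and then produce in every block subspace of $\mathcal F$, for each $n$, a finite block sequence which is $(1+\epsilon)$-unconditional, $(1+\epsilon)$-complemented and satisfies $\|\sum_{i=1}^n y_i\|\geq (1+\epsilon)^{-1}n^{1/p}$ with $\frac{1}{p}=1-\theta+\frac{\theta}{q}$ (so the endpoint exponent is $p_0=1$ and the upper $\ell_1$-estimate at the Gowers--Maurey side is the trivial one). The use of biorthogonal norming functionals to obtain the projection is also the paper's route.

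However, the mechanism you propose for the crucial lower bound does not work, and this is a genuine gap. You assert that RIS-type blocks at the $\mathrm{Re}=0$ endpoint satisfy a \emph{two-sided} $\ell_{p_0}$-estimate with constant close to $1$, and that interpolating the endpoint estimates yields $\|\sum_i y_i\|\sim n^{1/p}$. With $p_0=1$ this is false: in Gowers--Maurey-type spaces (and, after interpolation, in $\mathcal F$ itself) the lower estimate for $n$ successive normalized vectors always loses the factor $f(n)=\log_2(1+n)$. This is exactly Lemma \ref{estimate}, and Lemma \ref{RIS} shows that for a RIS of length $n$ the norm $\|\sum_{i=1}^n y_i\|$ is of order $n^{1/p}/f(n)^{1-\theta}$, \emph{not} $n^{1/p}$; pairing against the lifted norming functionals cannot do better, since those functionals carry the same $f(n)^{1-\theta}$ normalization. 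Hence a plain RIS of length $n$ fails the uniform bound $\geq C^{-1}n^{1/p}$ required by Proposition \ref{gen}. The missing idea is the two-level averaging of Proposition \ref{seq}: take a single RIS $x_1<\cdots<x_{mn}$, let each $y_j$ be the suitably normalized sum of $m$ consecutive $x_i$'s, and choose $m$ so large that $f(mn)/f(m)<1+\epsilon$; the logarithmic losses at the two levels then cancel, giving $(1+\epsilon)$-equivalence to the unit vector basis of $\ell_p^n$ (which already yields the near-$1$ unconditionality, with no need of exact pairs) together with the $(1+\epsilon)^{2p}$-bounded projection $Px=\sum_i\psi_i(x)y_i$. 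Without this lacunarity trick the construction you describe only delivers $n^{1/p}/f(n)^{1-\theta}$ and the singularity criterion does not apply.
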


We have trivial upper $\ell_1$-estimates in spaces $X_{(0,t)}$ and
upper $\ell_q$-estimates in spaces $X_{(1,t)}$.
So we only need to check the $\ell_p$-condition of Proposition \ref{gen} in the middle
space $X_\theta$, for $\frac{1}{p}=1-\theta+\frac{\theta}{q}$.
Let $f(x) :=\log_2(1+x)$.
We first state  estimates relative to successive vectors in the space $\mathcal F$
\cite[Proposition 1]{fere}, as well as estimates for successive functionals in $\mathcal F^*$
obtained by standard duality arguments:

\begin{lemma}\label{estimate}
For all successive vectors $x_1<\cdots<x_n$ in $\mathcal F$,
$$
\frac{1}{f(n)^{1-\theta}}\Big(\sum_{i=1}^n \|x_i\|^p\Big)^{1/p} \leq
\Big\|\sum_{i=1}^n x_i\Big\| \leq \Big(\sum_{i=1}^n \|x_i\|^p\Big)^{1/p},
$$
and for all successive functionals $\phi_1<\cdots<\phi_n$ in $\mathcal F^*$,
$$
\Big(\sum_{i=1}^n \|\phi_i\|^\pp\Big)^{1/\pp} \leq \Big\|\sum_{i=1}^n \phi_i\Big\|
\leq f(n)^{1-\theta}\Big(\sum_{i=1}^n \|\phi_i\|^\pp\Big)^{1/\pp}.
$$
\end{lemma}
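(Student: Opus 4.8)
\emph{Proof plan.} The first displayed estimate is \cite[Proposition 1]{fere}, which I would simply quote: it records the asymptotic $\ell_1$-behavior of the Gowers--Maurey-type spaces $X_{(0,t)}$ — whose lower estimate carries the logarithmic loss $f(n)$ — interpolated against $\ell_q$, so that the two-sided $\ell_1$/$\ell_q$ estimates become the stated $\ell_p$ estimate with $\frac1p=1-\theta+\frac{\theta}{q}$ (note $p>1$) and the loss $f(n)^{1-\theta}$ sits on the lower side. The only thing to prove is therefore the passage to successive functionals, and the plan is to do this by the routine duality argument, recording at the outset that $\mathcal F$ is reflexive (so $\mathcal F^*$ carries the shrinking dual basis and ``successive functionals'' makes sense) and that interval restrictions for this basis are contractive — the basis may be assumed bimonotone, an equivalent renorming that does not disturb the asymptotic estimates.

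For the lower bound $\left(\sum_i\|\phi_i\|^\pp\right)^{1/\pp}\le\left\|\sum_i\phi_i\right\|$ I would argue as follows. Fix $\epsilon>0$; since each $\phi_i$ is carried by the interval $I_i:=\operatorname{ran}\phi_i$ and $I_1<\dots<I_n$, choose normalized $x_i$ with $\supp x_i\subseteq I_i$ and $\phi_i(x_i)\ge(1-\epsilon)\|\phi_i\|$, so the $x_i$ are successive and $\phi_j(x_i)=0$ for $j\ne i$. Test $\sum_i\phi_i$ against $z=\sum_j\lambda_j x_j$ with $\lambda_j=\|\phi_j\|^{\pp/p}$: the upper half of the first estimate gives $\|z\|\le\left(\sum_j\lambda_j^p\right)^{1/p}=\left(\sum_j\|\phi_j\|^\pp\right)^{1/p}$, while $\left(\sum_i\phi_i\right)(z)=\sum_i\lambda_i\phi_i(x_i)\ge(1-\epsilon)\sum_i\|\phi_i\|^\pp$ because $\tfrac1p+\tfrac1\pp=1$ makes $\lambda_i\|\phi_i\|=\|\phi_i\|^\pp$. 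Dividing and letting $\epsilon\to0$ gives the bound.

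For the upper bound $\left\|\sum_i\phi_i\right\|\le f(n)^{1-\theta}\left(\sum_i\|\phi_i\|^\pp\right)^{1/\pp}$ I would take $\|x\|\le1$, set $y_i:=P_{I_i}x$ (so the $y_i$ are successive and $\left(\sum_i\phi_i\right)(x)=\sum_i\phi_i(y_i)$), and apply H\"older followed by the \emph{lower} half of the first estimate applied to the successive vectors $(y_i)$:
\[
\Big|\sum_i\phi_i(y_i)\Big|\le\Big(\sum_i\|\phi_i\|^\pp\Big)^{1/\pp}\Big(\sum_i\|y_i\|^p\Big)^{1/p}\le f(n)^{1-\theta}\Big(\sum_i\|\phi_i\|^\pp\Big)^{1/\pp}\Big\|\sum_i y_i\Big\|,
\]
and $\|\sum_i y_i\|\le\|x\|\le1$ since $\sum_i y_i$ is an interval-union restriction of $x$. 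Taking the supremum over $x$ finishes the argument.

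I do not anticipate a real obstacle: the mathematical content is entirely carried by \cite[Proposition 1]{fere}, and the duality is standard. The only points demanding care are purely bookkeeping — choosing the test vectors $x_i$ (and the interval pieces $y_i$) supported inside $\operatorname{ran}\phi_i$ so that the pairing with $\sum_i\phi_i$ decouples into $\sum_i\phi_i(x_i)$ (resp. $\sum_i\phi_i(y_i)$), and ensuring the monotonicity constant of the basis is $1$ so that the clean constants in the statement are preserved.
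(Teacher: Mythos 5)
Your overall route is exactly the one the paper takes: the first display of Lemma \ref{estimate} is quoted from \cite[Proposition 1]{fere}, and the functional estimates are obtained ``by standard duality arguments'' which the paper does not write out and which your proposal supplies. The lower functional bound is correct as written: testing $\sum_i\phi_i$ against $z=\sum_j\lambda_jx_j$ with $\lambda_j=\|\phi_j\|^{\pp/p}$ and using the upper $\ell_p$-estimate for successive vectors gives $(\sum_i\|\phi_i\|^\pp)^{1/\pp}\le\|\sum_i\phi_i\|$; and the norming vectors $x_i$ can indeed be taken supported in $\mathrm{ran}\,\phi_i$ because single-interval projections are contractive for the (already bimonotone, no renorming needed) basis of $\mathcal F$.

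The one step that does not survive scrutiny is the last inequality of your upper bound: with $I_i=\mathrm{ran}\,\phi_i$ and $y_i=P_{I_i}x$ you assert $\|\sum_i y_i\|\le\|x\|$ ``since $\sum_i y_i$ is an interval-union restriction of $x$''. Bimonotonicity only makes \emph{single}-interval restrictions contractive; a projection onto a union of $n$ pairwise disjoint, non-adjacent intervals need not be bounded by $1$, and in $\mathcal F$ --- which is H.I., hence has no unconditional basic sequence --- no unconditionality is available to rescue this. The crude bound $\|\sum_i P_{I_i}x\|\le n\|x\|$ would ruin the estimate. The repair is standard and one line: replace the $I_i$ by adjacent intervals $E_1<\cdots<E_n$ with $I_i\subseteq E_i$ and $\bigcup_i E_i=[\min I_1,\max I_n]$ (say $E_i=[\min I_i,\min I_{i+1}-1]$ for $i<n$ and $E_n=[\min I_n,\max I_n]$). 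Since $\phi_i$ is supported in $I_i\subseteq E_i$, one still has $\phi_i(x)=\phi_i(P_{E_i}x)$; the vectors $y_i:=P_{E_i}x$ are successive; and now $\sum_i y_i=P_{[\min I_1,\max I_n]}x$ is a genuine interval restriction, so $\|\sum_i y_i\|\le\|x\|\le 1$. With this substitution your H\"older-plus-lower-estimate computation goes through verbatim and yields the stated bound $f(n)^{1-\theta}(\sum_i\|\phi_i\|^\pp)^{1/\pp}$.
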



In \cite{fere}, $\ell^n_{p+}$-averages are defined as normalized vectors of the form
$\sum_{i=1}^n x_i$, where the $x_i$'s are successive of norm at most $(1+\epsilon)n^{-1/p}$,
and may be found in any block-subspace of $\mathcal F$ (see \cite[Lemma 2]{fere}).
However here we need to control not only the norm of $\sum_{i=1}^n x_i$ but also of
$\sum_{i=1}^n \pm x_i$ for any choice of signs $\pm$, so \cite[Lemma 2]{fere} is not quite
enough.
To this end we shall use RIS sequences as defined in \cite[Definition 3]{fere}.

RIS sequences with constant $C>1$ are successive sequences of $\ell_{p+}^{n_k}$-averages
with a technical "rapidly" increasing condition on the $n_k$'s and therefore are also
present in every block subspace of $\mathcal F$.
Every subsequence of a RIS sequence is again a RIS sequence.
In what follows $L$ is some lacunary infinite subset of $\N$ whose exact
definition may be found in \cite{fere}.
As a consequence of Lemma \ref{estimate}, \cite[Lemma 10]{fere} and standard duality
arguments we have:

\begin{lemma}\label{RIS}
Let $y_1<\cdots<y_n$ be a RIS sequence in $\mathcal F$, with constant $1+\epsilon^2/100$,
where $n \in [\log N, {\rm exp}\ N]$ for some $N$ in $L$, and $0<\epsilon<1/16$.
Then
$$
\frac{n^{1/p}}{f(n)^{1-\theta}} \leq\|\sum_{i=1}^n  y_i\| \leq
(1+\epsilon) \frac{n^{1/p}}{f(n)^{1-\theta}}.
$$
Furthermore if for all $i$, $\phi_i\in \mathcal F^*$ satisfies
$\|\phi_i\|=\phi_i(y_i)=1$ and ${\rm ran\ }\phi_i \subset {\rm ran\ }y_i$, then
$$
(1+\epsilon)^{-1} f(n)^{1-\theta}n^{1/p'} \leq \|\sum_{i=1}^n  \phi_i\| \leq
f(n)^{1-\theta}n^{1/p'}.
$$
\end{lemma}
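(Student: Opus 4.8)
The plan is to establish the four inequalities separately: three of them come essentially for free from Lemma \ref{estimate} together with a one-line duality argument, and the fourth --- the upper bound for $\big\|\sum_i y_i\big\|$ --- is the one genuinely delicate Gowers--Maurey type estimate, which I would simply quote from \cite[Lemma 10]{fere}. For the two immediate ones: each $y_i$ is a normalized $\ell^{n_i}_{p+}$-average, so $\|y_i\|=1$, and the $y_i$ are successive, hence the left-hand inequality of Lemma \ref{estimate} gives
$$
\Big\|\sum_{i=1}^n y_i\Big\|\ \geq\ \frac{1}{f(n)^{1-\theta}}\Big(\sum_{i=1}^n \|y_i\|^p\Big)^{1/p}\ =\ \frac{n^{1/p}}{f(n)^{1-\theta}};
$$
and once the $\phi_i$ are given with $\|\phi_i\|=1$ and ${\rm ran}\,\phi_i\subset{\rm ran}\,y_i$, the functionals $\phi_1<\cdots<\phi_n$ are successive in $\mathcal F^*$, so the functional half of Lemma \ref{estimate} yields
$$
\Big\|\sum_{i=1}^n \phi_i\Big\|\ \leq\ f(n)^{1-\theta}\Big(\sum_{i=1}^n \|\phi_i\|^{p'}\Big)^{1/p'}\ =\ f(n)^{1-\theta}\,n^{1/p'}.
$$

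For the upper bound $\big\|\sum_i y_i\big\|\leq(1+\epsilon)\,n^{1/p}/f(n)^{1-\theta}$ I would invoke \cite[Lemma 10]{fere}. This is exactly where the RIS hypotheses are used in an essential way: decomposing an arbitrary norming functional according to whether it is an $f(k)$-average with $k$ inside or outside the lacunary set $L$, and interpolating between the upper $\ell_1$-estimate valid in the spaces $X_{(0,t)}$ and the upper $\ell_q$-estimate valid in $X_{(1,t)}=\ell_q$, one obtains the sharp weight $f(n)^{-(1-\theta)}$, the rapidity of the RIS (constant $1+\epsilon^2/100$) and the restriction $n\in[\log N,\exp N]$ with $N\in L$ being precisely what keeps all error terms inside the factor $1+\epsilon$. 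I expect this to be the main obstacle: carrying it out from scratch would mean running the whole Gowers--Maurey RIS machinery inside the interpolation scale and keeping careful track of how the logarithmic weights transform under $\theta$-interpolation between the $\ell_1$-type and $\ell_q$-type endpoints; everything else in this lemma is formal.

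Finally, the functional lower bound follows by duality from the vector upper bound just quoted. Since the $y_i$ are successive and ${\rm ran}\,\phi_i\subset{\rm ran}\,y_i$, the $\phi_i$ have pairwise disjoint ranges, so $\phi_i(y_j)=0$ for $i\neq j$ while $\phi_i(y_i)=1$ by hypothesis; hence $\big(\sum_i\phi_i\big)\big(\sum_j y_j\big)=n$. Testing $\sum_i\phi_i$ against the unit vector $\big(\sum_j y_j\big)/\big\|\sum_j y_j\big\|$ then gives
$$
\Big\|\sum_{i=1}^n\phi_i\Big\|\ \geq\ \frac{n}{\big\|\sum_{j=1}^n y_j\big\|}\ \geq\ \frac{n\,f(n)^{1-\theta}}{(1+\epsilon)\,n^{1/p}}\ =\ \frac{f(n)^{1-\theta}\,n^{1/p'}}{1+\epsilon},
$$
using $1-1/p=1/p'$, which together with the three preceding inequalities is the assertion of the lemma.
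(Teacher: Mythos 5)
Your proof is correct and follows exactly the route the paper intends: the paper states Lemma \ref{RIS} as ``a consequence of Lemma \ref{estimate}, \cite[Lemma 10]{fere} and standard duality arguments'' without further detail, and your four steps (the two immediate inequalities from Lemma \ref{estimate}, the quoted upper bound from \cite[Lemma 10]{fere}, and the duality argument testing $\sum_i\phi_i$ against the normalized sum of the $y_i$) are precisely those three ingredients assembled correctly, including the identity $n/n^{1/p}=n^{1/p'}$ at the end.
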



We deduce the existence of sequences satisfying the condition of Proposition \ref{gen}
in any block-subspace of $\mathcal F$:

\begin{prop}\label{seq}
Let $Y$ be a block sequence of $\mathcal F$, $n \in \N$, and $\epsilon>0$.
Then there exists a  block-sequence $y_1<\cdots<y_n$ in $Y$ and a  block-sequence
$\psi_1<\cdots<\psi_n$ in $\mathcal F^*$ such that:
\begin{itemize}
\item[(1)] $(1+\epsilon)^{-1} \leq \|\psi_i\| \leq 1 \leq \|y_j\| \leq 1+\epsilon$ and
$\psi_i(y_j)=\delta_{ij}$ for $i,j=1,\ldots,n$,
\item[(2)] for any complex $\alpha_1,\ldots,\alpha_n$,
$\|\sum_{i=1}^n \alpha_i y_i\| \geq (1+\epsilon)^{-1}(\sum_{i=1}^n|\alpha_i|^p)^{1/p}$
\item[(3)] for any complex $\alpha_1,\ldots,\alpha_n$,
$\|\sum_{i=1}^n \alpha_i \psi_i\| \leq (1+\epsilon)(\sum_{i=1}^n |\alpha_i|^{\pp})^{1/\pp}$
\end{itemize}
In particular the block sequence $y_1<\cdots<y_n$ of $Y$ is $(1+\epsilon)$-equivalent to the
unit vector basis of $\ell_p^n$ and $[y_1,\ldots,y_n]$ is $(1+\epsilon)$-complemented in $Y$.
\end{prop}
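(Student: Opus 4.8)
The idea is a two–level blocking of RIS sequences, exploiting that the logarithmic factor $f$ appearing in Lemmas \ref{estimate} and \ref{RIS} varies slowly: for a fixed $n$ one has $f(nm)/f(m)\to 1$ as $m\to\infty$, and this is exactly what upgrades the ``$\ell_p$ up to a logarithm'' estimates available in $\mathcal F$ to an almost isometric copy of $\ell_p^n$. We may assume $0<\epsilon<1/16$, and we fix an auxiliary $\epsilon_0\in(0,1/16)$ to be shrunk at the end. Pick $N\in L$ large and put $m=\lceil \exp N/(2n)\rceil$, so that both $m$ and $nm$ lie in $[\log N,\exp N]$ and hence Lemma \ref{RIS} applies to sequences of lengths $m$ and $nm$; since $m\to\infty$ with $N$, we may take $N$ so large that $\big(f(nm)/f(m)\big)^{1-\theta}\le 1+\epsilon_0$. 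Inside the block subspace $Y$ choose a RIS sequence $y_1<\cdots<y_{nm}$ of constant $1+\epsilon_0^{2}/100$ (such sequences exist in every block subspace of $\mathcal F$), and split it into $n$ successive blocks $J_1<\cdots<J_n$ of $m$ consecutive terms; each block is again a RIS sequence of length $m$ with the same constant. For each $k$ fix $\phi_k\in\mathcal F^*$ as in Lemma \ref{RIS}, i.e. $\|\phi_k\|=\phi_k(y_k)=1$ and $\operatorname{ran}\phi_k\subset\operatorname{ran}y_k$.

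Set $u_i=c_i^{-1}\sum_{k\in J_i}y_k$ and $\psi_i=d_i^{-1}\sum_{k\in J_i}\phi_k$, where $c_i=\|\sum_{k\in J_i}y_k\|$ and $d_i=\|\sum_{k\in J_i}\phi_k\|$. Applying Lemma \ref{RIS} to each block gives $c_i\in[\,m^{1/p}f(m)^{\theta-1},\,(1+\epsilon_0)m^{1/p}f(m)^{\theta-1}]$ and $d_i\in[\,(1+\epsilon_0)^{-1}f(m)^{1-\theta}m^{1/\pp},\,f(m)^{1-\theta}m^{1/\pp}]$, so $c_id_i\in[(1+\epsilon_0)^{-1}m,(1+\epsilon_0)m]$. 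Since $\operatorname{ran}\phi_k\subset\operatorname{ran}y_k$ and the $y_k$ are successive, $\phi_k(y_l)=\delta_{kl}$; hence $\psi_i(u_j)=0$ for $i\ne j$ and $\psi_i(u_i)=m/(c_id_i)=:\gamma_i\in[(1+\epsilon_0)^{-1},1+\epsilon_0]$, while $\|u_i\|=\|\psi_i\|=1$ and $u_1<\cdots<u_n$, $\psi_1<\cdots<\psi_n$ are block sequences in $\mathcal F$ and $\mathcal F^*$. The upper part of Lemma \ref{estimate}, applied to the successive normalized vectors $\alpha_iu_i$, yields $\|\sum_i\alpha_iu_i\|\le(\sum_i|\alpha_i|^p)^{1/p}$. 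For the functionals, writing $\sum_i\alpha_i\psi_i=\sum_{k=1}^{nm}\mu_k\phi_k$ with $\mu_k=\alpha_i/d_i$ for $k\in J_i$ and using the dual part of Lemma \ref{estimate} at length $nm$,
\[
\Big\|\sum_i\alpha_i\psi_i\Big\|\le f(nm)^{1-\theta}\Big(\sum_i m\,\big|\alpha_i/d_i\big|^{\pp}\Big)^{1/\pp}\le (1+\epsilon_0)\Big(\frac{f(nm)}{f(m)}\Big)^{1-\theta}\Big(\sum_i|\alpha_i|^{\pp}\Big)^{1/\pp}\le (1+\epsilon_0)^{2}\Big(\sum_i|\alpha_i|^{\pp}\Big)^{1/\pp}.
\]

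Estimate (2) now follows by duality: test $\sum_i\alpha_iu_i$ against $\sum_i\beta_i\psi_i$ with $\beta_i=\overline{\alpha_i}\,|\alpha_i|^{p-2}$ (and $\beta_i=0$ if $\alpha_i=0$), so that $\beta_i\alpha_i=|\alpha_i|^p$ and $|\beta_i|^{\pp}=|\alpha_i|^p$ because $(p-1)\pp=p$; the biorthogonality relations and the last display give $\|\sum_i\alpha_iu_i\|\ge (1+\epsilon_0)^{-1}\sum_i|\alpha_i|^p\big/\big((1+\epsilon_0)^2(\sum_i|\alpha_i|^p)^{1/\pp}\big)=(1+\epsilon_0)^{-3}(\sum_i|\alpha_i|^p)^{1/p}$. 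To obtain the exact normalizations of (1), replace each $\psi_i$ by $\psi_i/\gamma_i$ (worsening (3) by one more factor $1+\epsilon_0$) and then rescale, setting $y_i:=(1+\epsilon_0)u_i$ and $\psi_i:=(1+\epsilon_0)^{-1}\gamma_i^{-1}\psi_i^{\mathrm{old}}$; then $\psi_i(y_j)=\delta_{ij}$, $\|y_j\|=1+\epsilon_0\in[1,1+\epsilon]$ and $\|\psi_i\|\in[(1+\epsilon_0)^{-2},1]\subset[(1+\epsilon)^{-1},1]$, while (2) and (3) persist up to bounded powers of $1+\epsilon_0$. Together with the clean upper $\ell_p$ estimate this shows that $(y_i)_{i=1}^n$ is $(1+\epsilon)$–equivalent to the unit vector basis of $\ell_p^n$. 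Finally $P:=\sum_i\psi_i(\cdot)\,y_i$ is a projection of $Y$ onto $[y_1,\dots,y_n]$ and, combining the upper $\ell_p$ estimate with (3),
\[
\|Py\|\le (1+\epsilon_0)\Big(\sum_i|\psi_i(y)|^p\Big)^{1/p}=(1+\epsilon_0)\sup_{(\sum_i|\beta_i|^{\pp})^{1/\pp}\le 1}\Big|\Big(\sum_i\beta_i\psi_i\Big)(y)\Big|\le (1+\epsilon_0)(1+\epsilon)\|y\|,
\]
so $[y_1,\dots,y_n]$ is $(1+\epsilon)$–complemented once $\epsilon_0$ is shrunk. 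Choosing $\epsilon_0$ small enough that every accumulated factor is below $1+\epsilon$ finishes the proof.

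The only genuinely delicate point is the combination of the opening remark with the bookkeeping of lengths: one must check that $m$ and $nm$ can simultaneously be made admissible (each inside some interval $[\log N,\exp N]$ with $N\in L$) while $m$ is large enough that $f(nm)/f(m)$ is as close to $1$ as desired — which is exactly what the choice $m=\lceil\exp N/(2n)\rceil$ with $N\in L$ large achieves. Everything else is the Hölder/duality computation displayed above together with the tracking of the nested constants $1+\epsilon_0$.
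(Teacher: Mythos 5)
Your proof is correct and follows essentially the same route as the paper: block a long RIS sequence of length $nm$ into $n$ consecutive blocks of length $m$, use Lemma \ref{RIS} for each block and Lemma \ref{estimate} globally, and exploit that $f(nm)/f(m)\to 1$ for fixed $n$ to pass from ``$\ell_p$ up to a logarithm'' to an almost isometric $\ell_p^n$. The only cosmetic differences are that you normalize by the exact block norms (hence the correction factors $\gamma_i$) and obtain the lower $\ell_p$ estimate (2) by duality against the $\psi_i$, whereas the paper reads it off directly from the lower inequality in Lemma \ref{estimate}; both the constants bookkeeping and the complementation argument are sound.
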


\begin{proof}
Assuming $\epsilon \leq 1/16$, pick $m$ such that $\dist(mn,N) <n$ for some $N \in L$
and big enough to ensure that $m$ and $mn$ belong to $[\log N,\exp N]$, and that
$f(mn)/f(m)<1+\epsilon$.
Denote $M=mn$.
Let $x_1,\ldots,x_M$ be a RIS in $Y$ with constant $1+\epsilon^2/100$ and
$\phi_1,\ldots,\phi_M$ be a sequence of successive norming functionals in $X^*$
for $x_1,\ldots,x_M$.

Now for $j=1,\ldots,n$, let
$$
y_j=\frac{f(m)^{1-\theta}}{m^{1/p}}\sum_{i=(j-1)m+1}^{jm} x_i,\, \textrm{ and }\,
\psi_j=\frac{1}{f(m)^{1-\theta} m^{1/\pp}}\sum_{i=(j-1)m+1}^{jm} \phi_j.
$$
Since $x_{(j-1)m+1},\ldots,x_{jm}$ is a RIS with constant $1+\epsilon^2/100$,
we have by Lemma \ref{RIS} that for $j=1,\ldots,n$,
$$
1 \leq \|y_j\| \leq (1+\epsilon),\quad  (1+\epsilon)^{-1} \leq \|\psi_j\| \leq 1,
$$
and clearly $\psi_j(y_k)=\delta_{j,k}.$
For any complex  $\alpha_1,\ldots,\alpha_n$,  Lemma \ref{estimate} implies
$$
\frac{m^{1/p}}{f(m)^{1-\theta}} \|\sum_{j=1}^n \alpha_j y_j\| \geq
\frac{(\sum_{j=1}^n m|\alpha_j|^p)^{1/p}}{f(M)^{1-\theta}},
$$
so
$$
\|\sum_{j=1}^n \alpha_j y_j\| \geq (\sum_{j=1}^n|\alpha_j|^p)^{1/p}
(\frac{f(m)}{f(M)})^{1-\theta} \geq (\sum_{j=1}^n|\alpha_j|^p)^{1/p} (1+\epsilon)^{-1}.
$$
Lemma \ref{estimate}  also implies
$f(m)^{1-\theta} m^{1/\pp} \|\sum_{j=1}^n \alpha_j \psi_j\| \leq
f(M)^{1-\theta}(\sum_{j=1}^n m|\alpha_j|^\pp)^{1/\pp},$
so
$ \|\sum_{j=1}^n \alpha_j \psi_j\| \leq (1+\epsilon)(\sum_{j=1}^n |\alpha_j|^\pp)^{1/\pp}.$
\medskip

%

Clearly $(y_i)_{i=1}^n$ is $(1+\epsilon)$-equivalent to the unit basis of
$\ell_p^n$.
We claim that $Px=\sum_{i=1}^n \psi_i(x)y_i$ defines a projection from $\mathcal F$ onto
$[y_1,\ldots,y_n]$ of norm at most $(1+\epsilon)^{2p}$.
Indeed for $x \in \mathcal F$,
$$\|Px\|^p \leq (1+\epsilon)^p (\sum_{i=1}^n |\psi_i(x)|^p)=
(1+\epsilon)^p (\sum_{i=1}^n \alpha_i |\psi_i(x)|^{p-1}\psi_i(x))$$
for some $\alpha_1,\ldots,\alpha_n$ of modulus 1.
So
$$\|Px\|^p \leq (1+\epsilon)^p \|x\| \|\sum_{i=1}^n \alpha_i |\psi_i(x)|^{p-1}\psi_i\|
\leq  (1+\epsilon)^{p+1} \|x\| (\sum_{i=1}^n |\psi_i(x)^{p-1}|^{\pp})^{1/\pp}.$$
Since
$$\sum_{i=1}^n |\psi_i(x)^{p-1}|^{\pp}=\sum_{i=1}^n |\psi_i(x)|^p \leq (1+\epsilon)^p
\|Px\|^p,$$
we deduce
$\|Px\|^p \leq (1+\epsilon)^{p+1+p/\pp}\|x\| \|Px\|^{p/\pp},$
therefore
$\|Px\| \leq (1+\epsilon)^{2p} \|x\|.$
This concludes the proof of the claim, and up to appropriate choice of $\epsilon$, that
of the proposition.\end{proof}

\section{Iterated twisting of $\mathcal F$}\label{sect:higher-twisting}

The results in this section are the particular cases of \cite[Cor. 2 and Prop.3]{cabecastroch} for the admissible families yielding Ferenczi's space. For the sake of completeness we include a rather complete sketch with somewhat different proofs. To unify the notation, let us set $\mathcal{F}_1 = \mathcal{F}$. As above, $\mathcal{F}_2$
denote the self-extension of $\mathcal{F}_1$ obtained in Section \ref{sect:twisting-Ferenczi}.
As it is showed in Proposition \ref{rep-derived-space},
$$
\mathcal{F}_2 =\{\big(g'(\theta),g(\theta)\big)\,  :\,  g\in\cl{H}(X_{j,t})\},
$$
endowed with the quotient norm of $\cl{H}(X_{j,t})/(\ker \delta_\theta \cap\ker \delta'_\theta)$.
Let us show that the twisting process can be iterated obtaining a sequence $(\mathcal{F}_n)$  of H.I.
spaces such that $\mathcal F_{n+m}$ is a twisted sum of $\mathcal F_n$ and $\mathcal F_m$.
\smallskip


Given a function $g\in\cl{H}(X_{j,t})$ and an integer $k\in\N$, we denote
$\hat g[k]:= g^{(k-1)}(\theta)/(k-1)!,$ the ($k$)-th coefficient of the Taylor series
of $g$ at $\theta$. Following the constructions in \cite{cabecastroch}, we define for $n\geq 3$:
$$
\mathcal{F}_n :=\{\big(\hat g[n],\ldots,\hat g[2],\hat g[1]\big)\,  :\,  g\in\cl{H}(X_{j,t})\}
$$
endowed with the quotient norm of $\cl{H}(X_{j,t})/\bigcap_{k=0}^{n-1}\ker \delta^{(k)}_\theta$.

\begin{prop}
Let $m,n\in\N$ with $m>n$.

\begin{enumerate}
\item  The expression $\pi_{m,n}(x_m,\ldots,x_n,\ldots,x_1) =(x_n,\ldots,x_1)$ defines
a surjective operator $\pi_{m,n} : \mathcal{F}_{m}\to\mathcal{F}_n$.
\smallskip

\item The expression $i_{n,m}(x_n,\ldots,x_1) :=(x_n,\ldots,x_1,0,\ldots,0)$ defines
a isomorphic embedding $i_{n,m} : \mathcal{F}_n\to\mathcal{F}_{m}$ with
${\rm ran}(i_{n,m})=\ker(\pi_{m,m-n})$.
\smallskip

\item The operator $\pi_{m,n}$ is strictly singular.
\end{enumerate}
\end{prop}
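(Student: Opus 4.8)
The plan is as follows. Write $N_j:=\bigcap_{k=0}^{j-1}\ker\delta^{(k)}_\theta\subseteq\cl{H}(X_{j,t})$ for the subspace of functions vanishing to order $\geq j$ at $\theta$, so that by construction $\mathcal F_j=\cl{H}(X_{j,t})/N_j$ and $N_m\subseteq N_n$ whenever $m\geq n$. Part (1) is then immediate: the identity of $\cl{H}(X_{j,t})$ induces the canonical map $\cl{H}(X_{j,t})/N_m\to\cl{H}(X_{j,t})/N_n$, which in the coordinates of the statement is exactly $\pi_{m,n}$; it has norm $\leq 1$ since any competitor for the quotient norm of $\mathcal F_m$ is one for that of $\mathcal F_n$, and it is onto because $\cl{H}(X_{j,t})\to\mathcal F_n$ is onto.

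For Part (2) I would realise $i_{n,m}$ as the inverse of a ``keep the top $n$ coordinates'' map. Division by $(z-\theta)^{m-n}$ defines a bounded operator $D\colon N_{m-n}\to\cl{H}(X_{j,t})$ (bounded because $\|g(z)/(z-\theta)^{m-n}\|_\Sigma$ is dominated on $\partial\mathbb S$ by $\dist(\theta,\partial\mathbb S)^{-(m-n)}\|g\|_{\cl H}$, hence on all of $\mathbb S$ by the maximum principle, the singularity at $\theta$ being removable) which acts on Taylor coefficients by the shift $\widehat{Dg}[k]=\hat g[k+m-n]$; it carries $N_m$ into $N_n$, so it descends to a bounded operator $\widetilde D\colon N_{m-n}/N_m\to\mathcal F_n$. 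Under the natural identification $N_{m-n}/N_m=\ker\pi_{m,m-n}\subseteq\mathcal F_m$ this $\widetilde D$ is the assignment $(y_m,\dots,y_{m-n+1},0,\dots,0)\mapsto(y_m,\dots,y_{m-n+1})$, which is visibly injective. The substantive point is its surjectivity; for this I would compose with multiplication by $\psi(z)=\bigl((z-\theta)/(z-\theta+c)\bigr)^{m-n}$, where $c$ is chosen large enough that $\psi$ is holomorphic and bounded on $\mathbb S$. Then $M_\psi\colon g\mapsto\psi g$ is bounded and maps $\cl{H}(X_{j,t})$ into $N_{m-n}$ and $N_n$ into $N_m$, so it induces $\widetilde{M_\psi}\colon\mathcal F_n\to N_{m-n}/N_m$, and $\widetilde D\circ\widetilde{M_\psi}$ is the operator on $\mathcal F_n$ induced by multiplication by $\rho(z)=(z-\theta+c)^{-(m-n)}$. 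Since $\rho(\theta)\neq 0$ and $\rho(z)-\rho(\theta)$ vanishes at $\theta$, the operator $M_\rho-\rho(\theta)\,\mathrm{id}$ sends $N_j/N_n$ into $N_{j+1}/N_n$ for every $j$, hence is nilpotent on the finite filtration $\mathcal F_n=N_0/N_n\supseteq N_1/N_n\supseteq\cdots\supseteq N_n/N_n=\{0\}$; therefore $\widetilde D\widetilde{M_\psi}$ is an isomorphism, $\widetilde D$ is onto, and by the open mapping theorem $\widetilde D$ is an isomorphism. Reading $\widetilde D^{-1}$ in coordinates shows both that $(x_n,\dots,x_1,0,\dots,0)$ really belongs to $\mathcal F_m$ and that $i_{n,m}=\widetilde D^{-1}$, so $i_{n,m}$ is an isomorphic embedding with ${\rm ran}(i_{n,m})=\ker\pi_{m,m-n}$. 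This identification — essentially the assertion that the padded tuples lie in $\mathcal F_m$ — is the point I expect to require the most care; cf.\ \cite{cabecastroch}.

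For Part (3) I would work purely at the level of exact sequences. First, $\pi_{2,1}$ is strictly singular by Theorem \ref{ferenczi}, and hence so is $\pi_{m,1}=\pi_{2,1}\circ\pi_{m,2}$ for every $m\geq 3$, being a left multiple of a strictly singular operator. Next I would show by induction on $k$ that $\mathcal F_k$ is hereditarily indecomposable: this holds for $k=1$ by hypothesis and for $k=2$ by Theorem \ref{ferenczi}, and for the step from $k$ to $k+1$ (with $k\geq 2$) one applies Lemma \ref{characterization} to the exact sequence $0\to\mathcal F_k\to\mathcal F_{k+1}\xrightarrow{\pi_{k+1,1}}\mathcal F_1\to 0$, whose kernel ${\rm ran}(i_{k,k+1})\cong\mathcal F_k$ is H.I.\ by the inductive hypothesis (Part (2)) and whose quotient map $\pi_{k+1,1}$ is strictly singular by the previous sentence. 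Finally, for arbitrary $m>n$ the kernel $\ker\pi_{m,n}={\rm ran}(i_{m-n,m})\cong\mathcal F_{m-n}$ is infinite dimensional by Part (2); if $\pi_{m,n}$ were not strictly singular it would be an isomorphism on some infinite dimensional $V\subseteq\mathcal F_m$, and then $V\cap\ker\pi_{m,n}=\{0\}$ while $V+\ker\pi_{m,n}$ is closed (a convergent sequence from $V+\ker\pi_{m,n}$ has convergent $V$-component because $\pi_{m,n}$ is bounded below on $V$), so $V\oplus\ker\pi_{m,n}$ would be a decomposable subspace of the H.I.\ space $\mathcal F_m$, a contradiction. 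Note that in this last step the singularity of each new one-step extension is not re-derived from the interpolation estimates of Section \ref{sect:estimates}, but bootstrapped from the single instance provided by Theorem \ref{ferenczi} together with hereditary indecomposability.
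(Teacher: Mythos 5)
Your proof is correct; part (1) coincides with the paper's, while parts (2) and (3) take routes that differ from the paper's in ways worth recording. For (2), the paper first builds a scalar function $\phi\in H^\infty(\mathbb S)$ with prescribed Taylor coefficients $\hat\phi[k]=\delta_{k,m-n}$ (via a conformal equivalence of $\mathbb S$ with the disc), defines $i_{n,m}$ directly as multiplication by $\phi$ followed by the quotient map, and only then uses division by $(z-\theta)^{m-n}$ to identify the range with $\ker(\pi_{m,m-n})$; you take the division operator as the primary object, prove it is an isomorphism of $\ker(\pi_{m,m-n})$ onto $\mathcal F_n$ by composing with multiplication by the explicit function $\psi(z)=\bigl((z-\theta)/(z-\theta+c)\bigr)^{m-n}$ and noting that the resulting multiplier is a nonzero scalar plus a nilpotent on the finite filtration by order of vanishing at $\theta$, and then define $i_{n,m}$ as the inverse. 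This trades the conformal-map construction for the nilpotency argument; both are sound, and your version makes the well-definedness of the padded tuples automatic. For (3) the difference is more substantial: the paper inducts on the strict singularity of the consecutive maps $\pi_{m,m-1}$, using a commutative diagram and a semi-Fredholm-type argument ($M\cap\ker(\pi_{m,1})$ finite dimensional, $M+\ker(\pi_{m,1})$ closed, contradicting strict singularity of $\pi_{m,1}$), and only in the subsequent Corollary concludes that every $\mathcal F_k$ is H.I.; you reverse the logic, first bootstrapping hereditary indecomposability of every $\mathcal F_k$ from the strict singularity of $\pi_{k+1,1}$ (which factors through $\pi_{2,1}$, given by Theorem \ref{ferenczi}) via Lemma \ref{characterization}, and then deducing strict singularity of an arbitrary $\pi_{m,n}$ from the H.I. property of $\mathcal F_m$ together with the infinite dimensionality of $\ker(\pi_{m,n})\cong\mathcal F_{m-n}$ supplied by part (2). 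Your route is somewhat more economical, replacing the perturbation argument by the elementary observation that an isomorphic restriction of $\pi_{m,n}$ would yield a decomposable subspace $V\oplus\ker(\pi_{m,n})$ of an H.I. space, and it delivers the H.I. assertion of the Corollary as a byproduct rather than as a consequence.
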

\begin{proof}
(1) Since $\dist(g,\bigcap_{k=0}^{n-1}\ker \delta^{(k)}_\theta)\leq
\dist(g,\bigcap_{k=0}^{m-1}\ker \delta^{(k)}_\theta)$, we have $\|\pi_{m,n}\|\leq 1$.
And it is obvious that $\pi_{m,n}$ is surjective.
\smallskip

(2) Let $\phi\in H^\infty(\mathbb{S})$ be a scalar function such that
$\hat \phi[k]= \delta_{k,m-n}$ for $1\leq k \leq m$.
For the existence of $\phi$, we consider a conformal equivalence
$\varphi: \mathbb{S}\to \mathbb{D}$ satisfying $\varphi(\theta)=0$, and the
polynomial $p(z):=(z-\theta)^{m-n}$.
The function $p\circ \varphi^{-1}\in H(\mathbb{D})$ admits a representation
$p\circ \varphi^{-1}(\omega)=\sum_{l=0}^\infty a_l\omega^l$,
and it is not difficult to check that $\phi(z) := \sum_{l=0}^m a_l\varphi(z)^l$
defines a function that satisfies the required conditions.

Given $(x_n,\ldots,x_1)\in \mathcal{F}_n$, we take $g\in\cl{H}(X_{j,t})$ such that
$\hat g[k] = x_k$ for $k=1,\ldots,n$.
Then $f := \phi\cdot g\in \cl{H}(X_{j,t})$ with $\|f\|\leq \|\phi\|_\infty\cdot\|g\|$
and, by the Leibnitz rule,
$$
\hat f[k] = \sum_{l=1}^k \hat\phi[l] \hat g[k-l].
$$
Thus $\hat f[k]=0$ for $1\leq k\leq m-n$ and $\hat f[k]= \hat g[k-m+n]$ for $m-n< k\leq m$;
i.e., $(\hat f[m],\ldots,\hat f[1])=(x_n,\ldots,x_1,0,\ldots,0)$.
Hence $i_{n,m}$ is well-defined and $\|i_{n,m}\|\leq \|\phi\|_\infty$.

Clearly $i_{n,m}$ is injective and ${\rm ran}(i_{n,m})\subset\ker(\pi_{m,m-n})$.
Let $(y_n,\ldots,y_1,0,\ldots,0)$ in $\ker(\pi_{m,m-n})$.
Then there exists $g\in\cl{H}(X_{j,t})$ such that $\hat g[k]=0$ for $1\leq k\leq m-n$ and
$\hat g[k]= y_{k-m+n}$ for $m-n< k\leq m$.
Since $g$ has a zero of order $m-n$ at $\theta$, there exists $f\in\cl{H}(X_{j,t})$ such that
$g(z)= f(z)(z-\theta)^{m-n}$, and it is not difficult to check that
$i_{n,m}(\hat f[n],\ldots,\hat f[1]) =  (y_n,\ldots,y_1,0,\ldots,0)$.
\smallskip

(3) Since $\pi_{m,n}=\pi_{m-1,n}\pi_{m,m-1}$ for $m>n+1$, it is enough to prove that
$\pi_{m,m-1}$ is strictly singular.
We will do it by induction:

We proved in Theorem \ref{ferenczi} that $\pi_{2,1}$ is strictly singular.
Let $m>2$ and assume that $\pi_{m-1,m-2}$ is strictly singular.
Note that $\pi_{m,1}= \pi_{m,2} \pi_{2,1}$; hence $\pi_{m,1}$ is also strictly singular.

We consider the following commuting diagram:
\begin{equation}
\begin{CD}
 0@>>> \mathcal F_{m-1}@>i_{m-1,m}>>  \mathcal F_{m}@>\pi_{m,1}>> \mathcal F_1@>>>0\\
&&@V\pi_{m-1,m-2}VV@VV\pi_{m,m-1}V@|\\
 0@>>>\mathcal F_{m-2}@>>i_{m-2,m-1}>  \mathcal F_{m-1}@>>\pi_{m-1,1}>  \mathcal F_1@>>> 0.
\end{CD}
\end{equation}

By (1) and (2), the two rows are exact.
Suppose that $M$ is an infinite dimensional closed subspace of $ \mathcal F_{m}$
such that $\pi_{m,m-1}|_M$ is an isomorphism.
Since $\pi_{m,m-1} i_{m-1,m}$ is strictly singular and ${\rm ran}(i_{m-1,m})=\ker(\pi_{m,1})$,
$M\cap \ker(\pi_{m,1})$ is finite dimensional and $M+ \ker(\pi_{m,1})$ is closed.
But this is impossible, because $\pi_{m,1}$ is strictly singular.
\end{proof}

Since uniform convexity is a 3-space property \cite{castgonz}, as an immediate consequence we get:

\begin{cor}
Let $m,n\in\N$.
Then the sequence
$$
\begin{CD}
0@>>> \mathcal F_m @>i_{m,m+n}>> \mathcal F_{m+n} @>\pi_{m+n,n}>> \mathcal F_n @>>>0
\end{CD}
$$
is exact and singular. Therefore, all the spaces $\mathcal{F}_n$ are uniformly convex H.I.
\end{cor}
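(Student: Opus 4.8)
The plan is to deduce everything from the preceding Proposition together with Lemma \ref{characterization} and the $3$-space results already cited. First I would record exactness of the displayed sequence. Applying part (1) of the preceding Proposition to the index pair $(m+n,n)$ shows that $\pi_{m+n,n}:\mathcal F_{m+n}\to\mathcal F_n$ is a surjective operator, and applying part (2) to the index pair $(m,m+n)$ shows that $i_{m,m+n}:\mathcal F_m\to\mathcal F_{m+n}$ is an isomorphic embedding with ${\rm ran}(i_{m,m+n})=\ker(\pi_{m+n,(m+n)-m})=\ker(\pi_{m+n,n})$. Injectivity of $i_{m,m+n}$ gives exactness at $\mathcal F_m$, the range--kernel identity gives exactness at $\mathcal F_{m+n}$, and surjectivity of $\pi_{m+n,n}$ gives exactness at $\mathcal F_n$. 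Hence $\mathcal F_{m+n}$ is a twisted sum of $\mathcal F_m$ and $\mathcal F_n$. (Each $\mathcal F_k$ is a genuine Banach space, being by definition a quotient of the Banach space $\cl{H}(X_{j,t})$.)

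For singularity, part (3) of the Proposition already gives that $\pi_{m+n,n}$ is strictly singular, and by the characterization recalled at the beginning of Section \ref{sect:estimates} — a quasi-linear map is singular exactly when the quotient map of the associated exact sequence is strictly singular — the sequence is singular. It is worth stressing that the substance of this point is not new here: via the factorization $\pi_{m+n,n}=\pi_{m+n-1,n}\,\pi_{m+n,m+n-1}$ and the inductive argument in the Proposition, strict singularity of $\pi_{m+n,n}$ reduces to that of $\pi_{2,1}$, which is exactly Theorem \ref{ferenczi}; no further estimate is required.

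For the final assertion I would induct on $n$. The base case $\mathcal F_1=\mathcal F$ is Ferenczi's space, which is uniformly convex and H.I. \cite{fere}. For the inductive step, specialize the exact sequence just obtained to $m=n-1$ and trivial second summand index, i.e.\ consider $0\to\mathcal F_{n-1}\to\mathcal F_n\stackrel{\pi_{n,1}}\to\mathcal F_1\to 0$, whose quotient map $\pi_{n,1}$ is strictly singular by part (3). Since $\mathcal F_{n-1}$ is H.I.\ by the inductive hypothesis, Lemma \ref{characterization} yields that $\mathcal F_n$ is H.I.; and since uniform convexity is a $3$-space property \cite{castgonz} and both $\mathcal F_{n-1}$ and $\mathcal F_1$ are uniformly convex, $\mathcal F_n$ is uniformly convex as well.

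I do not expect a genuine obstacle in this corollary: the hard analytic work was absorbed into Theorem \ref{ferenczi} and into parts (1)--(3) of the preceding Proposition, and what remains is bookkeeping. The only points demanding minimal care are the index arithmetic $(m+n)-m=n$ needed to identify ${\rm ran}(i_{m,m+n})$ with $\ker(\pi_{m+n,n})$, and verifying that the inductive hypotheses in Lemma \ref{characterization} and in the $3$-space property are applied with the correct subspace/quotient, both of which are immediate.
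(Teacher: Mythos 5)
Your proposal is correct and follows exactly the route the paper intends: exactness from parts (1)--(2) of the preceding Proposition, singularity from part (3) (which was already reduced by induction to Theorem \ref{ferenczi}), and then H.I.-ness and uniform convexity of each $\mathcal F_n$ by induction via Lemma \ref{characterization} and the $3$-space property. The paper treats all of this as an immediate consequence, so no comparison is needed.
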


Next we show that there are natural nontrivial twisted sums of spaces $\mathcal{F}_n$
which are not H.I.
Let $l,m,n\in\N$ with $l>n$. We consider the following push-out diagram:

\begin{equation}\label{poz-new}
\begin{CD}
 0@>>> \mathcal F_{l}@>i_{l,l+m}>>  \mathcal F_{l+m}@>\pi_{l+m,m}>> \mathcal F_m@>>>0\\
&&@V\pi_{l,n}VV@VV\pi_{l+m,n+m}V@|\\
 0@>>>\mathcal F_n@>>i_{n,n+m}>  \mathcal F_{n+m}@>>\pi_{n+m,m}>  \mathcal F_m@>>> 0.
\end{CD}
\end{equation}
\vspace{3mm}

\begin{prop}
Let $l,m,n\in\N$ with $l>n$.
Then the diagonal push-out sequence
\begin{equation}\label{diagonal-po}
\begin{CD}
0@>>> \mathcal F_l @>i>> \mathcal F_n\oplus \mathcal F_{l+m} @>\pi>> \mathcal F_{m+n} @>>>0
\end{CD}
\end{equation}
obtained from diagram {\rm (\ref{poz-new})} is a nontrivial exact sequence.
\end{prop}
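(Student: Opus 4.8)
The plan is to use the explicit description of the diagonal push-out sequence recalled before the statement, reduce the nontriviality of (\ref{diagonal-po}) to the non-splitting of a concrete short exact sequence, and then reach a contradiction by confronting the strict singularity of $\pi_{l+m,n+m}$ with the classical fact that a strictly singular perturbation of the identity is a Fredholm operator of index zero.

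First I would unwind the definition of the diagonal push-out sequence for diagram (\ref{poz-new}): there the left vertical map is $\alpha=\pi_{l,n}$, the embedding and quotient of the top row are $i_{l,l+m}$ and $\pi_{l+m,m}$, the embedding of the bottom row is $i_{n,n+m}$, and the middle vertical map is $\pi_{l+m,n+m}$. Hence (\ref{diagonal-po}) is equivalent to the exact sequence $0\to\mathcal F_l\to\mathcal F_n\oplus\mathcal F_{l+m}\to\mathcal F_{n+m}\to 0$ whose embedding is $x\mapsto(-\pi_{l,n}x,\,i_{l,l+m}x)$ and whose quotient map $q$ is $q(a,b)=i_{n,n+m}(a)+\pi_{l+m,n+m}(b)$. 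Since equivalent exact sequences split simultaneously, it suffices to prove that this last sequence does not split; so I would assume it does and derive a contradiction. A splitting provides a bounded operator $s=(s_1,s_2)\colon\mathcal F_{n+m}\to\mathcal F_n\oplus\mathcal F_{l+m}$ with $q\circ s=\mathrm{id}_{\mathcal F_{n+m}}$, that is, $i_{n,n+m}s_1+\pi_{l+m,n+m}s_2=\mathrm{id}_{\mathcal F_{n+m}}$.

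The heart of the argument is then to put $T:=\pi_{l+m,n+m}\circ s_2\colon\mathcal F_{n+m}\to\mathcal F_{n+m}$, so that $\mathrm{id}_{\mathcal F_{n+m}}-T=i_{n,n+m}s_1$. On one hand the range of $\mathrm{id}_{\mathcal F_{n+m}}-T$ lies inside $\mathrm{ran}(i_{n,n+m})$, which by the description of the spaces $\mathcal F_k$ equals $\ker\pi_{n+m,m}$; since $\pi_{n+m,m}$ maps $\mathcal F_{n+m}$ onto the infinite dimensional space $\mathcal F_m$, this subspace has infinite codimension in $\mathcal F_{n+m}$. On the other hand $\pi_{l+m,n+m}$ is the quotient map of the exact sequence $0\to\mathcal F_{l-n}\to\mathcal F_{l+m}\to\mathcal F_{n+m}\to 0$, which is one of the singular sequences produced by the Corollary just before the statement (applied with $l-n$ and $n+m$ in place of its two indices); here the hypothesis $l>n$ is used, precisely to guarantee $l-n\geq 1$. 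Consequently $\pi_{l+m,n+m}$ is strictly singular, and therefore so is $T=\pi_{l+m,n+m}s_2$, strictly singular operators forming an operator ideal. Thus $\mathrm{id}_{\mathcal F_{n+m}}-T$ is Fredholm of index zero, so its range is closed and of finite codimension in $\mathcal F_{n+m}$ — contradicting the previous sentence. This contradiction shows that (\ref{diagonal-po}) is nontrivial.

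I expect the only real work to be bookkeeping rather than conceptual: one must check, directly from the description of the Rochberg-type spaces $\mathcal F_k$ and of the maps $i_{\cdot,\cdot}$, $\pi_{\cdot,\cdot}$, the identities $\mathrm{ran}(i_{n,n+m})=\ker\pi_{n+m,m}$ and $\ker\pi_{l+m,n+m}\cong\mathcal F_{l-n}$, so that the two exact sequences invoked above are precisely the ones the earlier Corollary handles; and one must quote the standard theorem that $\mathrm{id}-S$ is Fredholm of index zero whenever $S$ is strictly singular. Apart from that last general fact, the proof uses nothing beyond results already established in the paper, and the strict inequality $l>n$ is indispensable: for $l=n$ the map $\pi_{l+m,n+m}$ is the identity and the sequence (\ref{diagonal-po}) genuinely splits.
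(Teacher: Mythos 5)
Your proof is correct, and it takes a genuinely different route from the paper's. The paper argues globally: if (\ref{diagonal-po}) were trivial, the middle space would be isomorphic to $\mathcal F_l\oplus\mathcal F_{m+n}$, and this is ruled out by the (stated, not proved) claim that for $l>n$ every operator from $\mathcal F_l$ or from $\mathcal F_{m+n}$ into $\mathcal F_n$ is strictly singular. You instead assume a bounded linear section $s=(s_1,s_2)$ of $\pi$ and exploit the identity $\mathrm{id}_{\mathcal F_{n+m}}-\pi_{l+m,n+m}s_2=i_{n,n+m}s_1$: the left-hand side is a strictly singular perturbation of the identity (since $\pi_{l+m,n+m}$ is strictly singular by part (3) of the preceding proposition, using $l+m>n+m$, i.e.\ $l>n$), hence Fredholm of index zero with finite-codimensional range, while the right-hand side has range inside $\mathrm{ran}(i_{n,n+m})=\ker\pi_{n+m,m}$, which has infinite codimension because $\pi_{n+m,m}$ maps onto the infinite-dimensional space $\mathcal F_m$. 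Both identifications you need are exactly parts (2) and (3) of the preceding proposition, and the Fredholm perturbation fact is the same tool the paper already invokes (via Aiena) in Section 7. What your approach buys is self-containedness: it avoids the paper's unproved incomparability claim and the further step of deducing non-isomorphism of the two direct sums from it; what the paper's approach buys is the stronger conclusion that the two middle spaces are actually non-isomorphic, not merely that this particular sequence fails to split. Your closing remark that the sequence splits when $l=n$ is also correct and shows the hypothesis is sharp.
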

\begin{proof}
As we saw in Section \ref{sect:twisted-sums}, the maps $i$ and $\pi$ are given by
$$
i(x) =(-\pi_{l,n}\, x,\, i_{l,l+m}\, x)\quad \textrm{ and }\quad \pi(y,z)=
i_{n,n+m}\, y+ \pi_{l+m,n+m}\, z,
$$
and it is easy to check that the sequence (\ref{diagonal-po}) is exact.
Since $l>n$, every operator from $\mathcal F_l$ or $\mathcal F_{m+n}$ into $\mathcal F_n$
is strictly singular.
Thus $\mathcal F_l\oplus \mathcal F_{m+n}$ is not isomorphic to $\mathcal F_n\oplus \mathcal F_{l+m}$,
and the exact sequence (\ref{diagonal-po}) is nontrivial.
%
\end{proof}

\end{document}